\newcommand{\vect}[1]{\ensuremath{\mathbf{#1}}}
\newcommand{\card}[1]{\ensuremath{\lvert{#1}\rvert}}
\newcommand{\cl}[1]{\ensuremath{\mathcal{#1}}}
\newcommand{\nset}[1]{\ensuremath{[{#1}]}}
\newcommand{\couples}[1][n]{\ensuremath{\binom{#1}{2}}} 
\DeclareMathOperator{\fullsig}{Sgn}
\DeclareMathOperator{\reducedsig}{sgn}
\DeclareMathOperator{\deck}{deck}                
\DeclareMathOperator{\range}{Im}                 
\DeclareMathOperator{\pr}{pr}                    
\newcommand{\Sp}[1]{\mathcal{#1}}
\newcommand{\complementX}[1]{\overline{#1}}
\newcommand{\Qsets}{\Omega}
\newcommand{\QsetsE}{\Qsets^{\star}}
\newcommand{\rot}[1]{\langle{#1}\rangle}
\newcommand{\XYset}[2]{[{#1}|{#2}]}
\newcommand{\XYrot}[2]{\rot{{#1}|{#2}}}
\newcommand{\Sgena}{A}
\newcommand{\Sgenb}{B}
\newcommand{\Sgenc}{C}
\newcommand{\Sbuildall}{F}
\newcommand{\Sbuildmon}{G}
\newcommand{\Sbuildsd}{H}
\newcommand{\Smon}{M}
\newcommand{\Sclique}{U}
\newcommand{\Ssd}{S}
\theoremstyle{plain}
\newtheorem{theorem}{Theorem}[section]
\newtheorem{proposition}[theorem]{Proposition}
\newtheorem{lemma}[theorem]{Lemma}
\newtheorem{corollary}[theorem]{Corollary}
\theoremstyle{definition}
\newtheorem{definition}[theorem]{Definition}
\newtheorem{fact}[theorem]{Fact}
\newtheorem{example}[theorem]{Example}
\theoremstyle{remark}
\newtheorem{remark}[theorem]{Remark}
\newcommand{\PostsLattice}[1]{
  \begin{tikzpicture}[scale=#1, transform shape]
    \tikzstyle{every node} = [circle, fill=black,scale=0.5]
    \tikzstyle{every label} = [scale=2,draw=none, fill=none, label distance=-4]
    \node (P2) [label=above:$\Omega$] at (7.0+1.0   ,10.5+2.5+1.2) {};
    \node (T0) [label=above left:$T_0$]  at (7.0-3.0,10.5-.5+2.5+.85) {};
    \node (T1) [label=above right:$T_1$] at (7.0+3.0,10.5-.5+2.5+.85) {};
    \node (T)  [label=above:$T_c$] at (7.0-1.0,10.5-1.0+3) {};
    \node (M)  [label=above left:$M$] at (6.76+.1+.4   ,9.38+1.1) {};
    \node (T0M)[label=left:$M_0\;$] at (7.23-1.5-.4+.2-.2, 9.51-.5+1-.1) {};
    \node (T1M)[label=right:$\;M_1$] at (6.76+1.0+.4+.3+.2, 9.38-.5+1-.1) {};
    \node (TM) [label=above right:\raisebox{1.25ex}{$M_c$}] at (7.23-.5, 9.51-1.0+1-.3) {};

    \node (L)  [label=above right:$L$] at (7.0+.5   , 5.0-.2+.2) {};
    \node (T0L)[label=left:$L_0$] at (7.0-1.6, 5.0-.2-.5) {};
    \node (T1L)[label=right:$L_1$] at (7.0+1.6, 5.0-.2-.5) {};
    \node (TL) [label=below left:$L_c$] at (7.0-.5, 5.0-.2-1.0-.2) {};
    \node (SL) [label=right:$LS$] at (7.0+0.0, 5.0-.2-0.5) {};
    
    \node (S) [label=above right:$S$] at (7+.1,7.0-0.05+.4) {};
    \node (ST) [label=left:$S_c$] at (6.5,6.5-0.2+.2) {};
    \node (SM) [label=below left:$SM$] at (6-.1,6.0-0.35) {};

    \node (P21) [label=right:$\Omega(1)$] at (7.0+0.5,3.5-1) {};
    \node (SP21)[label=right:$I^*$] at (7.0-0.2, 3.5-2) {};
    \node (AV)  [label=below:$I$] at (7.0 +.5  , 3.0-4 +.75) {};
    \node (T0AV)[label=below left:$I_0$] at (7.0-2, 3.0-4) {};
    \node (T1AV)[label=below right:$I_1$] at (7.0+2, 3.0-4) {};
    \node (TAV) [label=below:$I_c$] at (7.0-.5,  3.0-4-.75) {}; 

    \foreach \from/\to in {
          P2/T0, P2/T1, T0/T, T1/T,
          M/T0M, M/T1M, T0M/TM, T1M/TM,
          P2/M, T0/T0M, T1/T1M, T/TM,
          L/T0L, L/T1L, T0L/TL, T1L/TL,
          P2/L, T0/T0L, T1/T1L, T/ST, ST/TL,
          AV/T0AV, AV/T1AV, T0AV/TAV, T1AV/TAV,
          L/P21, P21/AV, T0L/T0AV, T1L/T1AV, TL/TAV,
          P2/S, S/ST, S/SL, L/SL, SL/TL, SL/SP21, P21/SP21, SP21/TAV,
          ST/SM, SM/TAV}
    \draw [-] (\from) -- (\to);

    \node (T02)  [label=left:$U_2$] at (0, 8.5) {};
    \node (T02T) [label=left:$T_cU_2$] at (0+1.0, 8.5-.7) {};
    \node (T02M) [label=right:$MU_2$] at (0+2.0, 8.5-.3) {};
    \node (T02TM)[label=right:$M_cU_2$] at (0+3.0, 8.5-1.0) {};
    \node (T03)  [label=left:$U_3$] at (0, 7.0) {};
    \node (T03T) [label=left:$T_cU_3$] at (0+1.0, 7.0-.7) {};
    \node (T03M) [label=right:$MU_3$] at (0+2.0, 7.0-.3) {};
    \node (T03TM)[label=right:$M_cU_3$] at (0+3.0, 7.0-1.0) {};
    \node (T0H) [draw=none, fill=none, scale=0.1] at (0, 6.5) {};
    \node (T0HT) [draw=none, fill=none, scale=0.1] at (0+1.0, 6.5-.7) {};
    \node (T0HM) [draw=none, fill=none, scale=0.1] at (0+2.0, 6.5-.3) {};
    \node (T0HTM) [draw=none, fill=none, scale=0.1] at (0+3.0, 6.5-1.0) {};
    \node (T0e)  [label=left:$U_\infty$] at (0, 5.0) {};
    \node (T0eT) [label=left:$T_cU_\infty$] at (0+1.0, 5.0-.7) {};
    \node (T0eM) [label=right:$MU_\infty$] at (0+2.0, 5.0-.3) {};
    \node (T0eTM)[label=right:$M_cU_\infty$] at (0+3.0, 5.0-1.0) {};
    
    \node (A) [label=above right:$\Lambda$] at (0+3.5,3.5-.5) {};
    \node (AT1) [label=right:$\Lambda_1$] at (0+4.5,3.0-.5) {};
    \node (AT0) [label=left:$\Lambda_0$] at (0+2.0,3.0-.5) {};
    \node (AT) [label=below:$\Lambda_c$] at (0+3.0,2.5-.5) {}; 

    \node (T12)  [label=right:$W_2$] at (14, 8.5) {};
    \node (T12T) [label=right:\raisebox{-3ex}{$\!T_cW_2$}] at (14-1.0, 8.5-.7) {};
    \node (T12M) [label=left:$MW_2\;$] at (14-2.0, 8.5-.3) {};
    \node (T12TM)[label=left:$M_cW_2$] at (14-3.0, 8.5-1.0) {};
    \node (T13)  [label=right:$W_3$] at (14, 7.0) {};
    \node (T13T) [label=right:\raisebox{-3ex}{$\!T_cW_3$}] at (14-1.0, 7.0-.7) {};
    \node (T13M) [label=left:$MW_3\;$] at (14-2.0, 7.0-.3) {};
    \node (T13TM)[label=left:$M_cW_3$] at (14-3.0, 7.0-1.0) {};
    \node (T1H) [draw=none, fill=none, scale=0.1] at (14, 6.5) {};
    \node (T1HT) [draw=none, fill=none, scale=0.1] at (14-1.0, 6.5-.7) {};
    \node (T1HM) [draw=none, fill=none, scale=0.1] at (14-2.0, 6.5-.3) {};
    \node (T1HTM) [draw=none, fill=none, scale=0.1] at (14-3.0, 6.5-1.0) {};
    \node (T1e)  [label=right:$W_\infty$] at (14, 5.0) {};
    \node (T1eT) [label=right:\raisebox{-3ex}{$\!T_cW_\infty$}] at (14-1.0, 5.0-.7) {};
    \node (T1eM) [label=left:$MW_\infty\;$] at (14-2.0, 5.0-.3) {};
    \node (T1eTM)[label=left:$M_cW_\infty$] at (14-3.0, 5.0-1.0) {};

    \node (V) [label=above left:$V$] at (14-3.5,3.5-.5) {};
    \node (VT0) [label=left:$V_0$] at (14-4.5,3.0-.5) {};
    \node (VT1) [label=right:$V_1$] at (14-2.0,3.0-.5) {};
    \node (VT) [label=below:$V_c$] at (14-3.0,2.5-.5) {}; 

    \foreach \from/\to in {
          T02/T02T, T02/T02M, T02T/T02TM, T02M/T02TM,
          T03/T03T, T03/T03M, T03T/T03TM, T03M/T03TM,
          T0e/T0eT, T0e/T0eM, T0eT/T0eTM, T0eM/T0eTM,
          T12/T12T, T12/T12M, T12T/T12TM, T12M/T12TM,
          T13/T13T, T13/T13M, T13T/T13TM, T13M/T13TM,
          T1e/T1eT, T1e/T1eM, T1eT/T1eTM, T1eM/T1eTM,
          T02/T03, T02T/T03T, T02M/T03M, T02TM/T03TM,
          T03/T0H, T03T/T0HT, T03M/T0HM, T03TM/T0HTM,
          T12/T13, T12T/T13T, T12M/T13M, T12TM/T13TM,
          T13/T1H, T13T/T1HT, T13M/T1HM, T13TM/T1HTM,
          T0eM/AT0, T0eTM/AT,
          T1eM/VT1, T1eTM/VT,
          A/AT0, A/AT1, AT0/AT, AT1/AT,
          V/VT0, V/VT1, VT0/VT, VT1/VT,
          A/AV, AT0/T0AV, AT1/T1AV, AT/TAV,
          V/AV, VT0/T0AV, VT1/T1AV, VT/TAV,
          T02TM/SM, T12TM/SM,
          T0/T02, T0M/T02M, T/T02T, TM/T02TM,
          T1/T12, T1M/T12M, T/T12T, TM/T12TM}
    \draw [-] (\from) -- (\to);
    \path (M) edge [out=215, in=90] (A);
    \path (T1M) edge [out=215, in=90] (AT1);
    \path (M) edge [out=325, in=90] (V);
    \path (T0M) edge [out=325, in=90] (VT0);

    \foreach \from/\to in {
          T0H/T0e, T0HT/T0eT, T0HM/T0eM, T0HTM/T0eTM,
          T1H/T1e, T1HT/T1eT, T1HM/T1eM, T1HTM/T1eTM}
    \draw [dotted] (\from) -- (\to);
  \end{tikzpicture}
}
\begin{document}
\title{Hypomorphic Sperner systems and nonreconstructible functions}

\author{Miguel Couceiro}
\address[M. Couceiro]{LAMSADE \\
Université Paris-Dauphine \\
Place du Maréchal de Lattre de Tassigny \\
75775 Paris Cedex 16 \\
France}
\email{miguel.couceiro@dauphine.fr}

\author{Erkko Lehtonen}
\address[E. Lehtonen]{University of Luxembourg \\
Computer Science and Communications Research Unit \\
6, rue Richard Coudenhove-Kalergi \\
L--1359 Luxembourg \\
Luxembourg}
\email{erkko.lehtonen@uni.lu}

\author{Karsten Schölzel}
\address[K. Schölzel]{University of Luxembourg \\
Mathematics Research Unit \\
6, rue Richard Coudenhove-Kalergi \\
L--1359 Luxembourg \\
Luxembourg}
\email{karsten.schoelzel@uni.lu}

\date{\today}

\begin{abstract}
A reconstruction problem is formulated for Sperner systems, and infinite families of nonreconstructible Sperner systems are presented.
This has an application to a reconstruction problem for functions of several arguments and identification minors.
Sperner systems being representations of certain monotone functions, infinite families of nonreconstructible functions are thus obtained.
The clones of Boolean functions are completely classified in regard to reconstructibility.
\end{abstract}

\maketitle


\section{Introduction}
\label{sec:intro}

Reconstruction problems have drawn the attention of researchers over the past decades.
Generally speaking, a reconstruction problem asks whether a mathematical object can be recovered from partial information.
The reconstruction problems we discuss here fit into the following general framework. Given a combinatorial object, we derive some ``subobjects'' by applying a certain operation in all possible ways to the elements of our initial object. Then we may ask whether the initial object is uniquely determined (up to some kind of isomorphism) by the collection of the derived subobjects.

Perhaps the most famous reconstruction problem is the following: Is every graph with at least three vertices uniquely determined, up to isomorphism, by the collection of its one-vertex-deleted subgraphs?
It was conjectured by Kelly~\cite{Kelly1942} (see also Ulam's problem book~\cite{Ulam}) that the answer is positive.
While the conjecture has been shown to hold for various classes of graphs, such as trees, regular graphs, and disconnected graphs, it still remains one of the most important open problems in graph theory.

The reconstruction problem for graphs can be varied in different ways. For example, we may form subgraphs by deleting edges instead of vertices (see Ellingham~\cite{Ellingham} and Harary~\cite{Harary1964}), or we could consider directed graphs or hypergraphs. Let us mention that directed graphs and hypergraphs are not in general reconstructible from one-vertex-deleted subgraphs: infinite nonreconstructible families of directed graphs have been presented by Stockmeyer~\cite{Stockmeyer}, and infinite nonreconstructible families of hypergraphs have been presented by Kocay~\cite{Kocay} and by Kocay and Lui~\cite{KocLui}.
Analogous reconstruction problems have been formulated for many other kinds of mathematical objects.

In this paper, we investigate the following reconstruction problem:
Is a function $f \colon A^n \to B$ uniquely determined, up to equivalence, by the collection of its identification minors, i.e., the functions obtained from $f$ by identifying a pair of its arguments?
This problem was first formulated in~\cite{LehtonenSymmetric}, and in~\cite{LehtonenSymmetric,LehtonenLinear} some classes of functions were shown to be reconstructible or weakly reconstructible, such as totally symmetric functions, functions determined by the order of first occurrence, and affine functions over nonassociative semirings.

The initial objective of the work reported in this paper was to take another small step towards understanding the reconstructibility of functions of several arguments and to determine whether order-preserving functions are reconstructible.
We started with an important special case, namely monotone Boolean functions.
Since an $n$-ary monotone Boolean function is completely and uniquely determined by the set of its minimal true points, and the minimal true points constitute an antichain in the componentwise ordering of $n$-tuples -- which may be viewed as subsets of an $n$-element set -- this led us into reformulating the reconstruction problem in this special case in terms of Sperner systems.

We construct several infinite families of pairs of nonisomorphic Sperner systems with the same deck.
This translates into the statement that the class of monotone Boolean functions is not reconstructible, and it is not even weakly reconstructible, and there exist pairs of nonequivalent monotone Boolean functions of arbitrarily high arity with the same deck.
Furthermore, the members of some of our families have some special properties that guarantee that the associated Boolean functions belong to certain subclones of the clone of all monotone functions, namely to the clone of self-dual monotone functions and to the clone of monotone constant-preserving $1$-separating functions.

Having constructed several infinite families of nonreconstructible Sperner systems, we will also have some positive results on reconstructibility. Namely, we show that $1$-homogeneous Sperner systems are reconstuctible, and so are Sperner systems with exactly one block. Considering the associated Boolean functions, this means that the clones of disjunctions and conjunctions are reconstructible. Combining these results with the theorem from~\cite{LehtonenLinear} that asserts that linear functions over finite fields are reconstructible, we arrive at a complete classification of the clones of Boolean functions in regard to their reconstructibility.

Sperner systems can be seen not only as representations of monotone Boolean functions but more generally as representations of term operations over a distributive lattice (or, even more generally, as representations of polynomial operations of a certain special form over a distributive lattice). This means that the class of term operations over a distributive lattice is not reconstructible.
All functions represented by Sperner systems (as described above) are monotone, but we also present ways of extending them into functions over larger domains that are not necessarily monotone but remain nonreconstructible.

Our work has a surprising connection to the reconstruction problem of hypergraphs and one-vertex-deleted subhypergraphs that was mentioned earlier in this introduction.
One of the families of Sperner systems we present here (namely the family $\Sp{\Smon}^m_i$ ($m \geq 3$, $i \in \{1, 2\}$) as in Definition~\ref{def:monotone}) actually serves as yet another example of an infinite family of nonreconstructible hypergraphs, in addition to those already presented by Kocay~\cite{Kocay} and Kocay and Lui~\cite{KocLui}; see Corollary~\ref{cor:nonreconstructiblehypergraphs}.

This paper is organised as follows.
In Section~\ref{sec:preliminaries}, we present all necessary definitions and formulate reconstruction problems for Sperner systems and for functions of several arguments, and we explain how these two reconstruction problems are related.
In Section~\ref{sec:main}, we construct infinite families of nonreconstructible Sperner systems. Several families are provided so as to prove that various clones of Boolean functions are not weakly reconstructible.
We have also some positive results about the reconstructibility of Sperner systems. In Section~\ref{sec:positive}, we show that $1$-homogeneous Sperner systems over sufficiently large sets are reconstructible, and so are Sperner systems with just one block.
In Section~\ref{sec:classification}, we classify the clones of Boolean functions in regard to reconstructibility.
The nonreconstructible functions that we have encountered so far are all order-preserving, but in Section~\ref{sec:further}, we discuss how to build other kinds of nonreconstructible functions.
In Section~\ref{sec:hypergraph}, we show that the Sperner systems $\Sp{\Smon}^m_i$ actually constitute an example of an infinite family of nonreconstructible hypergraphs.
Appendix~\ref{app:Post} provides a list of the clones of Boolean functions.
Appendix~\ref{app:Sperner} provides a list of all Sperner systems over sets with at most five elements, together with their decks, and the nonreconstructible ones are indicated.


\section{Preliminaries}
\label{sec:preliminaries}

\subsection{General}
\label{sec:preliminaries:general}

Let $\mathbb{N} := \{0, 1, 2, \dots\}$.
Throughout this paper, $k$, $\ell$, $m$ and $n$ stand for positive integers, and $A$ and $B$ stand for arbitrary finite sets with at least two elements. The set $\{1, \dots, n\}$ is denoted by $\nset{n}$. The set of all $2$-element subsets of a set $A$ is denoted by $\couples[A]$; we will write simply $\couples$ for $\couples[\nset{n}]$. Tuples are denoted by bold-face letters and components of a tuple are denoted by the corresponding italic letters with subscripts, e.g., $\vect{a} = (a_1, \dots, a_n)$.

Let $\vect{a} \in A^n$, and let $\sigma \colon \nset{m} \to \nset{n}$. We will write $\vect{a} \sigma$ to denote the $m$-tuple $(a_{\sigma(1)}, \dots, a_{\sigma(m)})$.
Since the $n$-tuple $\vect{a}$ can be formally seen as the map $\vect{a} \colon \nset{n} \to A$, $i \mapsto a_i$, the $m$-tuple $\vect{a} \sigma$ is just the composite map $\vect{a} \circ \sigma \colon \nset{m} \to A$.

A \emph{finite multiset} $M$ on a set $S$ is a couple $(S, \mathbf{1}_M)$, where $\mathbf{1}_M \colon S \to \mathbb{N}$ is a map, called a \emph{multiplicity function,} such that the set $\{x \in S : \mathbf{1}_M(x) \neq 0\}$ is finite. Then the sum $\sum_{x \in S} \mathbf{1}_M(x)$ is a well-defined natural number, and it is called the \emph{cardinality} of $M$. For each $x \in S$, the number $\mathbf{1}_M(x)$ is called the \emph{multiplicity} of $x$ in $M$.
If $(a_i)_{i \in I}$ is a finite indexed family of elements of $S$, then we will write $\{a_i : i \in I\}$ to denote the multiset in which the multiplicity of each $x \in S$ equals $\card{\{i \in I : a_i = x\}}$. While this notation is similar to that used for sets, it will always be clear from the context whether we refer to a set or to a multiset.

\subsection{Reconstruction problems}

Before going into our specific problems, we recall some usual general terminology of reconstruction problems.
In very abstract terms, a \emph{reconstruction problem} comprises the following pieces of data:
\begin{itemize}
\item a collection $\mathcal{O}$ of \emph{objects},
\item an equivalence relation $\equiv$ on $\mathcal{O}$,
\item for each object $O \in \mathcal{O}$, an associated natural number called the \emph{size} of $O$,
\item for each $n \in \mathbb{N}$, an index set $I_n$,
\item for every object $O$ of size $n$ and for every $i \in I_n$, a \emph{derived object} $O_i \in \mathcal{O}$.
\end{itemize}

Let $O \in \mathcal{O}$ be an object of size $n$.
The equivalence classes $O_i / {\equiv}$ of the derived objects $O_i$ for each $i \in I_n$ are referred to as the \emph{cards} of $O$.
The \emph{deck} of $O$, denoted $\deck O$, is the multiset $\{O_i / {\equiv} : i \in I_n\}$ of the cards of $O$.

We may now ask whether an object $O$ is uniquely determined, up to equivalence, by its deck.
In order to discuss whether and to which extent this is the case, we will use the following terminology that is more or less standard.

Let $O$ and $O'$ be objects of size $n$.
We say that $O'$ is a \emph{reconstruction} of $O$, or that $O$ and $O'$ are \emph{hypomorphic,} if $\deck O = \deck O'$, or, equivalently, if there exists a bijection $\phi \colon I_n \to I_n$ such that $O_i \equiv O'_{\phi(i)}$ for every $i \in I_n$.
If the last condition holds with $\phi$ equal to the identity map on $I_n$, i.e., if $O_i \equiv O'_i$ for every $i \in I_n$, then we say that $O$ and $O'$ are \emph{strongly hypomorphic.}
Note that strongly hypomorphic objects are necessarily hypomorphic, but the converse is not true in general.

An object is \emph{reconstructible} if it is equivalent to all of its reconstructions.
A class $\mathcal{C} \subseteq \mathcal{O}$ of objects is \emph{reconstructible} if all members of $\mathcal{O}$ are reconstructible.
A class $\mathcal{C} \subseteq \mathcal{O}$ is \emph{weakly reconstructible} if for every $O \in \mathcal{C}$, all reconstructions of $O$ that are members of $\mathcal{C}$ are equivalent to $O$.
A class $\mathcal{C} \subseteq \mathcal{O}$ is \emph{recognizable} if all reconstructions of the members of $\mathcal{C}$ are members of $\mathcal{C}$.
Note that a reconstructible class of objects is necessarily weakly reconstructible, but the converse is not true in general.
If a class of objects is recognizable and weakly reconstructible, then it is reconstructible.

\subsection{Sperner systems}

The set $\mathcal{P}(A)$ of all subsets of $A$ is called the \emph{power set} of $A$.
Ordered by inclusion, $\mathcal{P}(A)$ constitutes a lattice.
Any subset of $\mathcal{P}(A)$ is called a \emph{set system} over $A$, its elements are called \emph{blocks}, and the set $A$ is referred to as its \emph{ground set.}
A set system in which no block is included in another is called a \emph{Sperner system}.
Equivalently, a Sperner system is an antichain in the power set lattice $(\mathcal{P}(A); \subseteq)$.
The set of the minimal elements of any set system is a Sperner system.

A set system is \emph{$k$-homogeneous} if each one of its blocks has cardinality $k$.
A set system is \emph{homogeneous} if it is $k$-homogeneous for some $k$.

For any function $f \colon A \to B$ and any subset $S$ of $A$, we write $f(S)$ for the set $\{f(x) : x \in S\}$. For any set system $\Sp{\Sgena}$ over $A$, we write $f(\Sp{\Sgena})$ for $\{f(S) : S \in \Sp{\Sgena}\}$.

Let $\Sp{\Sgena}$ be a set system over $A$ and let $\Sp{\Sgenb}$ be a set system over $B$. We say that $\Sp{\Sgena}$ and $\Sp{\Sgenb}$ are \emph{isomorphic} and we write $\Sp{\Sgena} \equiv \Sp{\Sgenb}$ if there exists a bijection $\sigma \colon A \to B$ such that $\sigma(\Sp{\Sgena}) = \Sp{\Sgenb}$. In this case, such a bijection $\sigma$ is called an \emph{isomorphism.} We denote the isomorphism type of $\Sp{\Sgena}$ by $\Sp{\Sgena} / {\equiv}$.

Let $\theta$ be an equivalence relation on $A$. The $\theta$-class of an element $x \in A$ is denoted by $x / \theta$.
For an arbitrary subset $S$ of $A$, we let $S / \theta := \{x / \theta : x \in S\}$.
For $I \in \couples[A]$, let $\theta_I$ be the equivalence relation on $A$ whose only nonsingleton equivalence class is $I$.
We will write $S_I$ for $S / \theta_I$, and for a set system $\Sp{\Sgena}$ over $A$, we will write $\Sp{\Sgena}_I$ for $\{S_I : S \in \Sp{\Sgena}\}$.
As usual, we will often simplify notation and will denote each equivalence class by any one of its representatives.

Let $\Sp{\Sgena}$ be a Sperner system over $A$.
For $I \in \couples[A]$, let $\Sp{\Sgena}^*_I$ be the set of minimal elements of $\Sp{\Sgena}_I$. Then $\Sp{\Sgena}^*_I$ is a Sperner system over $A / \theta_I$.

\begin{definition}
We can now specify the data for the \emph{reconstruction problem for Sperner systems.}
The objects are all Sperner systems over finite sets.
The equivalence is given by the isomorphism between Sperner systems.
The size of a Sperner system is the cardinality of its ground set.
For each $n \in \mathbb{N}$, the index set $I_n$ is the set $\couples$ of all two-element subsets of $\nset{n}$.
We may assume that we have, for every finite set $A$, a fixed bijection $\sigma_A \colon \nset{\card{A}} \to A$.
Then for a Sperner system $\Sp{\Sgena}$ over a set $A$ of cardinality $n$ and for $I \in \couples$, the derived object $\Sp{\Sgena}_I$ is $\Sp{\Sgena}^*_{\sigma_A(I)}$.
Hence the cards of a Sperner system $\Sp{\Sgena}$ over $A$ are the isomorphism types $\Sp{\Sgena}^*_I / {\equiv}$ of the Sperner systems $\Sp{\Sgena}^*_I$ for $I \in \couples[A]$, and the deck of $\Sp{\Sgena}$ is the multiset $\{\Sp{\Sgena}^*_I / {\equiv} : I \in \couples[A]\}$.
\end{definition}

It clearly holds that isomorphic Sperner systems are hypomorphic. The converse is not true, as illustrated by the following simple examples. Example~\ref{ex:Sp4} also illustrates that hypomorphic Sperner systems are not necessarily strongly hypomorphic nor are they even necessarily equivalent to a pair of strongly hypomorphic Sperner systems.

\begin{example}
\label{ex:Sp2}
Let $A = \{1, 2\}$, $\Sp{\Sgena} = \{\{1\}\}$, $\Sp{\Sgenb} = \{\{1\}, \{2\}\}$, $\Sp{\Sgenc} = \{\{1, 2\}\}$. Obviously $\Sp{\Sgena}$, $\Sp{\Sgenb}$ and $\Sp{\Sgenc}$ are pairwise nonisomorphic, and it is easy to verify that $\Sp{\Sgena}^*_{\{1, 2\}} = \Sp{\Sgenb}^*_{\{1, 2\}} = \Sp{\Sgenc}^*_{\{1, 2\}} = \{\{1\}\}$. Hence $\deck \Sp{\Sgena} = \deck \Sp{\Sgenb} = \deck \Sp{\Sgenc}$. Moreover, $\Sp{\Sgena}$, $\Sp{\Sgenb}$ and $\Sp{\Sgenc}$ are strongly hypomorphic.
\end{example}

\begin{example}
\label{ex:Sp3}
Let $A = \{1, 2, 3\}$, $\Sp{\Sgena} = \{\{1, 2\}, \{1, 3\}, \{2, 3\}\}$, $\Sp{\Sgenb} = \{\{1\}\}$. Obviously $\Sp{\Sgena} \not\equiv \Sp{\Sgenb}$, and it is easy to verify that $\Sp{\Sgena}^*_I \equiv \{\{1\}\} \equiv \Sp{\Sgenb}^*_I$ for all $I \in \couples[3]$. Hence $\deck \Sp{\Sgena} = \deck \Sp{\Sgenb}$. Moreover, $\Sp{\Sgena}$ and $\Sp{\Sgenb}$ are strongly hypomorphic.
\end{example}

\begin{example}
\label{ex:Sp4}
Let $A = \{1, 2, 3, 4\}$, $\Sp{\Sgena} = \{\{1, 2\}, \{1, 3\}, \{1, 4\}, \{2, 3, 4\}\}$, $\Sp{\Sgenb} = \{\{1, 2\}, \{1, 3\}, \{2, 3\}\}$. Obviously $\Sp{\Sgena} \not\equiv \Sp{\Sgenb}$, and it is easy to verify that
\begin{itemize}
\item $\Sp{\Sgena}^*_I \equiv \{\{1\}\}$ for $I \in \{\{1, 2\}, \{1, 3\}, \{1, 4\}\}$,
\item $\Sp{\Sgena}^*_I \equiv \{\{1, 2\}, \{1, 3\}, \{2, 3\}\}$ for $I \in \{\{2, 3\}, \{2, 4\}, \{3, 4\}\}$,
\item $\Sp{\Sgenb}^*_I \equiv \{\{1\}\}$ for $I \in \{\{1, 2\}, \{1, 3\}, \{2, 3\}\}$,
\item $\Sp{\Sgenb}^*_I \equiv \{\{1, 2\}, \{1, 3\}, \{2, 3\}\}$ for $I \in \{\{1, 4\}, \{2, 4\}, \{3, 4\}\}$.
\end{itemize}
Hence $\Sp{\Sgena}$ and $\Sp{\Sgenb}$ are hypomorphic but not strongly hypomorphic.
Moreover, there do not exist Sperner systems $\Sp{\Sgena}'$ and $\Sp{\Sgenb}'$ such that $\Sp{\Sgena} \equiv \Sp{\Sgena}'$, $\Sp{\Sgenb} \equiv \Sp{\Sgenb}'$ and $\Sp{\Sgena}'$ and $\Sp{\Sgenb}'$ are strongly hypomorphic.
\end{example}

In fact, Examples~\ref{ex:Sp2}--\ref{ex:Sp4} exhibit, up to isomorphism, all nonreconstructible Sperner systems over an $n$-element set, for $2 \leq n \leq 5$. In particular, every Sperner system over a $5$-element set is reconstructible. (The case $n = 2$ is trivial. For $n = 3$, this is quite easy to see. It gets more tedious than difficult to verify the claim for $n = 4$, and a computer may be extremely helpful in dealing with the case $n = 5$. In order to assist the reader in verifying these claims, we provide in Appendix~\ref{app:Sperner} a list of all Sperner systems over sets with at most five elements, up to isomorphism, together with their decks. Unfortunately, the authors are not aware of any simpler proof of these claims than an exhaustive search.)
One might be led into thinking that these examples of nonreconstructible Sperner systems are just some anomalies that only arise on small ground sets and maybe all Sperner systems over sufficiently large ground sets are reconstructible. However, as we will see in this paper, this is not true; there exist nonisomorphic hypomorphic pairs of Sperner systems over every set with at least six elements.

\subsection{Functions of several arguments and identification minors}

A \emph{function} (\emph{of several arguments}) from $A$ to $B$ is a map $f \colon A^n \to B$ for some positive integer $n$, called the \emph{arity} of $f$. Functions of several arguments from $A$ to $A$ are called \emph{operations} on $A$. Operations on $\{0, 1\}$ are called \emph{Boolean functions.}
We denote the set of all $n$-ary functions from $A$ to $B$ by $\cl{F}_{AB}^{(n)}$, and we denote the set of all functions from $A$ to $B$ of any finite arity by $\cl{F}_{AB}$. We also write $\cl{F}_{AB}^{(\geq n)}$ for $\bigcup_{m \geq n} \cl{F}_{AB}^{(m)}$. In other words, $\cl{F}_{AB}^{(n)} = B^{A^n}$ and $\cl{F}_{AB} = \cl{F}_{AB}^{(\geq 1)}$.
We also denote by $\cl{O}_A$ the set of all operations on $A$.
For any class $\mathcal{C} \subseteq \cl{F}_{AB}$, we let $\mathcal{C}^{(n)} := \mathcal{C} \cap \cl{F}_{AB}^{(n)}$ and $\mathcal{C}^{(\geq n)} := \mathcal{C} \cap \cl{F}_{AB}^{(\geq n)}$.

Let $f \colon A^n \to B$. For $i \in \nset{n}$, the $i$-th argument of $f$ is \emph{essential,} or $f$ \emph{depends} on the $i$-th argument, if there exist tuples $\vect{a}, \vect{b} \in A^n$ such that $a_j = b_j$ for all $j \in \nset{n} \setminus \{i\}$ and $f(\vect{a}) \neq f(\vect{b})$. Arguments that are not essential are \emph{inessential.}

We say that a function $f \colon A^n \to B$ is a \emph{minor} of another function $g \colon A^m \to B$, and we write $f \leq g$, if there exists a map $\sigma \colon \nset{m} \to \nset{n}$ such that $f(\vect{a}) = g(\vect{a} \sigma)$ for all $\vect{a} \in A^m$.
The minor relation $\leq$ is a quasiorder on $\cl{F}_{AB}$, and, as for all quasiorders, it induces an equivalence relation on $\cl{F}_{AB}$ by the following rule: $f \equiv g$ if and only if $f \leq g$ and $g \leq f$. We say that $f$ and $g$ are \emph{equivalent} if $f \equiv g$. Furthermore, $\leq$ induces a partial order on the quotient $\cl{F}_{AB} / {\equiv}$. (Informally speaking, $f$ is a minor of $g$, if $f$ can be obtained from $g$ by permutation of arguments, addition of inessential arguments, deletion of inessential arguments, and identification of arguments. If $f$ and $g$ are equivalent, then each one can be obtained from the other by permutation of arguments, addition of inessential arguments, and deletion of inessential arguments.)
We denote the $\equiv$-class of $f$ by $f / {\equiv}$.
Note that equivalent functions have the same number of essential arguments and every nonconstant function is equivalent to a function with no inessential arguments.
Note also in particular that if $f, g \colon A^n \to B$, then $f \equiv g$ if and only if there exists a bijection $\sigma \colon \nset{n} \to \nset{n}$ such that $f(\vect{a}) = g(\vect{a} \sigma)$ for all $\vect{a} \in A^n$.

Of particular interest to us are the following minors. Let $n \geq 2$, and let $f \colon A^n \to B$. For each $I \in \couples$, we define the function $f_I \colon A^{n-1} \to B$ by the rule
$f_I(\vect{a}) = f(\vect{a} \delta_I)$ for all $\vect{a} \in A^{n-1}$,
where $\delta_I \colon \nset{n} \to \nset{n - 1}$ is given by the rule
\[
\delta_I(i) =
\begin{cases}
i, & \text{if $i < \max I$,} \\
\min I, & \text{if $i = \max I$,} \\
i - 1, & \text{if $i > \max I$.}
\end{cases}
\]
In other words, if $I = \{i, j\}$ with $i < j$, then
\[
f_I(a_1, \dots, a_{n-1}) =
f(a_1, \dots, a_{j-1}, a_i, a_j, \dots, a_{n-1}).
\]
Note that $a_i$ occurs twice on the right side of the above equality: both at the $i$-th and at the $j$-th position. We will refer to the function $f_I$ as an \emph{identification minor} of $f$. This nomenclature is motivated by the fact that $f_I$ is obtained from $f$ by identifying the arguments indexed by the pair $I$.

\begin{definition}
We can now specify the data for the \emph{reconstruction problem for functions of several arguments and identification minors.}
The objects are all functions of several arguments from $A$ to $B$, i.e., the elements of the set $\cl{F}_{AB}$.
The equivalence relation is the relation $\equiv$ on $\cl{F}_{AB}$ as defined above.
The size of a function $f \colon A^n \to B$ is its arity $n$.
For each $n \in \mathbb{N}$, the index set $I_n$ is the set $\couples$ of all two-element subsets of $\nset{n}$.
For a function $f \colon A^n \to B$ and for $I \in \couples$, the derived object $f_I$ is the identification minor $f_I$ of $f$ as defined above.
Hence the cards of $f$ are the equivalence classes $f_I / {\equiv}$ of the various identification minors $f_I$ of $f$, and the deck of $f$ is the multiset $\{f_I / {\equiv} : I \in \couples\}$.
\end{definition}

\subsection{Clones}

If $f \colon B^n \to C$ and $g_1, \dots, g_n \colon A^m \to B$, then the \emph{composition} of $f$ with $g_1, \dots, g_n$ is the function $f(g_1, \dots, g_n) \colon A^m \to C$ given by the rule
\[
f(g_1, \dots, g_n)(\vect{a}) =
f \bigl( g_1(\vect{a}), \dots, g_n(\vect{a}) \bigr),
\]
for all $\vect{a} \in A^m$.

For integers $n$ and $i$ such that $1 \leq i \leq n$, the $i$-th $n$-ary \emph{projection} on $A$ is the operation $\pr_i^{(n)} \colon A^n \to A$, $(a_1, \dots, a_n) \mapsto a_i$ for all $(a_1, \dots, a_n) \in A^n$. 

A \emph{clone} on $A$ is a class of operations on $A$ that contains all projections on $A$ and is closed under functional composition. Trivial examples of clones are the set $\cl{O}_A$ of all operations on $A$ and the set of all projections on $A$.

The clones on the two-element set $\{0, 1\}$ were completely described by Post~\cite{Post}, and they are presented in Appendix~\ref{app:Post}.
In the sequel, we will make specific reference to the following clones of Boolean functions:
\begin{itemize}
\item the clone $M$ of monotone functions,
\item the clone $SM$ of self-dual monotone functions,
\item the clone $M_c U_\infty$ of monotone constant-preserving $1$-separating functions,
\item the clone $M_c W_\infty$ of monotone constant-preserving $0$-separating functions,
\item the clone $\Lambda$ of polynomial operations of the two-element meet-semilattice,
\item the clone $V$ of polynomial operations of the two-element join-semilattice,
\item the clone $L$ of polynomial operations of the group of addition modulo $2$.
\end{itemize}

\subsection{The relationship between functions and Sperner systems}
\label{sub:relationship}

We assume that the reader is familiar with the notions of terms, polynomials, term operations, polynomial operations; see \cite{DenWis} for standard definitions and background. For an algebra $\mathbf{A}$ of type $\tau$ and for a term $t$ of the same type, we denote by $t^\mathbf{A}$ the term operation of $\mathbf{A}$ induced by $t$.

Throughout this section, we assume that $\mathbf{A} = (A; \wedge, \vee, 0, 1)$ is a bounded distributive lattice with least element $0$ and greatest element $1$.
To each Sperner system $\Sp{\Sgena}$ over $\nset{m}$, we associate an $m$-ary term
\[
t_{\Sp{\Sgena}} := \bigvee_{S \in \Sp{\Sgena}} \bigl( \bigwedge_{i \in S} x_i \bigr)
\]
in the language of bounded distributive lattices.
By definition, the term $t_{\Sp{\Sgena}}$ induces a term operation $t_{\Sp{\Sgena}}^\mathbf{A}$ of the lattice $\mathbf{A}$.
On the other hand, every $m$-ary term operation of $\mathbf{A}$ is induced by a unique term of the form $t_{\Sp{\Sgena}}$ for some Sperner system $\Sp{\Sgena}$ over $\nset{m}$. Consequently, the $m$-ary term operations of $\mathbf{A}$ are in a one-to-one correspondence with Sperner systems over an $m$-element set.

Furthermore, $(t_{\Sp{\Sgena}}^\mathbf{A})_I = t_{\Sp{\Sgena}^*_I}^\mathbf{A}$ for every $I \in \couples$. Hence, $t_{\Sp{\Sgena}}^\mathbf{A}$ and $t_{\Sp{\Sgenb}}^\mathbf{A}$ are hypomorphic if and only if $\Sp{\Sgena}$ and $\Sp{\Sgenb}$ are hypomorphic.
Consequently, the reconstruction problem for Sperner systems is essentially the same as the reconstruction problem for functions and identification minors when restricted to the class of term functions of a distributive lattice.

More generally, for fixed elements $a, b \in A$ such that $a < b$ in the lattice order, we can associate to each Sperner system $\Sp{\Sgena}$ over $\nset{m}$ the $m$-ary lattice polynomial
\[
t^{ab}_{\Sp{\Sgena}} := a \vee \bigl(b \wedge \bigvee_{S \in \Sp{\Sgena}} \bigwedge_{i \in S} x_i \bigr).
\]
Polynomials of the above form are referred to as \emph{$(a,b)$-truncated terms;} we also refer as \emph{truncated terms} to $(a,b)$-truncated terms for some $a, b \in A$. 
We will also speak of \emph{$(a,b)$-truncated term operations} and \emph{truncated term operations} of $\mathbf{A}$, the meaning being obvious.
Note that the $(0,1)$-truncated term operations of $\mathbf{A}$ are precisely the term operations of $\mathbf{A}$.
As above, the reconstruction problem for Sperner systems is essentially the same as the reconstruction problem for functions and identification minors when restricted to the class of $(a,b)$-truncated term operations of $\mathbf{A}$.

The relationship between the reconstructibility of Sperner systems and that of truncated term operations of a bounded distributive lattice will be made precise in Proposition~\ref{prop:termopreconstr}. We need a few auxiliary results.

For $I \subseteq \nset{n}$, the \emph{characteristic tuple} of $I$, denoted by $\vect{e}_I$, is the $n$-tuple whose $i$-th component is $1$ if $i \in I$ and $0$ otherwise.

\begin{theorem}[{Goodstein~\cite{Goodstein}}]
\label{thm:Goodstein}
Let $\mathbf{A} = (A; \wedge, \vee, 0, 1)$ be a bounded distributive lattice.
A function $f \colon A^n \to A$ is a polynomial operation of $\mathbf{A}$ if and only if
\[
f(x_1, \dots, x_n) = \bigvee_{I \subseteq \nset{n}} \bigl( f(\vect{e}_I) \wedge \bigwedge_{i \in I} x_i \bigr).
\]
\end{theorem}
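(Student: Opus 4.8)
The plan is to prove the two implications separately. The ``if'' direction is immediate: if $f$ equals the function given by the right-hand side, then $f$ is a polynomial operation of $\mathbf{A}$, because that right-hand side, read as a formula, is built from the variables $x_1,\dots,x_n$, the constants $f(\vect{e}_I)\in A$, and the operations $\wedge$ and $\vee$, hence it is a polynomial in the language of $\mathbf{A}$ enriched with all constants from $A$. For the ``only if'' direction, fix $n$ and let $\cl{G}$ be the set of all functions $g\colon A^n\to A$ satisfying the displayed identity (with $g$ in place of $f$). Recall that the $n$-ary polynomial operations of $\mathbf{A}$ form the smallest set of functions $A^n\to A$ containing all projections $\pr_i^{(n)}$ and all constant functions and closed under the pointwise operations $\wedge$ and $\vee$; so it suffices to verify that $\cl{G}$ enjoys these three closure properties, and then the theorem follows. (Throughout I use the conventions that an empty meet is $1$ and an empty join is $0$.)

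\textbf{Projections and constants.} Since $\pr_i^{(n)}(\vect{e}_I)$ equals $1$ if $i\in I$ and $0$ otherwise, the right-hand side for $\pr_i^{(n)}$ reduces to $\bigvee_{I\ni i}\bigwedge_{j\in I}x_j$; each such meet is $\le x_i$ and the term $I=\{i\}$ equals $x_i$, so the sup is $x_i$. If $g$ is the constant $c$, every coefficient equals $c$ and the right-hand side is $\bigvee_{I\subseteq\nset{n}}\bigl(c\wedge\bigwedge_{i\in I}x_i\bigr)$, whose summand for $I=\emptyset$ is $c\wedge 1=c$ and dominates all others, so the sup is $c$. \textbf{Closure under $\vee$.} For $g,h\in\cl{G}$ we have $(g\vee h)(\vect{e}_I)=g(\vect{e}_I)\vee h(\vect{e}_I)$; distributing $\wedge$ over $\vee$ inside the sup and regrouping turns $\bigvee_I\bigl((g(\vect{e}_I)\vee h(\vect{e}_I))\wedge\bigwedge_{i\in I}x_i\bigr)$ into $g(\vect{x})\vee h(\vect{x})$, so $g\vee h\in\cl{G}$.

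\textbf{Closure under $\wedge$.} First observe that every $g\in\cl{G}$ is monotone for the product order on $A^n$: if $\vect{x}\le\vect{y}$, then $\bigwedge_{i\in I}x_i\le\bigwedge_{i\in I}y_i$ for each $I$, hence termwise $g(\vect{x})\le g(\vect{y})$. Now take $g,h\in\cl{G}$; expanding $g(\vect{x})\wedge h(\vect{x})$ via the identity and distributivity yields $\bigvee_{J,K\subseteq\nset{n}}\bigl(g(\vect{e}_J)\wedge h(\vect{e}_K)\wedge\bigwedge_{i\in J\cup K}x_i\bigr)$. Compare this with $\bigvee_{I\subseteq\nset{n}}\bigl(g(\vect{e}_I)\wedge h(\vect{e}_I)\wedge\bigwedge_{i\in I}x_i\bigr)$: the inequality ``$\ge$'' is obtained by specializing $J=K=I$, and ``$\le$'' holds because, by monotonicity, $g(\vect{e}_J)\le g(\vect{e}_{J\cup K})$ and $h(\vect{e}_K)\le h(\vect{e}_{J\cup K})$, so the $(J,K)$-summand is bounded above by the $I$-summand with $I=J\cup K$. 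Since $(g\wedge h)(\vect{e}_I)=g(\vect{e}_I)\wedge h(\vect{e}_I)$, this says exactly that $g\wedge h\in\cl{G}$, completing the induction.

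I expect the $\wedge$-closure step to be the main obstacle, precisely because one cannot match summands directly but must route everything through the index $J\cup K$, which forces the preliminary observation that functions in $\cl{G}$ are monotone; everything else is routine manipulation with distributivity and the empty-meet/empty-join conventions. An equivalent route that bypasses the induction: put an arbitrary polynomial into the disjunctive form $\bigvee_j\bigl(c_j\wedge\bigwedge_{i\in S_j}x_i\bigr)$ by distributivity, compute $f(\vect{e}_I)=\bigvee_{j\,:\,S_j\subseteq I}c_j$, substitute into the right-hand side, and note that for each $j$ the inner join $\bigvee_{I\supseteq S_j}\bigwedge_{i\in I}x_i$ collapses to $\bigwedge_{i\in S_j}x_i$ — the very same domination phenomenon that drives the argument above.
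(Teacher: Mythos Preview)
Your proof is correct. The paper does not supply its own proof of this statement; it is quoted with a citation to Goodstein and used as a black box, so there is nothing in the paper to compare against directly. Your inductive argument---showing that the class $\cl{G}$ of functions satisfying the displayed identity contains the projections and the constant functions and is closed under pointwise $\wedge$ and $\vee$---is a clean and standard way to establish the result. The key step, as you correctly anticipated, is closure under $\wedge$: one cannot match summands directly, and the monotonicity of members of $\cl{G}$ is exactly what is needed to reindex the double join over $(J,K)$ by $I = J \cup K$. The alternative route you sketch at the end (normalizing an arbitrary polynomial into disjunctive form and computing the coefficients) is also valid and is closer in spirit to how such identities are often derived in the lattice-polynomial literature.
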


\begin{remark}
An immediate consequence of Theorem~\ref{thm:Goodstein} is that an $n$-ary polynomial operation of a bounded distributive lattice is completely and uniquely determined by its restriction to $\{0, 1\}^n$.
\end{remark}

\begin{fact}
\label{fact:range}
Let $f \colon A^n \to A$ be a polynomial operation of a bounded distributive lattice $\mathbf{A}$.
Then $f$ is an $(a,b)$-truncated term operation of $\mathbf{A}$ if and only if $\range f|_{\{0,1\}^n} \subseteq \{a, b\}$.
\end{fact}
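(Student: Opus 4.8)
The plan is to establish the two implications of the biconditional separately; the forward implication is immediate, while the converse rests on Goodstein's representation theorem (Theorem~\ref{thm:Goodstein}).

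Suppose first that $f = (t^{ab}_{\Sp{\Sgena}})^{\mathbf{A}}$ for some Sperner system $\Sp{\Sgena}$ over $\nset{n}$. The key point is that the set $\{0,1\}$ is a sublattice of $\mathbf{A}$ and that empty joins and meets evaluate to $0$ and $1$ respectively, so for every $\vect{x} \in \{0,1\}^n$ the value $c := \bigvee_{S \in \Sp{\Sgena}} \bigwedge_{i \in S} x_i$ lies in $\{0,1\}$. Substituting gives $f(\vect{x}) = a \vee (b \wedge c)$, which equals $a$ when $c = 0$ and equals $a \vee b = b$ (using $a < b$) when $c = 1$. Hence $\range f|_{\{0,1\}^n} \subseteq \{a, b\}$.

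For the converse, assume $f$ is a polynomial operation of $\mathbf{A}$ with $f(\vect{e}_I) \in \{a, b\}$ for every $I \subseteq \nset{n}$; note that these are exactly the values of $f$ on $\{0,1\}^n$. Starting from Goodstein's identity $f(\vect{x}) = \bigvee_{I \subseteq \nset{n}} (f(\vect{e}_I) \wedge \bigwedge_{i \in I} x_i)$, I would split the outer join according to whether $f(\vect{e}_I) = a$ or $f(\vect{e}_I) = b$. Every summand in the ``$a$'' part is at most $a$, while the summand for $I = \emptyset$ equals $f(\vect{e}_\emptyset) \geq a$; combining these bounds and using distributivity of $\mathbf{A}$ to factor out $b$ shows that $f(\vect{x}) = a \vee \bigl( b \wedge \bigvee_{I : f(\vect{e}_I) = b} \bigwedge_{i \in I} x_i \bigr)$ for every $\vect{x} \in A^n$. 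Finally, replacing the index family $\{I : f(\vect{e}_I) = b\}$ by its set $\Sp{\Sgena}$ of inclusion-minimal members leaves the inner join unchanged, since each such $I$ contains some $S \in \Sp{\Sgena}$ and hence $\bigwedge_{i \in I} x_i \leq \bigwedge_{i \in S} x_i$; as $\Sp{\Sgena}$ is then a Sperner system, this exhibits $f = (t^{ab}_{\Sp{\Sgena}})^{\mathbf{A}}$, so $f$ is an $(a,b)$-truncated term operation. (The degenerate subcases where this family is empty or contains $\emptyset$ correspond to $f$ being constantly $a$ or $b$, matching the formula with $\Sp{\Sgena} = \emptyset$ or $\Sp{\Sgena} = \{\emptyset\}$.)

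I expect the main obstacle to be the bookkeeping in the converse direction: one must verify that all the ``$a$''-labelled terms of Goodstein's sum genuinely collapse into the single leading constant $a$ (this is exactly where the $I = \emptyset$ term and the order relation $a < b$ enter), and that passing from the index family to its Sperner system of minimal sets does not change the induced operation. Both points are routine but do require the conventions on empty meets and joins to be handled carefully.
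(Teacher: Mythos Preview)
Your proof is correct. The paper states this result as a \emph{Fact} without proof, treating it as an immediate consequence of Goodstein's theorem (Theorem~\ref{thm:Goodstein}); your argument spells out precisely that derivation, so there is nothing to compare against and nothing to fix.
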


For lattice elements $a, b \in A$, we denote by $[a, b]$ the interval $\{x \in A : a \leq x \leq b\}$.
We also write $\vect{0} = (0, \dots, 0)$ and $\vect{1} = (1, \dots, 1)$.

\begin{theorem}[{Couceiro, Marichal~\cite{CouMar}}]
\label{thm:latticepolynomial}
Let $\mathbf{A} = (A; \wedge, \vee, 0, 1)$ be a bounded distributive lattice, let $n \geq 1$, and let $f \colon A^n \to A$ be a function preserving the lattice order of $\mathbf{A}$. Then $f$ is a polynomial operation of $\mathbf{A}$ if and only if for every $c \in [f(\vect{0}), f(\vect{1})]$, the following identities hold:
\begin{align*}
f(x_1 \wedge c, \dots, x_n \wedge c) &= f(x_1, \dots, x_n) \wedge c, \\
f(x_1 \vee c, \dots, x_n \vee c) &= f(x_1, \dots, x_n) \vee c.
\end{align*}
\end{theorem}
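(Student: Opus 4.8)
The plan is to establish the two implications separately; the ``only if'' direction is a short calculation from Goodstein's formula (Theorem~\ref{thm:Goodstein}), while the ``if'' direction, which I would prove by showing that $f$ coincides with the polynomial operation agreeing with $f$ on $\{0,1\}^n$, is the substantial part. For the ``only if'' direction, suppose $f$ is a polynomial operation of $\mathbf{A}$, so that $f(\vect{x}) = \bigvee_{I \subseteq \nset{n}} \bigl( f(\vect{e}_I) \wedge \bigwedge_{i \in I} x_i \bigr)$, and fix $c \in [f(\vect{0}), f(\vect{1})]$. Substituting $x_i \wedge c$ for each $x_i$ and using distributivity to write $\bigwedge_{i \in I}(x_i \wedge c) = c \wedge \bigwedge_{i \in I} x_i$ for every nonempty $I$, one recovers the right-hand side $f(\vect{x}) \wedge c$ except that the summand indexed by $I = \emptyset$ equals $f(\vect{0})$ rather than $f(\vect{0}) \wedge c$; these agree because $f(\vect{0}) \leq c$. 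The second identity follows dually, substituting $x_i \vee c$: the extra contribution $c \wedge \bigvee_{\emptyset \neq I \subseteq \nset{n}} f(\vect{e}_I) = c \wedge f(\vect{1}) = c$ (using that $f$ is order-preserving and $c \leq f(\vect{1})$) is absorbed into $f(\vect{x}) \vee c$, since also $f(\vect{0}) \leq c$.

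For the ``if'' direction, assume $f$ is order-preserving and satisfies both identities for every $c \in [f(\vect{0}), f(\vect{1})]$; write $u := f(\vect{0})$ and $v := f(\vect{1})$, and set
\[
g(\vect{x}) := \bigvee_{I \subseteq \nset{n}} \Bigl( f(\vect{e}_I) \wedge \bigwedge_{i \in I} x_i \Bigr).
\]
By Theorem~\ref{thm:Goodstein} this is a polynomial operation of $\mathbf{A}$, and since $f$ is order-preserving one has $g(\vect{e}_J) = \bigvee_{I \subseteq J} f(\vect{e}_I) = f(\vect{e}_J)$ for all $J$, so $g$ agrees with $f$ on $\{0,1\}^n$; in particular $g(\vect{0}) = u$ and $g(\vect{1}) = v$. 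The goal is to show $f = g$. Being a polynomial operation, $g$ also satisfies the two identities, by the ``only if'' direction. For any $\vect{x} \in A^n$, applying the join identity with $c = u$ and then the meet identity with $c = v$ — to $f$ and to $g$ alike, and using $u \leq f(\vect{x}), g(\vect{x}) \leq v$ — shows $f(\vect{x}) = f(\vect{x}^{*})$ and $g(\vect{x}) = g(\vect{x}^{*})$, where $\vect{x}^{*}$ is the tuple with $i$-th component $(x_i \vee u) \wedge v \in [u, v]$. Hence it suffices to prove $f = g$ on $[u, v]^n$.

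So fix $\vect{x} \in [u, v]^n$. For $g \leq f$: the summand of $g(\vect{x})$ for $I = \emptyset$ is $u \leq f(\vect{x})$; for a nonempty $I$, the meet $m_I := \bigwedge_{i \in I} x_i$ lies in $[u, v]$, so the meet identity applied with $c = m_I$ to the tuple $\vect{e}_I$ gives $f(\vect{e}_I) \wedge m_I = f\bigl( \vect{e}_I \wedge (m_I, \dots, m_I) \bigr)$, and the tuple on the right — equal to $m_I$ on the coordinates from $I$ and to $0$ elsewhere — lies below $\vect{x}$, whence $f(\vect{e}_I) \wedge m_I \leq f(\vect{x})$. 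Joining over all $I$ gives $g(\vect{x}) \leq f(\vect{x})$. The reverse inequality $f \leq g$ follows by running the very same argument in the order-dual bounded distributive lattice $\mathbf{A}^{\partial} = (A; \vee, \wedge, 1, 0)$: there $f$ is still order-preserving, the two identities again hold over the same interval $[u, v]$, and the polynomial built from $f|_{\{0,1\}^n}$ by the analogous formula is $\bigwedge_{I \subseteq \nset{n}} \bigl( f(\vect{e}_{\nset{n} \setminus I}) \vee \bigvee_{i \in I} x_i \bigr)$, which by Theorem~\ref{thm:Goodstein} applied to $\mathbf{A}^{\partial}$ is the conjunctive normal form of the same polynomial $g$. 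Thus $f = g$ is a polynomial operation.

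I expect the heart of the proof to be the reduction to the cube $[u, v]^n$. The identities only permit substituting a single constant $c$ uniformly into all coordinates, and only for $c$ in the interval $[f(\vect{0}), f(\vect{1})]$; a naive attempt to bound the Goodstein summand $f(\vect{e}_I) \wedge \bigwedge_{i \in I} x_i$ above by $f(\vect{x})$ stalls because $\bigwedge_{i \in I} x_i$ need not belong to that interval. Collapsing $\vect{x}$ to $\vect{x}^{*}$ by two applications of the identities (one with $c = f(\vect{0})$, one with $c = f(\vect{1})$) is precisely what forces all the relevant coordinatewise meets — and, dually, all the relevant joins — into the interval, after which the identities can be applied directly.
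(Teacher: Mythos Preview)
The paper does not supply a proof of this theorem; it is quoted from Couceiro and Marichal~\cite{CouMar} and used as a black box in the proof of Proposition~\ref{prop:recognizable}. So there is no ``paper's own proof'' to compare against.

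That said, your argument is correct. The ``only if'' computation from Goodstein's formula is routine; your handling of the $I=\emptyset$ term in each identity (using $f(\vect{0}) \leq c \leq f(\vect{1})$) is exactly the point where the interval restriction on $c$ enters. For the ``if'' direction, the two-step reduction to $[u,v]^n$ via the identities with $c=u$ and $c=v$ is the key maneuver, and your observation that it is needed precisely so that the meets $m_I = \bigwedge_{i\in I} x_i$ land in $[u,v]$ --- making the meet identity applicable to the tuple $\vect{e}_I$ --- is right on target. The duality step is also sound: the polynomial clone of $\mathbf{A}$ coincides with that of $\mathbf{A}^{\partial}$, so the CNF expression you write down is another polynomial operation agreeing with $f$ on $\{0,1\}^n$; by the uniqueness implicit in Theorem~\ref{thm:Goodstein} it equals $g$, and rerunning the inequality in $\mathbf{A}^{\partial}$ yields $f \leq g$.
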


\begin{theorem}[{\cite[Proposition~3.16, Example~3.17]{LehtonenSymmetric}}]
\label{thm:monrecognizable}
Let $(A; \leq_A)$ and $(B; \leq_B)$ be partially ordered sets. The class of order-preserving functions from $A$ to $B$ of arity at least $\card{A} + 2$ is recognizable.
\end{theorem}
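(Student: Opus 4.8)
The plan is to prove that, for arities $n \ge \card{A} + 2$, the property of being order-preserving is determined by the deck alone; recognizability of the class then follows immediately, since a reconstruction of $f$ has the same arity and the same deck as $f$.

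First I would record that being order-preserving passes to minors: if $f \le g$ via some $\sigma \colon \nset{m} \to \nset{n}$ and $g$ is order-preserving, then $\vect{a} \le \vect{a}'$ in $A^m$ forces $\vect{a}\sigma \le \vect{a}'\sigma$ in $A^n$, whence $f(\vect{a}) = g(\vect{a}\sigma) \le_B g(\vect{a}'\sigma) = f(\vect{a}')$. In particular equivalent functions are simultaneously order-preserving or not, so ``order-preserving'' is a well-defined attribute of a card $f_I/{\equiv}$.

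The heart of the argument is the equivalence: for $n \ge \card{A}+2$, a function $f \colon A^n \to B$ is order-preserving if and only if all of its identification minors $f_I$ ($I \in \couples$) are. The forward implication is the previous paragraph. For the converse, assume every $f_I$ is order-preserving; it suffices to check $f(\vect{a}) \le_B f(\vect{b})$ whenever $\vect{a}$ and $\vect{b}$ agree off a single coordinate $i$ and $a_i \le_A b_i$, since an arbitrary comparison $\vect{a} \le \vect{b}$ can be traversed by changing one coordinate at a time. There are $n-1 \ge \card{A}+1$ coordinates other than $i$, so by the pigeonhole principle there are distinct $j, k \in \nset{n} \setminus \{i\}$ with $a_j = a_k$ (and then also $b_j = b_k$, as $\vect{b}$ agrees with $\vect{a}$ outside $i$). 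Taking $I = \{j,k\}$ and deleting the $(\max I)$-th coordinate, one obtains $\vect{a}', \vect{b}' \in A^{n-1}$ with $\vect{a}'\delta_I = \vect{a}$ and $\vect{b}'\delta_I = \vect{b}$; since $i \notin I$, the tuples $\vect{a}'$ and $\vect{b}'$ still agree off a single coordinate and satisfy $\vect{a}' \le \vect{b}'$. Hence $f(\vect{a}) = f_I(\vect{a}') \le_B f_I(\vect{b}') = f(\vect{b})$.

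Combining these two facts, for $n \ge \card{A}+2$ the function $f$ is order-preserving precisely when every card of $\deck f$ is the class of an order-preserving function, which is a condition on $\deck f$ alone. Therefore any reconstruction $g$ of an order-preserving $f$ of arity $n \ge \card{A}+2$ has a deck all of whose cards are order-preserving, so $g$ itself is order-preserving and belongs to the class; hence the class is recognizable. I expect the only delicate point to be the converse in the middle step, and specifically the bookkeeping that when $a_j = a_k$ the pair $(\vect{a},\vect{b})$ can genuinely be realized as $(\vect{a}'\delta_I, \vect{b}'\delta_I)$ with the comparison in coordinate $i$ intact; the pigeonhole count there is exactly what pins down the bound $\card{A}+2$, since for smaller arities one could have all coordinates other than $i$ pairwise distinct, so that no identification minor would witness the needed comparison.
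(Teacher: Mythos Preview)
Your argument is correct. The paper does not supply its own proof of this statement; it is quoted from \cite{LehtonenSymmetric} without proof, so there is no in-paper argument to compare against. Your approach---reducing to single-coordinate increments and then using pigeonhole on the remaining $n-1 \ge \card{A}+1$ coordinates to find a repeated value that allows the comparison to be witnessed by an identification minor---is the natural one and is essentially what underlies the cited result.
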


\begin{proposition}
\label{prop:recognizable}
Let $\mathbf{A} = (A; \wedge, \vee, 0, 1)$ be a bounded distributive lattice.
\begin{enumerate}[\rm (a)]
\item\label{item:poly}
The class of polynomial operations of $\mathbf{A}$ of arity at least $\card{A} + 2$ is recognizable.
\item\label{item:trunc}
For any $a, b \in A$ with $a < b$, the class of $(a,b)$-truncated term operations of $\mathbf{A}$ of arity at least $\card{A} + 2$ is recognizable.
\end{enumerate}
\end{proposition}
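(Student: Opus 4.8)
The plan is to prove the two statements of Proposition~\ref{prop:recognizable} in sequence, deriving~\eqref{item:trunc} from~\eqref{item:poly} together with Fact~\ref{fact:range}. Recall that a class $\mathcal{C}$ is recognizable if every reconstruction of a member of $\mathcal{C}$ is again in $\mathcal{C}$; so in each case we take $f$ in the class, an arbitrary $g$ of the same arity $n \geq \card{A} + 2$ with $\deck g = \deck f$, and we must show $g$ lies in the class too. The key observation is that the defining closure conditions for polynomial operations given by Theorems~\ref{thm:Goodstein}, \ref{thm:latticepolynomial} can be tested ``locally'', i.e.\ detected in the identification minors, once the arity is large enough.

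For part~\eqref{item:poly}, first I would use Theorem~\ref{thm:monrecognizable}: since polynomial operations of a bounded distributive lattice are in particular order-preserving with respect to $(A;\leq)$, and $n \geq \card{A}+2$, the class of order-preserving functions is recognizable, so $g$ is order-preserving. Now I would like to invoke the characterization of Theorem~\ref{thm:latticepolynomial}: it suffices to verify, for every $c \in [g(\vect{0}), g(\vect{1})]$, the two identities $g(\vect{x} \wedge c) = g(\vect{x}) \wedge c$ and $g(\vect{x} \vee c) = g(\vect{x}) \vee c$ (where $\vect{x}\wedge c$ denotes componentwise meet with $c$, etc.). The strategy is to reduce checking such an identity at a tuple $\vect{a} \in A^n$ to checking a corresponding identity for a suitable identification minor of $g$, and then to transfer that information from $\deck g = \deck f$ back to the fact that $f$ itself satisfies the identities. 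The crucial point is that since $n \geq \card{A}+2 > \card{A}$, any tuple $\vect{a} \in A^n$ has two equal coordinates, say indexed by $I \in \couples$; hence $g(\vect{a})$ and $f$'s value at an analogous tuple are already encoded in $g_I$ and some $f_J$ respectively. To make this precise I would argue: for a fixed $c$, both sides of each identity, evaluated at a tuple with a repeated coordinate, are determined by an identification minor; since each card $g_I/{\equiv}$ equals some card $f_J/{\equiv}$, and $f_J$ is itself a polynomial operation (being an identification minor of the polynomial operation $f$, by Goodstein's formula one checks identification minors of polynomial operations are polynomial operations), the minor $g_I$ is a polynomial operation, hence order-preserving and satisfying the two identities. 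Running over all $I \in \couples$ covers all tuples in $A^n$ with a repeated coordinate — which, as $n > \card{A}$, is all of $A^n$. Therefore $g$ satisfies the identities of Theorem~\ref{thm:latticepolynomial} everywhere, and $g$ is a polynomial operation.

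For part~\eqref{item:trunc}, I would proceed similarly but now combine the conclusion of~\eqref{item:poly} with Fact~\ref{fact:range}. Given $f$ an $(a,b)$-truncated term operation of arity $n \geq \card{A}+2$ and $g$ hypomorphic to $f$, part~\eqref{item:poly} already tells us $g$ is a polynomial operation. By Fact~\ref{fact:range} it remains to show $\range g|_{\{0,1\}^n} \subseteq \{a,b\}$. Fix any $\vect{\varepsilon} \in \{0,1\}^n$; since $n \geq \card{A}+2 \geq 4 > 2$ there are two coordinates of $\vect{\varepsilon}$ that are equal (in fact $n > 2$ forces a repeat among $\{0,1\}$-valued coordinates), say at $I \in \couples$, so $g(\vect{\varepsilon})$ is a value taken by the $\{0,1\}$-restriction of the identification minor $g_I$. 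Now $g_I/{\equiv}$ equals $f_J/{\equiv}$ for some $J \in \couples$, and $f_J$ is an $(a,b)$-truncated term operation — indeed, from the explicit form $t^{ab}_{\Sp{\Sgena}}$ and the identity $(t^{ab}_{\Sp{\Sgena}})_I = t^{ab}_{\Sp{\Sgena}^*_I}$ (the truncated analogue of the relation stated for terms in Section~\ref{sub:relationship}), identification minors of $(a,b)$-truncated term operations are again $(a,b)$-truncated term operations, so by Fact~\ref{fact:range} the $\{0,1\}$-range of $f_J$, hence of the equivalent $g_I$, lies in $\{a,b\}$. Thus $g(\vect{\varepsilon}) \in \{a,b\}$; as $\vect{\varepsilon}$ was arbitrary, $\range g|_{\{0,1\}^n} \subseteq \{a,b\}$, and Fact~\ref{fact:range} gives that $g$ is an $(a,b)$-truncated term operation.

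The main obstacle I anticipate is bookkeeping in the reduction from a tuple $\vect{a} \in A^n$ with a repeated coordinate to an identification minor: one must be careful that the map $\delta_I$ defining $g_I$ really does realize every such tuple as $\vect{b}\delta_I$ for an appropriate $\vect{b} \in A^{n-1}$, and that the identity one wants for $g$ at $\vect{a}$ corresponds \emph{exactly} to the identity already known for the polynomial operation $g_I$ at $\vect{b}$ — including handling the parameter $c$, whose interval $[g(\vect{0}),g(\vect{1})]$ must be related to $[g_I(\vect{0}),g_I(\vect{1})]$. One also needs the (routine but necessary) lemma that identification minors of polynomial operations, resp.\ of $(a,b)$-truncated term operations, stay in the respective class, which follows by plugging the definition of $\delta_I$ into Goodstein's formula, resp.\ into the explicit polynomial $t^{ab}_{\Sp{\Sgena}}$. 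Once these are in place the argument is a clean ``large arity forces a repeated coordinate, so everything is visible on the deck'' argument.
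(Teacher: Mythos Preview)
Your proposal is correct and follows essentially the same approach as the paper's proof: both parts use Theorem~\ref{thm:monrecognizable} to get order-preservation, then pigeonhole on the arity to find a repeated coordinate so that the relevant test (the identities of Theorem~\ref{thm:latticepolynomial} for \eqref{item:poly}, the range condition of Fact~\ref{fact:range} for \eqref{item:trunc}) reduces to an identification minor, combined with the fact that identification minors of polynomial operations (resp.\ $(a,b)$-truncated term operations) remain in the class. The only cosmetic difference is that the paper phrases both parts as proofs by contradiction (assume $g$ fails the condition at some tuple, pass to $g_I$, contradict $\deck g = \deck f$), whereas you argue directly (each $g_I$ is in the class, hence $g$ passes the test at every tuple); these are contrapositives of one another, and your anticipated ``obstacle'' about matching $[g(\vect{0}),g(\vect{1})]$ with $[g_I(\vect{0}),g_I(\vect{1})]$ is exactly the observation $g_I(\vect{0})=g(\vect{0})$, $g_I(\vect{1})=g(\vect{1})$ that the paper uses as well.
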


\begin{proof}
\eqref{item:poly}
Let $f \colon A^n \to A$ be a polynomial operation of $\mathbf{A}$ and assume that $n \geq \card{A} + 2$.
Let $g \colon A^n \to A$ be a reconstruction of $f$.
Since the polynomial operations of a lattice are order-preserving, also $g$ is order-preserving by Theorem~\ref{thm:monrecognizable}.
Suppose, on the contrary, that $g$ is not a polynomial operation of $\mathbf{A}$.
By Theorem~\ref{thm:latticepolynomial}, there exist $c \in [g(\vect{0}), g(\vect{1})]$ and $(a_1, \dots, a_n) \in A^n$ such that $g(a_1 \wedge c, \dots, a_n \wedge c) \neq g(a_1, \dots, a_n) \wedge c$ or $g(a_1 \vee c, \dots, a_n \vee c) \neq g(a_1, \dots, a_n) \vee c$.
Since $n > \card{A}$, there exist indices $i, j \in \nset{n}$ such that $i < j$ and $a_i = a_j$.
Taking $I = \{i, j\}$, we have
\begin{multline*}
g_I(a_1 \wedge c, \dots, a_{j-1} \wedge c, a_{j+1} \wedge c, \dots, a_n \wedge c)
= g(a_1 \wedge c, \dots, a_n \wedge c) \\
\neq g(a_1, \dots, a_n) \wedge c
= g_I(a_1, \dots, a_{j-1}, a_{j+1}, \dots, a_n) \wedge c
\end{multline*}
or, similarly,
\[
g_I(a_1 \vee c, \dots, a_{j-1} \vee c, a_{j+1} \vee c, \dots, a_n \vee c)
\neq g_I(a_1, \dots, a_{j-1}, a_{j+1}, \dots, a_n) \vee c.
\]
Since $g_I(\vect{0}) = g(\vect{0})$ and $g_I(\vect{1}) = g(\vect{1})$, we have $c \in [g_I(\vect{0}), g_I(\vect{1})]$, and it follows from Theorem~\ref{thm:latticepolynomial} that $g_I$ is not a polynomial operation of $\mathbf{A}$.
But then $f$ and $g$ cannot have the same deck, because all identification minors of any lattice polynomial operation are lattice polynomial operations. We have reached a contradiction.

\eqref{item:trunc}
Let $f \colon A^n \to A$ be an $(a,b)$-truncated term operation of $\mathbf{A}$ and assume that $n \geq \card{A} + 2$.
Let $g \colon A^n \to A$ be a reconstruction of $f$.
By part~\eqref{item:poly}, $g$ is a polynomial operation of $\mathbf{A}$.
Suppose, on the contrary, that $g$ is not an $(a,b)$-truncated term operation.
By Fact~\ref{fact:range}, there exists a tuple $(a_1, \dots, a_n) \in \{0, 1\}^n$ such that $g(a_1, \dots, a_n) \notin \{a, b\}$.
Since $n \geq 3$, there exist indices $i, j \in \nset{n}$ such that $i < j$ and $a_i = a_j$.
Taking $I = \{i, j\}$, we have
\[
g_I(a_1, \dots, a_{j-1}, a_{j+1}, \dots, a_n) = g(a_1, \dots, a_n)
\notin \{a, b\},
\]
which implies that $g_I$ is not an $(a,b)$-truncated term operation of $\mathbf{A}$.
But then $f$ and $g$ cannot have the same deck, because all identification minors of $(a,b)$-truncated term operations are $(a,b)$-truncated term operations. We have reached a contradiction.
\end{proof}

\begin{proposition}
\label{prop:termopreconstr}
Let $\mathbf{A} = (A; \wedge, \vee, 0, 1)$ be a bounded distributive lattice, and let $a, b \in A$ be elements satisfying $a < b$.
Let $\Sp{\Sgena}$ be a Sperner system over $\nset{n}$, and assume that $n \geq \card{A} + 2$.
Then $\Sp{\Sgena}$ is reconstructible if and only if the $(a,b)$-truncated term operation $(t^{ab}_{\Sp{\Sgena}})^\mathbf{A}$ is reconstructible.
\end{proposition}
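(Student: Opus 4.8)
The plan is to establish a two-way correspondence between the deck of $\Sp{\Sgena}$ (as a Sperner system) and the deck of $f := (t^{ab}_{\Sp{\Sgena}})^\mathbf{A}$ (as a function with identification minors), and then translate reconstructibility across this correspondence. The key structural fact, already recorded in Section~\ref{sub:relationship}, is that $(t^{ab}_{\Sp{\Sgena}})^\mathbf{A}_I = (t^{ab}_{\Sp{\Sgena}^*_I})^\mathbf{A}$ for every $I \in \couples$; equivalently, forming identification minors commutes with the operation $\Sp{\Sgenb} \mapsto \Sp{\Sgenb}^*_I$ of identifying a pair in the ground set of a Sperner system. This immediately gives that hypomorphic Sperner systems yield hypomorphic truncated term operations and vice versa, but for reconstructibility we need the correspondence to respect \emph{isomorphism/equivalence} of cards, not just equality, and we need it to be exhaustive (every reconstruction of $f$ must arise from a Sperner system).

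First I would prove the easy direction: if $f$ is reconstructible, then so is $\Sp{\Sgena}$. Given a Sperner system $\Sp{\Sgenb}$ over $\nset{n}$ hypomorphic to $\Sp{\Sgena}$, the associated functions $(t^{ab}_{\Sp{\Sgena}})^\mathbf{A}$ and $(t^{ab}_{\Sp{\Sgenb}})^\mathbf{A}$ are hypomorphic (using the commutation identity together with the observation that a bijection $\sigma \colon \nset{n} \to \nset{n}$ taking $\Sp{\Sgenb}^*_I$ to $\Sp{\Sgena}^*_{\sigma(I)}$ induces the corresponding equivalence of identification minors, since relabelling arguments by a permutation realises $\equiv$). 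Reconstructibility of $f$ then forces $(t^{ab}_{\Sp{\Sgena}})^\mathbf{A} \equiv (t^{ab}_{\Sp{\Sgenb}})^\mathbf{A}$ via some permutation $\sigma$ of $\nset{n}$; and since the map $\Sp{\Sgenb} \mapsto (t^{ab}_{\Sp{\Sgenb}})^\mathbf{A}$ is injective (distinct Sperner systems give distinct restrictions to $\{0,1\}^n$, hence distinct polynomial operations by the Remark after Theorem~\ref{thm:Goodstein}, and applying $\sigma$ to the arguments corresponds to applying $\sigma$ to the blocks), we conclude $\sigma(\Sp{\Sgenb}) = \Sp{\Sgena}$, i.e.\ $\Sp{\Sgena} \equiv \Sp{\Sgenb}$.

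For the converse direction — reconstructibility of $\Sp{\Sgena}$ implies reconstructibility of $f$ — I would let $g \colon A^n \to A$ be any reconstruction of $f$ and argue that $g$ must itself be an $(a,b)$-truncated term operation. This is exactly where Proposition~\ref{prop:recognizable}\eqref{item:trunc} and the hypothesis $n \geq \card{A} + 2$ are used: the class of $(a,b)$-truncated term operations of arity $\geq \card{A}+2$ is recognizable, so $g = (t^{ab}_{\Sp{\Sgenb}})^\mathbf{A}$ for some Sperner system $\Sp{\Sgenb}$ over $\nset{n}$. Now $f$ and $g$ being hypomorphic means there is a bijection $\phi$ on $\couples$ with $(t^{ab}_{\Sp{\Sgena}^*_I})^\mathbf{A} \equiv (t^{ab}_{\Sp{\Sgenb}^*_{\phi(I)}})^\mathbf{A}$ for all $I$; since equivalence of polynomial operations is witnessed by a permutation of arguments, and the correspondence $\Sp{\Sgenc} \mapsto (t^{ab}_{\Sp{\Sgenc}})^\mathbf{A}$ is injective and intertwines argument-permutations with ground-set-permutations, this gives $\Sp{\Sgena}^*_I \equiv \Sp{\Sgenb}^*_{\phi(I)}$ as Sperner systems, i.e.\ $\Sp{\Sgena}$ and $\Sp{\Sgenb}$ are hypomorphic. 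Reconstructibility of $\Sp{\Sgena}$ yields $\Sp{\Sgena} \equiv \Sp{\Sgenb}$, hence $f \equiv g$.

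The main obstacle, and the point deserving the most care, is the precise bookkeeping of how the equivalence $\equiv$ on functions relates to the isomorphism $\equiv$ on Sperner systems \emph{at the level of individual cards}. One must check two things cleanly: (i) that $(t^{ab}_{\Sp{\Sgenc}})^\mathbf{A} \equiv (t^{ab}_{\Sp{\Sgend}})^\mathbf{A}$ for polynomial operations of equal arity if and only if $\Sp{\Sgenc} \equiv \Sp{\Sgend}$ as Sperner systems — which reduces, via Goodstein's theorem and Fact~\ref{fact:range}, to the statement that the $(a,b)$-truncated term operation determines and is determined by the block set up to relabelling; and (ii) that one may assume the ground sets of $\Sp{\Sgena}^*_I$ and $\Sp{\Sgenb}^*_{\phi(I)}$ have the same cardinality (they do, since both are quotients of an $n$-set by identifying one pair, hence have $n-1$ elements — wait, actually $\Sp{\Sgenc}^*_I$ lives over $\nset{n}/\theta_I$ which has $n-1$ elements, matching up correctly). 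Once this dictionary is set up, both implications are bookkeeping; Proposition~\ref{prop:recognizable} is what makes the hard converse direction go through by guaranteeing that reconstructions of $f$ stay inside the class where the dictionary applies.
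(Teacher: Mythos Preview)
Your proposal is correct and follows essentially the same route as the paper: one direction via the contrapositive (a nonisomorphic reconstruction of $\Sp{\Sgena}$ yields a nonequivalent reconstruction of $f$), and the other direction by invoking Proposition~\ref{prop:recognizable}\eqref{item:trunc} to force any reconstruction of $f$ to be an $(a,b)$-truncated term operation, then pulling back through the Sperner-system dictionary. The paper's proof is terser, leaving the bijectivity of $\Sp{\Sgenb} \mapsto (t^{ab}_{\Sp{\Sgenb}})^{\mathbf{A}}$ and the correspondence between function-equivalence and Sperner-isomorphism implicit, whereas you spell these out; your extra care at point~(i) is justified but not a different idea.
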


\begin{proof}
Assume first that $\Sp{\Sgena}$ is not reconstructible, and let $\Sp{\Sgenb}$ be a nonisomorphic reconstruction of $\Sp{\Sgena}$. Then the functions $(t^{ab}_{\Sp{\Sgena}})^\mathbf{A}$ and $(t^{ab}_{\Sp{\Sgenb}})^\mathbf{A}$ are not equivalent but they have the same deck, i.e., $(t^{ab}_{\Sp{\Sgena}})^\mathbf{A}$ is not reconstructible.

Assume then that $\Sp{\Sgena}$ is reconstructible.
Then any reconstruction of $(t^{ab}_{\Sp{\Sgena}})^\mathbf{A}$ that is an $(a,b)$-truncated term operation of $\mathbf{A}$ is equivalent to $(t^{ab}_{\Sp{\Sgena}})^\mathbf{A}$.
By Proposition~\ref{prop:recognizable}\eqref{item:trunc}, every reconstruction of $(t^{ab}_{\Sp{\Sgena}})^\mathbf{A}$ is an $(a,b)$-truncated term operation of $\mathbf{A}$.
Therefore, $(t^{ab}_{\Sp{\Sgena}})^\mathbf{A}$ is reconstructible.
\end{proof}


\section{Nonreconstructible Sperner systems}
\label{sec:main}

We are going to construct a few different infinite families of pairs of strongly hypomorphic nonisomorphic Sperner systems.
As explained in Section~\ref{sub:relationship}, for any bounded distributive lattice, there is a one-to-one correspondence between the Sperner systems over an $m$-element set and the $m$-ary term operations of the lattice. The existence of infinite families of nonisomorphic hypomorphic pairs of Sperner systems shows that the class of lattice term operations is not weakly reconstructible and there exist nonreconstructible lattice term functions of arbitrarily large arities.

An important special case of bounded distributive lattices is the two-element lattice $\mathbf{B} = (\{0, 1\}; \wedge, \vee, 0, 1)$.
The monotone Boolean functions are precisely the term operations of $\mathbf{B}$.
We will construct families of Sperner systems in different ways so that the associated Boolean functions belong to certain clones, namely to the clone $SM$ of self-dual monotone functions and to the clone $M_c U_\infty$ of monotone constant-preserving $1$-separating functions.

As a fundamental building block that will be used in all the constructions that follow, we first define families $\Sp{\Sbuildall}^m_1$ and $\Sp{\Sbuildall}^m_2$ ($m \geq 3$) of Sperner systems.
For each $m$, the systems $\Sp{\Sbuildall}^m_1$ and $\Sp{\Sbuildall}^m_2$ are isomorphic and hence hypomorphic, but we will then add to both some new blocks that will break the isomorphism but maintain the hypomorphism. In fact, in each case, the resulting pairs of Sperner systems will be not only hypomorphic but also strongly hypomorphic.

The ground sets of the Sperner systems that we will construct are subsets of the set $\mathbb{N} \times \{0, 1\}$.
In order to simplify exposition, we will identify $n$ with $(n, 0)$ and we will write $n'$ for $(n, 1)$, for each $n \in \mathbb{N}$.
For any subset $S$ of $\mathbb{N}$, we write $S'$ for the set $\{n' : n \in S\}$.
Denote $E_m := \nset{m} \cup \nset{m}'$.

In what follows, we will perform arithmetic with elements of $\nset{m}$. It will be understood that $+$ denotes modulo-$m$ addition, i.e., for $a, b \in \nset{m}$, we write $a + b$ to denote the unique element $c \in \nset{m}$ such that the sum of $a$ and $b$ is congruent to $c$ modulo $m$. The modulus $m$ will be clear from the context.

Let $J \subseteq E_m$. We denote the complement of $J$ with respect to $E_m$ by $\complementX{J}$.
For a set system $\Sp{\Sgena}$, we write $\complementX{\Sp{\Sgena}}$ for $\{\complementX{S} : S \in \Sp{\Sgena}\}$.

In the sequel, we will often refer to certain permutations of $E_m$. For $i \in \nset{m}$, the \emph{transposition} of $i$ and $i'$ is the permutation $(i \; i')$, and it is denoted by $\tau_i$. The \emph{rotation} $\rho$ is the permutation $(1 \; 2 \; \cdots \; m) (1' \; 2' \; \cdots \; m')$, a composition of two disjoint $m$-cycles.

For a set system $\Sp{\Sgena}$ over $E_m$, we write $\rot{\Sp{\Sgena}}$ for $\bigcup_{i \in \nset{m}} \rho^i(\Sp{\Sgena})$.
For $X \subseteq \nset{m}$ and $q \in \nset{m}$, we write $X + q$ for $\{x + q : x \in X\}$.

\subsection{The basic building blocks $\Sp{\Sbuildall}^m_1$ and $\Sp{\Sbuildall}^m_2$}

\begin{definition}
Let $m$ be an integer at least $2$.
For $J \subseteq \nset{m}$, we denote $\Sbuildall^m_J := J \cup (\nset{m} \setminus J)'$.
Let $\Sp{\Sbuildall}^m_1$ and $\Sp{\Sbuildall}^m_2$ be the following Sperner systems over $E_m$:
\begin{align*}
\Sp{\Sbuildall}^m_1 &:= \{\Sbuildall^m_J : \text{$J \subseteq \nset{m}$, $\card{J}$ odd}\}, \\
\Sp{\Sbuildall}^m_2 &:= \{\Sbuildall^m_J : \text{$J \subseteq \nset{m}$, $\card{J}$ even}\}.
\end{align*}
\end{definition}

\begin{example}
For $m \in \{3, 4\}$, we have
\begin{align*}
\Sp{\Sbuildall}^3_1 &= \{\{1, 2, 3\}, \{1, 2', 3'\}, \{1', 2, 3'\}, \{1', 2', 3\}\}, \\
\Sp{\Sbuildall}^3_2 &= \{\{1', 2', 3'\}, \{1', 2, 3\}, \{1, 2', 3\}, \{1, 2, 3'\}\}, \\
\Sp{\Sbuildall}^4_1 &= \{\{1, 2', 3', 4'\}, \{1', 2, 3', 4'\}, \{1', 2', 3, 4'\}, \{1', 2', 3', 4\}, \\
&\phantom{{}= \{ \hspace{0.1pt}}
           \{1', 2, 3, 4\}, \{1, 2', 3, 4\}, \{1, 2, 3', 4\}, \{1, 2, 3, 4'\}\}, \\
\Sp{\Sbuildall}^4_2 &= \{\{1, 2, 3, 4\}, \{1, 2, 3', 4'\}, \{1, 2', 3, 4'\}, \{1, 2', 3', 4\}, \\
&\phantom{{}= \{ \hspace{0.1pt}}
           \{1', 2, 3, 4'\}, \{1', 2, 3', 4\}, \{1', 2', 3, 4\}, \{1', 2', 3', 4'\}\}. \\
\end{align*}
See Table~\ref{table:G34} for a more visual presentation of $\Sp{\Sbuildall}^3_1$, $\Sp{\Sbuildall}^3_2$, $\Sp{\Sbuildall}^4_1$ and $\Sp{\Sbuildall}^4_2$.
\end{example}

\begin{table}
\[
\begin{array}{cc}
\begin{array}{c}
\Sp{\Sbuildall}^3_1 \\[1ex]
\begin{array}{ccc|ccc}
1 & 2 & 3 &    &    &    \\
1 &   &   &    & 2' & 3' \\
  & 2 &   & 1' &    & 3' \\
  &   & 3 & 1' & 2' &
\end{array}
\end{array}
&
\begin{array}{c}
\Sp{\Sbuildall}^3_2 \\[1ex]
\begin{array}{ccc|ccc}
  &   &   & 1' & 2' & 3' \\
  & 2 & 3 & 1' &    &    \\
1 &   & 3 &    & 2' &    \\
1 & 2 &   &    &    & 3'
\end{array}
\end{array}
\\
\\[2ex]
\begin{array}{c}
\Sp{\Sbuildall}^4_1 \\[1ex]
\begin{array}{cccc|cccc}
1 &   &   &   &    & 2' & 3' & 4' \\
  & 2 &   &   & 1' &    & 3' & 4' \\
  &   & 3 &   & 1' & 2' &    & 4' \\
  &   &   & 4 & 1' & 2' & 3' &    \\
  & 2 & 3 & 4 & 1' &    &    &    \\
1 &   & 3 & 4 &    & 2' &    &    \\
1 & 2 &   & 4 &    &    & 3' &    \\
1 & 2 & 3 &   &    &    &    & 4'
\end{array}
\end{array}
&
\begin{array}{c}
\Sp{\Sbuildall}^4_2 \\[1ex]
\begin{array}{cccc|cccc}
  &   &   &   & 1' & 2' & 3' & 4' \\
1 & 2 &   &   &    &    & 3' & 4' \\
1 &   & 3 &   &    & 2' &    & 4' \\
1 &   &   & 4 &    & 2' & 3' &    \\
  & 2 & 3 &   & 1' &    &    & 4' \\
  & 2 &   & 4 & 1' &    & 3' &    \\
  &   & 3 & 4 & 1' & 2' &    &    \\
1 & 2 & 3 & 4 &    &    &    &    \\
\end{array}
\end{array}
\end{array}
\]

\bigskip
\caption{Sperner systems $\Sp{\Sbuildall}^3_1$, $\Sp{\Sbuildall}^3_2$, $\Sp{\Sbuildall}^4_1$ and $\Sp{\Sbuildall}^4_2$.}
\label{table:G34}
\end{table}

\begin{remark}
\label{rem:Ginv}
Both $\Sp{\Sbuildall}^m_1$ and $\Sp{\Sbuildall}^m_2$ are $m$-homogeneous and invariant under the rotation $\rho$.
\end{remark}

\begin{remark}
\label{rem:Gcomplements}
For odd $m$, we have $\complementX{\Sp{\Sbuildall}^m_1} = \Sp{\Sbuildall}^m_2$, i.e., $\Sp{\Sbuildall}^m_2$ is exactly the set of the complements of the blocks of $\Sp{\Sbuildall}^m_1$ with respect to the set $E_m$.
For even $m$, we have $\complementX{\Sp{\Sbuildall}^m_i} = \Sp{\Sbuildall}^m_i$ for $i \in \{1, 2\}$, i.e., the complement of each block of $\Sp{\Sbuildall}^m_i$ is a block of $\Sp{\Sbuildall}^m_i$.
\end{remark}

\begin{remark}
\label{rem:G1G2isomorphic}
An important thing to notice is that $\Sp{\Sbuildall}^m_1$ and $\Sp{\Sbuildall}^m_2$ are isomorphic, an isomorphism being given by the transposition $\tau_i$ for any $i \in \nset{m}$, i.e., $\tau_i(\Sp{\Sbuildall}^m_1) = \Sp{\Sbuildall}^m_2$ for every $i \in \nset{m}$.
\end{remark}

\begin{remark}
\label{rem:G1G2iiprime}
A consequence of Remark~\ref{rem:G1G2isomorphic} is that $(\Sp{\Sbuildall}^m_1)_{\{i, i'\}} = (\Sp{\Sbuildall}^m_2)_{\{i, i'\}}$ for all $i \in \nset{m}$.
\end{remark}

We say that a subset $S$ of $E_m$ is \emph{unprimed odd} (\emph{unprimed even}) if $\card{S \cap \nset{m}}$ is odd (even, respectively).

\begin{remark}
The blocks of $\Sp{\Sbuildall}^m_1$ are unprimed odd.
The blocks of $\Sp{\Sbuildall}^m_2$ are unprimed even.
\end{remark}

\subsection{Construction for monotone functions}

As explained in the beginning of this section, we now define another family $\Sp{\Sbuildmon}^m$ of Sperner systems. Adding the blocks of $\Sp{\Sbuildmon}^m$ to both $\Sp{\Sbuildall}^m_1$ and $\Sp{\Sbuildall}^m_2$ will break the isomorphism of $\Sp{\Sbuildall}^m_1$ and $\Sp{\Sbuildall}^m_2$ but the resulting set systems will nevertheless be hypomorphic. In this way we obtain our first example of an infinite family of pairs of strongly hypomorphic nonisomorphic Sperner systems.

\begin{definition}
\label{def:monotone}
For $m \geq 3$ and $p \in \nset{m}$, let $\Sbuildmon^m_p := E_m \setminus \{p, p', (p + 1)'\}$. (Recall that $+$ denotes modulo-$m$ addition.)
Let $\Sp{\Sbuildmon}^m := \{\Sbuildmon^m_p : p \in \nset{m}\}$.
For $i \in \{1, 2\}$, define $\Sp{\Smon}^m_i := \Sp{\Sbuildall}^m_i \cup \Sp{\Sbuildmon}^m$.
\end{definition}

\begin{remark}
Note that $\tau_s(\Sbuildmon^m_p) = \Sbuildmon^m_p$ for every $s \in \nset{m} \setminus \{p + 1\}$ but $\tau_{p+1}(\Sbuildmon^m_p) \neq \Sbuildmon^m_p$; in fact, $\tau_{p+1}(\Sbuildmon^m_p) \notin \Sp{\Sbuildmon}^m$.
\end{remark}

\begin{remark}
Note that $\Sp{\Sbuildmon}^m$ is a $(2m - 3)$-homogeneous Sperner system and it is invariant under the rotation $\rho$ but not under any transposition $\tau_i$ with $i \in \nset{m}$.
Consequently, in view of Remark~\ref{rem:Ginv}, both $\Sp{\Smon}^m_1$ and $\Sp{\Smon}^m_2$ are invariant under the rotation $\rho$.
\end{remark}

\begin{remark}
Observe that for each $i \in \{1, 2\}$, the set systems $\Sp{\Sbuildmon}^m$ and $\Sp{\Sbuildall}^m_i$ are independent, i.e., no block of one of these systems is a subset of a member of the other; hence $\Sp{\Smon}^m_i$ is indeed a Sperner system.
For $m = 3$, this fact is easy to check by hand.
For $m > 3$, it is obvious from cardinalities that no block of $\Sp{\Sbuildmon}^m$ is a subset of a block of $\Sp{\Sbuildall}^m_i$. It also holds that no block of $\Sp{\Sbuildall}^m_i$ is a subset of any block of $\Sp{\Sbuildmon}^m$, because each block $\Sbuildmon^m_p$ ($p \in \nset{m}$) of $\Sp{\Sbuildmon}^m$ contains neither $p$ nor $p'$, but every block of $\Sp{\Sbuildall}^m_i$ contains either $p$ or $p'$.
\end{remark}

\begin{example}
For $m \in \{3, 4\}$, we have
\begin{align*}
\Sp{\Sbuildmon}^3 &= \{\{2, 3, 3'\}, \{1, 3, 1'\}, \{1, 2, 2'\}\}, \\
\Sp{\Sbuildmon}^4 &= \{\{2, 3, 4, 3', 4'\}, \{1, 3, 4, 1', 4'\}, \{1, 2, 4, 1', 2'\}, \{1, 2, 3, 2', 3'\}\}.
\end{align*}
Taking the unions $\Sp{\Sbuildall}^m_i \cup \Sp{\Sbuildmon}^m$, we obtain
\begin{align*}
\Sp{\Smon}^3_1 &= \{\{2, 3, 3'\}, \{1, 3, 1'\}, \{1, 2, 2'\},
           \{1, 2, 3\}, \{1, 2', 3'\}, \{1', 2, 3'\}, \{1', 2', 3\}\}, \\
\Sp{\Smon}^3_2 &= \{\{2, 3, 3'\}, \{1, 3, 1'\}, \{1, 2, 2'\},
           \{1', 2', 3'\}, \{1', 2, 3\}, \{1, 2', 3\}, \{1, 2, 3'\}\}, \\
\Sp{\Smon}^4_1 &= \{\{2, 3, 4, 3', 4'\}, \{1, 3, 4, 1', 4'\}, \{1, 2, 4, 1', 2'\}, \{1, 2, 3, 2', 3'\}, \\
&\phantom{{}= \{ \hspace{0.1pt}}
           \{1, 2', 3', 4'\}, \{1', 2, 3', 4'\}, \{1', 2', 3, 4'\}, \{1', 2', 3', 4\}, \\
&\phantom{{}= \{ \hspace{0.1pt}}
           \{1', 2, 3, 4\}, \{1, 2', 3, 4\}, \{1, 2, 3', 4\}, \{1, 2, 3, 4'\}\}, \\
\Sp{\Smon}^4_2 &= \{\{2, 3, 4, 3', 4'\}, \{1, 3, 4, 1', 4'\}, \{1, 2, 4, 1', 2'\}, \{1, 2, 3, 2', 3'\}, \\
&\phantom{{}= \{ \hspace{0.1pt}}
           \{1, 2, 3, 4\}, \{1, 2, 3', 4'\}, \{1, 2', 3, 4'\}, \{1, 2', 3', 4\}, \\
&\phantom{{}= \{ \hspace{0.1pt}}
           \{1', 2, 3, 4'\}, \{1', 2, 3', 4\}, \{1', 2', 3, 4\}, \{1', 2', 3', 4'\}\}.
\end{align*}
See Tables~\ref{table:F34} and~\ref{table:M34} for a more visual presentation of $\Sp{\Sbuildmon}^3$, $\Sp{\Sbuildmon}^4$, $\Sp{\Smon}^3_1$, $\Sp{\Smon}^3_2$, $\Sp{\Smon}^4_1$ and $\Sp{\Smon}^4_2$.

\begin{table}
\[
\begin{array}{cc}
  \begin{array}[t]{ccc|ccc}
    \multicolumn{6}{c}{\Sp{\Sbuildmon}^3} \\[1ex]
      & \color{red} 2 & 3 &   &   & 3' \\
    1 &   & \color{red} 3 & 1' &   &   \\
    \color{red} 1 & 2 &   &   & 2' &
  \end{array}
&
  \begin{array}[t]{cccc|cccc}
  \multicolumn{8}{c}{\Sp{\Sbuildmon}^4} \\[1ex]
      & \color{red} 2 & 3 & 4 &    &    & 3' & 4' \\
    1 &   & \color{red} 3 & 4 & 1' &    &    & 4' \\
    1 & 2 &   & \color{red} 4 & 1' & 2' &    &    \\
    \color{red} 1 & 2 & 3 &   &    & 2' & 3' &    
  \end{array}
\end{array}
\]

\bigskip
\caption{Sperner systems $\Sp{\Sbuildmon}^3$ and $\Sp{\Sbuildmon}^4$.}
\label{table:F34}
\end{table}

\begin{table}
\[
\begin{array}{cc}
  \begin{array}{cccc|ccc}
  & \multicolumn{6}{c}{\Sp{\Smon}^3_1} \\[1ex]
\multirow{4}{*}{$\Sp{\Sbuildall}^3_1$}
   & 1 & 2 & 3 &    &    &    \\
   & 1 &   &   &    & 2' & 3' \\
   &   & 2 &   & 1' &    & 3' \\
   &   &   & 3 & 1' & 2' &    \\
  \hline
\multirow{3}{*}{$\Sp{\Sbuildmon}^3$}
   &   & 2 & 3 &    &    & 3' \\
   & 1 &   & 3 & 1' &    &    \\
   & 1 & 2 &   &    & 2' &
  \end{array}
&
  \begin{array}{cccc|ccc}
  & \multicolumn{6}{c}{\Sp{\Smon}^3_2} \\[1ex]
\multirow{4}{*}{$\Sp{\Sbuildall}^3_2$}
   &   &   &   & 1' & 2' & 3' \\
   &   & 2 & 3 & 1' &    &    \\
   & 1 &   & 3 &    & 2' &    \\
   & 1 & 2 &   &    &    & 3' \\
  \hline
\multirow{3}{*}{$\Sp{\Sbuildmon}^3$}
   &   & 2 & 3 &    &    & 3' \\
   & 1 &   & 3 & 1' &    &    \\
   & 1 & 2 &   &    & 2' &
  \end{array}
\\ \\[1ex]
  \begin{array}{ccccc|cccc}
& \multicolumn{8}{c}{\Sp{\Smon}^4_1} \\[1ex]
\multirow{8}{*}{$\Sp{\Sbuildall}^4_1$}
& 1 &   &   &   &    & 2' & 3' & 4' \\
&   & 2 &   &   & 1' &    & 3' & 4' \\
&   &   & 3 &   & 1' & 2' &    & 4' \\
&   &   &   & 4 & 1' & 2' & 3' &    \\
&   & 2 & 3 & 4 & 1' &    &    &    \\
& 1 &   & 3 & 4 &    & 2' &    &    \\
& 1 & 2 &   & 4 &    &    & 3' &    \\
& 1 & 2 & 3 &   &    &    &    & 4' \\
\hline
\multirow{4}{*}{$\Sp{\Sbuildmon}^4$}
&   & 2 & 3 & 4 &    &    & 3' & 4' \\
& 1 &   & 3 & 4 & 1' &    &    & 4' \\
& 1 & 2 &   & 4 & 1' & 2' &    &    \\
& 1 & 2 & 3 &   &    & 2' & 3' &    
  \end{array}
&
  \begin{array}{ccccc|cccc}
& \multicolumn{8}{c}{\Sp{\Smon}^4_2} \\[1ex]
\multirow{8}{*}{$\Sp{\Sbuildall}^4_2$}
&   &   &   &   & 1' & 2' & 3' & 4' \\
& 1 & 2 &   &   &    &    & 3' & 4' \\
& 1 &   & 3 &   &    & 2' &    & 4' \\
& 1 &   &   & 4 &    & 2' & 3' &    \\
&   & 2 & 3 &   & 1' &    &    & 4' \\
&   & 2 &   & 4 & 1' &    & 3' &    \\
&   &   & 3 & 4 & 1' & 2' &    &    \\
& 1 & 2 & 3 & 4 &    &    &    &    \\
\hline
\multirow{4}{*}{$\Sp{\Sbuildmon}^4$}
&   & 2 & 3 & 4 &    &    & 3' & 4' \\
& 1 &   & 3 & 4 & 1' &    &    & 4' \\
& 1 & 2 &   & 4 & 1' & 2' &    &    \\
& 1 & 2 & 3 &   &    & 2' & 3' &    
  \end{array}
\end{array}
\]

\bigskip
\caption{Sperner systems $\Sp{\Smon}^3_1$, $\Sp{\Smon}^3_2$, $\Sp{\Smon}^4_1$ and $\Sp{\Smon}^4_2$.}
\label{table:M34}
\end{table}
\end{example}

It remains to show that $\Sp{\Smon}^m_1$ and $\Sp{\Smon}^m_2$ are nonisomorphic and strongly hypomorphic.

\begin{proposition}
\label{prop:Cnoniso}
For every $m \geq 3$, the Sperner systems $\Sp{\Smon}^m_1$ and $\Sp{\Smon}^m_2$ are nonisomorphic.
\end{proposition}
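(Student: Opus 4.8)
The plan is to distinguish $\Sp{\Smon}^m_1$ and $\Sp{\Smon}^m_2$ by a structural invariant that is preserved under isomorphism of set systems. Since any isomorphism $\sigma \colon E_m \to E_m$ must map blocks to blocks bijectively and preserve cardinalities, it suffices to find a property of the blocks — or of the incidence pattern among blocks and ground-set elements — that differs between the two systems. The natural candidate comes from Remark~\ref{rem:Gcomplements}: the blocks of $\Sp{\Sbuildall}^m_1$ are unprimed odd while those of $\Sp{\Sbuildall}^m_2$ are unprimed even, and the shared blocks $\Sp{\Sbuildmon}^m$ have cardinality $2m-3$ whereas the blocks of $\Sp{\Sbuildall}^m_i$ have cardinality $m$. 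So the $m$-element blocks of $\Sp{\Smon}^m_i$ are exactly the blocks of $\Sp{\Sbuildall}^m_i$ (at least once $m > 3$; the case $m = 3$, where $2m-3 = m = 3$, needs to be handled separately, perhaps by direct inspection of the explicit lists given in the Example, or by noting that the $\Sp{\Sbuildmon}^3$ blocks each contain a pair $\{i, i'\}$ while no $\Sp{\Sbuildall}^3_i$ block does).

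First I would argue that any isomorphism $\sigma$ between $\Sp{\Smon}^m_1$ and $\Sp{\Smon}^m_2$ restricts to a bijection between their cardinality-$m$ blocks, hence carries $\Sp{\Sbuildall}^m_1$ onto $\Sp{\Sbuildall}^m_2$, and simultaneously carries $\Sp{\Sbuildmon}^m$ onto itself. Next I would extract an invariant from the system $\Sp{\Sbuildmon}^m$ together with $\Sp{\Sbuildall}^m_i$ that separates $i = 1$ from $i = 2$. One clean approach: count, for each element $x \in E_m$, how many $m$-element blocks contain $x$ and compare this with the number of $(2m-3)$-blocks missing $x$; or look at which pairs $\{x, y\}$ of ground-set elements are ``linked'' in the sense of being simultaneously absent from some block of $\Sp{\Sbuildmon}^m$. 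Indeed $x$ and $x'$ together are missing from $\Sbuildmon^m_x$, and $(x+1)'$ is also missing from $\Sbuildmon^m_x$, so the complements of the $\Sp{\Sbuildmon}^m$-blocks are the triples $\{p, p', (p+1)'\}$; an isomorphism must permute these triples, which forces $\sigma$ to respect the cyclic structure tied to $\rho$ and to map the primed/unprimed partition in a controlled way. Combining this rigidity with the parity distinction (unprimed-odd versus unprimed-even blocks of $\Sp{\Sbuildall}^m_i$) should yield a contradiction.

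Concretely, I expect the cleanest route is: (i) show $\sigma(\Sp{\Sbuildmon}^m) = \Sp{\Sbuildmon}^m$ and $\sigma(\Sp{\Sbuildall}^m_1) = \Sp{\Sbuildall}^m_2$; (ii) analyze the action of $\sigma$ on the complement-triples $\{p, p', (p+1)'\}$ of $\Sp{\Sbuildmon}^m$-blocks to pin down $\sigma$ up to composition with powers of $\rho$ and with some transposition(s) $\tau_i$; (iii) observe that every such surviving candidate $\sigma$ preserves the property ``unprimed odd,'' so it cannot map a block of $\Sp{\Sbuildall}^m_1$ (unprimed odd) to a block of $\Sp{\Sbuildall}^m_2$ (unprimed even), contradicting (i). Step~(ii), the rigidity analysis of how an isomorphism can permute the triples $\{p, p', (p+1)'\}$, is the main obstacle: one must rule out exotic bijections of $E_m$ — for instance those mixing primed and unprimed coordinates in an irregular way — that nonetheless permute the triple family. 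I would handle this by noting that within each triple exactly one element (namely $(p+1)'$) is ``shared'' with the neighbouring triple $\{p+1, (p+1)', (p+2)'\}$ as its unprimed-partner's primed copy, giving each triple a distinguished element and an orientation, so that the triple family has the combinatorial structure of a directed $m$-cycle whose automorphisms are just the rotations; pulling this back gives enough control on $\sigma$ to finish. For small $m$ (in particular $m = 3$) one can always fall back on the explicit block lists.
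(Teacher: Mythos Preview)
Your plan works, but it is considerably more elaborate than necessary, and the paper's argument short-circuits almost all of it. The paper simply counts how many blocks of $\Sp{\Smon}^m_i$ contain each element of $E_m$: each element lies in exactly $2^{m-2}$ blocks of $\Sp{\Sbuildall}^m_i$, while in $\Sp{\Sbuildmon}^m$ each $p\in\nset{m}$ lies in $m-1$ blocks and each $p'\in\nset{m}'$ lies in $m-2$ blocks. Thus the degree sequence distinguishes $\nset{m}$ from $\nset{m}'$, forcing any isomorphism $\sigma$ to map $\nset{m}$ onto $\nset{m}$. Once that is known, $\sigma$ preserves the property ``unprimed odd,'' and the contradiction is immediate: an unprimed-odd block $\Sbuildall^m_J\in\Sp{\Sbuildall}^m_1$ would have to map to an unprimed-odd block of $\Sp{\Smon}^m_2$, but $\Sp{\Sbuildall}^m_2$ has none and (by cardinality for $m>3$, by parity for $m=3$) neither does $\Sp{\Sbuildmon}^m$.

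Compared with your outline, this bypasses the need to first establish $\sigma(\Sp{\Sbuildmon}^m)=\Sp{\Sbuildmon}^m$ (your step~(i)), so the awkward case $m=3$ where $2m-3=m$ never arises as a separate issue. It also replaces the rigidity analysis of the complement-triples (your step~(ii)) by a one-line degree count; in effect, the content of your triple analysis is precisely that the unprimed elements have $\Sp{\Sbuildmon}^m$-degree $m-1$ and the primed ones $m-2$, which is exactly what the degree count records. Two further remarks on your sketch: the observation that ``$\Sp{\Sbuildmon}^3$ blocks contain a pair $\{i,i'\}$'' is not an isomorphism-invariant statement and so cannot by itself separate $\Sp{\Sbuildmon}^3$ from $\Sp{\Sbuildall}^3_i$; and your claim that the automorphisms of the triple family are ``just the rotations'' is not quite right (reflections of the form $p\mapsto c-p$, $p'\mapsto (c-p+1)'$ also work), though this does not matter since every such automorphism still preserves $\nset{m}$, which is all you need.
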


\begin{proof}
Suppose, on the contrary, that there exists a permutation $\sigma \colon E_m \to E_m$ such that $\sigma(\Sp{\Smon}^m_1) = \Sp{\Smon}^m_2$.
Let us first count in how many blocks of $\Sp{\Smon}^m_1$ and $\Sp{\Smon}^m_2$ each element of $E_m$ occurs.
In $\Sp{\Sbuildall}^m_1$ and $\Sp{\Sbuildall}^m_2$, each element of $E_m$ occurs in exactly $2^{m-2}$ blocks.
In $\Sp{\Sbuildmon}^m$, each element $x \in \nset{m}$ occurs in $m - 1$ blocks, while each element $x \in \nset{m}'$ occurs in $m - 2$ blocks.
Thus, in both $\Sp{\Smon}^m_1$ and $\Sp{\Smon}^m_2$, each element $x \in \nset{m}$ occurs in $2^{m-2} + m - 1$ blocks and each element $x \in \nset{m}'$ occurs in $2^{m-2} + m - 2$ blocks.
It follows that $\sigma$ maps the set $\nset{m}$ onto itself and it maps the set $\nset{m}'$ onto itself.

Consider a block $\Sbuildall^m_J$ of $\Sp{\Sbuildall}^m_1$ for some $J \subseteq \nset{m}$ with $\card{J}$ odd.
Since $\Sbuildall^m_J$ is unprimed odd, $\sigma(\Sbuildall^m_J)$ is also unprimed odd, so it cannot be a block of $\Sp{\Sbuildall}^m_2$.
The set $\sigma(\Sbuildall^m_J)$ is not a block of $\Sp{\Sbuildmon}^m$ either: for $m > 3$ this is obvious from cardinalities; for $m = 3$ this holds because the blocks of $\Sp{\Sbuildmon}^m$ are unprimed even.
We have reached a contradiction.
\end{proof}

\begin{proposition}
\label{prop:Mhypo}
For all $m \geq 3$, the Sperner systems $\Sp{\Smon}^m_1$ and $\Sp{\Smon}^m_2$ are strongly hypomorphic.
\end{proposition}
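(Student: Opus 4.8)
The plan is to exhibit, for each pair $I \in \couples[E_m]$, an explicit isomorphism between $(\Sp{\Smon}^m_1)^*_I$ and $(\Sp{\Smon}^m_2)^*_I$, and in fact to show the two systems $(\Sp{\Smon}^m_i)_I$ coincide after taking minimal elements. First I would organize the pairs $I$ into orbits under the rotation $\rho$, which is a symmetry of both $\Sp{\Smon}^m_1$ and $\Sp{\Smon}^m_2$; since $\rho$ acts on $\couples[E_m]$, it suffices to treat one representative of each $\rho$-orbit. The pairs $I \subseteq E_m$ fall into a small number of shape classes: $I = \{i, i'\}$ (an element and its prime), $I = \{i, j\}$ with $i, j \in \nset{m}$ distinct, $I = \{i', j'\}$ with $i,j \in \nset{m}$ distinct, and $I = \{i, j'\}$ with $i \neq j$; within the last three classes one should further distinguish whether $j = i+1$ or $j = i-1$ (adjacent indices) from the generic case, because the block $\Sbuildmon^m_p = E_m \setminus \{p, p', (p+1)'\}$ treats the index $p+1$ specially.

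The key structural input is Remark~\ref{rem:G1G2iiprime}, which already gives $(\Sp{\Sbuildall}^m_1)_{\{i,i'\}} = (\Sp{\Sbuildall}^m_2)_{\{i,i'\}}$; together with the fact that $\Sp{\Sbuildmon}^m$ is the same summand in both $\Sp{\Smon}^m_1$ and $\Sp{\Smon}^m_2$, this handles the pairs of the form $\{i,i'\}$ immediately, since identifying $i$ with $i'$ commutes with taking unions. For the remaining pair shapes, the transposition $\tau_i$ is the natural candidate isomorphism: by Remark~\ref{rem:G1G2isomorphic}, $\tau_i(\Sp{\Sbuildall}^m_1) = \Sp{\Sbuildall}^m_2$, so I would try to show that, after collapsing the pair $I$, the transposition $\tau_i$ (for a suitably chosen $i$ depending on $I$) carries $(\Sp{\Smon}^m_1)^*_I$ onto $(\Sp{\Smon}^m_2)^*_I$. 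The point is that $\tau_i$ fixes every block $\Sbuildmon^m_p$ with $p + 1 \neq i$, i.e., all but one block of $\Sp{\Sbuildmon}^m$; so when the collapsed pair $I$ absorbs or neutralizes the discrepancy at the one exceptional block $\Sbuildmon^m_{i-1}$ — either because $\Sbuildmon^m_{i-1}$ and $\tau_i(\Sbuildmon^m_{i-1})$ become equal after identifying the elements of $I$, or because one of them ceases to be minimal in the collapsed system — the isomorphism goes through. For the pairs $\{i,j\}$ with $i,j$ both unprimed and nonadjacent I expect $\tau_i$ (or $\tau_j$) to work after checking that the one moved $\Sp{\Sbuildmon}^m$-block is taken care of by $\Sp{\Sbuildmon}^m$-blocks that are unaffected; for adjacent pairs and for mixed pairs one has to choose the transposition carefully and check minimality by hand.

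The main obstacle will be the bookkeeping for the ``exceptional'' pairs, namely those $I$ that interact nontrivially with the index $p+1$ in the definition of $\Sbuildmon^m_p$; there the naive transposition $\tau_i$ will not literally send $\Sp{\Smon}^m_1$ to $\Sp{\Smon}^m_2$ before collapsing, and one must verify that the offending block becomes redundant (non-minimal) or coincides with its image only after the identification $\theta_I$ is applied. Concretely, I would compute $\Sbuildmon^m_p / \theta_I$ and $\Sbuildall^m_J / \theta_I$ for the relevant shapes of $I$, identify which sets survive as minimal elements, and match them up; the cardinality observations from the proof of Proposition~\ref{prop:Cnoniso} (each $\Sbuildmon^m_p$ has $2m-3$ elements, each $\Sbuildall^m_J$ has $m$ elements) help rule out accidental containments. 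I also expect to need the separate small-$m$ check for $m = 3$, where $2m - 3 = 3 = m$ so the cardinality arguments degenerate and containments between $\Sp{\Sbuildmon}^3$-blocks and $\Sp{\Sbuildall}^3_i$-blocks must be excluded directly, as was already done in the remarks preceding Definition~\ref{def:monotone}.
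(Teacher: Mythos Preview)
Your high-level plan matches the paper's: handle $I = \{p,p'\}$ directly via Remark~\ref{rem:G1G2iiprime}, and for every other $I$ use a transposition $\tau_{r+1}$ (with $\{r+1,(r+1)'\}\cap I=\emptyset$) as the isomorphism, exploiting that $\tau_{r+1}$ swaps $\Sp{\Sbuildall}^m_1$ with $\Sp{\Sbuildall}^m_2$ and fixes every block of $\Sp{\Sbuildmon}^m$ except $\Sbuildmon^m_r$. The gap is in how you propose to dispose of that one exceptional block. You suggest it will be ``taken care of by $\Sp{\Sbuildmon}^m$-blocks that are unaffected,'' but this fails: for example with $m=4$, $I=\{1,3\}$, and $r=1$, the collapsed block $(\Sbuildmon^4_1)_I = \{1,2,4,3',4'\}$ contains none of $(\Sbuildmon^4_2)_I$, $(\Sbuildmon^4_3)_I$, $(\Sbuildmon^4_4)_I$ (each of which contains $1'$ or $2'$). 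Your alternative mechanism---that $(\Sbuildmon^m_r)_I$ and $(\tau_{r+1}(\Sbuildmon^m_r))_I$ coincide---also does not hold, since these two sets differ precisely in whether they contain the class of $r+1$ or that of $(r+1)'$, and by the choice of $r$ neither element is touched by $\theta_I$.

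The correct mechanism, and the one the paper uses, is that $(\Sbuildmon^m_r)_I$ is made redundant by blocks from $\Sp{\Sbuildall}^m_i$, not from $\Sp{\Sbuildmon}^m$: one exhibits subsets $S_1, S_2 \subseteq \nset{m}$, one of odd and one of even cardinality, with $(\Sbuildall^m_{S_j})_I \subseteq (\Sbuildmon^m_r)_I$ for both $j$. This simultaneously removes $(\Sbuildmon^m_r)_I$ from the minimal elements of $(\Sp{\Smon}^m_1)_I$ and of $(\Sp{\Smon}^m_2)_I$, after which $\tau_{r+1}$ is an honest isomorphism between $(\Sp{\Smon}^m_1)^*_I$ and $(\Sp{\Smon}^m_2)^*_I$. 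Two points your proposal misses: first, the cardinality disparity $\card{\Sbuildmon^m_p}=2m-3 > m=\card{\Sbuildall^m_J}$ is exactly what \emph{enables} these containments rather than something that rules them out; second, you need witnesses of \emph{both} parities of $\card{S}$, since the redundancy must hold in $\Sp{\Smon}^m_1$ and in $\Sp{\Smon}^m_2$ separately. Finding $r$, $S_1$, $S_2$ is then a short case analysis (four cases on the shape of $I$, with one case requiring a different choice when $m=3$).
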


\begin{proof}
Let $I \in \couples[E_m]$.
Assume first that $I = \{p, p'\}$ for some $p \in \nset{m}$.
Since $\tau_p(\Sp{\Sbuildall}^m_1) = \Sp{\Sbuildall}^m_2$, it follows that $(\Sp{\Sbuildall}^m_1)_I = (\Sp{\Sbuildall}^m_2)_I$.
Consequently,
\[
(\Sp{\Smon}^m_1)_I = (\Sp{\Sbuildall}^m_1 \cup \Sp{\Sbuildmon}^m)_I = (\Sp{\Sbuildall}^m_1)_I \cup (\Sp{\Sbuildmon}^m)_I = (\Sp{\Sbuildall}^m_2)_I \cup (\Sp{\Sbuildmon}^m)_I = (\Sp{\Sbuildall}^m_2 \cup \Sp{\Sbuildmon}^m)_I = (\Sp{\Smon}^m_2)_I;
\]
hence $(\Sp{\Smon}^m_1)^*_I = (\Sp{\Smon}^m_2)^*_I$.

Assume then that $I$ is not of the form $\{p, p'\}$ for some $p \in \nset{m}$. We will split the analysis into four cases. In each case, we will specify an element $r \in \nset{m}$ and sets $S_1, S_2 \subseteq \nset{m}$ such that $\{r + 1, (r + 1)'\} \cap I = \emptyset$ and $(\Sbuildall^m_{S_i})_I \subseteq (\Sbuildmon^m_r)_I$ for each $i \in \{1, 2\}$. This means that we can ignore $(\Sbuildmon^m_r)_I$ when we consider the minimal blocks of $(\Sp{\Smon}^m_1)_I$ and $(\Sp{\Smon}^m_2)_I$. The transposition of $r + 1$ and $(r + 1)'$ keeps the remaining blocks of $(\Sp{\Sbuildmon}^m)_I$ (i.e., the blocks $(\Sbuildmon^m_s)_I$ for $s \in \nset{m} \setminus \{r\}$) unchanged and maps $(\Sp{\Sbuildall}^m_1)_I$ onto $(\Sp{\Sbuildall}^m_2)_I$.
Consequently, $(\Sp{\Smon}^m_1)^*_I \equiv (\Sp{\Smon}^m_2)^*_I$, an isomorphism being given by $\tau_{r + 1}$.

\begin{itemize}
\item Case 1: $I = \{p, q\}$ for some distinct $p, q \in \nset{m}$. Swapping $p$ and $q$ if necessary, we may assume that $q \neq p + 1$.
Choose $r := p$ and, according to the parity of $m$, choose $S_1$ and $S_2$ among the sets $\nset{m}$ and $\nset{m} \setminus \{p + 2\}$.

\item Case 2: $I = \{p, q'\}$ for some distinct $p, q \in \nset{m}$ such that $q \neq p + 1$.
Choose $r := p$ and, according to the parity of $m$, choose $S_1$ and $S_2$ among the sets $\nset{m}$ and $\nset{m} \setminus \{q\}$.

\item Case 3: $I = \{p, q'\}$ for some distinct $p, q \in \nset{m}$ such that $p \neq q + 1$.
Choose $r := q$ and, according to the parity of $m$, choose $S_1$ and $S_2$ among the sets $\nset{m} \setminus \{q\} \cup \{q'\}$ and $\nset{m} \setminus \{p, q\}$.

\item Case 4: $I = \{p', q'\}$ for some distinct $p, q \in \nset{m}$. Swapping $p$ and $q$ if necessary, we may assume that $q \neq p + 1$.
Choose $r := p$.
If $m > 3$, then, according to the parity of $m$, choose $S_1$ and $S_2$ among the sets $\nset{m} \setminus \{p\}$ and $\nset{m} \setminus \{p, s\}$ for any $s \in \nset{m} \setminus \{p, p + 1, q\}$.
If $m = 3$, then choose $S_1 := \{p + 1\}$, $S_2 := \{p + 1, q\}$.
\end{itemize}

These four cases exhaust all possibilities, and we conclude that $(\Sp{\Smon}^m_1)^*_I \equiv (\Sp{\Smon}^m_2)^*_I$ for every $I \in \couples[E_m]$.
\end{proof}

\begin{example}
In order to illustrate Proposition~\ref{prop:Mhypo}, let us consider the case $m = 4$.

We present in Table~\ref{table:minorsM4} the set systems $(\Sp{\Smon}^4_i)_I$ for $i \in \{1, 2\}$ and $I \in \{\{1, 1'\}, \{1, 2\},\linebreak[0] \{1, 3\}, \{1, 2'\}, \{1, 3'\}, \{1, 4'\}, \{1', 2'\}, \{1', 3'\}\}$; the remaining cases are similar by the invariance of $\Sp{\Smon}^4_1$ and $\Sp{\Smon}^4_2$ under the permutation $(1 \; 2 \; 3 \; 4)(1' \; 2' \; 3' \; 4')$.
In each set system except for $(\Sp{\Smon}^4_i)_{\{1, 1'\}}$, $i \in \{1, 2\}$, the block written in bold and red is not minimal, because it includes the block written in italics and blue.
The non-bold non-red blocks of $(\Sp{\Smon}^4_1)_I$ are mapped onto the non-bold non-red blocks of $(\Sp{\Smon}^4_2)_I$ by the permutation $\sigma$.
Hence $(\Sp{\Smon}^4_1)^*_I \equiv (\Sp{\Smon}^4_2)^*_I$.

\newcommand{\Mpair}[2]{\begin{array}{c@{\quad}c}#1&#2\end{array}}

\newcommand{\MxaA}
{
\begin{array}{ccccccc}
1 &   &   &   & 2' & 3' & 4' \\
1 & 2 &   &   &    & 3' & 4' \\
1 &   & 3 &   & 2' &    & 4' \\
1 &   &   & 4 & 2' & 3' &    \\
1 & 2 & 3 & 4 &    &    &    \\
1 &   & 3 & 4 & 2' &    &    \\
1 & 2 &   & 4 &    & 3' &    \\
1 & 2 & 3 &   &    &    & 4' \\
\hline
  & 2 & 3 & 4 &    & 3' & 4' \\
1 &   & 3 & 4 &    &    & 4' \\
1 & 2 &   & 4 & 2' &    &    \\
1 & 2 & 3 &   & 2' & 3' &    \\
\end{array}
}

\newcommand{\MyaA}
{
\begin{array}{ccccccc}
1 &   &   &   & 2' & 3' & 4' \\
1 & 2 &   &   &    & 3' & 4' \\
1 &   & 3 &   & 2' &    & 4' \\
1 &   &   & 4 & 2' & 3' &    \\
1 & 2 & 3 &   &    &    & 4' \\
1 & 2 &   & 4 &    & 3' &    \\
1 &   & 3 & 4 & 2' &    &    \\
1 & 2 & 3 & 4 &    &    &    \\
\hline
  & 2 & 3 & 4 &    & 3' & 4' \\
1 &   & 3 & 4 &    &    & 4' \\
1 & 2 &   & 4 & 2' &    &    \\
1 & 2 & 3 &   & 2' & 3' &    \\
\end{array}
}

\newcommand{\MaA}{\Mpair{\MxaA}{\MyaA}}

\newcommand{\Mxab}
{
\begin{array}{ccccccc}
1 &   &   &    & 2' & 3' & 4' \\
1 &   &   & 1' &    & 3' & 4' \\
  & 3 &   & 1' & 2' &    & 4' \\
  &   & 4 & 1' & 2' & 3' &    \\
1 & 3 & 4 & 1' &    &    &    \\
1 & 3 & 4 &    & 2' &    &    \\
1 &   & 4 &    &    & 3' &    \\
\color{blue}\it 1 & \color{blue}\it 3 &   &    &    &    & \color{blue}\it 4' \\
\hline
1 & 3 & 4 &    &    & 3' & 4' \\
\color{red}\bf 1 & \color{red}\bf 3 & \color{red}\bf 4 & \color{red}\bf 1' &    &    & \color{red}\bf 4' \\
1 &   & 4 & 1' & 2' &    &    \\
1 & 3 &   &    & 2' & 3' &    \\
\end{array}
}

\newcommand{\Myab}
{
\begin{array}{ccccccc}
  &   &   & 1' & 2' & 3' & 4' \\
1 &   &   &    &    & 3' & 4' \\
1 & 3 &   &    & 2' &    & 4' \\
1 &   & 4 &    & 2' & 3' &    \\
1 & 3 &   & 1' &    &    & 4' \\
1 &   & 4 & 1' &    & 3' &    \\
  & 3 & 4 & 1' & 2' &    &    \\
\color{blue}\it 1 & \color{blue}\it 3 & \color{blue}\it 4 &    &    &    &    \\
\hline
1 & 3 & 4 &    &    & 3' & 4' \\
\color{red}\bf 1 & \color{red}\bf 3 & \color{red}\bf 4 & \color{red}\bf 1' &    &    & \color{red}\bf 4' \\
1 &   & 4 & 1' & 2' &    &    \\
1 & 3 &   &    & 2' & 3' &    \\
\end{array}
}

\newcommand{\Mab}{\Mpair{\Mxab}{\Myab}}

\newcommand{\Mxac}
{
\begin{array}{ccccccc}
1 &   &   &    & 2' & 3' & 4' \\
  & 2 &   & 1' &    & 3' & 4' \\
1 &   &   & 1' & 2' &    & 4' \\
  &   & 4 & 1' & 2' & 3' &    \\
1 & 2 & 4 & 1' &    &    &    \\
1 &   & 4 &    & 2' &    &    \\
\color{blue}\it 1 & \color{blue}\it 2 & \color{blue}\it 4 &    &    & \color{blue}\it 3' &    \\
1 & 2 &   &    &    &    & 4' \\
\hline
\color{red}\bf 1 & \color{red}\bf 2 & \color{red}\bf 4 &    &    & \color{red}\bf 3' & \color{red}\bf 4' \\
1 &   & 4 & 1' &    &    & 4' \\
1 & 2 & 4 & 1' & 2' &    &    \\
1 & 2 &   &    & 2' & 3' &    \\
\end{array}
}

\newcommand{\Myac}
{
\begin{array}{ccccccc}
  &   &   & 1' & 2' & 3' & 4' \\
1 & 2 &   &    &    & 3' & 4' \\
1 &   &   &    & 2' &    & 4' \\
1 &   & 4 &    & 2' & 3' &    \\
1 & 2 &   & 1' &    &    & 4' \\
  & 2 & 4 & 1' &    & 3' &    \\
1 &   & 4 & 1' & 2' &    &    \\
\color{blue}\it 1 & \color{blue}\it 2 & \color{blue}\it 4 &    &    &    &    \\
\hline
\color{red}\bf 1 & \color{red}\bf 2 & \color{red}\bf 4 &    &    & \color{red}\bf 3' & \color{red}\bf 4' \\
1 &   & 4 & 1' &    &    & 4' \\
1 & 2 & 4 & 1' & 2' &    &    \\
1 & 2 &   &    & 2' & 3' &    \\
\end{array}
}

\newcommand{\Mac}{\Mpair{\Mxac}{\Myac}}

\newcommand{\MxaB}
{
\begin{array}{ccccccc}
1 &   &   &   &    & 3' & 4' \\
  & 2 &   &   & 1' & 3' & 4' \\
1 &   & 3 &   & 1' &    & 4' \\
1 &   &   & 4 & 1' & 3' &    \\
  & 2 & 3 & 4 & 1' &    &    \\
\color{blue}\it 1 &   & \color{blue}\it 3 & \color{blue}\it 4 &    &    &    \\
1 & 2 &   & 4 &    & 3' &    \\
1 & 2 & 3 &   &    &    & 4' \\
\hline
  & 2 & 3 & 4 &    & 3' & 4' \\
\color{red}\bf 1 &   & \color{red}\bf 3 & \color{red}\bf 4 & \color{red}\bf 1' &    & \color{red}\bf 4' \\
1 & 2 &   & 4 & 1' &    &    \\
1 & 2 & 3 &   &    & 3' &    \\
\end{array}
}

\newcommand{\MyaB}
{
\begin{array}{ccccccc}
1 &   &   &   & 1' & 3' & 4' \\
1 & 2 &   &   &    & 3' & 4' \\
1 &   & 3 &   &    &    & 4' \\
1 &   &   & 4 &    & 3' &    \\
  & 2 & 3 &   & 1' &    & 4' \\
  & 2 &   & 4 & 1' & 3' &    \\
\color{blue}\it 1 &   & \color{blue}\it 3 & \color{blue}\it 4 & \color{blue}\it 1' &    &    \\
1 & 2 & 3 & 4 &    &    &    \\
\hline
  & 2 & 3 & 4 &    & 3' & 4' \\
\color{red}\bf 1 &   & \color{red}\bf 3 & \color{red}\bf 4 & \color{red}\bf 1' &    & \color{red}\bf 4' \\
1 & 2 &   & 4 & 1' &    &    \\
1 & 2 & 3 &   &    & 3' &    \\
\end{array}
}

\newcommand{\MaB}{\Mpair{\MxaB}{\MyaB}}

\newcommand{\MxaC}
{
\begin{array}{ccccccc}
1 &   &   &   &    & 2' & 4' \\
1 & 2 &   &   & 1' &    & 4' \\
  &   & 3 &   & 1' & 2' & 4' \\
1 &   &   & 4 & 1' & 2' &    \\
  & 2 & 3 & 4 & 1' &    &    \\
1 &   & 3 & 4 &    & 2' &    \\
\color{blue}\it 1 & \color{blue}\it 2 &   & \color{blue}\it 4 &    &    &    \\
1 & 2 & 3 &   &    &    & 4' \\
\hline
\color{red}\bf 1 & \color{red}\bf 2 & \color{red}\bf 3 & \color{red}\bf 4 &    &    & \color{red}\bf 4' \\
1 &   & 3 & 4 & 1' &    & 4' \\
1 & 2 &   & 4 & 1' & 2' &    \\
1 & 2 & 3 &   &    & 2' &    \\
\end{array}
}

\newcommand{\MyaC}
{
\begin{array}{ccccccc}
1 &   &   &   & 1' & 2' & 4' \\
1 & 2 &   &   &    &    & 4' \\
1 &   & 3 &   &    & 2' & 4' \\
1 &   &   & 4 &    & 2' &    \\
  & 2 & 3 &   & 1' &    & 4' \\
1 & 2 &   & 4 & 1' &    &    \\
  &   & 3 & 4 & 1' & 2' &    \\
\color{blue}\it 1 & \color{blue}\it 2 & \color{blue}\it 3 & \color{blue}\it 4 &    &    &    \\
\hline
\color{red}\bf 1 & \color{red}\bf 2 & \color{red}\bf 3 & \color{red}\bf 4 &    &    & \color{red}\bf 4' \\
1 &   & 3 & 4 & 1' &    & 4' \\
1 & 2 &   & 4 & 1' & 2' &    \\
1 & 2 & 3 &   &    & 2' &    \\
\end{array}
}

\newcommand{\MaC}{\Mpair{\MxaC}{\MyaC}}

\newcommand{\MxaD}
{
\begin{array}{ccccccc}
1 &   &   &   &    & 2' & 3' \\
1 & 2 &   &   & 1' &    & 3' \\
1 &   & 3 &   & 1' & 2' &    \\
  &   &   & 4 & 1' & 2' & 3' \\
  & 2 & 3 & 4 & 1' &    &    \\
1 &   & 3 & 4 &    & 2' &    \\
1 & 2 &   & 4 &    &    & 3' \\
\color{blue}\it 1 & \color{blue}\it 2 & \color{blue}\it 3 &   &    &    &    \\
\hline
\color{red}\bf 1 & \color{red}\bf 2 & \color{red}\bf 3 & \color{red}\bf 4 &    &    & \color{red}\bf 3' \\
1 &   & 3 & 4 & 1' &    &    \\
1 & 2 &   & 4 & 1' & 2' &    \\
1 & 2 & 3 &   &    & 2' & 3' \\
\end{array}
}

\newcommand{\MyaD}
{
\begin{array}{ccccccc}
1 &   &   &   & 1' & 2' & 3' \\
1 & 2 &   &   &    &    & 3' \\
1 &   & 3 &   &    & 2' &    \\
1 &   &   & 4 &    & 2' & 3' \\
1 & 2 & 3 &   & 1' &    &    \\
  & 2 &   & 4 & 1' &    & 3' \\
  &   & 3 & 4 & 1' & 2' &    \\
\color{blue}\it 1 & \color{blue}\it 2 & \color{blue}\it 3 & \color{blue}\it 4 &    &    &    \\
\hline
\color{red}\bf 1 & \color{red}\bf 2 & \color{red}\bf 3 & \color{red}\bf 4 &    &    & \color{red}\bf 3' \\
1 &   & 3 & 4 & 1' &    &    \\
1 & 2 &   & 4 & 1' & 2' &    \\
1 & 2 & 3 &   &    & 2' & 3' \\
\end{array}
}

\newcommand{\MaD}{\Mpair{\MxaD}{\MyaD}}

\newcommand{\MxAB}
{
\begin{array}{ccccccc}
1 &   &   &   & 1' & 3' & 4' \\
  & 2 &   &   & 1' & 3' & 4' \\
  &   & 3 &   & 1' &    & 4' \\
  &   &   & 4 & 1' & 3' &    \\
  & 2 & 3 & 4 & 1' &    &    \\
\color{blue}\it 1 &   & \color{blue}\it 3 & \color{blue}\it 4 & \color{blue}\it 1' &    &    \\
1 & 2 &   & 4 &    & 3' &    \\
1 & 2 & 3 &   &    &    & 4' \\
\hline
  & 2 & 3 & 4 &    & 3' & 4' \\
\color{red}\bf 1 &   & \color{red}\bf 3 & \color{red}\bf 4 & \color{red}\bf 1' &    & \color{red}\bf 4' \\
1 & 2 &   & 4 & 1' &    &    \\
1 & 2 & 3 &   & 1' & 3' &    \\
\end{array}
}

\newcommand{\MyAB}
{
\begin{array}{ccccccc}
  &   &   &   & 1' & 3' & 4' \\
1 & 2 &   &   &    & 3' & 4' \\
\color{blue}\it 1 &   & \color{blue}\it 3 &   & \color{blue}\it 1' &    & \color{blue}\it 4' \\
1 &   &   & 4 & 1' & 3' &    \\
  & 2 & 3 &   & 1' &    & 4' \\
  & 2 &   & 4 & 1' & 3' &    \\
  &   & 3 & 4 & 1' &    &    \\
1 & 2 & 3 & 4 &    &    &    \\
\hline
  & 2 & 3 & 4 &    & 3' & 4' \\
\color{red}\bf 1 &   & \color{red}\bf 3 & \color{red}\bf 4 & \color{red}\bf 1' &    & \color{red}\bf 4' \\
1 & 2 &   & 4 & 1' &    &    \\
1 & 2 & 3 &   & 1' & 3' &    \\
\end{array}
}

\newcommand{\MAB}{\Mpair{\MxAB}{\MyAB}}

\newcommand{\MxAC}
{
\begin{array}{ccccccc}
1 &   &   &   & 1' & 2' & 4' \\
  & 2 &   &   & 1' &    & 4' \\
  &   & 3 &   & 1' & 2' & 4' \\
  &   &   & 4 & 1' & 2' &    \\
  & \color{blue}\it 2 & \color{blue}\it 3 & \color{blue}\it 4 & \color{blue}\it 1' &    &    \\
1 &   & 3 & 4 &    & 2' &    \\
1 & 2 &   & 4 & 1' &    &    \\
1 & 2 & 3 &   &    &    & 4' \\
\hline
  & \color{red}\bf 2 & \color{red}\bf 3 & \color{red}\bf 4 & \color{red}\bf 1' &    & \color{red}\bf 4' \\
1 &   & 3 & 4 & 1' &    & 4' \\
1 & 2 &   & 4 & 1' & 2' &    \\
1 & 2 & 3 &   & 1' & 2' &    \\
\end{array}
}

\newcommand{\MyAC}
{
\begin{array}{ccccccc}
  &   &   &   & 1' & 2' & 4' \\
1 & 2 &   &   & 1' &    & 4' \\
1 &   & 3 &   &    & 2' & 4' \\
1 &   &   & 4 & 1' & 2' &    \\
  & \color{blue}\it 2 & \color{blue}\it 3 &   & \color{blue}\it 1' &    & \color{blue}\it 4' \\
  & 2 &   & 4 & 1' &    &    \\
  &   & 3 & 4 & 1' & 2' &    \\
1 & 2 & 3 & 4 &    &    &    \\
\hline
  & \color{red}\bf 2 & \color{red}\bf 3 & \color{red}\bf 4 & \color{red}\bf 1' &    & \color{red}\bf 4' \\
1 &   & 3 & 4 & 1' &    & 4' \\
1 & 2 &   & 4 & 1' & 2' &    \\
1 & 2 & 3 &   & 1' & 2' &    \\
\end{array}
}

\newcommand{\MAC}{\Mpair{\MxAC}{\MyAC}}

\newcommand{\Mlabel}{\begin{array}{l} (\Sp{\Smon}^4_1)_I, \\ (\Sp{\Smon}^4_2)_I \end{array}}

\begin{table}
\footnotesize
\setlength{\arraycolsep}{0.35pt}
\[
\begin{array}{|c|c|c|c|}
\hline
I & \{1, 1'\} & \{1, 2\} & \{1, 3\} \\
\hline
\Mlabel & \MaA & \Mab & \Mac \\
\hline
\sigma & \mathrm{id} & (3 \; 3') & (2 \; 2') \\
\hline \hline
I & \{1, 2'\} & \{1, 3'\} & \{1, 4'\} \\
\hline
\Mlabel & \MaB & \MaC & \MaD \\
\hline
\sigma & (3 \; 3') & (2 \; 2') & (2 \; 2') \\
\hline \hline
I & \{1', 2'\} & \{1', 3'\} & \\
\cline{1-3}
\Mlabel & \MAB & \MAC & \\
\cline{1-3}
\cline{1-3}
\sigma & (3 \; 3') & (2 \; 2') & \\
\hline
\end{array}
\]
\caption{Set systems $(\Sp{\Smon}^4_1)_I$ and $(\Sp{\Smon}^4_2)_I$. In each set system, the block written in bold and red, if any, is not minimal, because it includes the block written in italics and blue. The non-bold non-red blocks of $(\Sp{\Smon}^4_1)_I$ are mapped onto the non-bold non-red blocks of $(\Sp{\Smon}^4_2)_I$ by the permutation $\sigma$.}
\label{table:minorsM4}
\end{table}

\end{example}

\subsection{Construction for clique functions}

We now extend the Sperner systems $\Sp{\Smon}^m_i$ to systems over sets of cardinality $2m + 1$ and $2m + 2$.
The Boolean functions associated with these systems will be members of the clone $M_c U_\infty$ of monotone constant-preserving $1$-separating functions.
Let us denote $E_m^0 := E_m \cup \{0\}$ and $E_m^{00'} := E_m \cup \{0, 0'\}$.

\begin{definition}
For $m \geq 3$ and $i \in \{1, 2\}$, let $\Sp{\Sclique}^{2m+1}_i$ be the set system over $E_m^0$ and let $\Sp{\Sclique}^{2m+2}_i$ be the set system over $E_m^{00'}$ given by
\begin{align*}
\Sp{\Sclique}^{2m+1}_i &:= \{S \cup \{0\} : S \in \Sp{\Smon}^m_i\}, \\
\Sp{\Sclique}^{2m+2}_i &:= \Sp{\Sclique}^{2m+1}_i \cup \{\{0, 0'\}\}.
\end{align*}
\end{definition}

\begin{remark}
\label{rem:UmiMcUinf}
By definition, for all $n \geq 7$ and $i \in \{1, 2\}$, there is an element of the ground set of $\Sp{\Sclique}^n_i$ -- namely $0$ -- that is contained in every block of $\Sp{\Sclique}^n_i$.
Hence, the Boolean function associated with $\Sp{\Sclique}^n_i$ is a member of the clone $M_c U_\infty$.
\end{remark}

It remains to show that $\Sp{\Sclique}^n_1$ and $\Sp{\Sclique}^n_2$ are nonisomorphic and strongly hypomorphic.

\begin{proposition}
\label{prop:Uminoniso}
The Sperner systems $\Sp{\Sclique}^{2m+1}_1$ and $\Sp{\Sclique}^{2m+1}_2$ are nonisomorphic, and $\Sp{\Sclique}^{2m+2}_1$ and $\Sp{\Sclique}^{2m+2}_2$ are nonisomorphic, for every $m \geq 3$.
\end{proposition}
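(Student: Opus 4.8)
The plan is to exhibit the distinguished point $0$ of the ground set as an \emph{intrinsically recognizable} element of these set systems, so that every isomorphism between $\Sp{\Sclique}^n_1$ and $\Sp{\Sclique}^n_2$ is forced to fix it; deleting $0$ from all blocks then converts such an isomorphism into an isomorphism between $\Sp{\Smon}^m_1$ and $\Sp{\Smon}^m_2$, contradicting Proposition~\ref{prop:Cnoniso}. The one fact about the building blocks that makes this work is that \emph{no element of $E_m$ lies in every block of $\Sp{\Smon}^m_i$}: for each $x \in \nset{m}$, the block $\Sbuildmon^m_x = E_m \setminus \{x, x', (x+1)'\}$ of $\Sp{\Sbuildmon}^m \subseteq \Sp{\Smon}^m_i$ contains neither $x$ nor $x'$. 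Consequently, in $\Sp{\Sclique}^{2m+1}_i = \{S \cup \{0\} : S \in \Sp{\Smon}^m_i\}$ the point $0$ lies in every block, and it is the \emph{only} element of $E_m^0$ with this property, since any $x \in E_m$ contained in all sets $S \cup \{0\}$ would be contained in all $S \in \Sp{\Smon}^m_i$.

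For $n = 2m+1$ I would then argue as follows. Suppose $\sigma \colon E_m^0 \to E_m^0$ is a bijection with $\sigma(\Sp{\Sclique}^{2m+1}_1) = \Sp{\Sclique}^{2m+1}_2$. Since $0$ is the unique element contained in all blocks, on each side, we must have $\sigma(0) = 0$, and hence $\sigma$ restricts to a bijection of $E_m$. Because $0 \notin E_m$, we have $\sigma(S \cup \{0\}) = \sigma(S) \cup \{0\}$ for every $S \subseteq E_m$, so stripping $0$ from every block yields $\sigma|_{E_m}(\Sp{\Smon}^m_1) = \Sp{\Smon}^m_2$, contradicting Proposition~\ref{prop:Cnoniso}.

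For $n = 2m+2$ the added block $\{0, 0'\}$ has cardinality $2$, whereas every block of $\Sp{\Sclique}^{2m+1}_i$ has cardinality $m+1$ (for those arising from $\Sp{\Sbuildall}^m_i$, which is $m$-homogeneous) or $2m-2$ (for those arising from $\Sp{\Sbuildmon}^m$, which is $(2m-3)$-homogeneous), both of which are at least $4$ since $m \geq 3$. Thus $\{0, 0'\}$ is the unique block of minimum cardinality, so any isomorphism $\sigma$ between $\Sp{\Sclique}^{2m+2}_1$ and $\Sp{\Sclique}^{2m+2}_2$ satisfies $\sigma(\{0, 0'\}) = \{0, 0'\}$ and maps the remaining blocks, which constitute exactly $\Sp{\Sclique}^{2m+1}_1$, onto $\Sp{\Sclique}^{2m+1}_2$. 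By the argument above this forces $\sigma(0) = 0$, whence $\sigma(0') = 0'$, so $\sigma$ maps $E_m^{00'} \setminus \{0, 0'\} = E_m$ onto itself, and $\sigma|_{E_m}$ again exhibits an isomorphism $\Sp{\Smon}^m_1 \to \Sp{\Smon}^m_2$, a contradiction.

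The only step that genuinely uses the shape of the construction is the claim that $\Sp{\Smon}^m_i$ has no universal element, handled by the one-line observation about the blocks $\Sbuildmon^m_x$; everything else is routine bookkeeping about how isomorphisms interact with the coning operation $S \mapsto S \cup \{0\}$ and with block cardinalities, so I do not expect any real obstacle.
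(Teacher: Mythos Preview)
Your proof is correct and follows essentially the same strategy as the paper: show that any isomorphism must fix the distinguished point $0$ (and $0'$ in the $2m+2$ case), then strip it off to obtain an isomorphism $\Sp{\Smon}^m_1 \to \Sp{\Smon}^m_2$ contradicting Proposition~\ref{prop:Cnoniso}. The only difference is in the invariant used to recognize $0$: the paper counts occurrences of each element across all blocks (degree), whereas you use the fact that $0$ is the unique element lying in every block, and for $2m+2$ you additionally observe that $\{0,0'\}$ is the unique block of minimum size. Both arguments are equally elementary; yours is perhaps slightly more transparent since ``lies in every block'' requires no numerical bookkeeping, while the paper's degree count (worked out in the proof of Proposition~\ref{prop:Cnoniso}) gives finer information that is not needed here.
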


\begin{proof}
Suppose, on the contrary, that there exists an isomorphism $\sigma$ between $\Sp{\Sclique}^{2m+1}_1$ and $\Sp{\Sclique}^{2m+1}_2$. Counting the number of occurrences of each element in the blocks of the two systems, we see that $\sigma$ must fix $0$. Then the restriction of $\sigma$ to $E_m$ yields an isomorphism between $\Sp{\Smon}^m_1$ and $\Sp{\Smon}^m_2$, a contradiction to Proposition~\ref{prop:Cnoniso}.

A similar argument shows that there does not exist any isomorphism between $\Sp{\Sclique}^{2m+2}_1$ and $\Sp{\Sclique}^{2m+2}_2$.
\end{proof}

\begin{proposition}
\label{prop:U2m+1hypo}
For all $m \geq 3$, the Sperner systems $\Sp{\Sclique}^{2m+1}_1$ and $\Sp{\Sclique}^{2m+1}_2$ are strongly hypomorphic.
\end{proposition}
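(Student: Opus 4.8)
The plan is to verify strong hypomorphism directly, i.e., to show that $(\Sp{\Sclique}^{2m+1}_1)^*_I \equiv (\Sp{\Sclique}^{2m+1}_2)^*_I$ for every $I \in \couples[E_m^0]$. The pairs $I$ fall into two kinds: those with $I \subseteq E_m$, and those of the form $\{0, z\}$ with $z \in E_m$. The first kind reduces immediately to Proposition~\ref{prop:Mhypo}: since $0 \notin E_m$ lies in every block of $\Sp{\Sclique}^{2m+1}_i = \{S \cup \{0\} : S \in \Sp{\Smon}^m_i\}$, forming the quotient by $\theta_I$ and extracting minimal elements commute with adjoining $0$, so $(\Sp{\Sclique}^{2m+1}_i)^*_I = \{T \cup \{0\} : T \in (\Sp{\Smon}^m_i)^*_I\}$; by Proposition~\ref{prop:Mhypo} there is a bijection of $E_m / \theta_I$ carrying $(\Sp{\Smon}^m_1)^*_I$ to $(\Sp{\Smon}^m_2)^*_I$, and extending it so as to fix the class of $0$ gives the required isomorphism of the $I$-cards.

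For $I = \{0, z\}$ a separate, more computational argument is needed. Since $\rho$, extended so as to fix $0$, is an automorphism of each $\Sp{\Sclique}^{2m+1}_i$, it suffices to treat one $z$ from each $\rho$-orbit of $E_m$; I would argue uniformly for $z \in \{x, x'\}$ with $x \in \nset{m}$. Identifying $0$ with $z$ turns the block $S \cup \{0\}$ into $(S \setminus \{z\}) \cup \{0\}$, whence $(\Sp{\Sclique}^{2m+1}_i)^*_{\{0,z\}} = \{T \cup \{0\} : T \in \mathcal{D}_i\}$, where $\mathcal{D}_i$ is the set of minimal elements of $\{S \setminus \{z\} : S \in \Sp{\Smon}^m_i\}$ (a Sperner system over $E_m \setminus \{z\}$). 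It remains to prove $\mathcal{D}_1 \equiv \mathcal{D}_2$, and the claim is that the transposition $\tau_{x+1}$ — which fixes $z$ and so restricts to a permutation of $E_m \setminus \{z\}$ — witnesses this. Using $\Sp{\Smon}^m_i = \Sp{\Sbuildall}^m_i \cup \Sp{\Sbuildmon}^m$: because $\tau_{x+1}(\Sp{\Sbuildall}^m_1) = \Sp{\Sbuildall}^m_2$ and $\tau_{x+1}$ fixes $z$, the transposition maps the minimal elements of $\{S \setminus \{z\} : S \in \Sp{\Sbuildall}^m_1\}$ bijectively onto those of $\{S \setminus \{z\} : S \in \Sp{\Sbuildall}^m_2\}$; and $\tau_{x+1}$ fixes every block $\Sbuildmon^m_p$ with $p \neq x$, the only exceptional block being $\Sbuildmon^m_x$. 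The crux, to be settled by a short containment computation, is that after deleting $z$ both $\Sbuildmon^m_x$ (which occurs in $\Sp{\Smon}^m_2$) and $\tau_{x+1}(\Sbuildmon^m_x)$ (which occurs in $\tau_{x+1}(\Sp{\Smon}^m_1)$) cease to be minimal — in each case one exhibits a small block arising from $\Sp{\Sbuildall}^m_2$ that is contained in it — so both are discarded when minimal elements are extracted, and hence $\tau_{x+1}(\mathcal{D}_1) = \mathcal{D}_2$; extending $\tau_{x+1}$ to fix $0$ then yields $(\Sp{\Sclique}^{2m+1}_1)^*_{\{0,z\}} \equiv (\Sp{\Sclique}^{2m+1}_2)^*_{\{0,z\}}$.

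The formal reductions are routine; the substantive work is entirely in the case $I = \{0, z\}$, namely tracking which blocks of $\{S \setminus \{z\} : S \in \Sp{\Smon}^m_i\}$ survive minimalization. This is the main obstacle: one must organise the verification according to whether $z$ is primed or unprimed (the blocks of $\Sp{\Sbuildmon}^m$ containing $z$ differ in the two cases), and check the low-arity case $m = 3$ by hand, where several block cardinalities coincide; apart from that, it is the same style of explicit subset checking already used in the proof of Proposition~\ref{prop:Mhypo}.
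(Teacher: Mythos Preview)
Your proposal is correct and follows essentially the same approach as the paper: the case $0 \notin I$ reduces to Proposition~\ref{prop:Mhypo} exactly as you describe, and for $I = \{0,z\}$ with $z \in \{p,p'\}$ the paper likewise uses the transposition $\tau_{p+1}$ (your $\tau_{x+1}$), the point being that the single block $\Sbuildmon^m_p$ not fixed by this transposition becomes non-minimal after the identification. The only cosmetic difference is that the paper phrases the check as ``find $S_1, S_2$ (one of each parity) with $\Sbuildall^m_{S_i} \cup \{x\} \subseteq \Sbuildmon^m_p \cup \{x\}$'' --- explicitly $S_1, S_2 \in \{\nset{m}, \nset{m}\setminus\{p+2\}\}$ for $z = p$ and $S_1, S_2 \in \{\nset{m}\setminus\{p\}, \nset{m}\setminus\{p,p+2\}\}$ for $z = p'$ --- which is exactly the containment you identify as the crux, and no separate hand-check for $m=3$ is needed.
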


\begin{proof}
Let $I \in \couples[E_m^0]$. If $0 \notin I$, then for each $i \in \{1, 2\}$, it holds that
\begin{multline*}
(\Sp{\Sclique}^{2m+1}_i)_I =
\{(S \cup \{0\})_I : S \in \Sp{\Smon}^m_i\} = \\
\{S_I \cup \{0\} : S \in \Sp{\Smon}^m_i\} =
\{S \cup \{0\} : S \in (\Sp{\Smon}^m_i)_I\}.
\end{multline*}
Consequently, $(\Sp{\Sclique}^{2m+1}_i)^*_I = \{S \cup \{0\} : S \in (\Sp{\Smon}^m_i)^*_I\}$.
By Proposition~\ref{prop:Mhypo}, there exists an isomorphism $\sigma$ of $(\Sp{\Smon}^m_1)^*_I$ to $(\Sp{\Smon}^m_2)^*_I$. The extension of $\sigma$ to $E_m^0$, keeping $0$ as a fixed point, is clearly an isomorphism between $(\Sp{\Sclique}^{2m+1}_1)^*_I$ and $(\Sp{\Sclique}^{2m+1}_2)^*_I$.

Let us then consider the case when $0 \in I$. Then $I = \{0, x\}$ for some $x \in E_m$, and $(\Sp{\Sclique}^{2m+1}_i)_I = \{S \cup \{x\} : S \in \Sp{\Smon}^m_i\}$.
We will specify an element $r \in \nset{m}$ and sets $S_1, S_2 \subseteq \nset{m}$ such that $\{r + 1, (r + 1)'\} \cap \{x\} = \emptyset$ and $\Sbuildall^m_{S_i} \cup \{x\} \subseteq \Sbuildmon^m_r \cup \{x\}$ for $i \in \{1, 2\}$.
This means that we can ignore $\Sbuildmon^m_r \cup \{x\}$ when we consider the minimal blocks of $(\Sp{\Sclique}^{2m+1}_1)_I$ and $(\Sp{\Sclique}^{2m+1}_2)_I$.
The transposition or $r + 1$ and $(r + 1)'$ will be an isomorphism between $(\Sp{\Sclique}^{2m+1}_1)^*_I$ and $(\Sp{\Sclique}^{2m+1}_2)^*_I$.

If $x = p$ for some $p \in \nset{m}$, then choose $r := p$ and, according to the parity of $m$, choose $S_1$ and $S_2$ among the sets $\nset{m}$ and $\nset{m} \setminus \{p + 2\}$.

If $x = p'$ for some $p \in \nset{m}$, then choose $r := p$ and, according to the parity of $m$, choose $S_1$ and $S_2$ among the sets $\nset{m} \setminus \{p\}$ and $\nset{m} \setminus \{p, p + 2\}$.
\end{proof}

\begin{proposition}
\label{prop:U2m+2hypo}
For all $m \geq 3$, the Sperner systems $\Sp{\Sclique}^{2m+2}_1$ and $\Sp{\Sclique}^{2m+2}_2$ are strongly hypomorphic.
\end{proposition}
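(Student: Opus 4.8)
The plan is to treat separately the pairs $I$ with $0' \notin I$ and those with $0' \in I$, reducing the first kind to Proposition~\ref{prop:U2m+1hypo} and handling the second by a direct computation of the cards.

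\emph{The case $0' \notin I$.} Here $I \in \couples[E_m^0]$. Since $0'$ forms a singleton class modulo $\theta_I$, it lies in $\{0,0'\}_I$ but in no other block of $(\Sp{\Sclique}^{2m+2}_i)_I$, all of which arise from blocks $S \cup \{0\}$ with $S \subseteq E_m$; hence $\{0,0'\}_I$ is incomparable with every other block and
\[
(\Sp{\Sclique}^{2m+2}_i)^*_I = (\Sp{\Sclique}^{2m+1}_i)^*_I \cup \{\{0,0'\}_I\}
\qquad (i \in \{1,2\}).
\]
By Proposition~\ref{prop:U2m+1hypo} there is an isomorphism $(\Sp{\Sclique}^{2m+1}_1)^*_I \to (\Sp{\Sclique}^{2m+1}_2)^*_I$, and a glance at its construction shows it may be taken to fix the $\theta_I$-class of $0$ (it is the identity on $0$ when $0 \notin I$, and a transposition $\tau_{r+1}$ disjoint from that class when $0 \in I$). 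Extending such an isomorphism by fixing the class of $0'$ carries $\{0,0'\}_I$ to itself, and therefore yields an isomorphism of the cards of $\Sp{\Sclique}^{2m+2}_1$ and $\Sp{\Sclique}^{2m+2}_2$.

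\emph{The case $0' \in I$.} Write $I = \{0', z\}$. If $z = 0$, then $\{0,0'\}_I$ is a singleton, contained in every other block of $(\Sp{\Sclique}^{2m+2}_i)_I$, so $(\Sp{\Sclique}^{2m+2}_i)^*_I$ equals that singleton for both $i$. Assume $z \in E_m$, and let $\hat z := z / \theta_I = \{0', z\}$. Every block of $\Sp{\Sclique}^{2m+2}_i$ other than $\{0,0'\}$ has at least four elements, so after contraction it still has more than two; moreover $\{0,0'\}_I = \{0, \hat z\}$, a contracted block $(S \cup \{0\})_I$ (with $S \in \Sp{\Smon}^m_i$) contains $\{0, \hat z\}$ iff $z \in S$, and equals $S \cup \{0\}$ when $z \notin S$. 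Consequently
\[
(\Sp{\Sclique}^{2m+2}_i)^*_I = \{\{0, \hat z\}\} \cup \{\, S \cup \{0\} : S \in \Sp{\Smon}^m_i,\ z \notin S \,\}
\qquad (i \in \{1,2\}).
\]
Let $p \in \nset{m}$ be the index with $z \in \{p, p'\}$, and consider $\tau_{p+2}$ (which makes sense as $m \geq 3$), extended to the contracted ground set by fixing $0$ and $\hat z$. Since $\tau_{p+2}$ fixes $p$ and $p'$, it fixes $z$, so $z \notin S \Leftrightarrow z \notin \tau_{p+2}(S)$; by Remark~\ref{rem:G1G2isomorphic} it maps $\Sp{\Sbuildall}^m_1$ onto $\Sp{\Sbuildall}^m_2$; and a direct computation shows that it fixes every block of $\Sp{\Sbuildmon}^m$ except $\Sbuildmon^m_{p+1}$, which it sends to a set lying outside $\Sp{\Sbuildmon}^m$. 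Because both $\Sbuildmon^m_{p+1}$ and its image contain $z$, they are immaterial once one restricts to blocks avoiding $z$; hence $\tau_{p+2}$ maps $\{\, S \in \Sp{\Smon}^m_1 : z \notin S \,\}$ bijectively onto $\{\, S \in \Sp{\Smon}^m_2 : z \notin S \,\}$. Together with its fixing $0$ and $\{0,\hat z\}$, this is the required isomorphism of cards.

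The main obstacle is the second case: one has to pin down exactly which contracted blocks survive as minimal elements once the small block $\{0,\hat z\}$ enters, and one has to check that $\tau_{p+2}$ disturbs $\Sp{\Sbuildmon}^m$ in only one block, that this block $\Sbuildmon^m_{p+1}$ is moved outside $\Sp{\Sbuildmon}^m$, and yet that $\Sbuildmon^m_{p+1}$ (and hence its image) still contains $z$. The first case is routine once one records that the isomorphism supplied by Proposition~\ref{prop:U2m+1hypo} fixes the class of $0$.
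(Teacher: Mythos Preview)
Your proof is correct and follows essentially the same approach as the paper: split into the cases $0' \notin I$ and $0' \in I$, reduce the former to Proposition~\ref{prop:U2m+1hypo} after observing that the isomorphism there fixes the class of $0$, and in the latter compute the card explicitly and use the transposition $\tau_{p+2}$ (the paper's $\tau_{r+2}$) as the required isomorphism. Your write-up is in fact a bit more explicit than the paper's in checking that $\Sbuildmon^m_{p+1}$ contains $z$ and that the isomorphism from Proposition~\ref{prop:U2m+1hypo} fixes the class of $0$, but the argument is the same.
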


\begin{proof}
Let $I \in \couples[E_m^{0, 0'}]$.
If $0' \notin I$, then for each $i \in \{1, 2\}$, it holds that $(\Sp{\Sclique}^{2m+2}_i)_I = (\Sp{\Sclique}^{2m+1}_i)_I \cup \{0, 0'\}$; hence $(\Sp{\Sclique}^{2m+2}_i)^*_I = (\Sp{\Sclique}^{2m+1}_i)^*_I \cup \{0, 0'\}$.
By Proposition~\ref{prop:U2m+1hypo}, $(\Sp{\Sclique}^{2m+2}_i)^*_I$ and $(\Sp{\Sclique}^{2m+1}_i)^*_I$ are isomorphic, an isomorphism being given by the transposition $\tau_s$ for some $s \in \nset{m}$.
Consequently, $\tau_s$ is an isomorphism between $(\Sp{\Sclique}^{2m+2}_1)_I$ and $(\Sp{\Sclique}^{2m+2}_2)_I$.

Let us then consider the case when $0' \in I$.
If $I = \{0, 0'\}$, then $(\Sp{\Sclique}^{2m+2}_1)^*_I = \{\{0\}\} = (\Sp{\Sclique}^{2m+2}_2)^*_I$.
If $I = \{0', x\}$ for some $x \in E_m$, then
$(\Sp{\Sclique}^{2m+2}_i)^*_I = \{\{0, x\}\} \cup \{S \cup \{0\} : S \in \Sp{\Smon}^m_i, x \notin S\}$.
Furthermore, $x \in \{r, r'\}$ for some $r \in \nset{m}$ and $\Sbuildmon^m_{r+1} \cup \{0\} \notin (\Sp{\Sclique}^{2m+2}_i)^*_I$.
Therefore the transposition of $r + 2$ and $(r + 2)'$ is an isomorphism between $(\Sp{\Sclique}^{2m+2}_1)^*_I$ and $(\Sp{\Sclique}^{2m+2}_2)^*_I$.
\end{proof}

\subsection{Construction for self-dual functions}

Our next construction will result in Sperner systems whose associated Boolean functions are members of the clone $SM$ of self-dual monotone functions.
We are going to use again the systems $\Sp{\Sbuildall}^m_1$ and $\Sp{\Sbuildall}^m_2$ as building blocks, and we are going to add to these systems some new blocks, which will destroy the isomorphism but maintain hypomorphism as well as $m$-homogeneity.
We need to develop some notation and tools in order to be able to efficiently specify which $m$-element subsets of $E_m$ we are going to choose as the blocks of our set systems, and to be able to show that the resulting systems have the desired properties.

\begin{definition}
Let $X, Y \subseteq \nset{m}$ with $X \cap Y = \emptyset$. We define $\XYset{X}{Y}$ by
\begin{align*}
  \XYset{X}{Y}  := \{ J \subseteq E_m :{} & \forall x \in X \colon \card{\{x,x'\} \cap J} = 2, \\
                                        & \forall y \in Y \colon \card{\{y,y'\} \cap J} = 0, \\
        & \forall z \in \nset{m} \setminus (X \cup Y) \colon \card{\{z,z'\} \cap J} = 1 \}.
\end{align*}
For notational simplicity, we will write $\XYrot{X}{Y}$ for $\rot{\XYset{X}{Y}}$.
\end{definition}

Recall that $\rot{\Sp{\Sgena}} = \bigcup_{i \in \nset{m}} \rho^i(\Sp{\Sgena})$, and
observe that
\[
\XYrot{X}{Y} = \bigcup_{q \in \nset{m}} \XYset{X + q}{Y + q}.
\]

\begin{example}
Let $m = 5$. Then
\begin{align*}
\XYset{\{1\}}{\{2,3\}} = \{ & 
    \{1,1',4,5\}, \{1,1',4,5'\},\{1,1',4',5\}, \{1,1',4',5'\} \}, \\
\XYrot{\{1,2\}}{\{3,4\}} = \{ & 
   \{1,1',2,2',5\}, \{1,1',2,2',5'\}, \{2,2',3,3',1\}, \{2,2',3,3',1'\}, \\
&  \{3,3',4,4',2\}, \{3,3',4,4',2'\}, \{4,4',5,5',3\}, \{4,4',5,5',3'\}, \\
&  \{5,5',1,1',4\}, \{5,5',1,1',4'\} \}.
\end{align*}
\end{example}

\begin{remark}
\label{rem:emptyemptyG}
$\XYset{\emptyset}{\emptyset} = \XYrot{\emptyset}{\emptyset} = \Sp{\Sbuildall}^m_1 \cup \Sp{\Sbuildall}^m_2$.
\end{remark}

\begin{lemma}
\label{lem:XYXqYq}
For all $X, Y \subseteq \nset{m}$ with $X \cap Y = \emptyset$ and $q \in [m]$ we have $\XYrot{X}{Y} = \XYrot{X+q}{Y+q}$.
\end{lemma}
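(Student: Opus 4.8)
The plan is to unwind the definition of $\XYrot{X}{Y}$ and reduce the claim to the observation that the rotation $\rho$ permutes the "super-blocks" $\XYset{X'}{Y'}$ cyclically, so that taking the union over a full orbit is insensitive to the starting point. Concretely, recall from the displayed identity just before the lemma that
\[
\XYrot{X}{Y} = \bigcup_{p \in \nset{m}} \XYset{X + p}{Y + p}.
\]
Applying this with $X + q$ and $Y + q$ in place of $X$ and $Y$ gives
\[
\XYrot{X+q}{Y+q} = \bigcup_{p \in \nset{m}} \XYset{(X + q) + p}{(Y + q) + p} = \bigcup_{p \in \nset{m}} \XYset{X + (p + q)}{Y + (p + q)}.
\]

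First I would note that since $+$ denotes modulo-$m$ addition, the map $p \mapsto p + q$ is a bijection of $\nset{m}$ onto itself; hence as $p$ ranges over $\nset{m}$, so does $p + q$. Therefore the right-hand side of the last display is exactly $\bigcup_{p' \in \nset{m}} \XYset{X + p'}{Y + p'}$, which by the same displayed identity equals $\XYrot{X}{Y}$. This gives $\XYrot{X}{Y} = \XYrot{X+q}{Y+q}$, as desired. One should also remark, for the statement to make sense, that $(X + q) \cap (Y + q) = \emptyset$ whenever $X \cap Y = \emptyset$, since translation by $q$ modulo $m$ is a bijection and so preserves disjointness; this guarantees that $\XYset{X+q}{Y+q}$ (and hence $\XYrot{X+q}{Y+q}$) is well-defined.

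Alternatively, and perhaps more transparently, I would argue directly from $\XYrot{X}{Y} = \rot{\XYset{X}{Y}} = \bigcup_{i \in \nset{m}} \rho^i(\XYset{X}{Y})$: since $\rho$ acts on $E_m$ by $j \mapsto j + 1$ and $j' \mapsto (j+1)'$, one checks that $\rho^q(\XYset{X}{Y}) = \XYset{X+q}{Y+q}$ (the condition defining membership in $\XYset{X}{Y}$ is stated coordinatewise on pairs $\{z, z'\}$, and $\rho^q$ just relabels $z \mapsto z + q$), and then $\bigcup_{i} \rho^i(\XYset{X+q}{Y+q}) = \bigcup_i \rho^{i+q}(\XYset{X}{Y}) = \bigcup_i \rho^i(\XYset{X}{Y})$ because $\{\rho^i : i \in \nset{m}\}$ is closed under composition with $\rho^q$.

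There is essentially no obstacle here: the result is a bookkeeping lemma asserting that a union over a cyclic orbit does not depend on the orbit representative, and the only point requiring a word of care is confirming that arithmetic is taken modulo $m$ so that $p \mapsto p + q$ is a bijection of $\nset{m}$. I would therefore keep the proof to two or three lines citing the displayed formula for $\XYrot{X}{Y}$ and the bijectivity of translation modulo $m$.
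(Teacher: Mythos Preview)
Your proposal is correct and is essentially the same approach as the paper's: the paper simply writes ``Follows immediately from the definition,'' and what you have done is spell out exactly that immediate consequence, using the displayed identity $\XYrot{X}{Y} = \bigcup_{p \in \nset{m}} \XYset{X + p}{Y + p}$ and the bijectivity of translation modulo $m$. Your two or three line version citing that formula is precisely what is intended.
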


\begin{proof}
Follows immediately from the definition.
\end{proof}

\begin{lemma}
\label{lem:XYdisjoint}
Let $X_1, Y_1, X_2, Y_2 \subseteq \nset{m}$ be sets such that $X_i \cap Y_i = \emptyset$ for $i \in \{1, 2\}$.
Then either $\XYrot{X_1}{Y_1} = \XYrot{X_2}{Y_2}$ or $\XYrot{X_1}{Y_1} \cap \XYrot{X_2}{Y_2} = \emptyset$.
\end{lemma}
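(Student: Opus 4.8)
The plan is to show that membership of a single set $J \in \XYrot{X_1}{Y_1}$ forces the two rotation-classes to coincide, which gives the dichotomy. First I would unwind the definitions: by the displayed identity following the definition of $\XYrot{\cdot}{\cdot}$, a set $J$ lies in $\XYrot{X}{Y}$ if and only if there is some $q \in \nset{m}$ such that, writing $D(J) := \{z \in \nset{m} : \card{\{z,z'\} \cap J} = 1\}$, $T(J) := \{z \in \nset{m} : \card{\{z,z'\} \cap J} = 2\}$, and $Z(J) := \{z \in \nset{m} : \card{\{z,z'\} \cap J} = 0\}$, we have $T(J) = X + q$ and $Z(J) = Y + q$. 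In other words, the unordered triple of "shifted" sets $(T(J), Z(J), D(J))$ is exactly the rotation by $q$ of the triple $(X, Y, \nset{m} \setminus (X \cup Y))$. The key observation is that $J$ itself determines the pair $(T(J), Z(J))$, hence determines $X$ and $Y$ up to a common cyclic shift.

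The main step is then: suppose $J \in \XYrot{X_1}{Y_1} \cap \XYrot{X_2}{Y_2}$. Applying the above to each class, there exist $q_1, q_2 \in \nset{m}$ with $T(J) = X_1 + q_1 = X_2 + q_2$ and $Z(J) = Y_1 + q_1 = Y_2 + q_2$. Setting $q := q_2 - q_1$ (modulo $m$), we get $X_2 = X_1 + q$ and $Y_2 = Y_1 + q$ simultaneously. By Lemma~\ref{lem:XYXqYq}, $\XYrot{X_2}{Y_2} = \XYrot{X_1 + q}{Y_1 + q} = \XYrot{X_1}{Y_1}$. Thus if the intersection is nonempty the two classes are equal, which is precisely the stated dichotomy.

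The only mild subtlety — and the step I expect to need the most care — is checking that the pair $(T(J), Z(J))$ really is well-defined from $J$ alone and that $X, Y$ are recovered from it without ambiguity beyond the cyclic shift; this is immediate once one notes that for each $z \in \nset{m}$ the quantity $\card{\{z, z'\} \cap J}$ is one of $0, 1, 2$ and these three cases partition $\nset{m}$ into $Z(J)$, $D(J)$, $T(J)$ respectively. One should also note that the equality $X_1 + q_1 = X_2 + q_2$ and $Y_1 + q_1 = Y_2 + q_2$ hold as genuine set equalities (translation by a fixed element of $\nset{m}$ being a bijection), so no cardinality or parity bookkeeping is needed. Everything else is a direct substitution, so the proof is short and the real content is the identification of $(T(J), Z(J))$ as a rotation-invariant invariant of $J$.
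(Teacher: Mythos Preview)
Your proposal is correct and follows essentially the same approach as the paper's proof: both pick $J$ in the intersection, extract shifts $q_1, q_2$ from the definition of $\XYrot{\cdot}{\cdot}$, deduce that $X_1 + q_1 = X_2 + q_2$ and $Y_1 + q_1 = Y_2 + q_2$ from the fact that $J$ determines the partition of $\nset{m}$ by the values $\card{\{z,z'\} \cap J}$, and then invoke Lemma~\ref{lem:XYXqYq}. Your explicit naming of $T(J)$, $Z(J)$, $D(J)$ makes the invariant slightly more visible than in the paper, but the content is the same; note only the harmless sign slip where $q := q_2 - q_1$ should be $q := q_1 - q_2$ to get $X_2 = X_1 + q$.
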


\begin{proof}
Assume $\XYrot{X_1}{Y_1} \cap \XYrot{X_2}{Y_2} \neq \emptyset$, and let $J \in \XYrot{X_1}{Y_1} \cap \XYrot{X_2}{Y_2}$.
Then there are some $q_1, q_2 \in \nset{m}$ with $J \in (\XYset{X_1+q_1}{Y_1+q_1}) \cap (\XYset{X_2+q_2}{Y_2+q_2})$.
We have for $i \in \{1, 2\}$
\begin{gather*}
    \forall x \in X_i + q_i \colon \card{\{x, x'\} \cap J} = 2, \\
    \forall z \in \nset{m} \setminus (X_i + q_i) \colon \card{\{z, z'\} \cap J} \leq 1.
\end{gather*}
This implies $X_1 +q_1 = X_2 + q_2$, that is, $X_2 = X_1 + (q_1-q_2)$.
Similarly, we get $Y_1 +q_1 = Y_2 + q_2$, that is, $Y_2 = Y_1 + (q_1-q_2)$.
  
Thus $\XYrot{X_2}{Y_2} = \XYrot{X_1 + (q_1-q_2)}{Y_1 + (q_1-q_2)} = \XYrot{X_1}{Y_1}$, where the last equality holds by Lemma~\ref{lem:XYXqYq}.
\end{proof}

\begin{lemma}
\label{lem:XYcard}
For all $X, Y \subseteq \nset{m}$ such that $X \cap Y = \emptyset$, the set system $\XYrot{X}{Y}$ is $k$-homogeneous for $k = m + \card{X} - \card{Y}$.
\end{lemma}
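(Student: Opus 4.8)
The plan is to reduce the statement to a single cardinality count on one ``fibre'' $\XYset{X}{Y}$ and then transport it along the rotations.

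First I would observe that the pairs $\{z, z'\}$ for $z \in \nset{m}$ partition the ground set $E_m$ into $m$ two-element blocks. By the very definition of $\XYset{X}{Y}$, a set $J \in \XYset{X}{Y}$ meets each such pair in exactly $2$ elements when $z \in X$, in exactly $0$ elements when $z \in Y$, and in exactly $1$ element when $z \in \nset{m} \setminus (X \cup Y)$. Summing over the $m$ pairs yields
\[
\card{J} = 2\card{X} + 0 \cdot \card{Y} + 1 \cdot \bigl(m - \card{X} - \card{Y}\bigr) = m + \card{X} - \card{Y},
\]
independently of which set $J$ of the prescribed form we picked. Hence $\XYset{X}{Y}$ is $k$-homogeneous for $k = m + \card{X} - \card{Y}$.

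Next I would invoke the identity $\XYrot{X}{Y} = \bigcup_{q \in \nset{m}} \XYset{X+q}{Y+q}$ recorded right after the definition. Since $z \mapsto z + q$ is a permutation of $\nset{m}$, the sets $X + q$ and $Y + q$ are again disjoint and satisfy $\card{X + q} = \card{X}$ and $\card{Y + q} = \card{Y}$; so, applying the previous paragraph with $X + q$, $Y + q$ in place of $X$, $Y$, every block of $\XYset{X+q}{Y+q}$ has cardinality $m + \card{X + q} - \card{Y + q} = m + \card{X} - \card{Y} = k$. As each block of $\XYrot{X}{Y}$ lies in some $\XYset{X+q}{Y+q}$, all blocks of $\XYrot{X}{Y}$ have cardinality $k$, which is the claim.

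There is essentially no obstacle here; the only thing needing a word of care is that $X + q$ and $Y + q$ stay disjoint so that $\XYset{X+q}{Y+q}$ is legitimately defined, and this is immediate because $z \mapsto z + q$ is a bijection of $\nset{m}$. (Alternatively, one can bypass the fibre identity and note directly that $\rho$ permutes the partition $\{\{z, z'\} : z \in \nset{m}\}$ of $E_m$, so $\rho^q$ carries a block of $\XYset{X}{Y}$ to an equinumerous block of $\XYset{X+q}{Y+q}$; this gives the same conclusion.)
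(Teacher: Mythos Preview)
Your proof is correct and follows essentially the same approach as the paper: compute $\card{J}$ by summing the contributions from the pairs $\{z,z'\}$, getting $2\card{X} + (m - \card{X} - \card{Y}) = m + \card{X} - \card{Y}$. The paper's proof states this count directly for an arbitrary $J \in \XYrot{X}{Y}$ in a single line, whereas you are more explicit in first treating $\XYset{X}{Y}$ and then passing to the rotated union via $\card{X+q}=\card{X}$, $\card{Y+q}=\card{Y}$; this extra care is fine but not a different method.
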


\begin{proof}
For every $J \in \XYrot{X}{Y}$, it holds that
\[
\card{J} = 2 \card{X} + \card{\nset{m} \setminus (X \cup Y)}
= 2 \card{X} + m - (\card{X} + \card{Y})
= m + \card{X} - \card{Y}.
\qedhere
\]
\end{proof}

Let $\QsetsE := \{ \XYrot{X}{Y} : \card{X} = \card{Y}, X \cap Y = \emptyset \}$ 
and $\Qsets  := \QsetsE \setminus \{ \XYrot{\emptyset}{\emptyset} \}$.

\begin{lemma}
\label{lem:Qsetsunion}
Let $J \subseteq E_m$.
Then $\card{J} = m$ if and only if there is some $\XYrot{X}{Y} \in \QsetsE$ with $J \in \XYrot{X}{Y}$.
\end{lemma}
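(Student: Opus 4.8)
The statement is an ``if and only if'' about $m$-element subsets of $E_m$, so I would prove the two directions separately. The non-trivial direction is that every $J \subseteq E_m$ with $\card{J} = m$ lies in some $\XYrot{X}{Y} \in \QsetsE$; the converse is essentially Lemma~\ref{lem:XYcard}. For the converse, suppose $J \in \XYrot{X}{Y}$ with $\XYrot{X}{Y} \in \QsetsE$, i.e.\ $\card{X} = \card{Y}$ and $X \cap Y = \emptyset$. Then by Lemma~\ref{lem:XYcard}, $\XYrot{X}{Y}$ is $k$-homogeneous for $k = m + \card{X} - \card{Y} = m$, hence $\card{J} = m$.

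For the forward direction, let $J \subseteq E_m$ with $\card{J} = m$. The idea is to read off the ``shape'' of $J$ directly. Partition $\nset{m}$ according to how $J$ meets each pair $\{z, z'\}$: let
\[
X := \{ z \in \nset{m} : \card{\{z, z'\} \cap J} = 2 \}, \qquad
Y := \{ z \in \nset{m} : \card{\{z, z'\} \cap J} = 0 \},
\]
and let $Z := \nset{m} \setminus (X \cup Y)$ be the set of indices $z$ with $\card{\{z, z'\} \cap J} = 1$. Clearly $X \cap Y = \emptyset$. Counting elements of $J$ by pairs gives
\[
m = \card{J} = 2\card{X} + 0 \cdot \card{Y} + 1 \cdot \card{Z} = 2\card{X} + (m - \card{X} - \card{Y}),
\]
which simplifies to $\card{X} = \card{Y}$. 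By the very definition of $\XYset{X}{Y}$, the set $J$ satisfies all three defining conditions (two elements from each pair indexed by $X$, none from each pair indexed by $Y$, exactly one from each pair indexed by $Z = \nset{m} \setminus (X \cup Y)$), so $J \in \XYset{X}{Y} \subseteq \XYrot{X}{Y}$. Since $\card{X} = \card{Y}$ and $X \cap Y = \emptyset$, we have $\XYrot{X}{Y} \in \QsetsE$, as required.

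I do not expect any serious obstacle here; the statement is really just the observation that the sets $\XYrot{X}{Y}$ with $\card{X} = \card{Y}$ exhaust the $m$-homogeneous subsets of $E_m$, and the only computation needed is the one-line counting argument $2\card{X} + \card{Z} = m$ that forces $\card{X} = \card{Y}$. The mild point to be careful about is simply that the three cases $\card{\{z,z'\} \cap J} \in \{0,1,2\}$ genuinely partition $\nset{m}$ (since $\{z,z'\}$ has only two elements) and that membership in $\XYset{X}{Y}$ is defined by exactly these local conditions, so the definitions match up verbatim.
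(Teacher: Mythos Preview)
Your proof is correct and essentially identical to the paper's: both define $X$ and $Y$ by the intersection counts $\card{\{z,z'\}\cap J}$ and then use the cardinality relation $\card{J}=2\card{X}+\card{Z}=m+\card{X}-\card{Y}$ (which is Lemma~\ref{lem:XYcard}) to force $\card{X}=\card{Y}$. The only cosmetic difference is that the paper phrases the last step as a contradiction via Lemma~\ref{lem:XYcard}, whereas you count directly.
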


\begin{proof}
Assume first that $J \in \XYrot{X}{Y} \in \QsetsE$. Then $\card{X} = \card{Y}$, and $X \cap Y = \emptyset$.
By Lemma~\ref{lem:XYcard}, we have $\card{J} = m + \card{X} - \card{Y} = m$.

For the converse implication, assume that $\card{J} = m$.
Set $X := \{ x \in \nset{m} : x, x' \in J \}$ and $Y := \{ y \in \nset{m} : y, y' \notin J \}$. Then $X \cap Y = \emptyset$ and $J \in \XYrot{X}{Y}$.
Suppose, on the contrary, that $\card{X} \neq \card{Y}$.
Then Lemma~\ref{lem:XYcard} yields $\card{J} = m + \card{X} - \card{Y} \neq m$, which contradicts our assumption that $\card{J} = m$.
Thus $\card{X} = \card{Y}$, and we conclude that $\XYrot{X}{Y} \in \QsetsE$.
\end{proof}

\begin{lemma}
\label{lem:XYcomplements}
For all $X, Y \subseteq \nset{m}$ and $J \subseteq E_m$, it holds that $J \in \XYset{X}{Y}$ if and only if $\complementX{J} \in \XYset{Y}{X}$.
Furthermore, $J \in \XYrot{X}{Y}$ if and only if $\complementX{J} \in \XYrot{Y}{X}$.
\end{lemma}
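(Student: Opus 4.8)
The plan is to unwind both claims directly from the definition of $\XYset{X}{Y}$, using the single elementary observation that, for each $i \in \nset{m}$, the two-element set $\{i, i'\}$ is split between $J$ and $\complementX{J}$, so that $\card{\{i,i'\} \cap \complementX{J}} = 2 - \card{\{i,i'\} \cap J}$.

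First I would handle the statement about $\XYset{}$. Fix $X, Y \subseteq \nset{m}$ with $X \cap Y = \emptyset$ and $J \subseteq E_m$. By the observation above, the condition $\card{\{i,i'\} \cap J} = 2$ is equivalent to $\card{\{i,i'\} \cap \complementX{J}} = 0$, the condition $\card{\{i,i'\} \cap J} = 0$ is equivalent to $\card{\{i,i'\} \cap \complementX{J}} = 2$, and $\card{\{i,i'\} \cap J} = 1$ is equivalent to $\card{\{i,i'\} \cap \complementX{J}} = 1$. Applying these equivalences clause by clause to the three defining conditions of $J \in \XYset{X}{Y}$ --- the one indexed by $x \in X$, the one indexed by $y \in Y$, and the one indexed by $z \in \nset{m} \setminus (X \cup Y)$, using that $\nset{m} \setminus (X \cup Y) = \nset{m} \setminus (Y \cup X)$ --- turns them into exactly the three defining conditions of $\complementX{J} \in \XYset{Y}{X}$ (with the roles of the ``cardinality $2$'' clause and the ``cardinality $0$'' clause interchanged). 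Hence $J \in \XYset{X}{Y}$ if and only if $\complementX{J} \in \XYset{Y}{X}$.

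For the ``furthermore'' part I would use that complementation with respect to $E_m$ commutes with the rotation $\rho$: since $\rho$ is a permutation of $E_m$, we have $\rho(\complementX{K}) = \complementX{\rho(K)}$ for every $K \subseteq E_m$, and hence $\rho^i(\complementX{K}) = \complementX{\rho^i(K)}$ for all $i$. Now, recalling $\XYrot{X}{Y} = \bigcup_{i \in \nset{m}} \rho^i(\XYset{X}{Y})$, we get: $J \in \XYrot{X}{Y}$ iff $J = \rho^i(K)$ for some $i \in \nset{m}$ and some $K \in \XYset{X}{Y}$, iff $\complementX{J} = \complementX{\rho^i(K)} = \rho^i(\complementX{K})$ with $\complementX{K} \in \XYset{Y}{X}$ (by the first part), iff $\complementX{J} \in \rho^i(\XYset{Y}{X})$ for some $i \in \nset{m}$, iff $\complementX{J} \in \XYrot{Y}{X}$. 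Equivalently, one may phrase this as $\complementX{\rot{\mathcal{S}}} = \rot{\complementX{\mathcal{S}}}$ for any set system $\mathcal{S}$ over $E_m$, applied to $\mathcal{S} = \XYset{X}{Y}$ together with the identity $\complementX{\XYset{X}{Y}} = \XYset{Y}{X}$ just established.

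There is no genuine obstacle here; the proof is a direct computation. The only points that require a moment's attention are the bookkeeping that complementation swaps the ``cardinality $2$'' and ``cardinality $0$'' conditions while fixing the ``cardinality $1$'' condition, and the remark that $\rho$, being a bijection of $E_m$, intertwines with complementation --- which is precisely what propagates the $\XYset{}$-statement to the $\XYrot{}$-statement.
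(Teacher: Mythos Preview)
Your proof is correct and is exactly the direct unwinding of the definition that the paper has in mind; the paper's own proof simply reads ``Follows immediately from the definition.'' Your explicit bookkeeping of how complementation swaps the cardinality-$2$ and cardinality-$0$ clauses, together with the observation that $\rho$ commutes with complementation, is precisely what makes that one-line proof work.
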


\begin{proof}
Follows immediately from the definition.
\end{proof}

\begin{lemma}
\label{lem:gcd}
Let $X \subseteq \nset{m}$ and $q \in \mathbb{Z}$. Let $d = \gcd(q, m)$.
\begin{compactenum}[\rm (i)]
\item\label{item:gcd:1}
$X = X + q$ if and only if $X = \bigcup_{x \in X} (x + q \mathbb{Z} / m)$.

\item\label{item:gcd:2}
$q \mathbb{Z} / m = d \mathbb{Z} / m$.

\item\label{item:gcd:3}
Consequently, $X = X + q$ if and only if $X = X + d$.
\end{compactenum}
\end{lemma}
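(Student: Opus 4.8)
The approach is to identify $\nset{m}$ with the cyclic group $\mathbb{Z}/m\mathbb{Z}$ under modulo-$m$ addition. Under this identification $q\mathbb{Z}/m$ is the cyclic subgroup generated by $q$, and for $x \in \nset{m}$ the coset $x + q\mathbb{Z}/m$ is exactly the orbit $\{x, x + q, x + 2q, \dots\}$ of $x$ under the shift map $\pi_q \colon t \mapsto t + q$. With this reading, \eqref{item:gcd:1} is the assertion that $X$ is invariant under $\pi_q$ (that is, $X = X + q$) precisely when $X$ is a union of $\pi_q$-orbits, \eqref{item:gcd:2} is the elementary fact that $q$ and $\gcd(q, m)$ generate the same subgroup of $\mathbb{Z}/m\mathbb{Z}$, and \eqref{item:gcd:3} is then obtained simply by substituting \eqref{item:gcd:2} into \eqref{item:gcd:1}.

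For \eqref{item:gcd:1} I would argue both inclusions. Suppose first $X = X + q$. Iterating gives $X = X + kq$ for every $k \geq 0$, and since $\pi_q$ is a bijection of the finite set $\nset{m}$ this extends to all $k \in \mathbb{Z}$; hence for each $x \in X$ the entire coset $x + q\mathbb{Z}/m$ lies in $X$, so $\bigcup_{x \in X}(x + q\mathbb{Z}/m) \subseteq X$, while the reverse inclusion is immediate from $0 \in q\mathbb{Z}/m$. Conversely, if $X = \bigcup_{x \in X}(x + q\mathbb{Z}/m)$, then adding $q$ maps each of these cosets onto itself, so $X + q = \bigcup_{x \in X}\bigl((x + q\mathbb{Z}/m) + q\bigr) = \bigcup_{x \in X}(x + q\mathbb{Z}/m) = X$.

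For \eqref{item:gcd:2}, which is the standard description of subgroups of a cyclic group: since $d \mid q$ we have $q \in d\mathbb{Z}/m$, hence $q\mathbb{Z}/m \subseteq d\mathbb{Z}/m$; conversely B\'ezout's identity yields integers $a, b$ with $aq + bm = d$, so $d \equiv aq \pmod{m}$ and thus $d \in q\mathbb{Z}/m$, giving $d\mathbb{Z}/m \subseteq q\mathbb{Z}/m$. Finally \eqref{item:gcd:3} follows: by \eqref{item:gcd:1}, $X = X + q$ iff $X = \bigcup_{x \in X}(x + q\mathbb{Z}/m)$; by \eqref{item:gcd:2} this union equals $\bigcup_{x \in X}(x + d\mathbb{Z}/m)$, which by \eqref{item:gcd:1} applied with $d$ in place of $q$ holds iff $X = X + d$. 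No step here presents a genuine difficulty; the only points needing a little care are making the identification $\nset{m} \cong \mathbb{Z}/m\mathbb{Z}$ precise and noting that finiteness upgrades invariance under a single shift by $q$ to invariance under all shifts by integer multiples of $q$, which is exactly what makes the $\pi_q$-orbits coincide with the full cosets of $q\mathbb{Z}/m$.
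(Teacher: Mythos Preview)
Your proof is correct and follows essentially the same approach as the paper: both argue \eqref{item:gcd:1} by iterating the shift to pass from $X = X + q$ to invariance under all multiples of $q$, prove \eqref{item:gcd:2} via divisibility for one inclusion and B\'ezout's identity for the other, and deduce \eqref{item:gcd:3} by substituting \eqref{item:gcd:2} into the characterization from \eqref{item:gcd:1}. Your framing in terms of orbits and cyclic subgroups is slightly more explicit than the paper's, but the underlying argument is the same.
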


\begin{proof}
\eqref{item:gcd:1}
If $X = \bigcup_{x \in X} (x + q \mathbb{Z} /m)$, then it clearly holds that $X = X + q$.
If $X = X + q$, then an easy induction shows that $x \in X$ implies $x + nq \in X$ for all $n \in \mathbb{Z}$. Consequently, $X = \bigcup_{x \in X} (x + q \mathbb{Z} / m)$.

\eqref{item:gcd:2}
Since $q = cd$ for some $c \in \mathbb{Z}$, it holds that $nq = ncd$ for any $n \in \mathbb{Z}$. Hence $q \mathbb{Z} / m \subseteq d \mathbb{Z} / m$.
It is well known that there exist numbers $\alpha, \beta \in \mathbb{Z}$ such that $\alpha q + \beta m = d$.
Thus $nd = n \alpha q + n \beta m \equiv n \alpha q \pmod{m}$ for any $n \in \mathbb{Z}$. Hence $d \mathbb{Z} / m \subseteq q \mathbb{Z} / m$.

\eqref{item:gcd:3}
By part~\eqref{item:gcd:1}, $X = X + q$ if and only if $X = \bigcup_{x \in X} (x + q \mathbb{Z} / m)$. By part~\eqref{item:gcd:2}, this condition is equivalent to $X = \bigcup_{x \in X} (x + d \mathbb{Z} / m)$, which, again by part~\eqref{item:gcd:1}, is equivalent to $X = X + d$.
\end{proof}

\begin{lemma}
\label{lem:noComplement}
Assume that $m$ is odd.
Let $\XYrot{X}{Y} \in \Qsets$ and $J \in \XYrot{X}{Y}$.
Then $\complementX{J} \notin \XYrot{X}{Y}$.
\end{lemma}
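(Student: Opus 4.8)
The plan is to reduce the statement to the disjointness of $\XYrot{X}{Y}$ and $\XYrot{Y}{X}$, and then to rule out $\XYrot{X}{Y} = \XYrot{Y}{X}$ by a short $\gcd$ computation that uses the oddness of $m$.

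First I would unpack membership in $\Qsets$: since $\XYrot{X}{Y} \in \Qsets$, we have $\card{X} = \card{Y}$, $X \cap Y = \emptyset$, and $(X, Y) \neq (\emptyset, \emptyset)$ --- for if $X = Y = \emptyset$ then $\XYrot{X}{Y} = \XYrot{\emptyset}{\emptyset}$, which is excluded from $\Qsets$. Next, by Lemma~\ref{lem:XYcomplements}, from $J \in \XYrot{X}{Y}$ we get $\complementX{J} \in \XYrot{Y}{X}$. Therefore, if $\complementX{J}$ were also in $\XYrot{X}{Y}$, then $\complementX{J} \in \XYrot{X}{Y} \cap \XYrot{Y}{X}$, so by Lemma~\ref{lem:XYdisjoint} we would have $\XYrot{X}{Y} = \XYrot{Y}{X}$. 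Hence it suffices to show $\XYrot{X}{Y} \neq \XYrot{Y}{X}$.

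Assume for contradiction that $\XYrot{X}{Y} = \XYrot{Y}{X}$, so these families are not disjoint. Running the argument from the proof of Lemma~\ref{lem:XYdisjoint}: picking $K$ in the intersection, we have $K \in \XYset{X+q_1}{Y+q_1}$ and $K \in \XYset{Y+q_2}{X+q_2}$ for some integers $q_1, q_2$. Matching the (uniquely $K$-determined) set of indices $z$ with $\card{\{z,z'\} \cap K} = 2$, and likewise the set with $\card{\{z,z'\} \cap K} = 0$, forces $X + q_1 = Y + q_2$ and $Y + q_1 = X + q_2$. Writing $r := q_2 - q_1$, this says $X = Y + r$ and $Y = X + r$ (all arithmetic modulo $m$).

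The finish is purely number-theoretic. Combining the two relations gives $X = X + 2r$. Since $m$ is odd, $\gcd(2, m) = 1$, so $\gcd(2r, m) = \gcd(r, m) =: d$; applying Lemma~\ref{lem:gcd}\eqref{item:gcd:3} twice, first with shift $2r$ and then with shift $r$, yields $X = X + 2r \iff X = X + d \iff X = X + r$. Thus $X = X + r = Y$, which together with $X \cap Y = \emptyset$ forces $X = Y = \emptyset$, contradicting $(X,Y) \neq (\emptyset,\emptyset)$. Hence $\XYrot{X}{Y} \neq \XYrot{Y}{X}$, so by Lemma~\ref{lem:XYdisjoint} these families are disjoint, and since $\complementX{J} \in \XYrot{Y}{X}$ we conclude $\complementX{J} \notin \XYrot{X}{Y}$. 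The only place where any care is needed is the step $\gcd(2r,m) = \gcd(r,m)$, which is exactly where the hypothesis that $m$ is odd enters; everything else is bookkeeping with the previously established lemmas.
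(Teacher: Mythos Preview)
Your proof is correct and follows essentially the same approach as the paper: both arrive at $X = Y + r$ and $Y = X + r$ for some shift $r$, deduce $X = X + 2r$, use the oddness of $m$ to get $\gcd(2r,m) = \gcd(r,m)$, apply Lemma~\ref{lem:gcd} to obtain $X = X + r = Y$, and conclude $X = Y = \emptyset$, a contradiction. The only cosmetic difference is that you route the opening through Lemmas~\ref{lem:XYcomplements} and~\ref{lem:XYdisjoint}, whereas the paper works directly from the definition of $\XYset{X}{Y}$ to obtain the same pair of equations.
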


\begin{proof}
Without loss of generality, we can assume that $J \in \XYset{X}{Y}$.
Suppose, on the contrary, that $\complementX{J} \in \XYrot{X}{Y}$. Then there is some $q \in \nset{m}$ with $\complementX{J} \in \XYset{X+q}{Y+q}$.

Let $j \in X$. Then $j, j' \in J$ and $j, j' \notin \complementX{J}$, and thus $j \in Y + q$.
Similarly, let $j \in Y$. Then $j, j' \notin J$ and $j, j' \in \complementX{J}$, and thus $j \in X + q$.
From this it follows that $X = Y + q$ and $Y = X + q$.
Consequently, $X = X + 2q$.

Because $m$ is odd, we have $\gcd(m,2q) = \gcd(m,q)$.
Lemma~\ref{lem:gcd} implies that $X = X + q$.
Thus $X = Y$.
Since $X \neq \emptyset$, we have $X \cap Y = X \neq \emptyset$, which is a contradiction.
We conclude that our assumption $\complementX{J} \in \XYrot{X}{Y}$ was wrong.
\end{proof}

The previous lemmas can be summarized as follows.

\begin{proposition}
\label{prop:qsetsPartition}
The family $\QsetsE$ is a partition of all $m$-element subsets of $E_m$,
and for $\XYrot{X}{Y} \in \Qsets$ we have 
$\XYrot{Y}{X} = \complementX{\left( \XYrot{X}{Y} \right)}$.
Furthermore, if $m$ is odd, then $\XYrot{X}{Y} \neq \XYrot{Y}{X}$, i.e.,
a subset $S \subseteq E_m$ and its complement $\complementX{S}$ are in distinct parts, unless $S \in \XYrot{\emptyset}{\emptyset}$.
\end{proposition}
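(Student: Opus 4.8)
The plan is simply to stitch together the lemmas already established, since the proposition is explicitly a summary of them. For the partition claim, I would first note that $\QsetsE$ is a \emph{set} of set systems, so any coinciding systems $\XYrot{X}{Y}$ are automatically identified; by Lemma~\ref{lem:XYdisjoint} any two members of $\QsetsE$ are either equal or disjoint, so the distinct members are pairwise disjoint. Next I would check that each member is nonempty: given $X, Y \subseteq \nset{m}$ with $X \cap Y = \emptyset$, one produces a set $J \subseteq E_m$ satisfying the defining conditions of $\XYset{X}{Y}$ by putting both of $z, z'$ into $J$ for $z \in X$, neither for $y \in Y$, and exactly one of $z, z'$ for each $z \in \nset{m} \setminus (X \cup Y)$ -- this is possible precisely because $X$ and $Y$ are disjoint, so the choices are independent. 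Finally, Lemma~\ref{lem:Qsetsunion} states that $J \subseteq E_m$ has $\card{J} = m$ if and only if $J$ lies in some member of $\QsetsE$, so the union of the members of $\QsetsE$ is exactly the family of $m$-element subsets of $E_m$. Together, these three observations say that $\QsetsE$ is a partition of the $m$-element subsets of $E_m$.

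For the complementation claim, I would take $\XYrot{X}{Y} \in \Qsets$, so that $\card{X} = \card{Y}$ and $X \cap Y = \emptyset$; then also $\card{Y} = \card{X}$ and $Y \cap X = \emptyset$, whence $\XYrot{Y}{X} \in \QsetsE$. By the ``furthermore'' part of Lemma~\ref{lem:XYcomplements}, $J \in \XYrot{X}{Y}$ if and only if $\complementX{J} \in \XYrot{Y}{X}$; since complementation with respect to $E_m$ is an involution, this is exactly the assertion $\complementX{(\XYrot{X}{Y})} = \XYrot{Y}{X}$.

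For the final claim, assume $m$ is odd and let $\XYrot{X}{Y} \in \Qsets$. If $\XYrot{X}{Y} = \XYrot{Y}{X}$, then picking any $J \in \XYrot{X}{Y}$ and applying the complementation claim gives $\complementX{J} \in \XYrot{Y}{X} = \XYrot{X}{Y}$, contradicting Lemma~\ref{lem:noComplement}. Hence $\XYrot{X}{Y}$ and $\XYrot{Y}{X}$ are \emph{distinct} members of the partition $\QsetsE$, so any $m$-element $S \subseteq E_m$ with $S \notin \XYrot{\emptyset}{\emptyset}$ and its complement $\complementX{S}$ lie in distinct parts. In the exceptional case $S \in \XYrot{\emptyset}{\emptyset}$ one has $\complementX{S} \in \XYrot{\emptyset}{\emptyset}$ as well, since $\complementX{(\XYrot{\emptyset}{\emptyset})} = \XYrot{\emptyset}{\emptyset}$ by the complementation claim with $X = Y = \emptyset$ (equivalently, by Remarks~\ref{rem:emptyemptyG} and~\ref{rem:Gcomplements}). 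I do not anticipate any genuine obstacle, as every step is a direct invocation of an earlier lemma; the only point that warrants a moment's care is the nonemptiness of the members of $\QsetsE$, which is what makes the word ``partition'' literally correct rather than merely ``cover by pairwise disjoint sets.''
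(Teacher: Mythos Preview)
Your proof is correct and follows essentially the same approach as the paper's, invoking Lemmas~\ref{lem:XYdisjoint}, \ref{lem:Qsetsunion}, \ref{lem:XYcomplements}, and~\ref{lem:noComplement} in the same roles. The only difference is that you explicitly verify nonemptiness of each $\XYrot{X}{Y}$, a detail the paper leaves implicit.
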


\begin{proof}
The members of the family $\QsetsE$ are pairwise disjoint by Lemma~\ref{lem:XYdisjoint} and their union is the set of all $m$-element subsets of $E_m$ by Lemma~\ref{lem:Qsetsunion}. By Lemma~\ref{lem:XYcomplements}, $\complementX{\XYrot{X}{Y}} = \XYrot{Y}{X}$. If $m$ is odd, then $\XYrot{X}{Y} \neq \XYrot{Y}{X}$ by Lemma~\ref{lem:noComplement}.
\end{proof}

\begin{lemma}
\label{lem:smallerX}
\label{lem:biggerY}
\begin{inparaenum}[\rm (i)]
\item
Let $X_1, X_2, Y \subseteq \nset{m}$ with $X_2 \subseteq X_1$ and $X_1 \cap Y = \emptyset$.
Let $J_1 \in \XYrot{X_1}{Y}$. Then there is some $J_2 \in \XYrot{X_2}{Y}$ with $J_2 \subseteq J_1$.

\item
Let $X, Y_1, Y_2 \subseteq \nset{m}$ with $Y_1 \subseteq Y_2$ and $X \cap Y_2 = \emptyset$.
Let $J_1 \in \XYrot{X}{Y_1}$. Then there is some $J_2 \in \XYrot{X}{Y_2}$ with $J_2 \subseteq J_1$.
\end{inparaenum}
\end{lemma}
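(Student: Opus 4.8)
The plan is to prove both statements by the same kind of explicit surgery on $J_1$: in each case fix a shift $q \in \nset{m}$ witnessing that $J_1$ lies in the given rotated family, then delete from $J_1$ precisely the elements obstructing membership in the target family, and finally check that what remains satisfies the three conditions on pairs $\{z,z'\}$ in the definition of $\XYset{\cdot}{\cdot}$.

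For part (i), note first that $X_2 \cap Y \subseteq X_1 \cap Y = \emptyset$, so $\XYrot{X_2}{Y}$ is defined. Choose $q \in \nset{m}$ with $J_1 \in \XYset{X_1 + q}{Y + q}$. For every $z \in (X_1 + q) \setminus (X_2 + q)$ we have $z, z' \in J_1$, whereas a member of $\XYset{X_2 + q}{Y + q}$ must contain exactly one of $z, z'$; so let $J_2$ be obtained from $J_1$ by removing $z'$ for each such $z$. Then $J_2 \subseteq J_1$, and the three defining conditions for $\XYset{X_2 + q}{Y + q}$ are verified case by case: if $x \in X_2 + q$ then $x, x' \in J_1$ (as $X_2+q \subseteq X_1+q$) and neither was removed, so $\card{\{x, x'\} \cap J_2} = 2$; if $y \in Y + q$ then $y, y' \notin J_1 \supseteq J_2$; and if $z \in \nset{m} \setminus \bigl( (X_2 + q) \cup (Y + q) \bigr)$, then either $z \in (X_1 + q) \setminus (X_2 + q)$, so we removed $z'$ and kept $z$, giving $\card{\{z, z'\} \cap J_2} = 1$, or $z \notin X_1 + q$, in which case neither $z$ nor $z'$ was removed and $\card{\{z, z'\} \cap J_2} = \card{\{z, z'\} \cap J_1} = 1$ because $z \in \nset{m} \setminus \bigl( (X_1 + q) \cup (Y + q) \bigr)$. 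Hence $J_2 \in \XYset{X_2 + q}{Y + q} \subseteq \XYrot{X_2}{Y}$.

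For part (ii) the argument is parallel. Here $X \cap Y_1 \subseteq X \cap Y_2 = \emptyset$, so both families are defined; choose $q$ with $J_1 \in \XYset{X + q}{Y_1 + q}$. For each $y \in (Y_2 + q) \setminus (Y_1 + q)$ we have $y \notin X + q$ (since $X \cap Y_2 = \emptyset$ implies $(X+q) \cap (Y_2+q) = \emptyset$), hence $y \in \nset{m} \setminus \bigl( (X + q) \cup (Y_1 + q) \bigr)$ and exactly one of $y, y'$ lies in $J_1$; let $J_2$ be obtained from $J_1$ by deleting $\{y, y'\} \cap J_1$ for every such $y$. Then $J_2 \subseteq J_1$, and the three conditions for $\XYset{X + q}{Y_2 + q}$ hold: the pairs $\{x, x'\}$ with $x \in X + q$ are untouched and still satisfy $\card{\{x, x'\} \cap J_2} = 2$; the pairs $\{y, y'\}$ with $y \in Y_1 + q$ were already empty in $J_1$, and those with $y \in (Y_2 + q) \setminus (Y_1 + q)$ have just been emptied; and for $z \in \nset{m} \setminus \bigl( (X + q) \cup (Y_2 + q) \bigr) \subseteq \nset{m} \setminus \bigl( (X + q) \cup (Y_1 + q) \bigr)$ nothing around $z$ was removed, so $\card{\{z, z'\} \cap J_2} = \card{\{z, z'\} \cap J_1} = 1$. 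Thus $J_2 \in \XYset{X + q}{Y_2 + q} \subseteq \XYrot{X}{Y_2}$.

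I do not expect a genuine obstacle; the only care needed is in the bookkeeping across the three pair-conditions and in invoking the disjointness hypotheses at the right spots — in part (i), $X_2 \subseteq X_1$ both makes $\XYrot{X_2}{Y}$ well-defined and ensures the removed primed elements avoid the pairs indexed by $X_2 + q$; in part (ii), $X \cap Y_2 = \emptyset$ is exactly what guarantees that the indices $y$ whose pairs we empty are not already forced to be full by $X + q$. One could alternatively feed the two parts through the complementation bijection of Lemma~\ref{lem:XYcomplements}, but that yields "superset" variants of the statements rather than the stated "subset" ones, so arguing each part directly is cleaner.
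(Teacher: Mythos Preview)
Your proof is correct and follows essentially the same approach as the paper's: in each part you fix a shift, delete the obstructing elements (the primed copies over $X_1 \setminus X_2$ in part~(i), the pairs over $Y_2 \setminus Y_1$ in part~(ii)), and verify the three defining conditions case by case. The only cosmetic differences are that the paper absorbs the shift $q$ into a ``without loss of generality'' and, in part~(ii), writes the deletion more bluntly as $J_2 := J_1 \setminus (Y_2 \cup Y_2')$, which removes the same elements since $(Y_1 \cup Y_1') \cap J_1$ is already empty.
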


\begin{proof}
(i)
Without loss of generality, we may assume that $J_1 \in \XYset{X_1}{Y}$.
Let $J_2 := J_1 \setminus (X_1 \setminus X_2)'$.
It is clear from the definition that $J_2 \subseteq J_1$.
We now show that $J_2 \in \XYrot{X_2}{Y}$, or, more specifically, $J_2 \in \XYset{X_2}{Y}$.

\begin{itemize}
\item Since $X_1 \cup X_1' \subseteq J_1$, we have $X_2 \cup X_2' \subseteq J_1 \setminus (X_1 \setminus X_2)' = J_2$.
\item Since $(Y \cup Y') \cap J_1 = \emptyset$, we have $(Y \cup Y') \cap J_2 = (Y \cup Y') \cap (J_1 \setminus (X_1 \setminus X_2)') \subseteq (Y \cup Y') \cap J_1 = \emptyset$.
\item For all $z \in \nset{m} \setminus (X_2 \cup Y) = (X_1 \setminus X_2) \cup (\nset{m} \setminus (X_1 \cup Y))$, exactly one of $z$ and $z'$ is an element of $J_2$, because
  \begin{itemize}
    \item if $z \in X_1 \setminus X_2$, then $z \in J_2$ and $z' \notin J_2$;
    \item if $z \in \nset{m} \setminus (X_1 \cup Y)$, then either $z \in J_1$ or $z' \in J_1$ but not both. Since $J_1$ and $J_2$ coincide on this part, this property holds also for $J_2$.
  \end{itemize}
\end{itemize}

(ii)
Without loss of generality, we may assume that $J_1 \in \XYset{X}{Y_1}$. Let $J_2 := J_1 \setminus (Y_2 \cup Y_2')$. It is clear that $J_2 \subseteq J_1$ and $J_2 \in \XYset{X}{Y_2} \subseteq \XYrot{X}{Y_2}$.
\end{proof}

\begin{lemma}
\label{lem:XYtransposition}
For all $j \in \nset{m}$ the set $\XYrot{X}{Y}$ is closed under the transposition $\tau_j$.
\end{lemma}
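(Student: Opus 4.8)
The plan is to reduce the statement to the observation that the conditions defining $\XYset{X'}{Y'}$, for any disjoint $X', Y' \subseteq \nset{m}$, depend on a subset $J \subseteq E_m$ only through the numbers $\card{\{z,z'\} \cap J}$ for $z \in \nset{m}$, and that each of these numbers is left unchanged by $\tau_j$.

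First I would note that for every $z \in \nset{m}$ the transposition $\tau_j$ maps the pair $\{z, z'\}$ onto itself: if $z = j$ it swaps the two elements, and if $z \neq j$ it fixes both of them. Hence for any $J \subseteq E_m$ and any $z \in \nset{m}$ one has $\card{\{z, z'\} \cap \tau_j(J)} = \card{\{z, z'\} \cap J}$. Since the condition ``$J \in \XYset{X'}{Y'}$'' says precisely that this number equals $2$ for every $z \in X'$, equals $0$ for every $z \in Y'$, and equals $1$ for every $z \in \nset{m} \setminus (X' \cup Y')$, it follows that $J \in \XYset{X'}{Y'}$ implies $\tau_j(J) \in \XYset{X'}{Y'}$. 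Thus $\XYset{X'}{Y'}$ is closed under $\tau_j$ for all disjoint $X', Y' \subseteq \nset{m}$.

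Finally I would invoke the description $\XYrot{X}{Y} = \bigcup_{q \in \nset{m}} \XYset{X+q}{Y+q}$ recorded right after the definition of $\XYset{X}{Y}$. For each $q$ the sets $X + q$ and $Y + q$ are still disjoint subsets of $\nset{m}$, so $\XYset{X+q}{Y+q}$ is closed under $\tau_j$ by the previous step; hence the union $\XYrot{X}{Y}$ is closed under $\tau_j$ as well, which is the claim.

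I do not expect any real obstacle here; the only thing to watch is the choice of representation of $\XYrot{X}{Y}$. Using the original definition $\XYrot{X}{Y} = \rot{\XYset{X}{Y}} = \bigcup_i \rho^i(\XYset{X}{Y})$ would be awkward, since $\rho$ and $\tau_j$ do not commute in general and so one cannot simply push $\tau_j$ through $\rho^i$. Working instead with the union over the shifted sets $\XYset{X+q}{Y+q}$, each of which is individually $\tau_j$-invariant, avoids this point entirely.
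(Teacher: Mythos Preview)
Your proof is correct and follows essentially the same approach as the paper. The paper takes a generic $J \in \XYrot{X}{Y}$, reduces (``without loss of generality'') to $J \in \XYset{X}{Y}$, and then does a three-way case split on whether $j$ lies in $X$, $Y$, or the complement; your argument packages all three cases into the single observation that $\tau_j$ preserves $\card{\{z,z'\}\cap J}$ for every $z$, and then passes to the union $\bigcup_q \XYset{X+q}{Y+q}$ exactly as the paper's WLOG step implicitly does.
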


\begin{proof}
Let $J \in \XYrot{X}{Y}$. Without loss of generality, we may assume that $J \in \XYset{X}{Y}$.
Let $j \in \nset{m}$.
If $j \in X$, then we have $j, j' \in J$ and thus $\tau_j(J) = J$.
If $j \in Y$, then we have $j, j' \notin J$ and thus $\tau_j(J) = J$.
If $j \in \nset{m} \setminus (X \cup Y)$, then either $j$ or $j'$ is in $J$ but not both. The same is true for $\tau_j(J)$, and we have that $\tau_j(J) \in \XYset{X}{Y}$.
\end{proof}

We define signatures for pairs $(X,Y)$ of disjoint subsets $X$ and $Y$ of $\nset{m}$, as well as for set systems $\XYrot{X}{Y}$.
The signature will serve as a selection criterion for choosing the blocks in the Sperner systems that we are going to construct.
Denote by $\#_a(w)$ the number of occurrences of a letter $a$ in a word $w$, i.e., for $w = w_1 \dots w_n$, we have $\#_a(w) = \card{\{i \in \nset{n} : w_i = a\}}$

\begin{definition}
Let $X, Y \subseteq \nset{m}$ with $X \cap Y = \emptyset$, and let $Z := \nset{m} \setminus (X \cup Y)$.

The \emph{full signature} $\fullsig(X,Y)$ of $(X,Y)$ is the string $d_1 \dots d_m$ over the alphabet $\{x, y, z\}$ defined by
\[
d_i =
\begin{cases}
x, & \text{if $i \in X$,} \\
y, & \text{if $i \in Y$,} \\
z, & \text{if $i \in Z$.}
\end{cases}
\]

Define the map $\psi \colon \{x, y, z\}^* \setminus \{z\}^* \to \{x, y, \alpha, \beta\}^*$ as follows: $\psi$ maps any string $d_1 \dots d_n$ that does not comprise entirely of $z$'s to the unique string $d'_1 \dots d'_n$ satisfying the following conditions (we do addition modulo $n$, so that $d_0 = d_n$):
\begin{itemize}
\item if $d_i \in \{x, y\}$, then $d'_i = d_i$;
\item if $d_i = z$, then $d'_i \in \{\alpha, \beta\}$ and $d'_i \neq d'_{i-1}$;
\item if $d_i = z$ and $d_{i-1} = x$, then $d'_i = \beta$;
\item if $d_i = z$ and $d_{i-1} = y$, then $d'_i = \alpha$.
\end{itemize}
Define the map $\varphi \colon \{x, y, \alpha, \beta\}^* \to \mathcal{P}(\{\alpha, \beta\})$ as follows:
\[
\varphi(d_1 \dots d_n) = \{e \in \{\alpha, \beta\} : \text{$\#_e(d_1 \dots d_n)$ is odd}\}.
\]
In order to simplify notation, we will write the possible values of $\varphi$ as $\emptyset$, $\alpha$, $\beta$, and $\alpha\beta$ instead of $\emptyset$, $\{\alpha\}$, $\{\beta\}$, and $\{\alpha, \beta\}$.

For $(X,Y) \neq (\emptyset, \emptyset)$, we define the \emph{reduced signature} $\reducedsig(X,Y)$ of $(X,Y)$ as $\varphi(\psi(\fullsig(X,Y)))$.

The \emph{signature} $\reducedsig \XYrot{X}{Y}$ of a set system $\XYrot{X}{Y}$ is defined as $\reducedsig(X,Y)$. In view of Lemmas~\ref{lem:XYXqYq} and~\ref{lem:XYdisjoint}, $\reducedsig \XYrot{X}{Y}$ is well defined, because if $\XYrot{X}{Y} = \XYrot{X'}{Y'}$, then $X' = X + q$ and $Y' = Y + q$ for some $q \in \nset{m}$, and it is clear that $\reducedsig(X,Y) = \reducedsig(X + q, Y + q)$.
\end{definition}

\begin{example}
Let $m = 9$, $X = \{2, 5, 9\}$, and $Y = \{3, 8\}$. Then $Z := \nset{m} \setminus (X \cup Y) = \{1, 4, 6, 7\}$.
Then $\fullsig(X,Y) = zxyzxzzyx$ and $\psi(\fullsig(X,Y)) = \beta xy \alpha x \beta \alpha yx$.
We have $\#_\alpha(\psi(\fullsig(X,Y))) = 2$, $\#_\beta(\psi(\fullsig(X,Y))) = 2$, that is,
both $\alpha$ and $\beta$ occur in $\psi(\fullsig(X,Y))$ an even number of times, so $\reducedsig(X,Y) = \emptyset$.

Let then $m = 9$, $X = \{2, 5\}$, and $Y = \{3, 8\}$. Then $Z := \nset{m} \setminus (X \cup Y) = \{1, 4, 6, 7, 9\}$.
Then $\fullsig(X,Y) = zxyzxzzyz$ and $\psi(\fullsig(X,Y)) = \beta xy \alpha x \beta \alpha y \alpha$.
We have $\#_\alpha(\psi(\fullsig(X,Y))) = 3$, $\#_\beta(\psi(\fullsig(X,Y))) = 2$, so $\reducedsig(X,Y) = \alpha$.
\end{example}

\begin{remark}
Let $X, Y \subseteq \nset{m}$ with $X \cap Y = \emptyset$, and let $Z := \nset{m} \setminus (X \cup Y)$.
If $\card{Z}$ is odd, then $\reducedsig(X,Y) \in \{\alpha, \beta\}$.
If $\card{Z}$ is even, then $\reducedsig(X,Y) \in \{\emptyset, \alpha\beta\}$.
\end{remark}

\begin{lemma}
\label{lem:XYYXalphabeta}
Assume that $m$ is odd and $X, Y \subseteq \nset{m}$ such that $\card{X} = \card{Y} \geq 1$ and $X \cap Y = \emptyset$.
Then $\{\reducedsig(X,Y), \reducedsig(Y,X)\} = \{\alpha, \beta\}$.
\end{lemma}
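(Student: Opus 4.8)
The plan is to show that interchanging $X$ and $Y$ has precisely the effect of interchanging the two symbols $\alpha$ and $\beta$ in the reduced signature, and then to note that under the hypotheses $\reducedsig(X,Y)$ is already one of the two singletons $\alpha$, $\beta$, so that $\reducedsig(Y,X)$ must be the other one.

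First I would dispose of the easy half. Since $\card{X} = \card{Y}$ and $m$ is odd, the set $Z := \nset{m} \setminus (X \cup Y)$ has odd cardinality $m - 2\card{X}$. The word $\psi(\fullsig(X,Y))$ has length $m$ and its letters from $\{\alpha, \beta\}$ sit exactly at the positions of the $z$'s of $\fullsig(X,Y)$, so $\#_\alpha(\psi(\fullsig(X,Y))) + \#_\beta(\psi(\fullsig(X,Y))) = \card{Z}$ is odd; hence exactly one of these two counts is odd and $\reducedsig(X,Y) \in \{\alpha, \beta\}$. The same argument gives $\reducedsig(Y,X) \in \{\alpha, \beta\}$. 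Thus it only remains to prove that these two signatures are distinct.

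The core of the argument is the claim that, writing $\mu$ for the substitution on words over $\{x, y, \alpha, \beta\}$ that simultaneously swaps $x$ with $y$ and $\alpha$ with $\beta$, one has $\psi(\fullsig(Y,X)) = \mu(\psi(\fullsig(X,Y)))$. To prove it, observe first that $\fullsig(Y,X)$ arises from $\fullsig(X,Y) = d_1 \dots d_m$ by keeping every $z$ and swapping every $x$ with $y$. Now inspect the action of $\psi$. At a position carrying $x$ or $y$, $\psi$ copies the letter, so such positions are transformed by the $x \leftrightarrow y$ swap. On a maximal cyclic block of consecutive $z$'s — which always has a well-defined immediately preceding non-$z$ letter, because $\card{X} \geq 1$ prevents the word from consisting entirely of $z$'s — $\psi$ writes the alternating string $\beta \alpha \beta \cdots$ if that preceding letter is $x$, and $\alpha \beta \alpha \cdots$ if it is $y$. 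Swapping $x$ with $y$ in the ambient word flips the first symbol of the block between $\beta$ and $\alpha$, and since the block alternates, this propagates through the whole block as the swap $\alpha \leftrightarrow \beta$. Hence every position is transformed exactly according to $\mu$. Finally, $\varphi$ records which of $\alpha$, $\beta$ occurs an odd number of times, so $\varphi(\mu(u))$ is obtained from $\varphi(u)$ by interchanging $\alpha$ and $\beta$; composing, $\reducedsig(Y,X) = \varphi(\psi(\fullsig(Y,X))) = \varphi(\mu(\psi(\fullsig(X,Y))))$ is $\reducedsig(X,Y)$ with $\alpha$ and $\beta$ swapped.

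Putting the two halves together, $\reducedsig(X,Y)$ is $\alpha$ or $\beta$ and $\reducedsig(Y,X)$ is the other, whence $\{\reducedsig(X,Y), \reducedsig(Y,X)\} = \{\alpha, \beta\}$. I expect the only genuinely delicate point to be the verification of the intertwining claim for a block of $z$'s that wraps around the cyclic boundary of the word: there one must check that the notion of the block's preceding non-$z$ letter is unambiguous and that the alternation — and hence the $\alpha \leftrightarrow \beta$ swap — is consistent all the way around the cycle. Everything else is routine bookkeeping.
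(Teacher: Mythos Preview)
Your proposal is correct and follows essentially the same route as the paper's proof: both first use the parity of $\card{Z}$ to force $\reducedsig(X,Y), \reducedsig(Y,X) \in \{\alpha,\beta\}$, and then argue that swapping $x \leftrightarrow y$ in the full signature induces the swap $\alpha \leftrightarrow \beta$ in the output of $\psi$, whence the two reduced signatures differ. Your treatment is in fact slightly more careful than the paper's, which simply asserts the $\alpha \leftrightarrow \beta$ swap as ``easy to see'' without spelling out the block-of-$z$'s analysis or the cyclic wrap-around issue.
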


\begin{proof}
Let $Z := \nset{m} \setminus (X \cup Y)$.
Since $m$ is odd and $\card{X} = \card{Y}$ we see that $\card{Z}$ is odd.
Thus $\reducedsig(X,Y), \reducedsig(Y,X) \in \{\alpha, \beta\}$.

The roles of $X$ and $Y$ are interchanged between $(X,Y)$ and $(Y,X)$, so it is clear from the definition that the full signature $\fullsig(Y,X)$ is obtained from $\fullsig(X,Y)$ by changing every occurrence of $x$ into $y$ and changing every occurrence of $y$ into $x$, keeping the $z$'s unchanged.
It is easy to see that $\psi(\fullsig(Y,X))$ can be obtained from $\psi(\fullsig(X,Y))$ by changing every $\alpha$ into $\beta$ and changing every $\beta$ into $\alpha$.
This means that
\begin{align*}
\#_\alpha(\psi(\fullsig(X,Y))) &= \#_\beta(\psi(\fullsig(Y,X))), \\
\#_\beta(\psi(\fullsig(X,Y))) &= \#_\alpha(\psi(\fullsig(Y,X))).
\end{align*}
Therefore $\reducedsig(X,Y) \neq \reducedsig(Y,X)$, which implies that $\{\reducedsig(X,Y), \reducedsig(Y,X)\} = \{\alpha, \beta\}$.
\end{proof}

\begin{lemma}
\label{lem:xyzsubstrings}
Let $w$ be a string over the alphabet $\{x, y, z\}$, and assume that $\#_x(w) > \#_y(w) \geq 1$. Then $w$, considered as a circular string, contains one of the following strings as a substring:
$yxx \delta$,
$x z^n \delta$,
$y z^n x \delta$,
for some $n \geq 1$ and $\delta \in \{x, y\}$.
\end{lemma}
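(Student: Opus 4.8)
The plan is to argue by contradiction: I will assume that the circular word $w$ contains none of the substrings $yxx\delta$, $xz^n\delta$, $yz^nx\delta$ (for any $n \geq 1$ and $\delta \in \{x,y\}$) and deduce that $\#_x(w) \leq \#_y(w)$, contradicting the hypothesis $\#_x(w) > \#_y(w) \geq 1$. A useful preliminary observation is that $\#_x(w) > \#_y(w) \geq 1$ forces $\#_x(w) \geq 2$, so $w$ has at least three non-$z$ letters; in particular $w$ is not a power of $z$, hence every maximal block of consecutive $z$'s in $w$ is both immediately preceded and immediately followed by a letter of $\{x,y\}$.

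First I would establish that no occurrence of $x$ in $w$ is immediately followed by $z$. Indeed, if $w_i = x$ and $w_{i+1} = z$, then following the maximal block $z^n$ (with $n \geq 1$) that starts at position $i+1$ up to the non-$z$ letter $\delta \in \{x,y\}$ that comes right after it exhibits the forbidden substring $xz^n\delta$. Consequently, every $x$ in $w$ is immediately followed by an $x$ or a $y$.

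The heart of the argument, and the step I expect to be the main obstacle, is to show that $w$ contains no two consecutive $x$'s. Suppose it did, and let $B = x^r$ with $r \geq 2$ be a maximal block of consecutive $x$'s in $w$. Since $w$ contains a $y$, the block $B$ is immediately preceded by a letter $c$, and $c \neq x$ by maximality, so $c \in \{y, z\}$. If $c = y$, then reading $y$, then $B$, then the letter immediately after $B$ (which is not $z$, by the previous step) locates an occurrence of $yxx\delta$ with $\delta \in \{x,y\}$; here one takes $\delta$ to be the third $x$ of $B$ if $r \geq 3$, or the letter after $B$ if $r = 2$. If $c = z$, then the maximal block $z^s$ (with $s \geq 1$) that ends immediately before $B$ is itself immediately preceded by some letter which, by the previous step, cannot be an $x$ and therefore must be a $y$; then reading this $y$, then $z^s$, then the first two letters of $B$ yields the forbidden substring $yz^sx\delta$ with $\delta = x$. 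In either case we reach a contradiction, so $w$ is free of the factor $xx$. The subcase $c = z$ is precisely where the third pattern $yz^nx\delta$ is needed, and keeping the bookkeeping of the maximal $z$- and $x$-blocks straight there is the delicate point.

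To conclude, the two steps together imply that every $x$ in $w$ is immediately followed by a $y$, so the map $i \mapsto i+1$ is an injection from the set of positions of $w$ carrying $x$ into the set of positions carrying $y$; hence $\#_x(w) \leq \#_y(w)$, which is the contradiction we were after. Therefore $w$ must contain one of the three substrings.
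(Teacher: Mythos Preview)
Your proof is correct and follows essentially the same line as the paper's argument: both hinge on showing that if no occurrence of $z$ is preceded by $x$ and there is no run $xx$, then every $x$ is immediately followed by $y$, which forces $\#_x(w) \leq \#_y(w)$ and yields the desired contradiction. The only difference is cosmetic organisation---you frame the whole thing as a proof by contradiction, whereas the paper does a direct case analysis (no $z$'s; some $z$ preceded by $x$; some $z$ followed by $x$; neither)---but the logical content and the key observations coincide.
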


\begin{proof}
If the letter $z$ does not occur in $w$, then it follows from the assumption $\#_x(w) > \#_y(w) \geq 1$ that $w$ must have two consecutive $x$'s; hence $yxx \delta$ is a substring of $w$ for some $\delta \in \{x, y\}$. We may then assume that there is at least one occurrence of the letter $z$ in $w$. If there is an occurrence of $z$ preceded by $x$, then $w$ has a substring of the form $xz^n \delta$ for some $n \geq 1$ and $\delta \in \{x, y\}$. Assume then that no occurrence of $z$ is preceded by $x$.
If there is an occurrence of $z$ followed by $x$, then $w$ has a substring of the form $y z^n x \delta$ for some $n \geq 1$ and $\delta \in \{x, y, z\}$; note that $\delta$ cannot, however, be equal to $z$, because this would contradict our assumption that no occurrence of $z$ is preceded by $x$; thus, we are done in this case.

Finally, assume that no occurrence of $z$ is followed by $x$. It follows from our assumptions that no occurrence of $x$ is preceded or followed by $z$; that is, every maximal run of $x$'s is preceded and followed by $y$. If there is a run of at least two $x$'s, then $yxx \delta$ is a substring of $w$ for some $\delta \in \{x, y\}$. Otherwise, every occurrence of $x$ is preceded and followed by $y$. This implies that $\#_x(w) < \#_y(w)$, which contradicts our assumption that $\#_x(w) > \#_y(w)$, and we conclude that this last case is not possible.
\end{proof}

\begin{lemma}
\label{lemma:partysig}
Assume that $X, Y \subseteq \nset{m}$, $2 \leq \card{X} = \card{Y} + 1$, and $X \cap Y = \emptyset$.
Then there exist $X_1, Y_1, X_2, Y_2$ with $X_i \subseteq X$, $Y \subseteq Y_i$, and $\card{X_i} = \card{Y_i}$ for $i \in \{1, 2\}$ such that $\reducedsig(X_1,Y_1) = \alpha$ and $\reducedsig(X_2,Y_2) = \beta$.
\end{lemma}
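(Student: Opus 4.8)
The plan is to reduce the statement to a single parity bookkeeping and then to dispatch three explicit cases coming from Lemma~\ref{lem:xyzsubstrings}. Since $m$ is odd, for any \emph{admissible} pair $(X',Y')$ — by which I mean $X'\subseteq X$, $Y\subseteq Y'$, $X'\cap Y'=\emptyset$ and $\card{X'}=\card{Y'}$ — the set $Z':=\nset{m}\setminus(X'\cup Y')$ has odd cardinality $m-2\card{X'}$, so $\reducedsig(X',Y')\in\{\alpha,\beta\}$, and in fact $\reducedsig(X',Y')=\alpha$ precisely when $\#_\alpha\bigl(\psi(\fullsig(X',Y'))\bigr)$ is odd (because $\#_\alpha+\#_\beta=\card{Z'}$ is odd). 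Hence it suffices to exhibit two admissible pairs whose $\psi$-images have $\#_\alpha$-counts of opposite parity; the one with odd count is $(X_1,Y_1)$, the other $(X_2,Y_2)$. Moreover, from $\card{X}=\card{Y}+1$ one sees that an admissible pair necessarily has $\card{X'}=\card{Y'}\in\{\card{Y},\card{Y}+1\}$, so it is of exactly one of two shapes: $(X\setminus\{a\},Y)$ for some $a\in X$ (``delete an $x$''), or $(X,Y\cup\{b\})$ for some $b\in Z:=\nset{m}\setminus(X\cup Y)$ (``promote a $z$ to a $y$''). Each such move changes the circular string $w:=\fullsig(X,Y)$ in a single coordinate, and therefore alters $\psi(w)$ only inside the single maximal run of $z$'s that is created, truncated, or merged at that coordinate; so the change in $\#_\alpha$ can be read off locally.

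Next I would apply Lemma~\ref{lem:xyzsubstrings} to $w$, which is legitimate because $\#_x(w)=\card{X}>\card{Y}=\#_y(w)\geq 1$. This puts a substring $yxx\delta$, $xz^n\delta$, or $yz^nx\delta$ (with $n\geq 1$, $\delta\in\{x,y\}$) at a window of consecutive positions, and for each I would choose two moves. For $yxx\delta$ (positions $j,j{+}1,j{+}2,j{+}3$): delete the first, respectively the second, of the two $x$'s; each new $z$ is a singleton run, so the two $\psi$-images agree everywhere except that one reads $\dots y\,\alpha\,x\dots$ and the other $\dots y\,x\,\beta\dots$ at positions $j{+}1,j{+}2$, whence their $\#_\alpha$-counts differ by $1$. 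For $xz^n\delta$ (with $w_j=x$, $w_{j+1}=\dots=w_{j+n}=z$, $w_{j+n+1}=\delta$): if $n$ is even, promote $j{+}1$ and $j{+}n$ to $y$ (a direct count gives $\#_\alpha$-changes $0$ and $-1$); if $n$ is odd, promote $j{+}1$ to $y$ (change $0$) and delete $j$ from $X$ (change $+1$, see below). For $yz^nx\delta$ (with $w_j=y$, $w_{j+1}=\dots=w_{j+n}=z$, $w_{j+n+1}=x$, $w_{j+n+2}=\delta$): uniformly in $n$, promote $j{+}1$ to $y$ and delete $j{+}n{+}1$ from $X$ — the $\#_\alpha$-changes are $0$ and $+1$ when $n$ is even and $-1$ and $0$ when $n$ is odd, differing by $1$ in both cases. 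All these counts rely on one elementary observation I would isolate first: lengthening a maximal $z$-run by one cell at either end, or shortening it by one cell, changes its number of $\alpha$'s by an amount whose parity is determined by the old length and by whether the run follows an $x$ or a $y$ — this is just the behaviour of $\lfloor(\ell\pm1)/2\rfloor$ and $\lceil(\ell\pm1)/2\rceil$. Feeding this in, in each of the three cases the two $\#_\alpha$-changes differ by exactly $1$, and the first paragraph finishes the argument.

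The main obstacle is the odd-$n$ subcase of $xz^n\delta$. Deleting the $x$ at position $j$ turns it into a $z$ that merges with the run $\{j{+}1,\dots,j{+}n\}$ on its right, but it may \emph{also} merge with a run of $z$'s immediately on its left, so the $\psi$-encoding of the resulting run — and hence its $\alpha$-count — can depend on a symbol arbitrarily far from the modified coordinate (and even on a wrap-around around the whole circle). I would neutralise this by writing the merged run's length as $\ell+1+n$, where $\ell\geq 0$ is the length of the left-hand run and its preceding symbol is some $x$ or $y$, and using the identities $\lfloor(\ell+n+1)/2\rfloor=\lfloor\ell/2\rfloor+\tfrac{n+1}{2}$ and $\lceil(\ell+n+1)/2\rceil=\lceil\ell/2\rceil+\tfrac{n+1}{2}$, both valid because $n+1$ is even. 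These make the net change in $\#_\alpha$ equal to exactly $+1$ no matter what $\ell$ is or which way the left-hand run is oriented, so the long-range dependence is harmless. Once this is checked, the remaining verifications in all three cases are routine arithmetic with floors and ceilings.
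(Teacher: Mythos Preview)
Your proof is correct and follows essentially the same approach as the paper's: reduce to finding two admissible single-letter edits of $w=\fullsig(X,Y)$ whose $\psi$-images have $\#_\alpha$-counts of opposite parity, invoke Lemma~\ref{lem:xyzsubstrings}, and dispatch the resulting substrings by explicit local calculations. The only differences are cosmetic---you track $\#_\alpha$ alone (the paper tracks both $\#_\alpha$ and $\#_\beta$), in the even-$n$ case of $xz^n\delta$ you promote the first and last $z$'s (the paper promotes the first and second), and for the odd-$n$ merging subcase you give a more explicit floor/ceiling argument where the paper simply notes that ``depending on the prefix'' the new run is $(\alpha\beta)^{k+1}$ or $(\beta\alpha)^{k+1}$, both yielding $\Delta\#_\alpha=+1$.
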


\begin{proof}
Let $Z := \nset{m} \setminus (X \cup Y)$.
Note that for $i \in \{1, 2\}$, the conditions $X_i \subseteq X$, $Y \subseteq Y_i$, and $\card{X_i} = \card{Y_i}$ hold only if $\card{X_i}, \card{Y_i} \in \{\card{Y}, \card{X}\}$. More precisely, a necessary condition for $X_i$ and $Y_i$ ($i \in \{1, 2\}$) to have the desired properties is that
\begin{enumerate}
\item $X_i = X$ and $Y_i = Y \cup \{j\}$ for some $j \in Z$, or
\item $X_i = X \setminus \{j\}$ and $Y_i = Y$ for some $j \in X$.
\end{enumerate}
Our approach for finding sets $X_1, Y_1, X_2, Y_2$ with the desired properties is to make small modifications to the full signature $\fullsig(X,Y)$ and to see how these affect the reduced signature $\reducedsig(X,Y)$.
The only permitted modifications are 
\begin{enumerate}
\item replacement of a single occurrence of $z$ by $y$, and
\item replacement of a single occurrence of $x$ by $z$.
\end{enumerate}
These correspond to the two cases for $X_i$ and $Y_i$ above.

Let $w := \fullsig(X,Y)$. Then $w$ has an even number of occurrences of $z$; hence $\#_\alpha(\psi(w)) \equiv \#_\beta(\psi(w)) \pmod{2}$.
We are going to find two permitted modifications of $w$ that result in strings $u$ and $v$ with the property that
for each $\gamma \in \{\alpha, \beta\}$ the number of occurrences of $\gamma$ in $\psi(u)$ and the number of occurrences of $\gamma$ in $\psi(v)$ have different parities; more specifically,
\begin{align*}
\#_\alpha(\psi(w)) &= \#_\alpha(\psi(u)) \not\equiv \#_\alpha(\psi(v)) \pmod{2}, \\
\#_\beta(\psi(w)) &= \#_\beta(\psi(v)) \not\equiv \#_\beta(\psi(u)) \pmod{2}.
\end{align*}
Consequently, the words $u$ and $v$ are (in some order) the full signatures $\fullsig(X_1,Y_1)$ and $\fullsig(X_2,Y_2)$ for some sets $X_1, Y_1, X_2, Y_2$ such that $\card{X_i} = \card{Y_i}$ for $i \in \{1, 2\}$ and satifying $\reducedsig(X_1,Y_1) = \alpha$, $\reducedsig(X_2,Y_2) = \beta$.

By Lemma~\ref{lem:xyzsubstrings}, $w$, considered as a circular string, contains one of the following strings as a substring:
$yxx \delta$,
$x z^n \delta$,
$y z^n x \delta$,
for some $n \geq 1$ and $\delta \in \{x, y\}$.
In each case, in order to obtain the strings $u$ and $v$ as described above, we will find suitable permitted modifications within this substring and the remaining part of $w$ stays unchanged. For easy reference, Table~\ref{table:xyz} illustrates the argument that follows. In the table, we present substrings of $\fullsig(X,Y) = w$ and the corresponding substrings of the strings $u$ and $v$ obtained from $w$ by applying a permitted modification, as well as the corresponding substrings of $\psi(w)$, $\psi(u)$ and $\psi(v)$. In the last column labeled by $\Delta$, we indicate the changes in the number of $\alpha$'s and $\beta$'s; more precisely,
\[
(\#_\alpha(t) - \#_\alpha(w), \, \#_\beta(t) - \#_\beta(w)),
\]
where $t$ is either $u$ or $v$, depending on the row.
\begin{table}
\begin{center}
\begin{tabular}{cccc}
& substring & $\psi({\cdot})$ & $\Delta$ \\
\hline
$w$ & $yxx \delta$ & $yxx \delta$ &  \\
$u$ & $yxz \delta$ & $yx \beta \delta$ & $(0, 1)$ \\
$v$ & $yzx \delta$ & $y \alpha x \delta$ & $(1, 0)$ \\
\hline
$w$ & $x z^{2k} \delta$ & $x (\beta \alpha)^k \delta$ & \\
$u$ & $x y z^{2k-1} \delta$ & $xy \alpha (\beta \alpha)^{k-1} \delta$ & $(0, -1)$ \\
$v$ & $xz y z^{2k-2} \delta$ & $x \beta y (\alpha \beta)^{k-1} \delta$ & $(-1, 0)$ \\
\hline
$w$ & $x z^{2k + 1} \delta$ & $x \beta (\alpha \beta)^k \delta$ & \\
$u$ & $x y z^{2k} \delta$ & $x y (\alpha \beta)^k \delta$ & $(0, -1)$ \\
$v$ & $z z^{2k + 1} \delta$ & $(\alpha \beta)^{k+1} \delta$ & $(1, 0)$ \\
    &                       & or & \\
    &                       & $(\beta \alpha)^{k+1} \delta$ & $(1, 0)$ \\
\hline
$w$ & $y z^{2k} x \delta$ & $y (\alpha \beta)^k x \delta$ & \\
$u$ & $y y z^{2k-1} x \delta$ & $yy (\alpha \beta)^{k-1} \alpha x \delta$ & $(0, -1)$ \\
$v$ & $y z^{2k+1} \delta$ & $y (\alpha \beta)^k \alpha \delta$ & $(1, 0)$ \\
\hline
$w$ & $y z^{2k+1} x \delta$ & $y (\alpha \beta)^k \alpha x \delta$ & \\
$u$ & $y z^{2k+2} \delta$ & $y (\alpha \beta)^{k+1} \delta$ & $(0, 1)$ \\
$v$ & $y y z^{2k} x \delta$ & $yy (\alpha \beta)^k x \delta$ & $(-1, 0)$ \\
\hline
\end{tabular}
\end{center}
\caption{Possible substrings of $\fullsig(X,Y) = w$ and of strings $u$ and $v$ obtained by applying permitted modifications, as well as applications of $\psi$ to these strings. The column $\Delta$ indicates the changes in the number of $\alpha$'s and $\beta$'s.}
\label{table:xyz}
\end{table}

If $w$ contains a substring $yxx \delta$ for some $\delta \in \{x, y\}$, then let $u$ be the string obtained from $w$ by replacing the second $x$ in this substring by $z$, and let $v$ be the string obtained by replacing the first $x$ by $z$. Then $\psi(w) = \dots yxx \delta \dots$, $\psi(u) = \dots yx \beta \delta \dots$, $\psi(v) = \dots y \alpha x \delta \dots$, and the prefix and the suffix represented by the ellipses are the same in all three strings. The strings $\psi(u)$ and $\psi(w)$ have an equal number of occurrences of $\alpha$, and $\psi(u)$ has one more occurrence of $\beta$ than $\psi(w)$ does. The strings $\psi(v)$ and $\psi(w)$ have an equal number of occurrences of $\beta$, and $\psi(v)$ has one more occurrence of $\alpha$ than $\psi(w)$ does; in symbols,
\begin{align*}
\#_\alpha(\psi(u)) &= \#_\alpha(\psi(w)), & \#_\beta(\psi(u)) &= \#_\beta(\psi(w)) + 1, \\
\#_\alpha(\psi(v)) &= \#_\alpha(\psi(w)) + 1, & \#_\beta(\psi(v)) &= \#_\beta(\psi(w)).
\end{align*}

If $w$ contains a substring $x z^{2k} \delta$ for some $k \geq 1$ and $\delta \in \{x, y\}$, then let $u$ be the string obtained from $w$ by replacing the first $z$ in this substring by $y$, and let $v$ be the string obtained by replacing the second $z$ by $y$. Then $\psi(w) = \dots x (\beta \alpha)^k \delta \dots$, $\psi(u) = \dots xy \alpha (\beta \alpha)^{k-1} \delta \dots$, $\psi(v) = \dots x \beta y (\alpha \beta)^{k-1} \delta \dots$, so it holds that
\begin{align*}
\#_\alpha(\psi(u)) &= \#_\alpha(\psi(w)), & \#_\beta(\psi(u)) &= \#_\beta(\psi(w)) - 1, \\
\#_\alpha(\psi(v)) &= \#_\alpha(\psi(w)) - 1, & \#_\beta(\psi(v)) &= \#_\beta(\psi(w)).
\end{align*}

If $w$ contains a substring $x z^{2k+1} \delta$ for some $k \geq 0$ and $\delta \in \{x, y\}$, then let $u$ be the string obtained from $w$ by replacing the first $z$ in this substring by $y$, and let $v$ be the string obtained by replacing the first $x$ by $z$. Then $\psi(w) = \dots x \beta (\alpha \beta)^k \delta \dots$, $\psi(u) = \dots xy (\alpha \beta)^k \delta \dots$, and, depending on the prefix, we have either $\psi(v) = \dots (\alpha \beta)^{k+1} \delta \dots$ or $\psi(v) = \dots (\beta \alpha)^{k+1} \delta \dots$. In either case, we have
\begin{align*}
\#_\alpha(\psi(u)) &= \#_\alpha(\psi(w)), & \#_\beta(\psi(u)) &= \#_\beta(\psi(w)) - 1, \\
\#_\alpha(\psi(v)) &= \#_\alpha(\psi(w)) + 1, & \#_\beta(\psi(v)) &= \#_\beta(\psi(w)).
\end{align*}

If $w$ contains a substring $y z^{2k} x \delta$ for some $k \geq 1$ and $\delta \in \{x, y\}$, then let $u$ be the string obtained from $w$ by replacing the first $z$ in this substring by $y$, and let $v$ be the string obtained by replacing the first $x$ by $z$. Then $\psi(w) = \dots y (\alpha \beta)^k x \delta \dots$, $\psi(u) = \dots yy (\alpha \beta)^{k-1} \alpha x \delta \dots$, $\psi(v) = \dots y (\alpha \beta)^k \alpha \delta \dots$, so it holds that
\begin{align*}
\#_\alpha(\psi(u)) &= \#_\alpha(\psi(w)), & \#_\beta(\psi(u)) &= \#_\beta(\psi(w)) - 1, \\
\#_\alpha(\psi(v)) &= \#_\alpha(\psi(w)) + 1, & \#_\beta(\psi(v)) &= \#_\beta(\psi(w)).
\end{align*}

If $w$ contains a substring $y z^{2k+1} x \delta$ for some $k \geq 0$ and $\delta \in \{x, y\}$, then let $u$ be the string obtained from $w$ by replacing the first $x$ in this substring by $z$, and let $v$ be the string obtained by replacing the first $z$ by $y$. Then $\psi(w) = \dots y (\alpha \beta)^k \alpha x \delta \dots$, $\psi(u) = \dots y (\alpha \beta)^{k+1} \delta \dots$, $\psi(v) = \dots yy (\alpha \beta)^k x \delta \dots$, so it holds that
\begin{align*}
\#_\alpha(\psi(u)) &= \#_\alpha(\psi(w)), & \#_\beta(\psi(u)) &= \#_\beta(\psi(w)) + 1, \\
\#_\alpha(\psi(v)) &= \#_\alpha(\psi(w)) - 1, & \#_\beta(\psi(v)) &= \#_\beta(\psi(w)).
\end{align*}

We have exhausted all possibilities, and in each case we found strings $u$ and $v$ with the desired properties. This completes the proof.
\end{proof}

We now have all necessary notions and tools to define our next family of Sperner systems. 

\begin{definition}
For an odd integer $m \geq 3$, let
\begin{align*}
C_m & := \{1, 3, \dots, m-2\}, \\
A_m & := \{m\} \cup (C_m + 1) \cup (C_m + 1)', \\
B_m & := \{m\} \cup C_m \cup C_m'.
\end{align*}
Define the set system $\Sp{\Sbuildsd}^m$, as well as $\Sp{\Ssd}^m_1$ and $\Sp{\Ssd}^m_2$ by
\begin{align*}
\Sp{\Sbuildsd}^m  := & \rot{\{A_m, B_m\}}  \cup {} \\
   & \{ Q \in \Qsets \setminus \{ \XYrot{C_m}{C_m+1}, \XYrot{C_m+1}{C_m} \} : \reducedsig Q = \beta \}, \\
\Sp{\Ssd}^m_1 := & \Sp{\Sbuildsd}^m \cup \Sp{\Sbuildall}^m_1, \\
\Sp{\Ssd}^m_2 := & \Sp{\Sbuildsd}^m \cup \Sp{\Sbuildall}^m_2.
\end{align*}
\end{definition}

\begin{remark}
\label{rem:ABtau}
Note that $\tau_p(A_m) = A_m$ and $\tau_p(B_m) = B_m$ for all $p \in \nset{m} \setminus \{m\}$ but $\tau_m(A_m) \neq A_m$ and $\tau_m(B_m) \neq B_m$.
Note also that $\rot{\{A_m\}}$ and $\rot{\{B_m\}}$ comprise a half of the blocks of $\XYrot{C_m + 1}{C_m}$ and $\XYrot{C_m}{C_m + 1}$, respectively. Moreover, $\complementX{\rot{\{A_m\}}} = \XYrot{C_m}{C_m + 1} \setminus \rot{\{B_m\}}$ and $\complementX{\rot{\{B_m\}}} = \XYrot{C_m + 1}{C_m} \setminus \rot{\{A_m\}}$.
\end{remark}

It remains to show that for all $m \geq 3$, the Sperner systems $\Sp{\Ssd}^m_1$ and $\Sp{\Ssd}^m_2$ are nonisomorphic and strongly hypomorphic.

\begin{proposition}
\label{prop:Sminoniso}
For every odd $m \geq 3$, the Sperner systems $\Sp{\Ssd}^m_1$ and $\Sp{\Ssd}^m_2$ are nonisomorphic.
\end{proposition}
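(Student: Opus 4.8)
The plan is to argue by contradiction: assume that some bijection $\sigma \colon E_m \to E_m$ satisfies $\sigma(\Sp{\Ssd}^m_1) = \Sp{\Ssd}^m_2$. First I would record that $\Sp{\Ssd}^m_i$ is the \emph{disjoint} union $\Sp{\Sbuildsd}^m \cup \Sp{\Sbuildall}^m_i$: every block of $\Sp{\Sbuildsd}^m$ lies in a part $\XYrot{X}{Y}$ with $(X, Y) \neq (\emptyset, \emptyset)$ --- for the blocks coming from $\rot{\{A_m, B_m\}}$ this is part of Remark~\ref{rem:ABtau}, and for the remaining blocks it is immediate since they belong to members of $\Qsets = \QsetsE \setminus \{\XYrot{\emptyset}{\emptyset}\}$ --- so the disjointness from $\Sp{\Sbuildall}^m_1 \cup \Sp{\Sbuildall}^m_2 = \XYrot{\emptyset}{\emptyset}$ follows from Lemma~\ref{lem:XYdisjoint} and Remark~\ref{rem:emptyemptyG}. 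The contradiction is then reached in two steps: I would show that $\sigma$ maps $\nset{m}$ onto $\nset{m}$ and $\nset{m}'$ onto $\nset{m}'$, and then count, in each system, the blocks $S$ with $\card{S \cap \nset{m}}$ odd.

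For the first step, note that $\rot{\{A_m, B_m\}}$, the family $\{Q \in \Qsets \setminus \{\XYrot{C_m}{C_m+1}, \XYrot{C_m+1}{C_m}\} : \reducedsig Q = \beta\}$, and $\Sp{\Sbuildall}^m_i$ are all invariant under the rotation $\rho$, so $\Sp{\Ssd}^m_i$ is as well; hence all elements of $\nset{m}$ occur in the same number of blocks of $\Sp{\Ssd}^m_i$, and all elements of $\nset{m}'$ occur in the same number. Since $\Sp{\Sbuildsd}^m$ is common to both systems and every element of $E_m$ occurs in exactly $2^{m-2}$ blocks of $\Sp{\Sbuildall}^m_i$ (as in the proof of Proposition~\ref{prop:Cnoniso}), these common counts --- call them $u$ for unprimed and $p$ for primed --- are the same in $\Sp{\Ssd}^m_1$ as in $\Sp{\Ssd}^m_2$. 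It remains to see that $u \neq p$. By Lemma~\ref{lem:XYtransposition} each part $\XYrot{X}{Y}$ occurring above is closed under every transposition $\tau_j$, so $j$ and $j'$ occur in equally many of its blocks; hence the $\Qsets$-family, and also $\Sp{\Sbuildall}^m_i$, contribute equally to $u$ and to $p$. On the other hand, since $m$ is odd, each of $A_m$ and $B_m$ contains $(m+1)/2$ unprimed and $(m-1)/2$ primed elements, and a short count using the $\rho$-invariance of $\rot{\{A_m\}}$ and $\rot{\{B_m\}}$ shows that in $\rot{\{A_m, B_m\}}$ each unprimed element lies in $m+1$ blocks and each primed element in $m-1$. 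Therefore $u = p + 2 \neq p$, and since $\sigma$ must carry an element lying in $u$ blocks of $\Sp{\Ssd}^m_1$ to an element lying in $u$ blocks of $\Sp{\Ssd}^m_2$, it maps $\nset{m}$ onto $\nset{m}$ and $\nset{m}'$ onto $\nset{m}'$.

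For the second step, recall that every block of $\Sp{\Sbuildall}^m_1$ is unprimed odd and every block of $\Sp{\Sbuildall}^m_2$ is unprimed even. So, writing $s$ for the number of blocks $S$ of $\Sp{\Sbuildsd}^m$ with $\card{S \cap \nset{m}}$ odd, the system $\Sp{\Ssd}^m_1$ has exactly $2^{m-1} + s$ blocks $S$ with $\card{S \cap \nset{m}}$ odd, whereas $\Sp{\Ssd}^m_2$ has exactly $s$ of them. But $\sigma$ fixes $\nset{m}$ setwise, so $\card{\sigma(S) \cap \nset{m}} = \card{S \cap \nset{m}}$ for every block $S$; hence $\sigma$ maps the unprimed-odd blocks of $\Sp{\Ssd}^m_1$ bijectively onto those of $\Sp{\Ssd}^m_2$, which forces $2^{m-1} + s = s$, a contradiction.

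The main obstacle is the occurrence count in the first step: one needs the transposition-invariance of the $\XYrot{X}{Y}$-parts (Lemma~\ref{lem:XYtransposition}), which makes them unable to distinguish primed from unprimed elements, together with the observation that the two half-parts $\rot{\{A_m\}}$ and $\rot{\{B_m\}}$ break this symmetry by exactly $2$. Once $\sigma$ is pinned down to respect the primed/unprimed split, the parity count of the second step finishes the proof immediately.
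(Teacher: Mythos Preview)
Your proof is correct and follows the same two-step strategy as the paper: first an occurrence count forces any isomorphism to preserve the primed/unprimed split, then the unprimed-odd block count yields the contradiction. Your handling of the first step is marginally slicker---you use the $\tau_j$-invariance of each $\XYrot{X}{Y}$ (Lemma~\ref{lem:XYtransposition}) to see immediately that those parts contribute equally to $u$ and $p$, whereas the paper computes the exact occurrence numbers in each part---but the idea is the same.
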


\begin{proof}
Suppose, on the contrary, that there exists an isomorphism $\sigma$ of $\Sp{\Ssd}^m_1$ to $\Sp{\Ssd}^m_2$.

We have already observed in the proof of Proposition~\ref{prop:Cnoniso} that each element of $E_m$ occurs in exactly $2^{m-2}$ blocks of both $\Sp{\Sbuildall}^m_1$ and $\Sp{\Sbuildall}^m_2$.

Let $X, Y \subseteq \nset{m}$ with $X \cap Y = \emptyset$, and let $Z := \nset{m} \setminus (X \cup Y)$. Let us count how many times each element of $E_m$ occurs in the blocks of $\XYrot{X}{Y}$. We have that $\XYset{X}{Y}$ has $2^{\card{Z}}$ blocks, and for each $x \in X$, $y \in Y$ and $z \in Z$, the elements $x$ and $x'$ occur in all $2^{\card{Z}}$ blocks, the elements $y$ and $y'$ occur in none of the blocks, and the elements $z$ and $z'$ occur in $2^{\card{Z} - 1}$ blocks.
Consider then the rotated versions $\rho^i(\XYset{X}{Y})$ of $\XYset{X}{Y}$.
For each element $\ell \in \nset{m}$,
there are exactly $\card{X}$ elements $i \in \nset{m}$ for which $\ell \in \rho^i(X)$,
there are exactly $\card{Y}$ elements $i \in \nset{m}$ for which $\ell \in \rho^i(Y)$,
there are exactly $\card{Z}$ elements $i \in \nset{m}$ for which $\ell \in \rho^i(Z)$.
Thus, each element of $E_m$ occurs in $\card{X} \cdot 2^{\card{Z}} + \card{Z} \cdot 2^{\card{Z} - 1}$ blocks of $\XYrot{X}{Y}$.

However, each element of $\nset{m}$ occurs in $\card{C_m} + 1$ blocks of $\rot{\{A_m\}}$ while each element of $\nset{m}'$ occurs only in $\card{C_m}$ blocks of $\rot{\{A_m\}}$. The same is true for the blocks of $\rot{\{B_m\}}$.

By this counting argument, we conclude that $\sigma$ must map the set $\nset{m}$ onto itself, and it must map the set $\nset{m}'$ onto itself.

The number of unprimed odd blocks of $\Sp{\Ssd}^m_1$ is different from the number of unprimed odd blocks of $\Sp{\Ssd}^m_2$, since both have $\Sp{\Sbuildsd}^m$ in common and the blocks of $\Sp{\Sbuildall}^m_1$ are unprimed odd while the blocks of $\Sp{\Sbuildall}^m_2$ are unprimed even. Since the image of any unprimed odd set under $\sigma$ is unprimed odd, we conclude that $\sigma(\Sp{\Ssd}^m_1) \neq \Sp{\Ssd}^m_2$, a contradiction.
\end{proof}

For $j \in C_m + 1$, denote
\[
Q^m_j  := \XYrot{C_m \setminus \{m-2\}}{(C_m + 1) \setminus \{j\}}.
\]

\begin{lemma}
\label{lem:Qjsigy}
For any $j \in C_m + 1$, we have $\reducedsig Q^m_j = \beta$.
Consequently, $\{Q^m_j : j \in C_m+1\} \subseteq \Sp{\Sbuildsd}^m$.
\end{lemma}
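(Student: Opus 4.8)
The plan is to compute $\reducedsig Q^m_j$ directly from the full signature of the pair $(X, Y) := (C_m \setminus \{m-2\},\ (C_m + 1) \setminus \{j\})$, for which $Q^m_j = \XYrot{X}{Y}$. Here $X = \{1, 3, \dots, m-4\}$ and $Y \subseteq \{2, 4, \dots, m-1\}$, so $X \cap Y = \emptyset$ and $\card{X} = \card{Y} = (m-3)/2$; setting $Z := \nset{m} \setminus (X \cup Y)$, a short computation using $C_m \cup (C_m+1) = \nset{m} \setminus \{m\}$ gives $Z = \{j, m-2, m\}$, a $3$-element set. (Throughout I assume $m \geq 5$, the range in which $Q^m_j \in \Qsets$; for $m = 3$ the pair $(X,Y)$ degenerates to $(\emptyset,\emptyset)$.) Since $\card{Z}$ is odd we already know $\reducedsig(X,Y) \in \{\alpha, \beta\}$, so it suffices to show that $\psi(\fullsig(X,Y))$ contains an odd number of letters $\beta$.

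Next I would describe $w := \fullsig(X,Y)$: it is the alternating word carrying $x$ in the odd positions and $y$ in the even positions, except that the three positions $j$, $m-2$, $m$ are overwritten by $z$. As $1 \notin Z$, the letter in position $1$ is $x$, so cutting the circular word there eliminates any wrap-around and the $z$'s of $w$ fall into genuine maximal runs, each preceded (cyclically) by an $x$ or a $y$. The elementary fact I use, immediate from the definition of $\psi$, is that a maximal $z$-run of length $\ell$ preceded by $x$ is sent to $\beta\alpha\beta\alpha\cdots$ (contributing $\lceil \ell/2 \rceil$ letters $\beta$ and $\lfloor \ell/2 \rfloor$ letters $\alpha$), one preceded by $y$ is sent to $\alpha\beta\alpha\beta\cdots$, and the $x$'s and $y$'s contribute no $\alpha$'s or $\beta$'s; hence the numbers of $\alpha$'s and of $\beta$'s in $\psi(w)$ can be tallied run by run.

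I would then split into three cases. If $j \notin \{m-3, m-1\}$, the positions $j, m-2, m$ are pairwise non-adjacent (here one checks $j \leq m-5$), giving three runs of length $1$: the run at $j$ is preceded by the odd position $j-1$, which lies in $X$, hence is an $x$ and yields a $\beta$; the runs at $m-2$ and $m$ are preceded by the even positions $m-3$ and $m-1$, which lie in $Y$, hence are $y$'s and yield an $\alpha$ each. If $j = m-3$, then $\{m-3, m-2\}$ is a run of length $2$ preceded by the position $m-4 \in X$, contributing one $\beta$ and one $\alpha$, while $\{m\}$ is a run preceded by $m-1 \in Y$, contributing one $\alpha$. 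If $j = m-1$, then $\{m-2, m-1, m\}$ is a run of length $3$ preceded by $m-3 \in Y$, contributing one $\beta$ and two $\alpha$'s. In every case $\psi(w)$ has exactly one $\beta$ and two $\alpha$'s, so $\reducedsig Q^m_j = \varphi(\psi(w)) = \beta$.

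For the ``consequently'' clause: $(X,Y) \neq (\emptyset,\emptyset)$ (indeed $1 \in X$) and $\card{X} = \card{Y}$, so $Q^m_j \in \Qsets$; and $Q^m_j$ is neither $\XYrot{C_m}{C_m+1}$ nor $\XYrot{C_m+1}{C_m}$, since by the proof of Lemma~\ref{lem:XYdisjoint} equal systems $\XYrot{\cdot}{\cdot}$ have first-coordinate sets that are translates of each other, hence of the same cardinality, whereas $\card{X} = (m-3)/2 \neq (m-1)/2 = \card{C_m} = \card{C_m+1}$. Therefore $Q^m_j$ is one of the parts $Q \in \Qsets \setminus \{\XYrot{C_m}{C_m+1}, \XYrot{C_m+1}{C_m}\}$ with $\reducedsig Q = \beta$, and thus $Q^m_j \subseteq \Sp{\Sbuildsd}^m$ for every $j \in C_m+1$. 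The only real difficulty is keeping the bookkeeping straight — confirming that $j$, $m-2$, $m$ are distinct, that the positions preceding the $z$-runs avoid $Z$ and the cut point $1$, and that no run wraps around — after which the three cases are entirely routine.
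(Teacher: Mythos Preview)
Your proof is correct and follows essentially the same approach as the paper's: a direct computation of $\psi(\fullsig(X,Y))$ for $X = C_m \setminus \{m-2\}$ and $Y = (C_m+1) \setminus \{j\}$. The paper states the outcome in a single unified sentence---positions $m-2$ and $m$ receive $\alpha$ and position $j$ receives $\beta$---whereas your case split on whether $j$ is adjacent to $m-2$ or $m$ verifies exactly this; you are also more explicit than the paper about the $m=3$ degeneracy and about why $Q^m_j \notin \{\XYrot{C_m}{C_m+1}, \XYrot{C_m+1}{C_m}\}$, both of which the paper leaves implicit.
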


\begin{proof}
It is easy to verify that $\psi(\fullsig (C_m \setminus \{m - 2\}, (C_m + 1) \setminus \{j\})$ has precisely two occurrences of $\alpha$, namely, at positions $m-2$ and $m$, and precisely one occurrence of $\beta$, namely, at position $j$. Therefore, $\reducedsig(C_m \setminus \{m - 2\}, (C_m + 1) \setminus \{j\}) = \beta$, and the claim follows.
\end{proof}

\begin{proposition}
\label{prop:Smihypo}
For every odd integer $m \geq 3$, the Sperner systems $\Sp{\Ssd}^m_1$ and $\Sp{\Ssd}^m_2$ are strongly hypomorphic.
\end{proposition}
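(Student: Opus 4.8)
\emph{Plan.} The plan is to argue as in the proof of Proposition~\ref{prop:Mhypo}: fix $I \in \couples[E_m]$ and show $(\Sp{\Ssd}^m_1)^*_I \equiv (\Sp{\Ssd}^m_2)^*_I$ in every case. If $I = \{p, p'\}$ with $p \in \nset{m}$, one is done at once: $(\Sp{\Sbuildall}^m_1)_I = (\Sp{\Sbuildall}^m_2)_I$ by Remark~\ref{rem:G1G2iiprime}, and since $\Sp{\Sbuildsd}^m$ is common to $\Sp{\Ssd}^m_1$ and $\Sp{\Ssd}^m_2$, in fact $(\Sp{\Ssd}^m_1)_I = (\Sp{\Ssd}^m_2)_I$. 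For all other $I$ the goal is to find a transposition $\tau_s$ with $\{s, s'\} \cap I = \emptyset$ inducing an isomorphism $(\Sp{\Ssd}^m_1)^*_I \to (\Sp{\Ssd}^m_2)^*_I$; since both systems are $\rho$-invariant, $I$ may first be normalised by a rotation.

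Such a $\tau_s$ fixes the two points of $I$, hence descends to a permutation $\overline{\tau_s}$ of $E_m / \theta_I$ with $(\tau_s \cl{A})^*_I = \overline{\tau_s}(\cl{A}^*_I)$ for every set system $\cl{A}$ over $E_m$. Moreover $\tau_s(\Sp{\Sbuildall}^m_1) = \Sp{\Sbuildall}^m_2$ (Remark~\ref{rem:G1G2isomorphic}), every member of $\QsetsE$ is $\tau_s$-invariant (Lemma~\ref{lem:XYtransposition}), and, using $\tau_s \rho^i = \rho^i \tau_{s-i}$ (indices modulo $m$, $\tau_0 := \tau_m$) together with Remark~\ref{rem:ABtau}, the transposition $\tau_s$ fixes every block of $\rot{\{A_m\}}$ and $\rot{\{B_m\}}$ except $\rho^s A_m$ and $\rho^s B_m$, which it sends to $A^\star := \rho^s \tau_m A_m \in \XYrot{C_m+1}{C_m} \setminus \rot{\{A_m\}}$ and $B^\star := \rho^s \tau_m B_m \in \XYrot{C_m}{C_m+1} \setminus \rot{\{B_m\}}$. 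Therefore $\tau_s(\Sp{\Ssd}^m_1) = (\Sp{\Ssd}^m_2 \setminus \{\rho^s A_m, \rho^s B_m\}) \cup \{A^\star, B^\star\}$, and $\overline{\tau_s}$ is the sought isomorphism provided $s$ is chosen so that $(\rho^s A_m)_I$ and $(\rho^s B_m)_I$ are non-minimal in both $(\Sp{\Ssd}^m_1)_I$ and $(\Sp{\Ssd}^m_2)_I$ (this, via $\overline{\tau_s}$, also makes $(A^\star)_I$ and $(B^\star)_I$ non-minimal in $\tau_s(\Sp{\Ssd}^m_1)_I$, and discarding non-minimal blocks leaves the minimal elements unchanged).

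To establish non-minimality, note that $\Sp{\Sbuildsd}^m$, $\Sp{\Sbuildall}^m_1$ and $\Sp{\Sbuildall}^m_2$ are $m$-homogeneous, so after $\theta_I = \theta_{\{a,b\}}$ every block has size $m$, or size $m-1$ if it contains $I$. One must therefore arrange that each of $\rho^s A_m$, $\rho^s B_m$ contains exactly one element of $I$: containing both would leave a minimal $(m-1)$-block that cannot be removed, and containing neither makes domination impossible. Granting this, say $a \in \rho^s A_m$ and $b \notin \rho^s A_m$, the size-$m$ block $(\rho^s A_m)_I$ is non-minimal in $(\Sp{\Ssd}^m_j)_I$ exactly when $\Sp{\Ssd}^m_j$ contains a block of the form $(\rho^s A_m \setminus \{c\}) \cup \{b\}$ with $c \in \rho^s A_m \setminus \{a\}$, and similarly for $B_m$. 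It suffices to exhibit such dominating blocks inside the common part $\Sp{\Sbuildsd}^m$: the blocks $Q^m_j$ of Lemma~\ref{lem:Qjsigy}, the blocks of $\rot{\{A_m, B_m\}}$, and further members $\XYrot{X}{Y}$ of $\Qsets$ with $\reducedsig \XYrot{X}{Y} = \beta$ — their membership in $\Sp{\Sbuildsd}^m$ being verified by reduced-signature computations in the style of Lemmas~\ref{lem:Qjsigy} and~\ref{lemma:partysig}, and inclusions among them being supplied by Lemma~\ref{lem:smallerX}; for $m = 3$ one may alternatively dominate by blocks of $\Sp{\Sbuildall}^m_j$, checking $j = 1, 2$, exactly as in Proposition~\ref{prop:Mhypo}.

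The remaining work is to specify the exponent $s$ for each shape of $I$ — namely $\{p, q\}$, $\{p, q'\}$, $\{p', q'\}$, subdivided according to the positions of $p, q$ relative to the offsets ($+1$, $+2$) occurring in $A_m$, $B_m$, $\Sp{\Sbuildall}^m_j$ — and, in each case, to exhibit the four dominating blocks that render $\rho^s A_m$, $\rho^s B_m$, $A^\star$, $B^\star$ non-minimal. This explicit case analysis, carried out with the signature calculus of Lemmas~\ref{lem:XYXqYq}--\ref{lemma:partysig} and the domination Lemma~\ref{lem:smallerX}, is the main bookkeeping obstacle; a table of the choices of $s$ and of the relevant non-minimal blocks (e.g.\ for $m = 5$) would make the verification transparent, as in Proposition~\ref{prop:Mhypo}.
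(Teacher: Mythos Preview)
Your strategy is correct and is exactly the paper's: handle $I=\{p,p'\}$ directly; otherwise pick $\tau_s$ with $\{s,s'\}\cap I=\emptyset$, observe that $\tau_s$ swaps $\Sp{\Sbuildall}^m_1\leftrightarrow\Sp{\Sbuildall}^m_2$, fixes each $\XYrot{X}{Y}\in\Qsets$ (Lemma~\ref{lem:XYtransposition}), and fixes every block of $\rot{\{A_m,B_m\}}$ except $\rho^sA_m$ and $\rho^sB_m$; then show these two blocks become non-minimal after identification. Your analysis of the ``exactly one element of $I$'' constraint is also right.

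What you leave as ``the main bookkeeping obstacle'' the paper dispatches in a few lines by two simplifications you have not found. First, since $m$ is odd, every unordered pair in $\nset{m}$ is a rotation of some $\{1,j\}$ with $j\in C_m+1$; hence after rotation $I$ has one of the four forms $\{1,j\}$, $\{1,j'\}$, $\{1',j\}$, $\{1',j'\}$ with $j\in C_m+1$. In all four cases $\{m,m'\}\cap I=\emptyset$, so one always takes $s=m$ (i.e.\ $\rho^s=\mathrm{id}$): the problematic blocks are simply $A_m$ and $B_m$, and one checks immediately that $|A_m\cap I|=|B_m\cap I|=1$. There is no need to vary $s$ or to subdivide further.

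Second, domination is uniform rather than via ad hoc neighbours: since $j\in A_m$ (or $j'\in A_m$) one has $(A_m)_I=(A_m\cup\{1\})_I$ or $(A_m\cup\{1'\})_I$, and both augmented sets lie in $\XYrot{C_m+1}{(C_m+2)\setminus\{m\}}=\XYrot{C_m}{(C_m+1)\setminus\{m-1\}}$; by Lemma~\ref{lem:smallerX} every such block contains a block of $Q^m_{m-1}\subseteq\Sp{\Sbuildsd}^m$. The analogous argument for $B_m$ uses $Q^m_j$. Thus only two dominating blocks (in $\Sp{\Sbuildsd}^m$, hence common to both systems) are needed, not four, and no special case $m=3$ arises.
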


\begin{proof}
Since $m$ is odd, we have that
\[
\{\{p, q\} : p, q \in \nset{m}\} = \{\{q, q\} : q \in \nset{m}\} \cup \{\{1, j\} + q : j \in C + 1, q \in \nset{m}\}.
\]
By Remark~\ref{rem:G1G2iiprime}, we have for all $i \in \nset{m}$ that $(\Sp{\Sbuildall}^m_1)_{\{i, i'\}} = (\Sp{\Sbuildall}^m_2)_{\{i, i'\}}$.
Hence,
\begin{multline*}
(\Sp{\Ssd}^m_1)_{\{i, i'\}} =
(\Sp{\Ssd}^m \cup \Sp{\Sbuildall}^m_1)_{\{i, i'\}} =
(\Sp{\Ssd}^m)_{\{i, i'\}} \cup (\Sp{\Sbuildall}^m_1)_{\{i, i'\}} = \\
(\Sp{\Ssd}^m)_{\{i, i'\}} \cup (\Sp{\Sbuildall}^m_2)_{\{i, i'\}} =
(\Sp{\Ssd}^m \cup \Sp{\Sbuildall}^m_2)_{\{i, i'\}} =
(\Sp{\Ssd}^m_2)_{\{i, i'\}}.
\end{multline*}
Consequently, $(\Sp{\Ssd}^m_1)^*_{\{i, i'\}} = (\Sp{\Ssd}^m_2)^*_{\{i, i'\}}$.

Thus, it suffices to consider the cases when $I$ is of the form $\{1, j\}$, $\{1, j'\}$, $\{1', j\}$ or $\{1', j'\}$ for some $j \in C + 1$. We will show that then both $(A_m)_I$ and $(B_m)_I$ includes a block of $(Q^m_\ell)_I$ for some $\ell \in C_m + 1$, which is a block of $(\Sp{\Sbuildsd}^m)_I$ by Lemma~\ref{lem:Qjsigy}. This means that we can ignore $(A_m)_I$ and $(B_m)_I$ when we consider the minimal elements of $(\Sp{\Sbuildsd}^m)_I$. But then the transposition $\tau_m$ maps the set $(\Sp{\Sbuildsd}^m)_I \setminus \{(A_m)_I, (B_m)_I\}$ onto itself by Lemma~\ref{lem:XYtransposition} and Remark~\ref{rem:ABtau} and it maps $(\Sp{\Sbuildall}^m_1)_I$ onto $(\Sp{\Sbuildall}^m_2)_I$ by Remark~\ref{rem:G1G2isomorphic}. Thus $\tau_m$ is an isomorphism between $(\Sp{\Ssd}^m_1)_I$ and $(\Sp{\Ssd}^m_2)_I$.

Consider first $(A_m)_I$.
If $I = \{1, j\}$ or $I = \{1, j'\}$, then we have
$(A_m)_I = (\{1, m\} \cup (C_m + 1) \cup (C_m + 1)')_I$.
If $I = \{1', j\}$ or $I = \{1', j'\}$, then we have
$(A_m)_I = (\{1', m\} \cup (C_m + 1) \cup (C_m + 1)')_I$.
Both $\{1, m\} \cup (C_m + 1) \cup (C_m + 1)'$ and $\{1', m\} \cup (C_m + 1) \cup (C_m + 1)'$ are blocks of $\XYrot{C_m + 1}{(C_m + 2) \setminus \{m\}} = \XYrot{C_m}{(C_m + 1) \setminus \{m - 1\}}$, and every block of this set system is a superset of a block of $Q^m_{m-1}$.
Thus $(A_m)_I$ includes a block of $(Q^m_{m-1})_I$.

Consider then $(B_m)_I$.
If $I = \{1, j\}$ or $I = \{1', j\}$, then we have
$(B_m)_I = (\{j, m\} \cup C_m \cup C_m')_I$.
If $I = \{1, j'\}$ or $I = \{1', j'\}$, then we have
$(B_m)_I = (\{j', m\} \cup C_m \cup C_m')_I$.
Both $\{j, m\} \cup C_m \cup C_m'$ and $\{j', m\} \cup C_m \cup C_m'$ are blocks of $\XYrot{C_m}{(C_m + 1) \setminus \{j\}}$, and every block of this set system is a superset of a block of $Q^m_j$.
Thus $(B_m)_I$ includes a block of $(Q^m_j)_I$.
\end{proof}

We still want to verify that the Boolean functions associated with the Sperner systems $\Sp{\Ssd}^m_i$ are self-dual, i.e., members of the clone $SM$.

\begin{lemma}
\label{lem:abovemidlayer}
For all $J \subseteq E_m$ with $\card{J} \geq m + 1$, there is some $H \in \Sp{\Sbuildsd}^m$ with $H \subseteq J$.
\end{lemma}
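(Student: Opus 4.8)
The plan is to read off from $J$ a pair of disjoint subsets $X,Y \subseteq \nset{m}$ with $\card{X}=\card{Y}$ such that $\XYrot{X}{Y}$ is one of the signature-$\beta$ systems making up $\Sp{\Sbuildsd}^m$ (or else to exhibit a rotation of $A_m$ or $B_m$ inside $J$ directly), producing such a pair by invoking Lemma~\ref{lemma:partysig}. To this end, partition $\nset{m}$ according to $J$: put $P := \{x \in \nset{m} : x,x' \in J\}$, $N := \{x \in \nset{m} : x,x' \notin J\}$, $S := \nset{m} \setminus (P \cup N)$, so that $\card{J} = 2\card{P} + \card{S}$ and $m = \card{P}+\card{N}+\card{S}$, whence $\card{P}-\card{N} = \card{J}-m \geq 1$. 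The basic realization remark is: if $X,Y \subseteq \nset{m}$ are disjoint with $X \subseteq P$, $N \subseteq Y$ and $\card{X}=\card{Y}$, then there is a block $H \in \XYset{X}{Y} \subseteq \XYrot{X}{Y}$ with $H \subseteq J$ --- obtained by taking $x,x'$ for each $x \in X$ (legal since $X \subseteq P$), omitting $y,y'$ for each $y \in Y$, and taking, for each $z \in \nset{m}\setminus(X \cup Y)$ (which lies in $P \cup S$ since $N \subseteq Y$), whichever of $z,z'$ belongs to $J$; then $\card{H}=m$ by Lemma~\ref{lem:XYcard}. Similarly $\rho^q(B_m) \subseteq J$ whenever $C_m+q \subseteq P$ and $m+q \in J$, and $\rho^q(A_m) \subseteq J$ whenever $(C_m+1)+q \subseteq P$ and $m+q \in J$; in either case $\rot{\{A_m,B_m\}} \subseteq \Sp{\Sbuildsd}^m$ already contains a block inside $J$.

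Suppose first that $\card{N} \geq 1$, so $\card{P} \geq \card{N}+1 \geq 2$. Choose $X^{\circ} \subseteq P$ with $\card{X^{\circ}} = \card{N}+1$ and set $Y^{\circ} := N$; then $2 \leq \card{X^{\circ}} = \card{Y^{\circ}}+1$ and $X^{\circ} \cap Y^{\circ} = \emptyset$, so Lemma~\ref{lemma:partysig} yields disjoint $X_2 \subseteq X^{\circ} \subseteq P$ and $Y_2 \supseteq N$ with $\card{X_2}=\card{Y_2}$ and $\reducedsig(X_2,Y_2)=\beta$. Since $\emptyset \neq N \subseteq Y_2$, the system $\XYrot{X_2}{Y_2}$ belongs to $\Qsets$. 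If $\XYrot{X_2}{Y_2}$ is neither $\XYrot{C_m}{C_m+1}$ nor $\XYrot{C_m+1}{C_m}$, then $\XYrot{X_2}{Y_2} \subseteq \Sp{\Sbuildsd}^m$ and the realization remark finishes the argument. If $\XYrot{X_2}{Y_2} = \XYrot{C_m}{C_m+1}$, write $(X_2,Y_2) = (C_m+q,(C_m+1)+q)$; then $C_m+q = X_2 \subseteq P$, and since $m$ is odd, $m \notin C_m+1$, so $m+q \notin Y_2 \supseteq N$, i.e.\ $m+q \in J$; the realization remark then gives $\rho^q(B_m) \subseteq J$. Symmetrically $\XYrot{X_2}{Y_2} = \XYrot{C_m+1}{C_m}$ gives $\rho^q(A_m) \subseteq J$ (using $m \notin C_m$). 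By Lemma~\ref{lem:XYYXalphabeta} exactly one of $\XYrot{C_m}{C_m+1}$, $\XYrot{C_m+1}{C_m}$ has reduced signature $\beta$, so these are the only two exceptional subcases.

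Now suppose $\card{N}=0$, so $\card{P} \geq 1$. If $\card{P}=m$ then $J = E_m \supseteq A_m$. If $\card{P} \geq 2$, then $\card{S} = m-\card{P} \geq 1$, and we rerun the previous argument with $X^{\circ} \subseteq P$ of size $2$ and $Y^{\circ} := \{z_0\}$ for an arbitrary $z_0 \in S$: the requirement $N = \emptyset \subseteq Y_2$ is automatic, and $m+q \notin N = \emptyset$ holds trivially in the exceptional subcases. Finally, if $\card{P}=1$, say $P=\{x_0\}$, then $\card{J}=m+1$ and every $m$-element $H \subseteq J$ either drops one of $x_0,x_0'$ --- in which case $H \in \XYrot{\emptyset}{\emptyset}$, which contains no block of $\Sp{\Sbuildsd}^m$ --- or keeps both $x_0,x_0'$ and drops the unique representative in $J$ of some $z_1 \in \nset{m}\setminus\{x_0\}$, so that $H \in \XYset{\{x_0\}}{\{z_1\}}$. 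Using Lemma~\ref{lem:XYYXalphabeta} with $X=\{x_0\}$ and the rotation-invariance of $\reducedsig$, one of $z_1 := x_0+1$, $z_1 := x_0-1$ satisfies $\reducedsig(\{x_0\},\{z_1\}) = \beta$; for $m \geq 5$ the system $\XYrot{\{x_0\}}{\{z_1\}}$ has $\card{X}=\card{Y}=1 \neq \card{C_m}$, hence differs from both exceptional systems and lies in $\Sp{\Sbuildsd}^m$, and the corresponding $H$ is the required block. (For $m=3$ the range $\card{J}=m+1$ is small enough to be treated by hand.)

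The hard part is the exclusion of $\XYrot{C_m}{C_m+1}$ and $\XYrot{C_m+1}{C_m}$ from the signature part of $\Sp{\Sbuildsd}^m$: whenever Lemma~\ref{lemma:partysig} returns one of these two systems one is forced to fall back on a rotation of $A_m$ or $B_m$, and verifying that such a rotation actually sits inside $J$ is exactly where the explicit shape of $A_m$, $B_m$ and the oddness of $m$ enter (cf.\ Remark~\ref{rem:ABtau}). The secondary difficulty is the boundary range $\card{J}=m+1$ with $\card{P} \leq 1$, which forces the crude two-way choice of $z_1$ above and, for the smallest $m$, a direct check.
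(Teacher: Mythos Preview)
Your handling of the exceptional case contains a genuine gap. The implication ``$m+q \notin Y_2 \supseteq N$, i.e.\ $m+q \in J$'' does not hold: from $m+q \notin N$ you only obtain $m+q \in P \cup S$, which means \emph{at least one} of $m+q$ and $(m+q)'$ lies in $J$, not that $m+q$ itself does. If it is $(m+q)'$ that lies in $J$, then $\rho^q(B_m)$ (respectively $\rho^q(A_m)$) is not a subset of $J$, since its single element outside $(C_m+q)\cup(C_m+q)'$ (respectively $(C_m+1+q)\cup(C_m+1+q)'$) is the unprimed $m+q$. The same defect recurs verbatim in your $\card{N}=0$, $\card{P}\geq 2$ subcase.

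A concrete failure: take $m=5$ and $J=\{1,2,2',4,4',5'\}$. Then $P=\{2,4\}$, $N=\{3\}$, $S=\{1,5\}$, and since $\card{P}=\card{N}+1$ you are forced to set $X^\circ=P=\{2,4\}$, $Y^\circ=N=\{3\}$. The $\beta$-output of Lemma~\ref{lemma:partysig} applied to $(X^\circ,Y^\circ)$ is $X_2=\{2,4\}=C_5+1$, $Y_2=\{1,3\}=C_5$, the exceptional system $\XYrot{C_5+1}{C_5}$ with $q=0$. Your fallback then asks for $A_5=\{5,2,4,2',4'\}\subseteq J$, but $5\notin J$. (A valid $H$ does exist, e.g.\ $H=\{1,2,2',4',5'\}\in\XYset{\{2\}}{\{3\}}$ with $\reducedsig(\{2\},\{3\})=\beta$, but your argument does not locate it.)

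The paper's proof avoids this by first reducing to $\card{J}=m+1$ and applying Lemma~\ref{lemma:partysig} to the actual pair $(X,Y)$ with $J\in\XYset{X}{Y}$ and $\card{X}=\card{Y}+1$. The point is that one then knows exactly how $(X_2,Y_2)$ arose from $(X,Y)$: either $X=X_2\cup\{m+q\}$, in which case $m+q\in X$ forces $m+q,(m+q)'\in J$ and the rotation of $A_m$ genuinely fits; or $Y_2=Y\cup\{j\}$ for some $j$, in which case one abandons $A_m,B_m$ altogether and instead exhibits an alternative pair $(X_2',Y_2')$ of signature $\beta$ with $X_2'\subseteq X$, $Y\subseteq Y_2'$ that is not one of the two excluded systems, then invokes Lemma~\ref{lem:smallerX}. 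Your choice of an arbitrary $X^\circ\subseteq P$ loses exactly the information needed to make this distinction.
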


\begin{proof}
It suffices to prove the claim for sets $J$ such that $\card{J} = m + 1$.
Under this assumption, we have that $J \in \XYrot{X}{Y}$ for some $X, Y \subseteq \nset{m}$ with $\card{X} = \card{Y} + 1$ and $X \cap Y = \emptyset$.

By Lemma~\ref{lemma:partysig}, there exist some $X_2$ and $Y_2$ with $X_2 \subseteq X$, $Y \subseteq Y_2$, $\card{X_2} = \card{Y_2}$ such that $\reducedsig(X_2,Y_2) = \beta$.
If $\{X_2, Y_2\} \neq \{C_m, C_m + 1\}$ then $\XYrot{X_2}{Y_2} \subseteq \Sp{\Sbuildsd}^m$, and by Lemma~\ref{lem:smallerX} there is some $H \in \XYrot{X_2}{Y_2}$ with $H \subseteq J$.

If $\{X_2, Y_2\} = \{C_m, C_m+1\}$, then the condition $\reducedsig(X_2,Y_2) = \beta$ implies $\XYrot{X_2}{Y_2} = \XYrot{C_m+1}{C_m}$. There are two different possibilities for $\XYrot{X}{Y}$.
\begin{itemize}
\item
If $\XYrot{X}{Y} = \XYrot{\{m\} \cup (C_m + 1)}{C_m}$, then $J = \rho^i(\{m, m'\} \cup (C_m + 1) \cup (C_m + 1)')$ for some $i \in \nset{m}$, and thus $\rho^i(\{A_m\}) \subseteq J$.

\item
Otherwise, $\XYrot{X}{Y} = \XYrot{C_m + 1}{C_m \setminus \{j\}}$ for some $j \in C_m$. Let then $X_2' := (C_m + 1) \setminus \{j+1\}$ and $Y_2' := C_m \setminus \{j\}$. Since $\reducedsig(X_2',Y_2') = \beta$, we can use $X_2'$ and $Y_2'$ instead of $X_2$ and $Y_2$, and we have $\XYrot{X_2'}{Y_2'} \subseteq \Sp{\Sbuildsd}^m$ and some $H \in \XYrot{X_2'}{Y_2'}$ satisfying $H \subseteq J$.
\qedhere
\end{itemize}
\end{proof}

\begin{lemma}
\label{lem:midlayercomplements}
Let $m \geq 3$ be an odd integer, and let $i \in \{1, 2\}$. For every subset $J \subseteq E_m$ with $\card{J} = m$, the Sperner system $\Sp{\Ssd}^m_i$ contains exactly one of the sets $J$ and $\complementX{J}$.
\end{lemma}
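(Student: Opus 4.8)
The plan is to partition the $m$-element subsets of $E_m$ according to the parts of $\QsetsE$ (Proposition~\ref{prop:qsetsPartition}) and to verify the claim part by part, using throughout that $\complementX{J}$ again has $m$ elements, that $J \neq \complementX{J}$ (as $\card{J} = m \geq 1$ and $J \cap \complementX{J} = \emptyset$), and that $J \in \XYrot{X}{Y}$ implies $\complementX{J} \in \XYrot{Y}{X}$ by Lemma~\ref{lem:XYcomplements}. First I would record two bookkeeping observations. One: every block of $\Sp{\Ssd}^m_i$ has exactly $m$ elements ($\Sp{\Sbuildall}^m_i$ is $m$-homogeneous, the blocks $\rot{\{A_m, B_m\}}$ have size $1 + 2\card{C_m} = m$, and the remaining blocks lie in parts $\XYrot{X}{Y}$ with $\card{X} = \card{Y}$, hence are $m$-homogeneous by Lemma~\ref{lem:XYcard}), so $\Sp{\Ssd}^m_i$ coincides with the set of its $m$-element members. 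Two: the two summands of $\Sp{\Ssd}^m_i$ contribute to disjoint parts, since $\Sp{\Sbuildall}^m_i \subseteq \XYrot{\emptyset}{\emptyset}$ (Remark~\ref{rem:emptyemptyG}) while every block of $\Sp{\Sbuildsd}^m$ lies in some part of $\Qsets$, with $\rot{\{A_m\}} \subseteq \XYrot{C_m+1}{C_m}$ and $\rot{\{B_m\}} \subseteq \XYrot{C_m}{C_m+1}$ by Lemma~\ref{lem:XYXqYq}; all parts in question are pairwise disjoint by Lemma~\ref{lem:XYdisjoint}.

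For $J \in \XYrot{\emptyset}{\emptyset}$, both $J$ and $\complementX{J}$ lie in $\XYrot{\emptyset}{\emptyset}$ and hence avoid $\Sp{\Sbuildsd}^m$, so the question reduces to $\Sp{\Sbuildall}^m_i$; here I would use that $\Sp{\Sbuildall}^m_1$ and $\Sp{\Sbuildall}^m_2$ partition $\XYrot{\emptyset}{\emptyset}$ (one family being unprimed odd, the other unprimed even) and that $\complementX{\Sp{\Sbuildall}^m_1} = \Sp{\Sbuildall}^m_2$ for odd $m$ (Remark~\ref{rem:Gcomplements}), which yields exactly one of $J, \complementX{J}$ in $\Sp{\Sbuildall}^m_i$. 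For $J$ in a part $\XYrot{X}{Y} \in \Qsets$ that is \emph{not} one of $\XYrot{C_m}{C_m+1}$, $\XYrot{C_m+1}{C_m}$, the complement $\complementX{J}$ lies in $\XYrot{Y}{X}$, which is again in $\Qsets$, is distinct from $\XYrot{X}{Y}$ since $m$ is odd (Proposition~\ref{prop:qsetsPartition}), and is also not one of the two excluded parts (as $\XYrot{Y}{X} = \complementX{\XYrot{X}{Y}}$ and complementation swaps $\XYrot{C_m}{C_m+1}$ with $\XYrot{C_m+1}{C_m}$); moreover $\card{X} = \card{Y} \geq 1$, so Lemma~\ref{lem:XYYXalphabeta} applies and exactly one of $\reducedsig \XYrot{X}{Y}$, $\reducedsig \XYrot{Y}{X}$ equals $\beta$, the other being $\alpha$. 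The whole $\beta$-part is then contained in $\Sp{\Sbuildsd}^m$, while the whole $\alpha$-part is disjoint from $\Sp{\Sbuildsd}^m$ (it meets neither $\rot{\{A_m, B_m\}}$ nor the $\reducedsig = \beta$ collection); since $\Sp{\Sbuildall}^m_i$ contributes nothing here, $\Sp{\Ssd}^m_i$ contains exactly the one of $J, \complementX{J}$ lying in the $\beta$-part.

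The remaining case, $J \in \XYrot{C_m}{C_m+1} \cup \XYrot{C_m+1}{C_m}$, is the delicate one and where I expect the real work to be, because here $\Sp{\Sbuildsd}^m$ retains only ``half'' of each part, namely $\rot{\{A_m\}}$ and $\rot{\{B_m\}}$. Using the bookkeeping observations, I would argue that $\Sp{\Ssd}^m_i \cap \XYrot{C_m+1}{C_m} = \rot{\{A_m\}}$ and $\Sp{\Ssd}^m_i \cap \XYrot{C_m}{C_m+1} = \rot{\{B_m\}}$: the two excluded parts contribute nothing from the $\reducedsig = \beta$ collection by definition, and $\Sp{\Sbuildall}^m_i$ contributes nothing. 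Then for $J \in \XYrot{C_m+1}{C_m}$ we have $\complementX{J} \in \XYrot{C_m}{C_m+1}$, and the identity $\complementX{\rot{\{B_m\}}} = \XYrot{C_m+1}{C_m} \setminus \rot{\{A_m\}}$ from Remark~\ref{rem:ABtau} shows that $\complementX{J} \in \rot{\{B_m\}}$ if and only if $J \notin \rot{\{A_m\}}$; hence exactly one of $J \in \rot{\{A_m\}}$, $\complementX{J} \in \rot{\{B_m\}}$ holds, which is precisely the assertion that $\Sp{\Ssd}^m_i$ contains exactly one of $J, \complementX{J}$. The subcase $J \in \XYrot{C_m}{C_m+1}$ is symmetric, using instead $\complementX{\rot{\{A_m\}}} = \XYrot{C_m}{C_m+1} \setminus \rot{\{B_m\}}$. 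Since these cases exhaust all $m$-element subsets of $E_m$, the lemma follows.
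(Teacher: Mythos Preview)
Your proof is correct and follows essentially the same approach as the paper: partition the $m$-element subsets of $E_m$ according to $\QsetsE$ and handle separately the part $\XYrot{\emptyset}{\emptyset}$ (via Remarks~\ref{rem:emptyemptyG} and~\ref{rem:Gcomplements}), the generic parts of $\Qsets$ (via Lemma~\ref{lem:XYYXalphabeta}), and the two special parts $\XYrot{C_m}{C_m+1}$, $\XYrot{C_m+1}{C_m}$ (via Remark~\ref{rem:ABtau}). Your version is somewhat more explicit in the bookkeeping (recording that the two summands of $\Sp{\Ssd}^m_i$ live in disjoint parts, and spelling out in the special case exactly which equivalence from Remark~\ref{rem:ABtau} is being used), but the structure is the same.
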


\begin{proof}
Let $J \subseteq E_m$ with $\card{J} = m$.
By Proposition~\ref{prop:qsetsPartition}, there exist unique $\XYrot{X}{Y} \in \QsetsE$ such that $J \in \XYrot{X}{Y}$ and $\complementX{J} \in \XYrot{Y}{X}$. Since $m$ is odd, $\XYrot{X}{Y} \cap \XYrot{Y}{X} = \emptyset$.
Assume first that $\XYrot{X}{Y} \in \QsetsE \setminus \{\XYrot{\emptyset}{\emptyset}, \XYrot{C_m}{C_m + 1}, \XYrot{C_m + 1}{C_m}\}$.
By Lemma~\ref{lem:XYYXalphabeta}, one of $\reducedsig \XYrot{X}{Y}$ and $\reducedsig \XYrot{Y}{X}$ is $\alpha$ and the other is $\beta$.
By the definition of $\Sp{\Sbuildsd}^m$, exactly one of $J$ and $\complementX{J}$ is in $\Sp{\Sbuildsd}^m$ and hence in $\Sp{\Ssd}^m_i$.

Assume then that $\XYrot{X}{Y} \in \{\XYrot{C_m}{C_m + 1}, \XYrot{C_m + 1}{C_m}\}$.
By Remark~\ref{rem:ABtau}, either $J \in \rot{\{A_m, B_m\}}$ or $J \in \complementX{\rot{\{A_m, B_m\}}}$.
It then follows from the definition of $\Sp{\Sbuildsd}^m$ that exactly one of $J$ and $\complementX{J}$ is in $\Sp{\Sbuildsd}^m$ and hence in $\Sp{\Ssd}^m_i$.

Finally, assume that $\XYrot{X}{Y} = \XYrot{\emptyset}{\emptyset}$. By Remark~\ref{rem:emptyemptyG}, $\XYrot{\emptyset}{\emptyset} = \Sp{\Sbuildall}^m_1 \cup \Sp{\Sbuildall}^m_2$.
Since $m$ is odd, $\complementX{\Sp{\Sbuildall}^m_1} = \Sp{\Sbuildall}^m_2$ by Remark~\ref{rem:Gcomplements}.
We conclude that one of $J$ and $\complementX{J}$ is in $\Sp{\Sbuildall}^m_1$ and the other is in $\Sp{\Sbuildall}^m_2$; hence exactly one of $J$ and $\complementX{J}$ is in $\Sp{\Ssd}^m_i$.
\end{proof}

\begin{proposition}
\label{prop:SmiSM}
For every odd $m \geq 3$ and $i \in \{1, 2\}$, the Boolean function associated with the Sperner system $\Sp{\Ssd}^m_i$ is self-dual.
\end{proposition}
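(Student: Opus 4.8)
The plan is to combine the elementary characterisation of self-duality for monotone Boolean functions with Lemmas~\ref{lem:abovemidlayer} and~\ref{lem:midlayercomplements}. Write $f_i := t_{\Sp{\Ssd}^m_i}^{\mathbf{B}}$ for the Boolean function associated with $\Sp{\Ssd}^m_i$, and for a tuple $\vect{a} \in \{0, 1\}^{E_m}$ put $T_{\vect{a}} := \{j \in E_m : a_j = 1\}$, so that $f_i(\vect{a}) = 1$ if and only if some block of $\Sp{\Ssd}^m_i$ is included in $T_{\vect{a}}$. Since $f_i(\complementX{\vect{a}})$ is computed from $\complementX{T_{\vect{a}}}$, the function $f_i$ is self-dual, i.e.\ $f_i(\complementX{\vect{a}}) = \complementX{f_i(\vect{a})}$ for every $\vect{a}$, exactly when for every $T \subseteq E_m$ precisely one of the following holds: $\Sp{\Ssd}^m_i$ has a block included in $T$, or $\Sp{\Ssd}^m_i$ has a block included in $\complementX{T}$. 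This reformulation is the statement I would establish and then verify.

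The first step is to record that $\Sp{\Ssd}^m_i$ is $m$-homogeneous: the blocks of $\Sp{\Sbuildall}^m_i$ have cardinality $m$ by Remark~\ref{rem:Ginv}, both $A_m$ and $B_m$ have cardinality $1 + 2\card{C_m} = m$, and every set system $\XYrot{X}{Y}$ with $\card{X} = \card{Y}$ is $m$-homogeneous by Lemma~\ref{lem:XYcard}. Two consequences are used afterwards: for an $m$-element set $T$, the condition ``$\Sp{\Ssd}^m_i$ has a block included in $T$'' is equivalent to ``$T \in \Sp{\Ssd}^m_i$''; and for any $T$ with $\card{T} \leq m - 1$ no block of $\Sp{\Ssd}^m_i$ is included in $T$.

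Then I would fix $T \subseteq E_m$ and split on cardinalities, using $\card{T} + \card{\complementX{T}} = 2m$. If $\card{T} \geq m + 1$, then $T$ includes a block of $\Sp{\Sbuildsd}^m \subseteq \Sp{\Ssd}^m_i$ by Lemma~\ref{lem:abovemidlayer}, while $\card{\complementX{T}} \leq m - 1$ rules out any block being included in $\complementX{T}$; so exactly one alternative holds. The case $\card{\complementX{T}} \geq m + 1$ is symmetric. If $\card{T} = \card{\complementX{T}} = m$, then Lemma~\ref{lem:midlayercomplements} gives that exactly one of $T$ and $\complementX{T}$ belongs to $\Sp{\Ssd}^m_i$, which by $m$-homogeneity is the same as exactly one of $T$, $\complementX{T}$ including a block. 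Hence the reformulated self-duality condition holds in all cases, and $f_i$ is self-dual.

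Essentially all of the technical content lives in the two invoked lemmas, so no serious obstacle remains. The only point requiring care is the cardinality bookkeeping that simultaneously forbids a block inside $T$ together with a block inside $\complementX{T}$ (an intersecting-family phenomenon) and forbids both sides from missing a block (a maximality phenomenon); both are handled uniformly once $m$-homogeneity is in place, reducing the middle layer $\card{T} = m$ to Lemma~\ref{lem:midlayercomplements} and the outer layers to Lemma~\ref{lem:abovemidlayer}.
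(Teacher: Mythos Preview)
Your proposal is correct and follows essentially the same approach as the paper's proof: both establish $m$-homogeneity of $\Sp{\Ssd}^m_i$, then invoke Lemma~\ref{lem:abovemidlayer} for sets of size greater than $m$ and Lemma~\ref{lem:midlayercomplements} for the middle layer $\card{T} = m$. Your version is more explicit in spelling out the reformulation of self-duality and the reasons for $m$-homogeneity, but the structure and the key lemmas used are identical.
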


\begin{proof}
The Sperner system $\Sp{\Ssd}^m_i$ is $m$-homogeneous.
By Lemma~\ref{lem:abovemidlayer}, every set $J \subseteq E_m$ with $\card{J} > m$ includes a block of $\Sp{\Sbuildsd}^m$.
By Lemma~\ref{lem:midlayercomplements}, for every set $J \subseteq E_m$ with $\card{J} = m$ it holds that exactly one of $J$ and $\complementX{J}$ is in $\Sp{\Ssd}^m_i$.
It follows that the Boolean function associated with $\Sp{\Ssd}^m_i$ is self-dual.
\end{proof}


\section{Reconstructible Sperner systems}
\label{sec:positive}

Having found several infinite families of nonreconstructible Sperner systems, we now prove some positive results about reconstructibility. Our aim is to show that the members of the clones $\Lambda$ and $V$ of semilattice polynomial operations of sufficiently large arity are reconstructible.

Let $\Sp{\Sgena}$ be a set system over $A$. Denote by $U_{\Sp{\Sgena}}$ the union of the blocks of $\Sp{\Sgena}$. (Note that $U_{\Sp{\Sgena}}$ is a subset of $A$ and it may be a proper subset.) The elements of $U_{\Sp{\Sgena}}$ are said to be \emph{essential} in $\Sp{\Sgena}$ and the elements of $A \setminus U_{\Sp{\Sgena}}$ are said to be \emph{inessential} in $\Sp{\Sgena}$. Note that $\Sp{\Sgena}$ is also a set system over any set containing $U_{\Sp{\Sgena}}$.

\begin{lemma}
\label{lem:dummyid}
Let $\Sp{\Sgena}$ be a Sperner system over $A$, and assume that an element $a$ of $A$ is inessential in $\Sp{\Sgena}$.
If $I \in \couples[A]$ and $I \not\subseteq U_{\Sp{\Sgena}}$, then $\Sp{\Sgena}^*_I$ is isomorphic to $\Sp{\Sgena}$ when viewed as a set system over $A \setminus \{a\}$.
\end{lemma}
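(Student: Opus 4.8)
The plan is to argue that, because $I$ meets the inessential part of the ground set, identifying the pair $I$ does nothing to $\Sp{\Sgena}$ except delete one inessential point from the ground set, and that this deleted point can be freely exchanged with the given inessential point $a$.

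First I would pick $v \in I \setminus U_{\Sp{\Sgena}}$, which exists since $I \not\subseteq U_{\Sp{\Sgena}}$, and write $I = \{u, v\}$; here $u$ may or may not be essential, and $v$ may or may not equal $a$. Since $v$ belongs to no block of $\Sp{\Sgena}$, every block $S \in \Sp{\Sgena}$ satisfies $S \subseteq U_{\Sp{\Sgena}} \subseteq A \setminus \{v\}$. The quotient map $\pi \colon A \to A / \theta_I$, $x \mapsto x/\theta_I$, identifies only $u$ with $v$, so its restriction $\pi_0 \colon A \setminus \{v\} \to A / \theta_I$ is a bijection; in particular $\pi_0$ is inclusion-preserving and inclusion-reflecting on subsets of $A \setminus \{v\}$. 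Hence $\Sp{\Sgena}_I = \{S/\theta_I : S \in \Sp{\Sgena}\} = \{\pi_0(S) : S \in \Sp{\Sgena}\}$ is the image of the antichain $\Sp{\Sgena}$ under an order isomorphism between the power sets of $A \setminus \{v\}$ and $A / \theta_I$, so it is again an antichain. Therefore $\Sp{\Sgena}^*_I = \Sp{\Sgena}_I$, and $\pi_0$ is an isomorphism from $\Sp{\Sgena}$, viewed as a set system over $A \setminus \{v\}$, onto $\Sp{\Sgena}^*_I$.

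Next I would reconcile the two candidate ground sets $A \setminus \{v\}$ and $A \setminus \{a\}$, which both have cardinality $\card{A} - 1$ and both contain $U_{\Sp{\Sgena}}$ (so $\Sp{\Sgena}$ is a set system over each of them). If $v = a$ we are already done. Otherwise both $v$ and $a$ lie outside $U_{\Sp{\Sgena}}$, so the bijection $\psi \colon A \setminus \{v\} \to A \setminus \{a\}$ that sends $a$ to $v$ and fixes every other element maps each block of $\Sp{\Sgena}$ (a subset of $U_{\Sp{\Sgena}} \subseteq A \setminus \{a, v\}$) to itself; thus $\psi(\Sp{\Sgena}) = \Sp{\Sgena}$, i.e.\ $\Sp{\Sgena}$ over $A \setminus \{v\}$ is isomorphic to $\Sp{\Sgena}$ over $A \setminus \{a\}$. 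Composing this with the isomorphism from the previous step gives $\Sp{\Sgena}^*_I \equiv \Sp{\Sgena}$ over $A \setminus \{a\}$, as required.

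I do not expect a genuine obstacle: the content is bookkeeping about ground sets. The one step deserving care is the identity $\Sp{\Sgena}^*_I = \Sp{\Sgena}_I$ — that is, that identifying $I$ creates no new containments among blocks — since this is precisely what can fail when $I \subseteq U_{\Sp{\Sgena}}$; here it follows from $\pi$ being injective on $A \setminus \{v\} \supseteq \bigcup_{S \in \Sp{\Sgena}} S$, so that $\pi$ neither merges two points of any block nor introduces a block-containment that was not already present. The degenerate cases $\Sp{\Sgena} = \emptyset$ and $\Sp{\Sgena} = \{\emptyset\}$ are handled by the same argument, the quantifications over blocks being vacuous or trivial.
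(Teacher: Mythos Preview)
Your proposal is correct and follows essentially the same approach as the paper: both exploit that some $v \in I$ is inessential, so that identifying $I$ leaves every block unchanged (up to renaming the representative), whence $\Sp{\Sgena}^*_I = \Sp{\Sgena}_I \equiv \Sp{\Sgena}$. Your version is more careful than the paper's, which simply notes $S_I = S$ for each block and concludes; in particular you make explicit why no new containments arise and how to reconcile the ground sets $A \setminus \{v\}$ and $A \setminus \{a\}$, points the paper leaves implicit.
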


\begin{proof}
If $I \not\subseteq U_{\Sp{\Sgena}}$, then (at least) one of the elements of $I$ is not a member of any block of $\Sp{\Sgena}$. If one of the elements of $I$ is a member of some blocks of $\Sp{\Sgena}$, we can choose this element as the representative of the class $I$ of $\theta_I$, and we have that $S_I = S$ for every block $S$ of $\Sp{\Sgena}$. Otherwise $I \cap U_{\Sp{\Sgena}} = \emptyset$, and again we clearly have that $S_I = S$ for every block $S$ of $\Sp{\Sgena}$. The claim thus follows.
\end{proof}

\begin{lemma}
\label{lem:dummyminors}
Let $\Sp{\Sgena}$ be a Sperner system over $A$.
Then $\card{U_{\Sp{\Sgena}^*_I}} \leq \card{U_{\Sp{\Sgena}}}$ for all $I \in \couples[A]$, and
the inequality is strict if and only if $I \subseteq U_{\Sp{\Sgena}}$.
\end{lemma}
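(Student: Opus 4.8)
The plan is to compare $U_{\Sp{\Sgena}}$ with $U_{\Sp{\Sgena}^*_I}$ by tracking how the quotient map $q \colon A \to A/\theta_I$ acts on $U_{\Sp{\Sgena}}$. First I would record the elementary identity $U_{\Sp{\Sgena}_I} = q(U_{\Sp{\Sgena}})$, which holds because $\bigcup_{S \in \Sp{\Sgena}} S_I = \bigcup_{S \in \Sp{\Sgena}} q(S) = q\bigl(\bigcup_{S \in \Sp{\Sgena}} S\bigr)$, together with the trivial inclusion $U_{\Sp{\Sgena}^*_I} \subseteq U_{\Sp{\Sgena}_I}$ coming from $\Sp{\Sgena}^*_I \subseteq \Sp{\Sgena}_I$. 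Hence $\card{U_{\Sp{\Sgena}^*_I}} \le \card{q(U_{\Sp{\Sgena}})}$, and everything reduces to understanding the restriction of $q$ to $U_{\Sp{\Sgena}}$, whose only possible identification is of the two elements of $I$.

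Then I would split into the two cases according to whether $I \subseteq U_{\Sp{\Sgena}}$. If $I \subseteq U_{\Sp{\Sgena}}$, then $q$ merges two distinct elements of $U_{\Sp{\Sgena}}$ and is otherwise injective on it, so $\card{q(U_{\Sp{\Sgena}})} = \card{U_{\Sp{\Sgena}}} - 1$; combined with the inequality above this already gives $\card{U_{\Sp{\Sgena}^*_I}} < \card{U_{\Sp{\Sgena}}}$, the asserted strict inequality. If $I \not\subseteq U_{\Sp{\Sgena}}$, pick $a \in I \setminus U_{\Sp{\Sgena}}$; then $a$ is inessential in $\Sp{\Sgena}$, so Lemma~\ref{lem:dummyid} applies and yields that $\Sp{\Sgena}^*_I$ is isomorphic to $\Sp{\Sgena}$ viewed as a set system over $A \setminus \{a\}$. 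Since an isomorphism of set systems carries the union of the blocks of one onto the union of the blocks of the other, $\card{U_{\Sp{\Sgena}^*_I}} = \card{U_{\Sp{\Sgena}}}$.

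Putting the two cases together shows $\card{U_{\Sp{\Sgena}^*_I}} \le \card{U_{\Sp{\Sgena}}}$ for every $I \in \couples[A]$, with equality when $I \not\subseteq U_{\Sp{\Sgena}}$ and strict inequality when $I \subseteq U_{\Sp{\Sgena}}$, which is exactly the ``if and only if'' in the statement. I do not expect a genuine obstacle here; the one point needing a little care is the non-strict case, where instead of trying to prove $U_{\Sp{\Sgena}^*_I} = U_{\Sp{\Sgena}_I}$ directly (passing to the minimal blocks could a priori shrink the union), I would invoke Lemma~\ref{lem:dummyid} to obtain the isomorphism outright; alternatively, one may observe that for a suitable choice of representative of the class $I$ (namely one lying in $U_{\Sp{\Sgena}}$, if any) one has $S_I = S$ for every $S \in \Sp{\Sgena}$, so that $\Sp{\Sgena}_I = \Sp{\Sgena}$ is already a Sperner system, whence $\Sp{\Sgena}^*_I = \Sp{\Sgena}$ and the equality is immediate.
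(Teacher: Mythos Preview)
Your proposal is correct and follows essentially the same approach as the paper's proof: both split into the cases $I \subseteq U_{\Sp{\Sgena}}$ and $I \not\subseteq U_{\Sp{\Sgena}}$, invoke Lemma~\ref{lem:dummyid} for the latter to get equality, and for the former observe that $U_{\Sp{\Sgena}^*_I}$ is contained in $q(U_{\Sp{\Sgena}}) = U_{\Sp{\Sgena}}/\theta_I$, which has cardinality $\card{U_{\Sp{\Sgena}}} - 1$. Your write-up is simply a bit more expansive in spelling out the identity $U_{\Sp{\Sgena}_I} = q(U_{\Sp{\Sgena}})$ and the inclusion $U_{\Sp{\Sgena}^*_I} \subseteq U_{\Sp{\Sgena}_I}$, but the substance is identical.
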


\begin{proof}
If $I \not\subseteq U_{\Sp{\Sgena}}$, then it follows from Lemma~\ref{lem:dummyid} that $\card{U_{\Sp{\Sgena}^*_I}} = \card{U_{\Sp{\Sgena}}}$. If $I \subseteq U_{\Sp{\Sgena}}$, then $U_{\Sp{\Sgena}^*_I}$ is a subset of $U_{\Sp{\Sgena}} / \theta_I$, and we have $\card{U_{\Sp{\Sgena}^*_I}} \leq \card{U_{\Sp{\Sgena}} / \theta_I} = \card{U_{\Sp{\Sgena}}} - 1 < \card{U_{\Sp{\Sgena}}}$.
\end{proof}

For any $S \subseteq \nset{n}$, denote by $\vect{e}_S$ the $n$-tuple $(a_1, \dots, a_n)$ satisfying $a_i = 1$ if and only if $i \in S$ and $a_i = 0$ otherwise.

\begin{lemma}
Let $\Sp{\Sgena}$ be a Sperner system over $\nset{n}$, and let $\mathbf{A}$ be a bounded distributive lattice.
For every $i \in \nset{n}$, the $i$-th argument is essential in $t_{\Sp{\Sgena}}^\mathbf{A}$ if and only if $i$ is essential in $\Sp{\Sgena}$.
\end{lemma}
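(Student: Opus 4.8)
The statement to prove relates two notions of essentiality: the combinatorial one, where $i$ is essential in $\Sp{\Sgena}$ iff $i \in U_{\Sp{\Sgena}}$ (i.e.\ $i$ lies in some block), and the functional one, where the $i$-th argument is essential in $t_{\Sp{\Sgena}}^\mathbf{A}$. I would prove the two implications separately, and each should be quite short.

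For the easy direction, suppose $i$ is inessential in $\Sp{\Sgena}$, so that $i \notin S$ for every block $S \in \Sp{\Sgena}$. Then in the defining expression $t_{\Sp{\Sgena}} = \bigvee_{S \in \Sp{\Sgena}} \bigl( \bigwedge_{j \in S} x_j \bigr)$ the variable $x_i$ simply does not appear, so $t_{\Sp{\Sgena}}^\mathbf{A}$ does not depend on its $i$-th argument; hence the $i$-th argument is inessential in $t_{\Sp{\Sgena}}^\mathbf{A}$. Contrapositively, if the $i$-th argument is essential, then $i$ is essential in $\Sp{\Sgena}$.

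For the converse, suppose $i$ is essential in $\Sp{\Sgena}$, i.e.\ there is a block $S \in \Sp{\Sgena}$ with $i \in S$. I want tuples in $A^n$ differing only in coordinate $i$ on which $t_{\Sp{\Sgena}}^\mathbf{A}$ takes different values. The natural candidates are the characteristic tuples $\vect{e}_S$ and $\vect{e}_{S \setminus \{i\}}$, which differ exactly in coordinate $i$ (value $1$ versus $0$). Evaluating via Theorem~\ref{thm:Goodstein} (or directly from the form of $t_{\Sp{\Sgena}}$), one checks that $t_{\Sp{\Sgena}}^\mathbf{A}(\vect{e}_T) = 1$ precisely when $T \supseteq S'$ for some block $S' \in \Sp{\Sgena}$, and $t_{\Sp{\Sgena}}^\mathbf{A}(\vect{e}_T) = 0$ otherwise (using $0 < 1$ in $\mathbf{A}$ and distributivity). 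Now $t_{\Sp{\Sgena}}^\mathbf{A}(\vect{e}_S) = 1$ since $S \supseteq S$. On the other hand $t_{\Sp{\Sgena}}^\mathbf{A}(\vect{e}_{S \setminus \{i\}}) = 0$: if some block $S'$ were contained in $S \setminus \{i\}$, then $S' \subsetneq S$ (as $i \in S \setminus S'$), contradicting the Sperner (antichain) property of $\Sp{\Sgena}$. Hence the two values differ and the $i$-th argument is essential in $t_{\Sp{\Sgena}}^\mathbf{A}$.

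The only point requiring a little care — and the sole place the Sperner hypothesis is used — is the claim $t_{\Sp{\Sgena}}^\mathbf{A}(\vect{e}_{S \setminus \{i\}}) = 0$, which rests on the antichain property: no block can be a proper subset of $S$. Everything else is a direct unwinding of definitions, so the main (modest) obstacle is simply to record cleanly the evaluation rule $t_{\Sp{\Sgena}}^\mathbf{A}(\vect{e}_T) = \bigvee_{S' \in \Sp{\Sgena},\, S' \subseteq T} 1$ in a bounded distributive lattice, for which one may either cite Theorem~\ref{thm:Goodstein} or argue directly that $\bigwedge_{j \in S'} (\vect{e}_T)_j$ equals $1$ if $S' \subseteq T$ and $0$ otherwise.
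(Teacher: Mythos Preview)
Your proof is correct and follows essentially the same approach as the paper: both directions are handled identically, using the characteristic tuples $\vect{e}_S$ and $\vect{e}_{S \setminus \{i\}}$ for the nontrivial implication. You simply spell out more explicitly why $t_{\Sp{\Sgena}}^\mathbf{A}(\vect{e}_{S \setminus \{i\}}) = 0$ (via the antichain property), which the paper leaves implicit.
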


\begin{proof}
If $i$ is not essential in $\Sp{\Sgena}$, then it is clear that $t_{\Sp{\Sgena}}^\mathbf{A}$ does not depend on the $i$-th argument.
Assume then that $i$ is essential in $\Sp{\Sgena}$.
Let $S$ be a block of $\Sp{\Sgena}$ containing $i$.
Then $t_{\Sp{\Sgena}}^\mathbf{A}(\vect{e}_S) = 1$ and $t_{\Sp{\Sgena}}^\mathbf{A}(\vect{e}_{S \setminus \{i\}}) = 0$, which shows that the $i$-th argument is essential in $t_{\Sp{\Sgena}}^\mathbf{A}$.
\end{proof}

The following is an immediate consequence of the result proved in~\cite[Theorem~6]{CouLehlattice} and~\cite[Theorem~3.10]{CLWmonotone}

\begin{theorem}
\label{thm:CLlattice6}
If $n \geq 2$ and $\Sp{\Sgena}$ is a Sperner system on $\nset{n}$ such that every element of $\nset{n}$ is essential in $\Sp{\Sgena}$, then there exists $I \in \couples$ such that $\Sp{\Sgena}^*_I$ has $n - 1$ essential elements precisely unless $n = 3$ and $\Sp{\Sgena} = \{\{1, 2\}, \{1, 3\}, \{2, 3\}\}$.
\end{theorem}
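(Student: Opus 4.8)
The plan is to deduce the statement from the known classification of (monotone, or more generally order-preserving) functions by their \emph{arity gap}, using the dictionary between Sperner systems and lattice term operations from Section~\ref{sub:relationship}.

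First I would translate the claim into the language of functions. Fix the two-element lattice $\mathbf{B}$ (any bounded distributive lattice works just as well). By the lemma immediately preceding the statement, the essential elements of a Sperner system $\Sp{\Sgena}$ over $\nset{n}$ are exactly the essential arguments of $t_{\Sp{\Sgena}}^{\mathbf{B}}$, and recall from Section~\ref{sub:relationship} that $(t_{\Sp{\Sgena}}^{\mathbf{B}})_I = t_{\Sp{\Sgena}^*_I}^{\mathbf{B}}$ for every $I \in \couples$. Hence ``$\Sp{\Sgena}^*_I$ has $n-1$ essential elements'' is equivalent to ``the identification minor $(t_{\Sp{\Sgena}}^{\mathbf{B}})_I$ depends on exactly $n-1$ of its arguments.'' Moreover, by Lemma~\ref{lem:dummyminors}, when all $n$ elements of $\Sp{\Sgena}$ are essential every pair $I$ satisfies $I \subseteq U_{\Sp{\Sgena}} = \nset{n}$, so $\card{U_{\Sp{\Sgena}^*_I}} \le n-1$; thus $n-1$ is the largest value that can possibly occur, and the theorem is precisely the assertion that the arity gap of the monotone Boolean function $t_{\Sp{\Sgena}}^{\mathbf{B}}$ (with its $n$ essential arguments) equals $1$, with one exception.

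Second, I would invoke the cited arity-gap results: a monotone Boolean function (equivalently, a term operation of a distributive lattice) depending on all $n \ge 2$ of its variables has an identification minor with $n-1$ essential variables unless it is the ternary median $(x_1 \wedge x_2) \vee (x_1 \wedge x_3) \vee (x_2 \wedge x_3)$, whose identification minors are all projections (one essential variable) and whose arity gap is therefore $2$. Translating back through the bijective correspondence of Section~\ref{sub:relationship}, $t_{\Sp{\Sgena}}^{\mathbf{B}}$ is the ternary median exactly when $\Sp{\Sgena} = \{\{1,2\},\{1,3\},\{2,3\}\}$ (so $n = 3$), which yields the stated dichotomy. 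For that exceptional system one checks directly, as already recorded in Example~\ref{ex:Sp3}, that $\Sp{\Sgena}^*_I \equiv \{\{1\}\}$ for every $I \in \couples[3]$, confirming that no suitable $I$ exists.

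All the mathematical weight sits in the cited arity-gap classification, so that is where the main obstacle would lie in a self-contained treatment. Reproving it amounts to showing that for a Sperner system $\Sp{\Sgena}$ on $\nset{n}$ with all elements essential and $\Sp{\Sgena} \neq \{\{1,2\},\{1,3\},\{2,3\}\}$ there is a pair $I = \{i,j\}$ such that, after identifying $i$ and $j$, every element $k \notin I$ still lies in some minimal block of $\Sp{\Sgena}_I$, i.e.\ no block through $k$ gets absorbed into a block avoiding $k$; one first notes (as a small lemma) that the merged element is always essential in $\Sp{\Sgena}^*_I$, and then argues by a case analysis on the structure of $\Sp{\Sgena}$ — the small arities $n \le 4$ by inspection, and the general case by exhibiting a good $I$ (chosen according to the minimal blocks and the homogeneity pattern of $\Sp{\Sgena}$) and ruling out absorption. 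I would only take this longer route if a reference were unavailable; otherwise the reduction above is the efficient path.
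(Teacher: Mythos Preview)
Your approach is exactly what the paper does: the paper states the theorem as an immediate consequence of the arity-gap results \cite[Theorem~6]{CouLehlattice} and \cite[Theorem~3.10]{CLWmonotone}, and your reduction via the dictionary between Sperner systems and lattice term operations (essential elements $\leftrightarrow$ essential arguments, $(t_{\Sp{\Sgena}}^{\mathbf{B}})_I = t_{\Sp{\Sgena}^*_I}^{\mathbf{B}}$) is precisely how one sees this. The paper gives no further proof, so your write-up is in fact more detailed than the original.
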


A function $f \colon A^n \to B$ is \emph{totally symmetric} if for every permutation $\sigma$ of $A$ we have $f(\vect{a}) = f(\vect{a} \sigma)$ for all $\vect{a} \in A^n$.
Analogously, a Sperner system $\Sp{\Sgena}$ on $A$ is \emph{totally symmetric} if $\sigma(\Sp{\Sgena}) = \Sp{\Sgena}$ for every permutation $\sigma$ of $A$.
It is obvious that $\Sp{\Sgena}$ is totally symmetric is and only if $t_{\Sp{\Sgena}}^\mathbf{A}$ is totally symmetric.

\begin{theorem}[{\cite[Theorem~5.1]{LehtonenSymmetric}}]
Suppose $n \geq \card{A} + 2$, and let $f \colon A^n \to B$. If $f$ is totally symmetric, then $f$ is reconstructible.
\end{theorem}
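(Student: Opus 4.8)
### Proof proposal

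The statement to prove is: if $n \geq \card{A} + 2$ and $f \colon A^n \to B$ is totally symmetric, then $f$ is reconstructible. The plan is to show that the property of being totally symmetric is \emph{recognizable} in the sense of Section~\ref{sec:preliminaries} (every reconstruction of a totally symmetric $f$ is itself totally symmetric), and that the class of totally symmetric functions is \emph{weakly reconstructible} (within the class, the deck determines the function up to equivalence). As noted in the preliminaries, a class that is both recognizable and weakly reconstructible is reconstructible, so these two ingredients suffice.

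First I would establish recognizability. Suppose $g$ is a reconstruction of the totally symmetric function $f \colon A^n \to B$, so $\deck g = \deck f$. The key combinatorial fact is that a function of arity $n \geq \card{A} + 2$ is totally symmetric precisely when all of its identification minors are totally symmetric and, in addition, some mild compatibility condition holds that can be read off the deck; more concretely, one argues directly that if $g$ were not totally symmetric then there would be a pair $\vect{a}, \vect{a}\sigma$ with $g(\vect{a}) \neq g(\vect{a}\sigma)$, and since $n > \card{A}$ the Pigeonhole principle forces two coordinates of $\vect{a}$ to be equal, which lets one exhibit an identification minor $g_I$ that is not totally symmetric — whereas every identification minor of the totally symmetric $f$ is totally symmetric, contradicting $\deck g = \deck f$. (Some care is needed because a single transposition of coordinates need not fix a repeated-coordinate tuple; one should instead reduce a general asymmetry to an asymmetry witnessed on a tuple with a repeated entry, using $n \geq \card{A}+2$ to have enough room.) This is essentially the recognizability statement already quoted as Theorem~\ref{thm:monrecognizable} for order-preserving functions, adapted to the symmetry setting, and I expect it to follow by the same template.

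Next I would establish weak reconstructibility. A totally symmetric $f \colon A^n \to B$ is determined by a much smaller amount of data: its value on a tuple depends only on the multiset of entries, i.e., on the composition of $n$ into at most $\card{A}$ parts indexed by the elements of $A$. Write $f$ as a function of these multiplicity vectors. The identification minor $f_I$ is again totally symmetric, and as a function of multiplicity vectors it is obtained by a fixed, $I$-independent shift (identifying two arguments of a symmetric function just removes one "slot" and adds it back, so all the $f_I$ carry the same information); hence the single card $f_I / {\equiv}$ already records the restriction of $f$ to all tuples in which some value is repeated. Since $n \geq \card{A} + 2 > \card{A}$, the Pigeonhole principle guarantees that \emph{every} tuple in $A^n$ has a repeated entry, so this restriction is in fact all of $f$. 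Therefore, given the deck of $f$ together with the a priori knowledge that $f$ is totally symmetric, one recovers $f$ (up to equivalence) from any one card. If $g$ is another totally symmetric function with $\deck g = \deck f$, comparing a single common card reconstructs the same multiplicity-indexed function, so $g \equiv f$.

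Finally I would combine the two: by recognizability, every reconstruction of $f$ lies in the class of totally symmetric functions; by weak reconstructibility, every such reconstruction is equivalent to $f$; hence $f$ is reconstructible. The main obstacle I anticipate is the recognizability step, specifically the bookkeeping needed to pass from an arbitrary asymmetry $g(\vect{a}) \neq g(\vect{a}\sigma)$ to one exhibited on a tuple with a repeated coordinate so that it descends to an identification minor; the arity bound $n \geq \card{A} + 2$ (rather than merely $n > \card{A}$) is presumably exactly what makes this reduction go through, and getting that argument clean is the crux. The weak-reconstructibility half should be routine once the "all tuples have a repeated entry" observation is in place.
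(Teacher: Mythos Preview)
The theorem you are addressing is not proved in the present paper; it is merely quoted from \cite{LehtonenSymmetric}, so there is no proof here to compare your attempt against. That said, your proposal contains a genuine error that undermines both halves of the argument.

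Your recognizability step rests on the claim that ``every identification minor of the totally symmetric $f$ is totally symmetric.'' This is false. Take $A = \{0,1\}$, $n = 4$ (so $n = \card{A} + 2$), and let $f(\vect{a}) = 1$ if and only if $a_1 + a_2 + a_3 + a_4 \geq 2$. Then $f_{\{1,2\}}(a,b,c) = f(a,a,b,c)$ satisfies $f_{\{1,2\}}(0,0,1) = 0$ but $f_{\{1,2\}}(1,0,0) = 1$: same multiset of entries, different values, so $f_{\{1,2\}}$ is not totally symmetric. In general, for totally symmetric $f$ the minor $f_I$ doubles the argument at position $\min I$, so it is symmetric in all positions \emph{except} $\min I$; the minors of a totally symmetric function are all mutually \emph{equivalent}, but typically not totally symmetric themselves. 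Your proposed contradiction---``$g_I$ is not totally symmetric, but every card of $f$ is''---therefore never gets off the ground, because the cards of $f$ are not totally symmetric to begin with.

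The same false premise appears in your weak-reconstructibility step (``the identification minor $f_I$ is again totally symmetric''). Here the conclusion can be rescued, but the argument must change: a representative $h$ of the single card is symmetric in all but at most one argument, and when $h$ is not totally symmetric that distinguished position is uniquely determined by $h$ (since invariance under $\mathrm{Sym}([n-1]\setminus\{j\})$ and $\mathrm{Sym}([n-1]\setminus\{j'\})$ for $j \neq j'$ would force total symmetry). Once that position is located you know which coordinate is being doubled, and then a pigeonhole argument recovers $f$ on every $n$-tuple. Your sketch, relying on $f_I$ being symmetric and on an ``$I$-independent shift,'' does not do this.

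For recognizability the gap is more serious: since the cards of $f$ are \emph{not} totally symmetric, exhibiting a non-symmetric card of $g$ proves nothing. What you do know about a reconstruction $g$ is that all of its cards coincide---$g$ has a \emph{unique identification minor}---and the route taken in \cite{LehtonenSymmetric} is to analyse functions with that property directly. This is also where the bound $n \geq \card{A} + 2$, rather than merely $n > \card{A}$, is genuinely needed; your pigeonhole sketch would only use $n > \card{A}$.
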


In the special case of lattice term operations of the two-element lattice, this translates into the following.

\begin{corollary}
\label{cor:symmetric}
Every totally symmetric Sperner system over a set with at least four elements is reconstructible.
\end{corollary}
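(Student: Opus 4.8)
The plan is to deduce the corollary from the previously stated reconstruction theorem for totally symmetric functions, \cite[Theorem~5.1]{LehtonenSymmetric}, via the correspondence between Sperner systems and term operations of the two-element lattice described in Section~\ref{sub:relationship}.

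First I would reduce to the case of a Sperner system over $\nset{n}$: reconstructibility is invariant under isomorphism, so if $\Sp{\Sgena}$ is a totally symmetric Sperner system over a set of cardinality $n \geq 4$, it suffices to treat its isomorphic copy over $\nset{n}$. Let $\mathbf{B} = (\{0, 1\}; \wedge, \vee, 0, 1)$ be the two-element lattice and form the associated Boolean function $t_{\Sp{\Sgena}}^{\mathbf{B}}$. As observed immediately before the statement of the corollary, $\Sp{\Sgena}$ is totally symmetric if and only if $t_{\Sp{\Sgena}}^{\mathbf{B}}$ is totally symmetric; hence $t_{\Sp{\Sgena}}^{\mathbf{B}}$ is a totally symmetric function whose domain has two elements.

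Next I would apply \cite[Theorem~5.1]{LehtonenSymmetric} to $t_{\Sp{\Sgena}}^{\mathbf{B}}$: since the arity $n$ satisfies $n \geq 4 = \card{\{0,1\}} + 2$, the theorem gives that $t_{\Sp{\Sgena}}^{\mathbf{B}}$ is reconstructible. Finally I would invoke Proposition~\ref{prop:termopreconstr} with $\mathbf{A} := \mathbf{B}$, $a := 0$ and $b := 1$; here $(t^{01}_{\Sp{\Sgena}})^{\mathbf{B}}$ is exactly the term operation $t_{\Sp{\Sgena}}^{\mathbf{B}}$, because the $(0,1)$-truncated term operations of a bounded distributive lattice are precisely its term operations, and the hypothesis $n \geq \card{\{0,1\}} + 2 = 4$ of the proposition is met. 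The proposition then yields that $\Sp{\Sgena}$ is reconstructible if and only if $t_{\Sp{\Sgena}}^{\mathbf{B}}$ is, and the conclusion follows.

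There is no genuine obstacle in this argument; the one point requiring a small amount of care is the bookkeeping of the arity bounds. Both the symmetric-reconstruction theorem and Proposition~\ref{prop:termopreconstr} impose a lower bound on the arity measured against the size of the domain of the function (equivalently, the carrier of the lattice), namely $\card{\{0,1\}} + 2 = 4$; this coincides with the hypothesis ``at least four elements'' in the corollary, so no stronger assumption on $\Sp{\Sgena}$ is needed.
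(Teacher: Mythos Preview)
Your argument is correct and is essentially the same as the paper's: the corollary is stated immediately after \cite[Theorem~5.1]{LehtonenSymmetric} with the remark that it is just that theorem specialized to term operations of the two-element lattice. You have simply made the translation explicit by invoking Proposition~\ref{prop:termopreconstr}, which is exactly the intended bridge.
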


Let us recall a useful theorem by Willard~\cite{Willard}.

\begin{theorem}[{Willard~\cite[Theorem~2.6]{Willard}}]
\label{thm:Willard2.6}
Suppose $n \geq \max(\card{A}, 3) + 2$. Let $f \colon A^n \to B$ and assume that $f$ depends on all of its $n$ arguments. If every identification minor of $f$ that depends on $n - 1$ arguments is totally symmetric, then $f$ is determined by either $\operatorname{supp}$ or $\operatorname{oddsupp}$.
\end{theorem}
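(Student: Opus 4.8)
The plan is to split the argument according to whether \emph{all} identification minors $f_I$, $I \in \couples$, depend on $n-1$ arguments, or whether at least one of them collapses to fewer. In either case the first task is to show that $f$ is totally symmetric, since every function determined by $\operatorname{supp}$ or $\operatorname{oddsupp}$ is totally symmetric, so this is a necessary feature of the conclusion.

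For the symmetry step I would use a routing argument that exploits the large arity. Fix $\vect{a} \in A^n$ and a transposition $\pi = (k\;\ell)$ of argument positions; we want $f(\vect{a}) = f(\vect{a}\pi)$. If $a_k = a_\ell$ there is nothing to prove, so assume $a_k \neq a_\ell$. Since $n - 2 \geq \card{A}$, the pigeonhole principle produces distinct $i, j \in \nset{n} \setminus \{k, \ell\}$ with $a_i = a_j$; putting $I = \{i,j\}$, both $\vect{a}$ and $\vect{a}\pi$ lie on the diagonal $\{x_i = x_j\}$, hence each arises via the doubling map $\delta_I$ from an $(n-1)$-tuple, and $f(\vect{a}) = f_I(\vect{b})$, $f(\vect{a}\pi) = f_I(\vect{b}')$ with $\vect{b}'$ obtained from $\vect{b}$ by a transposition. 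If $f_I$ depends on $n-1$ arguments it is totally symmetric by hypothesis, and we are done. The delicate point is that for some tuples $\vect{a}$ every available choice of $I$ may yield an identification minor depending on fewer than $n-1$ arguments, so that the hypothesis gives no information; this degenerate situation has to be handled separately, and it is exactly what produces the ``$\operatorname{oddsupp}$'' alternative, so I would treat it together with the second main case below.

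Suppose first that every $f_I$ depends on $n-1$ arguments, hence is totally symmetric, and (by the previous paragraph) $f$ is totally symmetric too. Write $f(\vect{a}) = \bar{f}\bigl((m_v)_{v \in A}\bigr)$, where $m_v$ is the multiplicity of $v$ in $\vect{a}$ and $\sum_{v} m_v = n$. Expressing $f_I$ through $\bar{f}$: the tuple $\vect{b}\delta_I$ has the same multiplicities as $\vect{b}$ except that the value occupying the doubled coordinate gains one, so total symmetry of $f_I$ forces $\bar{f}(m + e_v)$ to be independent of which value $v$ occurring in $\vect{b}$ we took; that is, $\bar{f}(m + e_v) = \bar{f}(m + e_w)$ whenever $\sum_u m_u = n-1$ and $v, w \in \operatorname{supp}(m)$. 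Now I would use the elementary ``transport'' moves $m \mapsto m - e_v + e_w$ (legitimate whenever $m_v \geq 2$ and $m_w \geq 1$), which preserve both the support and the value of $\bar{f}$, together with the fact that $n > \card{A}$ forces some coordinate of any $m$ with $\sum m = n$ to be at least $2$, to conclude that the integer points of each fixed support–fixed sum slice are connected by such moves. Hence $\bar{f}(m)$ depends only on $\operatorname{supp}(m)$, i.e.\ $f$ is determined by $\operatorname{supp}$.

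It remains to treat the case in which some $f_{I_0}$, $I_0 = \{i_0, j_0\}$, depends on at most $n-2$ arguments. Here I would first determine which argument is lost: either a ``third'' argument (so that $f$ restricted to $x_{i_0} = x_{j_0}$ becomes independent of some other coordinate) or the doubled argument itself (so that $f$ on the diagonal becomes independent of the common value). Both are strong cancellation phenomena, and the plan is to show, using total symmetry of $f$ together with the full-arity totally symmetric minors, that the cancellation propagates: after identifying any pair of arguments and fixing a suitable further coordinate the dependence vanishes, and iterating shows that $f(\vect{a})$ is unchanged when a value is deleted from $\vect{a}$ in pairs — precisely the statement that $f$ is determined by $\operatorname{oddsupp}$. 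I expect this last case to be the main obstacle: one must simultaneously finish the symmetry step for the degenerate tuples left open above and globalise the local cancellation to an $\operatorname{oddsupp}$-dependence, and this is where Willard's combinatorial bookkeeping is genuinely needed.
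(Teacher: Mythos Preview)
The paper does not prove this theorem at all: it is quoted verbatim from Willard's paper \cite{Willard} as an external result and used as a black box (to derive Corollary~\ref{cor:Willard}). There is therefore no ``paper's own proof'' to compare your attempt against.

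As for your sketch itself, it is a reasonable outline of how one might approach Willard's argument, and you are candid about where the real work lies. Two remarks. First, your symmetry step is incomplete exactly where you say it is: when the only pairs $I$ available from pigeonhole all yield minors with fewer than $n-1$ essential arguments, the hypothesis is silent, and you cannot defer this to the oddsupp case without a circularity --- you need symmetry \emph{before} you can speak of $\bar f$ as a function of multiplicities. Willard handles this by a more careful induction that does not assume global symmetry up front. Second, in your supp case the transport move $m \mapsto m - e_v + e_w$ requires $m_v \geq 2$ and $m_w \geq 1$; you should check that the graph on multiplicity vectors with fixed support and fixed sum is actually connected under these moves, which is true but uses $n \geq \card{A} + 2$ rather than merely $n > \card{A}$. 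Your oddsupp paragraph is, as you acknowledge, only a plan; the actual propagation argument in Willard's paper is the substantive content of the theorem.
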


The precise definition of ``$f$ is determined by $\operatorname{supp}$'' or ``$f$ is determined by $\operatorname{oddsupp}$'' is not important to us here, but what is important is the fact that functions determined by either $\operatorname{supp}$ or $\operatorname{oddsupp}$ are totally symmetric. Theorem~\ref{thm:Willard2.6} then translates into the following in the special case of lattice term functions of the two-element lattice.

\begin{corollary}
\label{cor:Willard}
Suppose $n \geq 5$. Let $\Sp{\Sgena}$ be a Sperner system on $\nset{n}$ and assume that every element of $\nset{n}$ is essential in $\Sp{\Sgena}$. If for all $I \in \couples$ it holds that $\Sp{\Sgena}^*_I$ is totally symmetric whenever $\Sp{\Sgena}^*_I$ has $n-1$ essential elements, then $\Sp{\Sgena}$ is totally symmetric.
\end{corollary}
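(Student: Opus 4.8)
The plan is to deduce Corollary~\ref{cor:Willard} from Theorem~\ref{thm:Willard2.6} by transporting the statement through the correspondence $\Sp{\Sgena} \mapsto t_{\Sp{\Sgena}}^{\mathbf{B}}$ between Sperner systems over $\nset{n}$ and $n$-ary monotone Boolean functions (term operations of the two-element lattice $\mathbf{B}$). First I would fix $n \geq 5$ and a Sperner system $\Sp{\Sgena}$ on $\nset{n}$ in which every element is essential, set $f := t_{\Sp{\Sgena}}^{\mathbf{B}}$, and invoke the lemma relating essential elements of $\Sp{\Sgena}$ to essential arguments of $t_{\Sp{\Sgena}}^{\mathbf{B}}$ to conclude that $f$ depends on all $n$ of its arguments. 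Since $\card{A} = 2$, the arity bound $n \geq \max(\card{A}, 3) + 2 = 5$ of Theorem~\ref{thm:Willard2.6} is exactly met.

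Next I would verify the hypothesis of Theorem~\ref{thm:Willard2.6}, namely that every identification minor of $f$ that depends on $n-1$ arguments is totally symmetric. By the identity $(t_{\Sp{\Sgena}}^{\mathbf{B}})_I = t_{\Sp{\Sgena}^*_I}^{\mathbf{B}}$ from Section~\ref{sub:relationship}, the identification minor $f_I$ equals $t_{\Sp{\Sgena}^*_I}^{\mathbf{B}}$, and again by the essential-elements lemma, $f_I$ depends on $n-1$ arguments if and only if $\Sp{\Sgena}^*_I$ has $n-1$ essential elements. In that case the hypothesis of the corollary says $\Sp{\Sgena}^*_I$ is totally symmetric, and since $\Sp{\Sgena}$ is totally symmetric if and only if $t_{\Sp{\Sgena}}^{\mathbf{B}}$ is (as noted just before Corollary~\ref{cor:symmetric}), it follows that $f_I = t_{\Sp{\Sgena}^*_I}^{\mathbf{B}}$ is totally symmetric. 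Thus the hypothesis of Theorem~\ref{thm:Willard2.6} holds for $f$.

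Applying Theorem~\ref{thm:Willard2.6}, $f$ is determined by $\operatorname{supp}$ or by $\operatorname{oddsupp}$; as the excerpt points out, such functions are totally symmetric, so $f$ is totally symmetric. Transporting back through the correspondence once more, $\Sp{\Sgena}$ is totally symmetric, which is the desired conclusion.

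I do not expect a serious obstacle here: the proof is essentially a dictionary translation, and all the needed ingredients (the minor identity $(t_{\Sp{\Sgena}}^{\mathbf{B}})_I = t_{\Sp{\Sgena}^*_I}^{\mathbf{B}}$, the essential-elements lemma, the equivalence of total symmetry on both sides, and Theorem~\ref{thm:Willard2.6} itself) are already available. The only point requiring a moment's care is matching the arity threshold $n \geq \max(\card{A},3)+2$ with the stated hypothesis $n \geq 5$: since $\card{A} = 2$ for Boolean functions, both reduce to $n \geq 5$, so there is nothing lost. One should also remember to observe at the outset that, because every element of $\nset{n}$ is essential in $\Sp{\Sgena}$, the function $f$ genuinely depends on all $n$ arguments, so that Theorem~\ref{thm:Willard2.6} applies verbatim.
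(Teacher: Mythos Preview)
Your proposal is correct and is exactly the approach the paper intends: the corollary is stated as the translation of Theorem~\ref{thm:Willard2.6} to the two-element lattice via the correspondence $\Sp{\Sgena} \mapsto t_{\Sp{\Sgena}}^{\mathbf{B}}$, and you have spelled out that translation faithfully using the minor identity $(t_{\Sp{\Sgena}}^{\mathbf{B}})_I = t_{\Sp{\Sgena}^*_I}^{\mathbf{B}}$, the essential-elements lemma, and the equivalence of total symmetry on both sides.
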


The bound $n \geq 5$ in Corollary~\ref{cor:Willard} is sharp, as witnessed by the Sperner system $\Sp{\Sgena}$ of Example~\ref{ex:Sp4}.

\begin{theorem}
\label{thm:dummyweaklyrec}
The class of Sperner systems over $A$ with inessential elements is weakly reconstructible.
\end{theorem}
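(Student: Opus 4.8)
The plan is to extract from $\deck \mathcal{A}$ enough information to recover $\mathcal{A}$ up to isomorphism, using Lemmas~\ref{lem:dummyid} and~\ref{lem:dummyminors} essentially as stated. Write $n := \card{A}$ and $u := \card{U_{\mathcal{A}}}$; the hypothesis that $\mathcal{A}$ has inessential elements is exactly $u < n$. First I would note that $u$ is determined by the deck. The number of essential elements of a Sperner system is an isomorphism invariant, so $\card{U_c}$ is meaningful for each card $c \in \deck \mathcal{A}$; by Lemma~\ref{lem:dummyminors}, $\card{U_{\mathcal{A}^*_I}} \le u$ for every $I \in \couples[A]$, with equality precisely when $I \not\subseteq U_{\mathcal{A}}$. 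Since $u < n$ and $\card{A} \ge 2$, at least one such pair $I$ exists, so $u = \max\{\card{U_c} : c \in \deck \mathcal{A}\}$.

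Next I would isolate a distinguished card. Fix an inessential element $a \in A$. By Lemma~\ref{lem:dummyid}, for every $I \not\subseteq U_{\mathcal{A}}$ the card $\mathcal{A}^*_I$ is isomorphic to $\mathcal{A}$ regarded as a set system over $A \setminus \{a\}$; hence all such cards coincide and each has exactly $u$ essential elements, whereas any card arising from some $I \subseteq U_{\mathcal{A}}$ has strictly fewer than $u$ essential elements by Lemma~\ref{lem:dummyminors}. Therefore $\deck \mathcal{A}$ contains a unique card $c^{\ast}$ with $\card{U_{c^{\ast}}} = u$, namely the isomorphism type of $\mathcal{A}$ viewed over $A \setminus \{a\}$.

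To conclude, let $\mathcal{B}$ be a reconstruction of $\mathcal{A}$ belonging to the class; since reconstructions have equal size, we may assume $\mathcal{B}$ is a Sperner system over $A$ with $U_{\mathcal{B}} \subsetneq A$, and we fix an inessential $b \in A$. Applying the two preceding steps to $\mathcal{B}$ and using $\deck \mathcal{B} = \deck \mathcal{A}$ gives $\card{U_{\mathcal{B}}} = u$ and shows that the unique card of $\deck \mathcal{B}$ with $u$ essential elements, namely $\mathcal{B}$ over $A \setminus \{b\}$, equals $c^{\ast}$; thus there is a bijection $\varphi \colon A \setminus \{a\} \to A \setminus \{b\}$ with $\varphi(\mathcal{A}) = \mathcal{B}$. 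As every block of $\mathcal{A}$ lies in $U_{\mathcal{A}} \subseteq A \setminus \{a\}$ and every block of $\mathcal{B}$ avoids $b$, extending $\varphi$ by $\widehat{\varphi}(a) := b$ yields a bijection $\widehat{\varphi} \colon A \to A$ with $\widehat{\varphi}(\mathcal{A}) = \mathcal{B}$, so $\mathcal{A} \equiv \mathcal{B}$. No step here is a genuine obstacle; the only point deserving care is the degenerate cases ($\card{A}$ small, or $u = 0$, where $\mathcal{A} \in \{\emptyset, \{\emptyset\}\}$), all handled uniformly once one checks that a pair $I \not\subseteq U_{\mathcal{A}}$ is always available.
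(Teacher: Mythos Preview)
Your proof is correct and follows essentially the same approach as the paper's: both use Lemmas~\ref{lem:dummyid} and~\ref{lem:dummyminors} to observe that the cards with the maximal number of essential elements are exactly the isomorphic copies of $\mathcal{A}$ itself (viewed over one fewer point), so any reconstruction $\mathcal{B}$ in the class must share this distinguished card and hence be isomorphic to $\mathcal{A}$. Your write-up is simply more explicit than the paper's terse argument, in particular spelling out the extension of the bijection $\varphi$ to $\widehat{\varphi}$ and noting the degenerate cases.
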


\begin{proof}
Let $\Sp{\Sgena}$ and $\Sp{\Sgenb}$ be Sperner systems over $A$, and assume that both have inessential elements.
By Lemmas~\ref{lem:dummyid} and~\ref{lem:dummyminors}, the deck of $\Sp{\Sgena}$ comprises copies of $\Sp{\Sgena}$ and Sperner systems with fewer essential elements; similarly, the deck of $\Sp{\Sgenb}$ comprises copies of $\Sp{\Sgenb}$ and Sperner systems with fewer essential elements. If $\deck \Sp{\Sgena} = \deck \Sp{\Sgenb}$, then we necessarily have that $\Sp{\Sgena} \equiv \Sp{\Sgenb}$.
\end{proof}

\begin{theorem}
\label{thm:1homogeneous1block}
Let $\Sp{\Sgena}$ be a Sperner system over $\nset{n}$, and assume that $n \geq 4$.
If $\Sp{\Sgena}$ is $1$-homogeneous or has exactly one block, then $\Sp{\Sgena}$ is reconstructible.
\end{theorem}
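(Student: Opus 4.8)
The plan is to handle the two cases -- $\Sp{\Sgena}$ being $1$-homogeneous and $\Sp{\Sgena}$ having exactly one block -- separately, in each case distinguishing according to how many elements of $\nset{n}$ are essential in $\Sp{\Sgena}$.

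First consider the case where $\Sp{\Sgena}$ has inessential elements. Then $\Sp{\Sgena}$ lies in the class covered by Theorem~\ref{thm:dummyweaklyrec}, so $\Sp{\Sgena}$ is weakly reconstructible within that class. To upgrade this to reconstructibility, I would show that the property ``$\Sp{\Sgena}$ is $1$-homogeneous'' (respectively ``$\Sp{\Sgena}$ has exactly one block'') is recognizable from the deck, at least when restricted to systems with inessential elements, and then invoke the principle that a recognizable, weakly reconstructible class is reconstructible. Concretely, if $\Sp{\Sgena}$ has an inessential element, then by Lemma~\ref{lem:dummyid} at least one card equals $\Sp{\Sgena}/{\equiv}$ itself, so the deck literally exhibits $\Sp{\Sgena}$; combined with Lemma~\ref{lem:dummyminors} (the cards with the maximal number of essential elements are exactly the copies of $\Sp{\Sgena}$), any reconstruction must be isomorphic to $\Sp{\Sgena}$. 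So in fact the ``inessential element'' case follows directly from Theorem~\ref{thm:dummyweaklyrec} together with the observation that a reconstruction of such $\Sp{\Sgena}$ again has inessential elements (which is where recognizability is used).

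Now suppose every element of $\nset{n}$ is essential. If $\Sp{\Sgena}$ is $1$-homogeneous with all elements essential, then $\Sp{\Sgena} = \{\{1\}, \dots, \{n\}\}$, which is totally symmetric; since $n \geq 4$, Corollary~\ref{cor:symmetric} gives reconstructibility. If $\Sp{\Sgena}$ has exactly one block $S$ with all elements essential, then $S = \nset{n}$, so $\Sp{\Sgena} = \{\nset{n}\}$, which is again totally symmetric, and Corollary~\ref{cor:symmetric} applies. I would also need to dispatch the mixed situation: a system that is $1$-homogeneous but has some inessential element is a subset $\{\{i\} : i \in T\}$ for some $T \subsetneq \nset{n}$, and a one-block system with an inessential element is $\{S\}$ with $S \subsetneq \nset{n}$; both are covered by the ``inessential element'' paragraph above. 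One should double-check that a reconstruction of a $1$-homogeneous system with inessential elements is again of that shape, i.e. that $1$-homogeneity together with ``having inessential elements'' is recognizable; similarly for one-block systems. This is where I expect the only real work: verifying recognizability of these two syntactic shapes from the deck. For $1$-homogeneous systems the block sizes are small, so identification minors change very little (by Lemma~\ref{lem:dummyid}, identifying two elements not both essential returns $\Sp{\Sgena}$, and identifying two singletons merges them), and the deck should visibly force $1$-homogeneity of any reconstruction. For one-block systems, a card is obtained by identifying two elements inside or outside the block, so every card again has exactly one block, which forces any reconstruction to have one block.

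The main obstacle I anticipate is the recognizability bookkeeping for reconstructions of systems with inessential elements: one must rule out that some exotic Sperner system with all elements essential could share a deck with a $1$-homogeneous (or one-block) system that has inessential elements. I would address this by noting that such a system's deck contains a copy of the system itself (Lemma~\ref{lem:dummyid}), which already pins down the isomorphism type, so no competing reconstruction exists regardless of its internal structure -- making the recognizability question moot in these two cases. Thus the proof reduces to: (i) apply Corollary~\ref{cor:symmetric} when all elements are essential (the system is then forced to be $\{\{1\},\dots,\{n\}\}$ or $\{\nset{n}\}$), and (ii) apply Lemmas~\ref{lem:dummyid} and~\ref{lem:dummyminors} (equivalently Theorem~\ref{thm:dummyweaklyrec} sharpened by the self-card observation) when there is an inessential element.
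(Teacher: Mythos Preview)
Your case split (all elements essential versus some inessential) and your treatment of the fully-essential case via Corollary~\ref{cor:symmetric} are both fine and match the paper. The gap is in the other case.

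Your claim that the self-card ``pins down the isomorphism type, so no competing reconstruction exists regardless of its internal structure'' is false. Knowing that $\Sp{\Sgena}$ appears as a card in $\deck\Sp{\Sgena}$ tells you only that any reconstruction $\Sp{\Sgenb}$ has some identification minor isomorphic to $\Sp{\Sgena}$; it does \emph{not} force $\Sp{\Sgenb}\equiv\Sp{\Sgena}$. Example~\ref{ex:Sp4} is a direct counterexample to your reasoning: there $\Sp{\Sgenb}=\{\{1,2\},\{1,3\},\{2,3\}\}$ over $\nset{4}$ has $4$ inessential, so $\Sp{\Sgenb}$ sits in its own deck, yet $\Sp{\Sgena}=\{\{1,2\},\{1,3\},\{1,4\},\{2,3,4\}\}$ is a nonisomorphic reconstruction with all elements essential. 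So the ``self-card observation'' does \emph{not} make recognizability moot, and Theorem~\ref{thm:dummyweaklyrec} alone is insufficient.

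The paper closes this gap with real work that your proposal omits. Suppose $\Sp{\Sgena}$ has an inessential element (so $\card{U_{\Sp{\Sgena}}}=m<n$) and $\Sp{\Sgenb}$ is a reconstruction with \emph{all} elements essential. By Theorem~\ref{thm:CLlattice6}, $\Sp{\Sgenb}$ has a card with $n-1$ essential elements; since the cards of $\Sp{\Sgena}$ have at most $m$ essential elements, this forces $m=n-1$ and that card is $\{\{1\},\dots,\{n-1\}\}$ or $\{\nset{n-1}\}$. For $n\geq 5$, every such card is totally symmetric, so Corollary~\ref{cor:Willard} makes $\Sp{\Sgenb}$ totally symmetric, whence reconstructible by Corollary~\ref{cor:symmetric}, contradicting $\Sp{\Sgena}\not\equiv\Sp{\Sgenb}$. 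For $n=4$ Corollary~\ref{cor:Willard} is unavailable (its bound is sharp, witnessed precisely by Example~\ref{ex:Sp4}), and the paper does an explicit block-size analysis to rule out a fully-essential $\Sp{\Sgenb}$. You need to supply this argument, or an alternative one; the self-card shortcut does not work.
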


\begin{proof}
Under our assumptions, it holds that $\Sp{\Sgena}$ is isomorphic to $\{\{1\}, \dots, \{m\}\}$ or $\{\nset{m}\}$ for some $m \in \nset{n}$. We may assume, without loss of generality, that $\Sp{\Sgena}$ is equal to one of $\{\{1\}, \dots, \{m\}\}$ or $\{\nset{m}\}$.
If $\Sp{\Sgena} = \{\{1\}, \dots, \{m\}\}$, then the deck of $\Sp{\Sgena}$ comprises $\{\{1\}, \dots, \{m - 1\}\}$ with multiplicity $\binom{m}{2}$ and $\{\{1\}, \dots, \{m\}\}$ with multiplicity $\binom{n}{2} - \binom{m}{2}$.
If $\Sp{\Sgena} = \{\nset{m}\}$, then the deck of $\Sp{\Sgena}$ comprises $\{\nset{m - 1}\}$ with multiplicity $\binom{m}{2}$ and $\{\nset{m}\}$ with multiplicity $\binom{n}{2} - \binom{m}{2}$.

Assume first that $m = n$. Then $\Sp{\Sgena}$ is totally symmetric, and it is reconstructible by Corollary~\ref{cor:symmetric}.

Assume then that $m < n$. Then $\Sp{\Sgena}$ has inessential elements. Let $\Sp{\Sgenb}$ be a reconstruction of $\Sp{\Sgena}$.
If $\Sp{\Sgenb}$ has inessential elements, then it follows from Theorem~\ref{thm:dummyweaklyrec} that $\Sp{\Sgena} \equiv \Sp{\Sgenb}$.
Suppose then, on the contrary, that all elements of $\nset{n}$ are essential in $\Sp{\Sgenb}$.
It follows from Proposition~\ref{thm:CLlattice6} that $\Sp{\Sgenb}$ has a card with $n - 1$ essential elements.
Then we necessarily have that $m = n - 1$ and this card is $\{\{1\}, \dots, \{n - 1\}\}$ or $\{\nset{n - 1}\}$.
We need to split the analysis on two cases depending on the value of $n$.

Consider first the case that $n \geq 5$. Since every card of $\Sp{\Sgenb}$ that has $n - 1$ essential elements is totally symmetric, it follows from Corollary~\ref{cor:Willard} that $\Sp{\Sgenb}$ is totally symmetric as well. Corollary~\ref{cor:symmetric} then implies that $\Sp{\Sgenb}$ is reconstructible; hence $\Sp{\Sgena} \equiv \Sp{\Sgenb}$. But $\Sp{\Sgena}$ and $\Sp{\Sgenb}$ have a different number of essential elements, so $\Sp{\Sgena}$ is certainly not equivalent to $\Sp{\Sgenb}$. We have reached a contradiction.

It remains to consider the case that $n = 4$.
Assume first that $\Sp{\Sgena} = \{1, 2, 3\}$; hence $\deck \Sp{\Sgena}$ comprises $\{1, 2, 3\}$ with multiplicity $3$ and $\{1, 2\}$ with multiplicity $3$.
In order for $\Sp{\Sgenb}$ to have $\{1, 2, 3\}$ as a card, $\Sp{\Sgenb}$ must contain a block with at least three elements.
If $\Sp{\Sgenb}$ contains a block with four elements, then $\Sp{\Sgenb} = \{1, 2, 3, 4\}$; but this system is totally symmetric and hence reconstructible by Corollary~\ref{cor:symmetric}, so this is not possible.
Thus $\Sp{\Sgenb}$ contains a block with three elements; say, $\{1, 2, 3\}$ is a block of $\Sp{\Sgenb}$. Since no element of $\nset{n}$ is inessential in $\Sp{\Sgenb}$, there must be another block that contains the element $4$.
Then $\Sp{\Sgenb}^*_{\{1,2\}}$ has at least two blocks: $\{1, 3\}$ and a block containing $4$. This is not possible, since all cards of $\Sp{\Sgena}$ (and hence of $\Sp{\Sgenb}$) have exactly one block.
We have reached a contradiction.

Assume then that $\Sp{\Sgena} = \{\{1\}, \{2\}, \{3\}\}$; hence $\deck \Sp{\Sgena}$ comprises $\{\{1\}, \{2\}, \{3\}\}$ with multiplicity $3$ and $\{\{1\}, \{2\}\}$ with multiplicity $3$.
If $\Sp{\Sgenb}$ has a block $S$ with $\card{S} \geq 3$, then for any two-element subset $I$ of $S$, the card $\Sp{\Sgenb}^*_I$ has a block with at least two elements; this is not possible.
If $\Sp{\Sgenb}$ has a block $S$ with $\card{S} = 2$, then $S$ is a two-element block of the card $\Sp{\Sgenb}^*_{\nset{4} \setminus S}$; this is not possible.
Thus all blocks of $\Sp{\Sgenb}$ are singletons, so, in fact, $\Sp{\Sgenb} = \{\{1\}, \{2\}, \{3\}, \{4\}\}$; but this is totally symmetric and hence reconstructible by Corollary~\ref{cor:symmetric}. But $\Sp{\Sgena}$ and $\Sp{\Sgenb}$ are clearly not isomorphic.
We have reached a contradiction also in this case.

We conclude that all reconstructions of $\Sp{\Sgena}$ are isomorphic to $\Sp{\Sgena}$, that is, $\Sp{\Sgena}$ is reconstructible. This completes the proof.
\end{proof}

Examples~\ref{ex:Sp2} and~\ref{ex:Sp3} show that the bound $n \geq 4$ in Theorem~\ref{thm:1homogeneous1block} is sharp.

Since we are aiming at proving the reconstructibility of the clones $\Lambda$ and $V$, we must not forget constant functions.

\begin{theorem}[{\cite[Example~3.4]{LehtonenSymmetric}}]
\label{thm:constantreconstructible}
If $f \colon A^n \to B$ is a constant function and $n > \card{A}$, then $f$ is reconstructible.
\end{theorem}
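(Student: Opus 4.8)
The plan is to show directly that a constant function admits no reconstruction other than itself. First I would observe that identification minors preserve constancy: if $f \colon A^n \to B$ is the constant function with value $b \in B$, then for every $I \in \couples$ we have $f_I(\vect{a}) = f(\vect{a}\delta_I) = b$ for all $\vect{a} \in A^{n-1}$, so $f_I$ is the $(n-1)$-ary constant function with value $b$. Consequently $\deck f$ consists of $\binom{n}{2}$ copies of the equivalence class of the $(n-1)$-ary constant-$b$ function.

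Next let $g \colon A^n \to B$ be a reconstruction of $f$, i.e., $\deck g = \deck f$. Then for every $I \in \couples$ the card $g_I / {\equiv}$ is the class of the constant-$b$ function. Since equivalent functions have the same number of essential arguments (none, in this case) and a function with no essential arguments on the nonempty domain $A^{n-1}$ is constant, each $g_I$ is itself constant; comparing it with the $(n-1)$-ary constant-$b$ function via the witnessing bijection of $\nset{n-1}$ shows that $g_I$ takes the value $b$. Spelling out the definition of $g_{\{i,j\}}$ for $i < j$, this means $g(c_1, \dots, c_n) = b$ for every tuple $\vect{c} \in A^n$ with $c_i = c_j$; ranging over all $I = \{i,j\} \in \couples$, we obtain $g(\vect{c}) = b$ for every $\vect{c} \in A^n$ that has at least two equal coordinates.

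Finally I would invoke the hypothesis $n > \card{A}$: by the pigeonhole principle every tuple in $A^n$ has two equal coordinates, so $g$ is the constant function with value $b$, hence $g \equiv f$. This proves that $f$ is reconstructible. The argument has essentially no hard step; the only point that warrants a line of care is the passage from ``$g_I$ equivalent to a constant'' to ``$g_I$ equal to that constant'', which rests on the fact recorded in Section~\ref{sec:preliminaries} that equivalent functions have the same essential arguments.
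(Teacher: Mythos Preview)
Your argument is correct. Note that the paper does not actually prove this statement; it merely quotes it from \cite[Example~3.4]{LehtonenSymmetric}. Your direct pigeonhole argument is precisely the standard one and is presumably what the cited reference contains: every card of $f$ is the constant-$b$ class, so every $g_I$ is constant with value $b$ (using, as you note, that equivalence preserves the set of essential arguments and that $g_I \leq c_b$ forces the value to be $b$), whence $g$ agrees with $b$ on all tuples with a repeated coordinate, which is all of $A^n$ once $n > \card{A}$.
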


\begin{corollary}
\label{cor:LambdaV}
If $\mathcal{C}$ is one of the clones $\Lambda$ and $V$ of Boolean functions, then the class $\mathcal{C}^{(\geq 4)}$ is reconstructible.
\end{corollary}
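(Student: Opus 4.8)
The plan is to deduce this corollary directly from the positive results already established for Sperner systems, using the transfer principle of Proposition~\ref{prop:termopreconstr} to pass from Sperner systems to the associated Boolean functions, and invoking Theorem~\ref{thm:constantreconstructible} to dispose of the constant functions separately. So the whole argument is a bookkeeping exercise: identify which members of $\Lambda$ and $V$ are non-constant, observe that these correspond exactly to the two types of Sperner systems treated in Theorem~\ref{thm:1homogeneous1block}, and check that the arity bounds line up.

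First I would make explicit the structure of the two clones. A member of $\Lambda$ is either one of the two constant functions or a conjunction $\bigwedge_{i \in S} x_i$ with $\emptyset \neq S \subseteq \nset{n}$, and dually a member of $V$ is either a constant function or a disjunction $\bigvee_{i \in S} x_i$ with $\emptyset \neq S \subseteq \nset{n}$. Over the two-element lattice $\mathbf{B}$, the conjunction $\bigwedge_{i \in S} x_i$ is the term operation $t_{\Sp{\Sgena}}^{\mathbf{B}}$ associated with the Sperner system $\Sp{\Sgena} = \{S\}$ over $\nset{n}$, which has \emph{exactly one block}; the disjunction $\bigvee_{i \in S} x_i$ is $t_{\Sp{\Sgena}}^{\mathbf{B}}$ associated with the Sperner system $\Sp{\Sgena} = \{\{i\} : i \in S\}$ over $\nset{n}$, which is \emph{$1$-homogeneous}. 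Since the $(0,1)$-truncated term operations of $\mathbf{B}$ are precisely the term operations of $\mathbf{B}$, every non-constant member of $\mathcal{C}^{(n)}$ is of the form $(t^{01}_{\Sp{\Sgena}})^{\mathbf{B}}$ for a Sperner system $\Sp{\Sgena}$ to which Theorem~\ref{thm:1homogeneous1block} applies.

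With this in hand the proof is short. Let $f \in \mathcal{C}^{(n)}$ with $n \geq 4$. If $f$ is constant, then $n \geq 4 > 2 = \card{\{0,1\}}$, so $f$ is reconstructible by Theorem~\ref{thm:constantreconstructible}. If $f$ is not constant, write $f = (t^{01}_{\Sp{\Sgena}})^{\mathbf{B}}$ as above, where $\Sp{\Sgena}$ is a Sperner system over $\nset{n}$ that has exactly one block (when $\mathcal{C} = \Lambda$) or is $1$-homogeneous (when $\mathcal{C} = V$). Since $n \geq 4$, Theorem~\ref{thm:1homogeneous1block} yields that $\Sp{\Sgena}$ is reconstructible; and since also $n \geq 4 = \card{\{0,1\}} + 2$, Proposition~\ref{prop:termopreconstr} (applied with $a = 0$, $b = 1$) transfers reconstructibility of $\Sp{\Sgena}$ to $f$. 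Hence every member of $\mathcal{C}^{(\geq 4)}$ is reconstructible. There is no genuine obstacle here: all the substance lies in Theorems~\ref{thm:1homogeneous1block} and~\ref{thm:constantreconstructible} and Proposition~\ref{prop:termopreconstr}, and the only points requiring attention are the elementary verification that the non-constant members of $\Lambda$ and $V$ correspond respectively to one-block and $1$-homogeneous Sperner systems, and the observation that the arity bound $n \geq 4$ is exactly what is needed for all three ingredients simultaneously.
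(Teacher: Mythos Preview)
Your proof is correct and follows essentially the same approach as the paper: identify the non-constant members of $\Lambda$ and $V$ as term operations associated with one-block and $1$-homogeneous Sperner systems respectively, then combine Theorem~\ref{thm:1homogeneous1block}, Proposition~\ref{prop:termopreconstr}, and Theorem~\ref{thm:constantreconstructible}. Your version is more explicit about checking that the arity bound $n \geq 4$ simultaneously satisfies the hypotheses of all three ingredients, which is a nice touch.
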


\begin{proof}
The clone $V$ comprises the constant functions and the functions of the form $t_{\Sp{\Sgena}}^\mathbf{B}$, where $\Sp{\Sgena}$ is a $1$-homogeneous Sperner system. The clone $\Lambda$ comprises the constant functions and the functions of the form $t_{\Sp{\Sgena}}^\mathbf{B}$, where $\Sp{\Sgena}$ is a Sperner system with exactly one block.
The claim then follows from Proposition~\ref{prop:termopreconstr} and Theorems~\ref{thm:1homogeneous1block} and~\ref{thm:constantreconstructible}.
\end{proof}


\section{Reconstructibility of Post classes}
\label{sec:classification}

Our results on the reconstructibility of Sperner systems straightforwardly translate into equivalent statements about distributive lattice polynomial functions, in particular monotone Boolean functions.
In fact, we are now very close to having a complete classification of the clones of Boolean functions in regard to reconstructibility. We only lack information on the reconstructibility of the subclones of the clone $L$ of linear functions that are not subclones of the clone $M$ of monotone functions. This information is provided by the result from~\cite{LehtonenLinear} that we quote below. Note that the members of the clone $L$ are precisely the affine functions over the finite field of order $2$.

\begin{theorem}[{\cite[Theorem~4.7]{LehtonenLinear}}]
\label{thm:affine}
Let $(G; +, \cdot)$ be a finite field of order $q$. The affine functions of arity at least $\max(q, 3) + 1$ over $(G; +, \cdot)$ are reconstructible.
\end{theorem}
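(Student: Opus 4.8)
The plan is to reduce the statement to a purely combinatorial reconstruction problem for multisets over the additive group of $G$, and to solve that. The starting observation is that an $n$-ary affine function $f = c_0 + c_1 x_1 + \dots + c_n x_n$ over $(G; +, \cdot)$ is, up to equivalence, nothing but the pair consisting of its constant term $c_0$ and the multiset $\mu(f) := \{c_1, \dots, c_n\}$ of its coefficients: two $n$-ary affine functions are equivalent exactly when they have the same constant term and the same coefficient multiset. Moreover, for $I = \{i, j\}$ with $i < j$, the identification minor $f_I$ is again affine, has the same constant term, and its coefficient multiset is obtained from $\mu(f)$ by deleting the two elements $c_i, c_j$ and adjoining the single element $c_i + c_j$. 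Thus the multiplicative structure of $G$ plays no role: the reconstruction problem for $n$-ary affine functions over $(G; +, \cdot)$ is the same as the reconstruction problem for size-$n$ multisets over the abelian group $(G; +)$, where the deck of a multiset $M$ records, for each unordered pair of positions, the multiset obtained by replacing the two chosen elements by their sum. It therefore suffices to prove (a) that the class of affine functions of arity at least $\max(q,3)+1$ is recognizable, and (b) that any two multisets of size at least $\max(q,3)+1$ over $(G; +)$ with the same deck are equal; (a) gives recognizability, (b) gives weak reconstructibility, and together they give reconstructibility.

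For (a): identification minors of affine functions are affine, so the content is to show that a function $g \colon G^n \to G$ with $n \geq \max(q,3)+1$, all of whose identification minors are affine, is itself affine. Affinity over the field $G$ is pinned down by a finite list of identities, each involving only boundedly many argument positions (one must express both additivity of $g - g(\mathbf{0})$ in each variable and $G$-homogeneity). Since $n$ exceeds $q$, in any instance of such an identity the tuples can be chosen with two coinciding coordinates, so the instance is already realized inside some identification minor $g_I$; affinity of every $g_I$ then forces the identity for $g$. This is where $n \geq q+1$ (and $n \geq 4$) enters recognizability.

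For (b), the combinatorial heart: let $M, M'$ be multisets over $(G; +)$ of size $n \geq \max(q,3)+1$ with the same deck. The constant term and the sum $\Sigma$ of the elements of $M$ (which equals the sum over every card) are immediate common invariants. Encode $M$ by $\Phi_M := \sum_{g \in G} m_g(M)\, e_g$ in the integral group algebra $\mathbb{Z}[G]$, with $e_g e_h = e_{g+h}$. Enumerating $M$ as $a_1, \dots, a_n$, one computes that the sum of $\Phi_N$ over all cards $N$ of the deck equals
\[
\binom{n-1}{2}\,\Phi_M + \tfrac12\Bigl(\Phi_M^{\,2} - \textstyle\sum_i e_{2a_i}\Bigr),
\]
because $\Phi_M^{\,2} = \sum_i e_{2a_i} + 2\sum_{i<j} e_{a_i + a_j}$ and $\sum_i e_{2a_i}$ is a fixed linear function of $\Phi_M$; higher symmetric functions of the deck are likewise expressible through $\Phi_M$ and its powers. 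Evaluating such relations at the characters of $G$ turns them into polynomial relations among the complex numbers $\chi(\Phi_M)$, and the aim is to solve this system for $\Phi_M$ — exploiting that the multiplicities $m_g(M)$ are non-negative integers with known sum $n$, and that the arity bound keeps the relevant integer coefficients (such as $\binom{n-1}{2}$) nonzero in $G$ — so as to conclude $\Phi_M = \Phi_{M'}$, i.e. $M = M'$.

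The hard part is precisely this last step. Pairwise sums alone do not in general determine a multiset — this is the familiar non-uniqueness of sumset inversion — so recovering $\Phi_M$ from the deck must genuinely use both the integrality and non-negativity of the multiplicities and the fact that the deck carries strictly more information than the bare sumset of $M$; moreover, squaring in $\mathbb{Z}[G]$ is far from injective, so the character equations leave a sign-type ambiguity that has to be resolved. Organising this elimination — plausibly by induction on the support of $M$, or by an extremal argument that first locates a coefficient of maximal multiplicity and peels it off — and verifying that it requires arity at least $\max(q,3)+1$, is where the real work lies; the even characteristic case, in which $g \mapsto 2g$ is not bijective on $G$, may well need a separate treatment.
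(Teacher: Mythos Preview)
The paper does not prove this theorem; it is quoted from \cite{LehtonenLinear} and used as a black box in Section~\ref{sec:classification}. There is therefore no proof in the present paper to compare against.

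That said, your reduction is exactly the one suggested by the title of \cite{LehtonenLinear}: encode an affine function by its constant term and the multiset of its linear coefficients, observe that identification of two arguments becomes the operation ``remove two elements, insert their sum'' on that multiset, and thereby translate the question into a reconstruction problem for multisets over the additive group $(G;+)$. Your group-algebra identity
\[
\sum_{\{i,j\}} \Phi_{N_{ij}} \;=\; \binom{n-1}{2}\,\Phi_M \;+\; \tfrac12\Bigl(\Phi_M^{\,2} - \sum_i e_{2a_i}\Bigr)
\]
is correct, and the pigeonhole sketch for recognizability in~(a) is sound.

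The genuine gap is the one you name yourself: you have not shown that the deck determines $\Phi_M$. The displayed identity is a single quadratic relation in $\Phi_M$; after evaluating at a nontrivial character $\chi$ it leaves a quadratic in the complex number $\chi(\Phi_M)$ with two roots, and you give no mechanism for selecting the correct one consistently across all characters. Saying that ``higher symmetric functions of the deck are likewise expressible'' is a gesture, not an argument: you would need to exhibit enough independent deck invariants and then actually solve the resulting system, or else find a different route (for instance, reconstructing the multiplicity of one specific element of $G$ directly from the deck and peeling it off inductively). Your closing paragraph is an honest inventory of exactly these difficulties, including the characteristic-$2$ wrinkle, but it does not resolve any of them. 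As it stands you have a correct setup and a correct first identity, not a proof.
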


Recall that the \emph{dual} of a Boolean function $f \colon \{0, 1\}^n \to \{0, 1\}$ is the function $f^\mathrm{d} \colon \{0, 1\}^n \to \{0, 1\}$ given by $f^\mathrm{d}(\vect{a}) = \overline{f(\overline{\vect{a}})}$ for all $\vect{a} \in \{0, 1\}^n$. Here $\overline{0} = 1$ and $\overline{1} = 0$. 

\begin{lemma}
\label{lem:dual}
For any $f \colon A^n \to B$ and any $I \in \couples$, we have $(f^\mathrm{d})_I = (f_I)^\mathrm{d}$.
\end{lemma}

\begin{proof}
For all $\vect{a} \in \{0, 1\}^{n-1}$,
\[
(f^\mathrm{d})_I(\vect{a})
= f^\mathrm{d}(\vect{a} \delta_I)
= \overline{f(\overline{\vect{a} \delta_I})}
= \overline{f(\overline{\vect{a}} \delta_I)}
= \overline{f_I(\overline{\vect{a}})}
= (f_I)^\mathrm{d}(\vect{a}).
\qedhere
\]
\end{proof}

\begin{remark}
\label{rem:dual}
It follows from Lemma~\ref{lem:dual} that $f$ is reconstructible if and only if $f^\mathrm{d}$ is reconstructible, and a class $\mathcal{C} \subseteq \cl{O}_{\{0,1\}}$ is reconstructible, weakly reconstructible, or recognizable if and only if its dual class $\mathcal{C}^\mathrm{d} := \{f^\mathrm{d} : f \in \mathcal{C}\}$ has the same property.
\end{remark}

\begin{theorem}
Let $\mathcal{C}$ be a clone on $\{0, 1\}$.
If $\mathcal{C}$ is included in $\Lambda$, $V$ or $L$, then $\mathcal{C}^{(\geq 4)}$ is reconstructible.
Otherwise, $\mathcal{C}^{(\geq n)}$ is not weakly reconstructible for any $n \geq 1$, and $\mathcal{C}$ contains pairs of nonequivalent strongly hypomorphic functions of arbitrarily high arity.
\end{theorem}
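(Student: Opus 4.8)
The statement has two halves, and the plan is to treat them separately, leaning on the machinery developed in the preceding sections and on Post's classification of clones (Appendix~\ref{app:Post}). For the positive half, suppose $\mathcal{C} \subseteq \Lambda$, $\mathcal{C} \subseteq V$, or $\mathcal{C} \subseteq L$. In the first two cases, Corollary~\ref{cor:LambdaV} already asserts that $\Lambda^{(\geq 4)}$ and $V^{(\geq 4)}$ are reconstructible; since a subclass of a reconstructible class of functions of a fixed minimum arity is again reconstructible (reconstructibility of a single function is an intrinsic property, and $\mathcal{C}^{(\geq 4)} \subseteq \Lambda^{(\geq 4)}$ or $\subseteq V^{(\geq 4)}$), the claim follows immediately. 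In the third case, the members of $L$ are exactly the affine functions over the two-element field $\mathrm{GF}(2)$, so Theorem~\ref{thm:affine} with $q = 2$ gives that affine functions of arity at least $\max(2,3)+1 = 4$ are reconstructible; hence $L^{(\geq 4)}$, and thus $\mathcal{C}^{(\geq 4)}$ for any $\mathcal{C} \subseteq L$, is reconstructible.

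For the negative half, suppose $\mathcal{C}$ is a clone that is \emph{not} contained in any of $\Lambda$, $V$, $L$. The key structural input is Post's lattice: I would argue that such a clone must contain at least one of a short list of ``minimal offending'' clones. A clean way to phrase this: the clones contained in $\Lambda \cup V \cup L$ (as a union of down-sets in Post's lattice) are precisely the subclones of $\Lambda$, of $V$, or of $L$; any clone not among these lies above one of finitely many join-irreducible clones witnessing the escape. Inspecting Post's lattice, every clone not below $\Lambda$, $V$, or $L$ contains $SM$ (the clone of self-dual monotone functions) or $M_cU_\infty$ or its dual $M_cW_\infty$ (monotone constant-preserving $1$- or $0$-separating functions) --- or else contains $S$ or a nonmonotone clone from which these can be reached; but in fact it suffices to observe that every such $\mathcal{C}$ contains $SM$, $M_cU_\infty$, or $M_cW_\infty$, since these three clones sit below every clone that is monotone but escapes $\Lambda \cup V$, and the non-monotone escaping clones contain them too via the monotone subclone. (This is the step I expect to require the most care: a careful walk through Post's lattice, cross-checking against the diagram in Appendix~\ref{app:Post}, to verify that no clone slips through without containing one of these three.)

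Granting that, the conclusion is assembled from the constructions of Section~\ref{sec:main}. For each $n \geq 1$, pick $m$ large enough that the relevant ground set has at least $n$ elements. If $\mathcal{C} \supseteq M_cU_\infty$, use the Sperner systems $\Sp{\Sclique}^k_1$ and $\Sp{\Sclique}^k_2$ for $k \geq n$: by Remark~\ref{rem:UmiMcUinf} their associated Boolean functions lie in $M_cU_\infty \subseteq \mathcal{C}$, by Propositions~\ref{prop:Uminoniso}, \ref{prop:U2m+1hypo}, \ref{prop:U2m+2hypo} they are nonisomorphic and strongly hypomorphic, and by the correspondence of Section~\ref{sub:relationship} the corresponding functions are nonequivalent, strongly hypomorphic, and of arbitrarily high arity. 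If $\mathcal{C} \supseteq M_cW_\infty$, apply Remark~\ref{rem:dual} to the previous case (duality preserves strong hypomorphism and nonequivalence, and sends $M_cU_\infty$ to $M_cW_\infty$). If $\mathcal{C} \supseteq SM$, use $\Sp{\Ssd}^m_1$ and $\Sp{\Ssd}^m_2$ for odd $m \geq n$: Proposition~\ref{prop:SmiSM} places their functions in $SM \subseteq \mathcal{C}$, and Propositions~\ref{prop:Sminoniso} and~\ref{prop:Smihypo} give nonisomorphism and strong hypomorphism. In every case we obtain, inside $\mathcal{C}$, pairs of nonequivalent strongly hypomorphic functions of arbitrarily high arity; in particular $\mathcal{C}^{(\geq n)}$ fails to be weakly reconstructible for every $n$. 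This finishes the negative half and hence the theorem.
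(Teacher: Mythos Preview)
Your proposal is correct and follows essentially the same route as the paper's proof: Corollary~\ref{cor:LambdaV} and Theorem~\ref{thm:affine} (with $q=2$) handle the positive half, while for the negative half you read off Post's lattice that any clone not below $\Lambda$, $V$, or $L$ contains one of $SM$, $M_cU_\infty$, $M_cW_\infty$, and then invoke the constructions $\Sp{\Ssd}^m_i$ and $\Sp{\Sclique}^k_i$ together with Remark~\ref{rem:dual} for the dual case. The paper is equally terse about the Post-lattice step (``Reading off of Post's lattice\ldots''), so your acknowledged care there matches the level of detail in the original.
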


\begin{proof}
Corollary~\ref{cor:LambdaV} and Theorem~\ref{thm:affine} provide the claim about the clones included in $\Lambda$, $V$ or $L$.
Reading off of Post's lattice (see Figure~\ref{fig:PostsLattice} in Appendix~\ref{app:Post}), we see that if $\mathcal{C}$ is a clone that is not included in $\Lambda$, $V$ or $L$, then $\mathcal{C}$ includes $SM$, $M_c U_\infty$ or $M_c W_\infty$.
Propositions~\ref{prop:Sminoniso}, \ref{prop:Smihypo} and \ref{prop:SmiSM} establish that the clone $SM$ contains pairs of nonequivalent strongly hypomorphic functions of arbitrarily high arity.
Remark~\ref{rem:UmiMcUinf} and Propositions~\ref{prop:Uminoniso}, \ref{prop:U2m+1hypo} and \ref{prop:U2m+2hypo} do this for the clone $M_c U_\infty$.
By Remark~\ref{rem:dual}, the same holds for the clone $M_c W_\infty$, which is the dual of $M_c U_\infty$.
The claim thus follows, and the proof is complete.
\end{proof}


\section{Further nonreconstructible functions}
\label{sec:further}

The nonreconstructible functions that we have seen so far are all order-preserving, because they arise as (truncated) term operations of a bounded distributive lattice, as described in Section~\ref{sub:relationship}. It is not difficult to see that there exist also nonreconstructible functions that are not order-preserving. Namely, there are various simple ways of building new nonreconstructible functions from any given one, the resulting functions being not necessarily order-preserving. We briefly mention here, without proof, examples of a few such methods. Throughout this section, we let $f, g \colon A^n \to B$.

\begin{example}
\textbf{Relabeling the domain and codomain.}
For any permutations $\phi \colon A \to A$ and $\psi \colon B \to B$, define $f^{\psi,\phi} \colon A^n \to B$ by the rule
\[
f^{\psi,\phi}(a_1, \dots, a_n) = \psi(f(\phi(a_1), \dots, \phi(a_n))).
\]
We have $(f^{\psi,\phi})_I = (f_I)^{\psi,\phi}$ for any $I \in \couples$, and $f^{\psi,\phi} \equiv g^{\psi,\phi}$ if and only if $f \equiv g$. In other words, if $f$ is not reconstructible, then neither is $f^{\psi,\phi}$.
\end{example}

\begin{example}
\textbf{Modifying the diagonal.}
For any map $\Delta \colon A \to B$, define $f^\Delta \colon A^n \to B$ by the rule
\[
f^\Delta(a_1, \dots, a_n) =
\begin{cases}
\Delta(a_1), & \text{if $a_i = a_j$ for all $i, j \in \nset{n}$,} \\
f(a_1, \dots, a_n), & \text{otherwise.}
\end{cases}
\]
We have $(f^\Delta)_I = (f_I)^\Delta$ for any $I \in \couples$, and $f^\Delta \equiv g^\Delta$ if and only if $f \equiv g$. In other words, if $f$ is not reconstructible, then neither is $f^\Delta$.
\end{example}

\begin{example}
\textbf{Extending the domain and codomain.}
For any sets $A'$ and $B'$ with $A' \supseteq A$ and $B' \supseteq B$ and for any map $\theta \colon \mathcal{P}(A') \to B'$, define $f' \colon (A')^n \to B'$ by the rule
\[
f'(a_1, \dots, a_n) =
\begin{cases}
f(a_1, \dots, a_n), & \text{if $(a_1, \dots, a_n) \in A^n$,} \\
\theta(\{a_1, \dots, a_n\}), & \text{otherwise.}
\end{cases}
\]
We have $(f')_I = (f_I)'$ for any $I \in \couples$, and $f' \equiv g'$ if and only if $f \equiv g$. In other words, if $f$ is not reconstructible, then neither is $f'$
\end{example}

\begin{example}
\textbf{Duplicating and padding.}
For any map $\theta \colon \mathcal{P}(A \times \{0, 1\}) \to B$, define $\hat{f} \colon (A \times \{0, 1\})^n \to B$ by the rule
\[
\hat{f}((a_1, b_1), \dots, (a_n, b_n)) =
\begin{cases}
f(a_1, \dots, a_n), & \text{if $b_i = b_j$ for all $i, j \in \nset{n}$,} \\
\theta(\{(a_1, b_1), \dots, (a_n, b_n)\}), & \text{otherwise.}
\end{cases}
\]
We have $(\hat{f})_I = \widehat{(f_I)}$ for any $I \in \couples$, and $\hat{f} \equiv \hat{g}$ if and only if $f \equiv g$. In other words, if $f$ is not reconstructible, then neither is $\hat{f}$.
\end{example}


\section{Connection to a reconstruction problem for hypergraphs}
\label{sec:hypergraph}

Our work bears a surprising connection to a completely different reconstruction problem that has been formulated for hypergraphs and vertex-deleted subhypergraphs. Set systems being essentially the same thing as hypergraphs, Sperner systems are certainly fit to serve as objects of the reconstruction problem for hypergraphs; however, cards are formed in a different way.

\begin{definition}
The \emph{reconstruction problem for hypergraphs and vertex-deleted subhypergraphs} comprises the following data.
The objects are all hypergraphs with a finite vertex set.
The equivalence is given by the isomorphism between hypergraphs.
The size of a hypergraph is the number of its vertices.
For each $n \in \mathbb{N}$, the index set $I_n$ is $\nset{n}$. We may assume that there is a fixed bijection between $\nset{n}$ and any $n$-element set, and we identify each element of $\nset{n}$ with its image under this bijection.
For each hypergraph $G$ and $v \in V(G)$, the derived object $G_v$ is the vertex-deleted hypergraph $G - v$ whose vertex set is $V(G) \setminus \{v\}$ and edge set is $\{e \in E(G) : v \notin e\}$.
Hence the cards of a hypergraph $G$ are the isomorphism types $(G - v) / {\equiv}$ of the vertex-deleted hypergraphs $G - v$ for $v \in V(G)$, and the deck of $G$ is the multiset $\{(G - v) / {\equiv} : v \in V(G)\}$.
\end{definition}

Kocay~\cite{Kocay} and Kocay and Lui~\cite{KocLui} have presented infinite families of nonreconstructible hypergraphs.
One of the families of Sperner systems that we have constructed in this paper, namely $\Sp{\Smon}^m_i$, $m \geq 3$, $i \in \{1, 2\}$ (see Definition~\ref{def:monotone}), actually turns out to be another example of an infinite family of nonreconstructible hypergraphs.

We have already shown in Proposition~\ref{prop:Cnoniso} that $\Sp{\Smon}^m_1$ and $\Sp{\Smon}^m_2$ are nonisomorphic. It remains to show that $\Sp{\Smon}^m_1$ and $\Sp{\Smon}^m_2$ are hypomorphic (with respect to the reconstruction problem for hypergraphs and vertex-deleted subhypergraphs); we will show that they are in fact strongly hypomorphic.

\begin{proposition}
\label{prop:hypergraph}
For all $m \geq 3$, the hypergraphs $\Sp{\Smon}^m_1$ and $\Sp{\Smon}^m_2$ are strongly hypomorphic with respect to the reconstruction problem for hypergraphs and vertex-deleted subhypergraphs.
\end{proposition}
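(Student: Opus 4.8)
The plan is to deduce the statement directly from the already-established fact (Proposition~\ref{prop:Mhypo}) that $\Sp{\Smon}^m_1$ and $\Sp{\Smon}^m_2$ are strongly hypomorphic \emph{as Sperner systems}, i.e.\ with respect to the identification-minor reconstruction problem. The two reconstruction problems differ only in how cards are formed: deleting a vertex $v$ from the hypergraph versus identifying a pair $I=\{i,j\}$. The key observation is that for an $m$-homogeneous Sperner system $\Sp{\Sgena}$ over $E_m$ whose blocks all have size $2m-3 < \card{E_m}$ (so that no block is the whole ground set), the vertex-deleted hypergraph $\Sp{\Sgena} - v$ and a suitable identification minor $\Sp{\Sgena}^*_I$ carry the same combinatorial information, up to a relabeling. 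More precisely, I will show that for each $v \in E_m$ one can find a pair $I_v \in \couples[E_m]$ with $v \in I_v$ such that $\Sp{\Smon}^m_1 - v \equiv (\Sp{\Smon}^m_1)^*_{I_v}$ (viewed over the appropriate $(2m-1)$-element ground set) and likewise for $\Sp{\Smon}^m_2$, using the \emph{same} pair $I_v$ on both sides.

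First I would make precise the relationship between the two card-forming operations on these particular systems. Recall $E_m = \nset{m} \cup \nset{m}'$, and note that every block of $\Sp{\Smon}^m_i$ has size $m$ or $2m-3$; in particular (for $m \ge 3$) no block equals $E_m$, and for each vertex $v \in E_m$ there is at least one block not containing $v$ (indeed each $\Sbuildmon^m_p$ omits two whole pairs). Deleting $v$ simply discards all blocks through $v$ and leaves the rest untouched on vertex set $E_m \setminus \{v\}$. On the other hand, identifying $I = \{v, w\}$ replaces each block $S$ by $S_I = S/\theta_I$ and then takes minimal elements; if we choose $w$ so that $w$ lies in \emph{every} block that contains $v$ (so that merging $v$ into $w$ never creates a ``new'' smaller set from a $v$-block beyond what a $w$-block already gives) the minimalization step kills exactly the images of the blocks through $v$, and the surviving minimal blocks are precisely the blocks of $\Sp{\Smon}^m_i$ that avoid $v$, now living on $E_m / \theta_I$, which is canonically $E_m \setminus \{v\}$. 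The point is that such a $w = w(v)$ exists and, crucially, can be chosen independently of $i$, because $\Sp{\Smon}^m_1$ and $\Sp{\Smon}^m_2$ share the same ``incidence shape'': they have the common part $\Sp{\Sbuildmon}^m$, and the parts $\Sp{\Sbuildall}^m_1$, $\Sp{\Sbuildall}^m_2$ are interchanged by the transposition $\tau_v$ (Remark~\ref{rem:G1G2isomorphic}) if $v \in \nset{m}$, and by $\tau_{v'}$-type reasoning otherwise; I will verify in each of the (few) orbit-representative cases for $v$ that the required $w$ can be found.

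Second, with the bijection $v \mapsto I_v$ in hand, I conclude: for every $v \in E_m$ we have $\Sp{\Smon}^m_1 - v \equiv (\Sp{\Smon}^m_1)^*_{I_v}$ and $\Sp{\Smon}^m_2 - v \equiv (\Sp{\Smon}^m_2)^*_{I_v}$, and by Proposition~\ref{prop:Mhypo} $(\Sp{\Smon}^m_1)^*_{I_v} \equiv (\Sp{\Smon}^m_2)^*_{I_v}$; hence $\Sp{\Smon}^m_1 - v \equiv \Sp{\Smon}^m_2 - v$ for every vertex $v$, which is exactly strong hypomorphism in the hypergraph sense. Alternatively, if the direct translation above proves awkward for some orbit of $v$, I would instead recompute the vertex-deleted subhypergraphs by hand following the four-case bookkeeping of the proof of Proposition~\ref{prop:Mhypo}: exhibit, for each $v$, an element $r = r(v) \in \nset{m}$ such that $\tau_{r+1}$ fixes every block of $\Sp{\Sbuildmon}^m$ not through $v$ (equivalently every $\Sbuildmon^m_s - v$ with $s \neq r$), swaps $(\Sp{\Sbuildall}^m_1) - v$ with $(\Sp{\Sbuildall}^m_2) - v$, and such that $\Sbuildmon^m_r - v$ is dominated by (contains no new minimal set beyond) the remaining blocks — so it may be ignored; then $\tau_{r+1}$ is the desired isomorphism between $\Sp{\Smon}^m_1 - v$ and $\Sp{\Smon}^m_2 - v$.

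The main obstacle I anticipate is purely bookkeeping: verifying that for \emph{every} vertex $v$ (there are essentially two orbits, $v \in \nset{m}$ and $v \in \nset{m}'$, under the rotation $\rho$, plus possibly a small-$m$ exception at $m=3$) the element $r(v)$ (or the pair $I_v$) can be chosen so that the leftover block $\Sbuildmon^m_{r}$-deleted-at-$v$ is genuinely redundant — i.e.\ it contains some block of $(\Sp{\Sbuildall}^m_i) - v$. This is the same ``$(\Sbuildall^m_{S_i})_I \subseteq (\Sbuildmon^m_r)_I$'' phenomenon exploited in Proposition~\ref{prop:Mhypo}, and since here $I$ is replaced by a single vertex deletion the inclusions are, if anything, easier to check, because no nontrivial merging occurs; but it must still be done for each orbit. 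I expect no conceptual difficulty beyond this, and for $m=3$ a direct inspection of the seven-block systems $\Sp{\Smon}^3_1$ and $\Sp{\Smon}^3_2$ (their vertex-deleted subhypergraphs on $5$ vertices) settles the base case outright.
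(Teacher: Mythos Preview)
Your primary approach does not work: for the systems $\Sp{\Smon}^m_i$ there is no vertex $w \neq v$ contained in every block through $v$. Take $v = 1$: the $\Sp{\Sbuildall}^m_i$-blocks through $1$ are the $\Sbuildall^m_J$ with $1 \in J$ (of the appropriate parity), and their intersection is already $\{1\}$ for $m = 3$ (e.g.\ $\{1,2,3\} \cap \{1,2',3'\} = \{1\}$ in $\Sp{\Sbuildall}^3_1$, and $\{1,2,3'\} \cap \{1,2',3\} = \{1\}$ in $\Sp{\Sbuildall}^3_2$). So the hoped-for translation ``vertex deletion $=$ identification minor for a well-chosen pair $I_v$'' is simply unavailable, and Proposition~\ref{prop:Mhypo} cannot be invoked in this way.

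Your alternative approach has the right core idea---exhibit a transposition $\tau_s$ as the isomorphism---but it imports structure from Proposition~\ref{prop:Mhypo} that is irrelevant here and obscures what is really going on. There is no minimalization step in vertex deletion: a block either survives (if it avoids $v$) or is discarded entirely, so the phrase ``$\Sbuildmon^m_r$ is dominated by the remaining blocks, so it may be ignored'' has no meaning in this setting. More importantly, the bookkeeping runs the other way: after deleting $v$, almost all $\Sbuildmon^m_s$ are \emph{discarded} (because they contain $v$), and only one or two survive. Indeed $v \notin \Sbuildmon^m_s = E_m \setminus \{s, s', (s+1)'\}$ iff $v \in \{s, s', (s+1)'\}$; hence if $v = p$ then only $\Sbuildmon^m_p$ survives, and if $v = p'$ then only $\Sbuildmon^m_p$ and $\Sbuildmon^m_{p-1}$ survive. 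Since $\tau_s(\Sbuildmon^m_q) = \Sbuildmon^m_q$ whenever $s \neq q+1$, the choice $s = p+2$ fixes every surviving $\Sbuildmon$-block; and since $v \notin \{p+2,(p+2)'\}$, the same $\tau_{p+2}$ swaps $\Sp{\Sbuildall}^m_1 - v$ with $\Sp{\Sbuildall}^m_2 - v$. That is the paper's argument in full: direct, with no domination step and no appeal to Proposition~\ref{prop:Mhypo}. The vertex-deletion statement is in fact \emph{easier} than the identification-minor one, and routing through the latter only adds complications.
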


\begin{proof}
Let $m \geq 3$, and let $v \in E_m$. We claim that if $r \in \nset{m}$ and $v \notin \{r, r'\}$, then $\tau_r(\Sp{\Sbuildall}^m_1 - v) = \Sp{\Sbuildall}^m_2 - v$. For, if $S \in \Sp{\Sbuildall}^m_1 - v$, then $S$ is unprimed odd and $v \notin S$; hence $\tau_r(S)$ is unprimed even and $v \notin S$, i.e., $\tau_r(S) \in \Sp{\Sbuildall}^m_2 - v$. Thus $\tau_r(\Sp{\Sbuildall}^m_1 - v) \subseteq \Sp{\Sbuildall}^m_2 - v$. A similar argument shows that $\tau_r(\Sp{\Sbuildall}^m_2 - v) \subseteq \Sp{\Sbuildall}^m_1 - v$. Since $\tau_r$ is an involution, we have $\Sp{\Sbuildall}^m_2 - v \subseteq \tau_r(\Sp{\Sbuildall}^m_1 - v)$. Hence $\tau_r(\Sp{\Sbuildall}^m_1 - v) = \Sp{\Sbuildall}^m_2 - v$, as claimed.

Consider then $\Sp{\Sbuildmon}^m - v$. If $v = p$ for some $p \in \nset{m}$, then $\Sp{\Sbuildmon}^m - v = \{\Sbuildmon^m_p\}$. If $v = p'$ for some $p \in \nset{m}$, then $\Sp{\Sbuildmon}^m - v = \{\Sbuildmon^m_p, \Sbuildmon^m_{p-1}\}$. In either case, $\tau_{p+2}(\Sp{\Sbuildmon}^m - v) = \Sp{\Sbuildmon}^m - v$.

Consequently, choosing $p \in \nset{m}$ such that $v \in \{p, p'\}$, we have
\begin{align*}
\tau_{p+2}(\Sp{\Smon}^m_1 - v)
&= \tau_{p+2}((\Sp{\Sbuildall}^m_1 \cup \Sp{\Sbuildmon}^m) - v)
= \tau_{p+2}((\Sp{\Sbuildall}^m_1 - v) \cup (\Sp{\Sbuildmon}^m - v)) \\
&= \tau_{p+2}(\Sp{\Sbuildall}^m_1 - v) \cup \tau_{p+2}(\Sp{\Sbuildmon}^m - v)
= (\Sp{\Sbuildall}^m_2) - v \cup (\Sp{\Sbuildmon}^m - v) \\
&= (\Sp{\Sbuildall}^m_2 \cup \Sp{\Sbuildmon}^m) - v
= \Sp{\Smon}^m_2 - v.
\end{align*}
We conclude that $\Sp{\Smon}^m_1 - v \equiv \Sp{\Smon}^m_2 - v$ for all $v \in E_m$.
\end{proof}

\begin{corollary}
\label{cor:nonreconstructiblehypergraphs}
For $m \geq 3$, the Sperner systems $\Sp{\Smon}^m_1$ and $\Sp{\Smon}^m_2$ are nonisomorphic and strongly hypomorphic with respect to the reconstruction problem for hypergraphs and vertex-deleted subhypergraphs.
\end{corollary}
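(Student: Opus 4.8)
The plan is to read the corollary off of two statements that are already in hand. The assertion that $\Sp{\Smon}^m_1$ and $\Sp{\Smon}^m_2$ are nonisomorphic is exactly Proposition~\ref{prop:Cnoniso}, whose proof is a pair of counting arguments: first, comparing how many blocks of $\Sp{\Smon}^m_i$ contain a given element forces any candidate permutation to map $\nset{m}$ onto $\nset{m}$ and $\nset{m}'$ onto $\nset{m}'$; second, since every block of $\Sp{\Sbuildall}^m_1$ is unprimed odd and no block of $\Sp{\Smon}^m_2$ is simultaneously unprimed odd and of the right cardinality, no such map can exist. So the first half of the corollary requires no new work.

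For the second half I would invoke Proposition~\ref{prop:hypergraph}, which states that $\Sp{\Smon}^m_1 - v \equiv \Sp{\Smon}^m_2 - v$ for every $v \in E_m$; this is precisely the definition of strong hypomorphism for the vertex-deleted-subhypergraph problem. Combining the two propositions finishes the proof, so there is essentially nothing else to do.

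If instead one wanted a self-contained argument for the hypomorphism, the one genuinely substantive step is the transposition trick of Proposition~\ref{prop:hypergraph}: writing $v \in \{p, p'\}$, the permutation $\tau_{p+2}$ fixes $\Sp{\Sbuildmon}^m - v$ setwise (deleting $v$ leaves only the one or two blocks $\Sbuildmon^m_p$, and possibly $\Sbuildmon^m_{p-1}$, that avoid $v$, and these are $\tau_{p+2}$-invariant since $p+2 \not\equiv p+1 \pmod m$ and $p+2 \not\equiv p \pmod m$ for $m \geq 3$), while $\tau_{p+2}$ swaps $\Sp{\Sbuildall}^m_1 - v$ with $\Sp{\Sbuildall}^m_2 - v$ (a surviving block is unprimed odd iff its image under a transposition $\tau_r$ with $r$ not the index of $v$ is unprimed even, and $\tau_r$ is an involution). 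Taking unions gives $\tau_{p+2}(\Sp{\Smon}^m_1 - v) = \Sp{\Smon}^m_2 - v$. The only place where care is needed is the modular index bookkeeping --- checking that $p+2$ really differs from the forbidden indices for every small $m \geq 3$ --- and this is exactly the mild obstacle that the dedicated proof of Proposition~\ref{prop:hypergraph} already handles.
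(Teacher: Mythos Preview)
Your proposal is correct and matches the paper's own proof exactly: the paper simply writes ``Follows from Propositions~\ref{prop:Cnoniso} and~\ref{prop:hypergraph},'' which is precisely your first two paragraphs. The additional self-contained sketch you give of the transposition trick is accurate and faithfully summarizes the argument of Proposition~\ref{prop:hypergraph}, though it is not needed for the corollary itself.
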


\begin{proof}
Follows from Propositions~\ref{prop:Cnoniso} and~\ref{prop:hypergraph}.
\end{proof}


\appendix
\section{Post classes}
\label{app:Post}

\begin{figure}
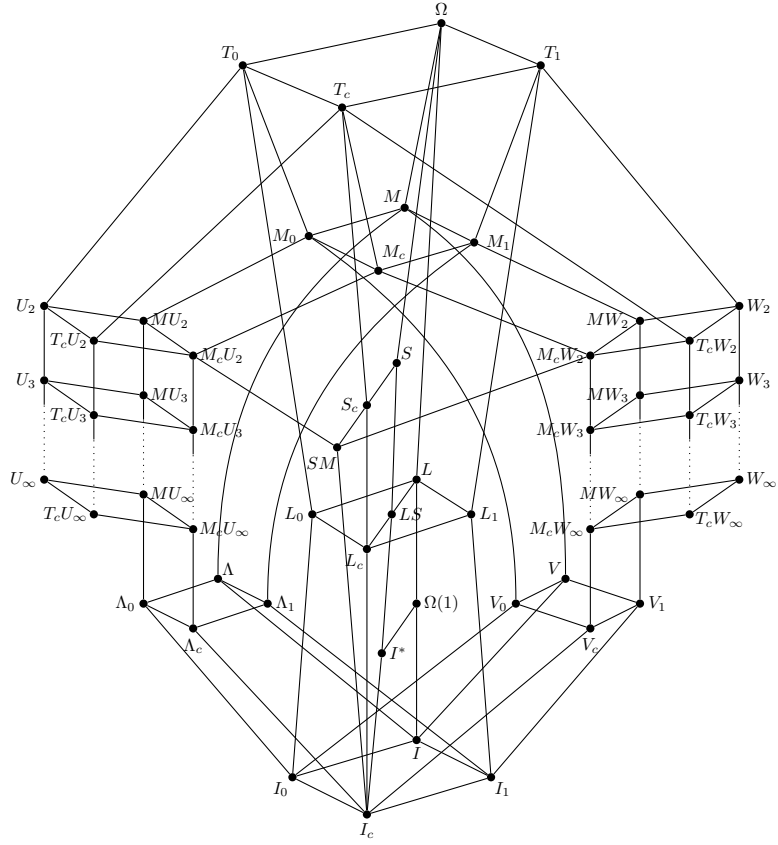

\PostsLattice{0.66}
\caption{Post's lattice.}
\label{fig:PostsLattice}
\end{figure}

The clones on the two-element set $\{0, 1\}$ were completely described by Post~\cite{Post}, and they are often called \emph{Post classes.} These clones are listed below, following the terminology and notation of~\cite{FolPog} and~\cite{JGK}, and the lattice of clones is presented in Figure~\ref{fig:PostsLattice}.

\begin{itemize}
\item $\Omega$ denotes the clone of all Boolean functions.
\end{itemize}
For $a \in \{0,1\}$, a Boolean function $f$ is \emph{$a$-preserving} if $f(a, \dots, a) = a$.
\begin{itemize}
\item $T_0$ denotes the clone of all $0$-preserving functions.
\item $T_1$ denotes the clone of all $1$-preserving functions.
\item $T_c = T_0 \cap T_1$.
\end{itemize}
A Boolean function $f \colon \{0, 1\}^n \to \{0, 1\}$ is \emph{monotone} if $f(a_1, \dots, a_n) \leq f(b_1, \dots, b_n)$ whenever $a_i \leq b_i$ for all $i \in \nset{n}$.
\begin{itemize}
\item $M$ denotes the clone of all monotone functions.
\item $M_0 = M \cap T_0$, $M_1 = M \cap T_1$, $M_c = M \cap T_c$.
\end{itemize}
Let us write $\overline{0} = 1$ and $\overline{1} = 0$. The \emph{dual} of a Boolean function $f \colon \{0, 1\}^n \to \{0, 1\}$ is the function $f^\mathrm{d} \colon \{0, 1\}^n \to \{0, 1\}$ given by $f^\mathrm{d}(\vect{a}) = \overline{f(\overline{\vect{a}})}$ for all $\vect{a} \in \{0, 1\}^n$. A Boolean function $f$ is \emph{self-dual} if $f = f^\mathrm{d}$.
\begin{itemize}
\item $S$ denotes the clone of all self-dual functions.
\item $S_c = S \cap T_c$, $SM = S \cap M$.
\item $L$ denotes the clone of all polynomial operations of the group of addition modulo $2$.
\item $L_0 = L \cap T_0$, $L_1 = L \cap T_1$, $LS = L \cap S$, $L_c = L \cap T_c$.
\end{itemize}
Let $a \in \{0, 1\}$. A set $S \subseteq \{0, 1\}^n$ is said to be \emph{$a$-separating} if there is an index $i \in \nset{n}$ such that for every $(a_1, \dots, a_n) \in S$ we have $a_i = a$. A function $f$ is said to be \emph{$a$-separating} if $f^{-1}(a)$ is $a$-separating. For an integer $m \geq 2$, a function $f$ is said to be \emph{$a$-separating of rank $m$} if every subset of $f^{-1}(a)$ of cardinality at most $m$ is $a$-separating.
\begin{itemize}
\item For $m \geq 2$, $U_m$ and $W_m$ denote the clones of all $1$- and $0$-separating functions of rank $m$, respectively.
\item $U_\infty$ and $W_\infty$ denote the clones of all $1$- and $0$-separating functions, respectively.
\item For $m \in \{2, \dots, \infty\}$, $T_cU_m = T_c \cap U_m$, $T_cW_m = T_c \cap W_m$, $MU_m = M \cap U_m$, $MW_m = M \cap W_m$, $M_cU_m = M_c \cap U_m$, $M_cW_m = M_c \cap W_m$.
\item $\Lambda$ denotes the clone of all polynomial operations of the two-element meet-semilattice $(\{0, 1\}; \wedge)$.
\item $\Lambda_0 = \Lambda \cap T_0$, $\Lambda_1 = \Lambda \cap T_1$, $\Lambda_c = \Lambda \cap T_c$.
\item $V$ denotes the clone of all polynomial operations of the two-element join-semilattice $(\{0, 1\}; \vee)$.
\item $V_0 = V \cap T_0$, $V_1 = V \cap T_1$, $V_c = V \cap T_c$.
\item $\Omega(1)$ denotes the clone of all projections, negated projections and constant functions.
\item $I^* = \Omega(1) \cap S$, $I = \Lambda \cap V$, $I_0 = I \cap T_0$, $I_1 = I \cap T_1$, $I_c = I \cap T_c$.
\end{itemize}


\section{Sperner systems over small sets}
\label{app:Sperner}

We present in the following tables all Sperner systems, up to isomorphism, over the $n$-element set $\nset{n}$, for each $n \in \{2, 3, 4, 5\}$, as well as their decks.
The rows of each table are labeled with Sperner systems over $\nset{n}$, while the columns are labeled with Sperner systems over $\nset{n-1}$. We use a shorthand notation: all set brackets are omitted, each block is presented by writing its elements in juxtaposition without any separating symbol, and blocks are separated by commas.
The number at row $\Sp{\Sgena}$ column $\Sp{\Sgenb}$ indicates the multiplicity of $\Sp{\Sgenb}$ in the deck of $\Sp{\Sgena}$. (For the sake of clarity, we have not written down any $0$'s.)
We have omitted from the tables the Sperner systems $\emptyset$ and $\{\emptyset\}$; the deck of $\emptyset$ comprises $\binom{n}{2}$ occurrences of $\emptyset$, and the deck of $\{\emptyset\}$ comprises $\binom{n}{2}$ occurrences of $\{\emptyset\}$, and these two systems are not cards of any Sperner system with nonempty blocks.
The nonreconstructible Sperner systems are indicated in the tables by an asterisk.
Note that every Sperner system over a five-element set is reconstructible.

\newcommand{\sep}{\discretionary{,}{\mbox{\qquad}}{,\mbox{\,}}}
\newcommand{\columnlabels}{}

\noindent
\begin{minipage}[t]{0.39\textwidth}
\renewcommand{\columnlabels}{
\multicolumn{1}{|c|}{\raisebox{0ex}{\normalsize $n = 2$}} & 
1\\
\hline
}

\noindent
\begin{tabular}[t]{|p{1.75cm}|*{1}{c}|}
\hline
\columnlabels
1 * & 1 \\
1\sep 2 * & 1 \\
12 * & 1 \\
\hline
\end{tabular}

\renewcommand{\columnlabels}{
\multicolumn{1}{|c|}{\raisebox{0ex}{\normalsize $n = 3$}} & 
1 & 
1\sep 2 & 
12\\
\hline
}
\bigskip\bigskip\bigskip
\noindent
\begin{tabular}{|p{1.75cm}|*{3}{c}|}
\hline
\columnlabels
1 * & 3 &   &   \\
1\sep 2 & 1 & 2 &   \\
1\sep 2\sep 3 &   & 3 &   \\
1\sep 23 & 2 & 1 &   \\
12 & 1 &   & 2 \\
12\sep 13 & 2 &   & 1 \\
12\sep 13\sep 23 * & 3 &   &   \\
123 &   &   & 3 \\
\hline
\end{tabular}
\end{minipage}
\hfill
\begin{minipage}[t]{0.6\textwidth}
\raggedleft
\renewcommand{\columnlabels}{
\multicolumn{1}{|c|}{\raisebox{4ex}{\normalsize $n = 4$}} & 
\begin{sideways} 1 \end{sideways} & 
\begin{sideways} 1\sep 2 \end{sideways} & 
\begin{sideways} 1\sep 2\sep 3 \end{sideways} & 
\begin{sideways} 1\sep 23 \end{sideways} & 
\begin{sideways} 12 \end{sideways} & 
\begin{sideways} 12\sep 13 \end{sideways} & 
\begin{sideways} 12\sep 13\sep 23\,\, \end{sideways} & 
\begin{sideways} 123 \end{sideways}\\
\hline
}
\noindent
{\footnotesize
\begin{tabular}[t]{|l|*{8}{c}|}
\hline
\columnlabels
1 & 6 &   &   &   &   &   &   &   \\
1\sep 2 & 1 & 5 &   &   &   &   &   &   \\
1\sep 2\sep 3 &   & 3 & 3 &   &   &   &   &   \\
1\sep 2\sep 3\sep 4 &   &   & 6 &   &   &   &   &   \\
1\sep 2\sep 34 &   & 4 & 1 & 1 &   &   &   &   \\
1\sep 23 & 2 & 1 &   & 3 &   &   &   &   \\
1\sep 23\sep 24 & 1 & 2 &   & 3 &   &   &   &   \\
1\sep 23\sep 24\sep 34 &   & 3 &   & 3 &   &   &   &   \\
1\sep 234 & 3 &   &   & 3 &   &   &   &   \\
12 & 1 &   &   &   & 5 &   &   &   \\
12\sep 13 & 2 &   &   &   & 1 & 3 &   &   \\
12\sep 13\sep 14 & 3 &   &   &   &   & 3 &   &   \\
12\sep 13\sep 14\sep 23 & 2 &   &   & 2 &   &   & 2 &   \\
12\sep 13\sep 14\sep 23\sep 24 & 1 &   &   & 4 &   &   & 1 &   \\
12\sep 13\sep 14\sep 23\sep 24\sep 34 &   &   &   & 6 &   &   &   &   \\
12\sep 13\sep 14\sep 234 * & 3 &   &   &   &   &   & 3 &   \\
12\sep 13\sep 23 * & 3 &   &   &   &   &   & 3 &   \\
12\sep 13\sep 24 & 1 &   &   & 2 &   & 2 & 1 &   \\
12\sep 13\sep 24\sep 34 &   &   &   & 4 &   & 2 &   &   \\
12\sep 13\sep 234 & 2 &   &   &   &   & 2 & 2 &   \\
12\sep 34 &   &   &   & 2 &   & 4 &   &   \\
12\sep 134 & 1 &   &   &   & 2 & 3 &   &   \\
12\sep 134\sep 234 & 1 &   &   &   &   & 4 & 1 &   \\
123 &   &   &   &   & 3 &   &   & 3 \\
123\sep 124 &   &   &   &   & 4 & 1 &   & 1 \\
123\sep 124\sep 134 &   &   &   &   & 3 & 3 &   &   \\
123\sep 124\sep 134\sep 234 &   &   &   &   &   & 6 &   &   \\
1234 &   &   &   &   &   &   &   & 6 \\
\hline
\end{tabular}
}
\end{minipage}

\bigskip

\renewcommand{\columnlabels}{
\multicolumn{1}{|c|}{\raisebox{8ex}{\normalsize $n = 5$}} & 
\begin{sideways} 1 \end{sideways} & 
\begin{sideways} 1\sep 2 \end{sideways} & 
\begin{sideways} 1\sep 2\sep 3 \end{sideways} & 
\begin{sideways} 1\sep 2\sep 3\sep 4 \end{sideways} & 
\begin{sideways} 1\sep 2\sep 34 \end{sideways} & 
\begin{sideways} 1\sep 23 \end{sideways} & 
\begin{sideways} 1\sep 23\sep 24 \end{sideways} & 
\begin{sideways} 1\sep 23\sep 24\sep 34 \end{sideways} & 
\begin{sideways} 1\sep 234 \end{sideways} & 
\begin{sideways} 12 \end{sideways} & 
\begin{sideways} 12\sep 13 \end{sideways} & 
\begin{sideways} 12\sep 13\sep 14 \end{sideways} & 
\begin{sideways} 12\sep 13\sep 14\sep 23 \end{sideways} & 
\begin{sideways} 12\sep 13\sep 14\sep 23\sep 24 \end{sideways} & 
\begin{sideways} 12\sep 13\sep 14\sep 23\sep 24\sep 34\,\, \end{sideways} & 
\begin{sideways} 12\sep 13\sep 14\sep 234 \end{sideways} & 
\begin{sideways} 12\sep 13\sep 23 \end{sideways} & 
\begin{sideways} 12\sep 13\sep 24 \end{sideways} & 
\begin{sideways} 12\sep 13\sep 24\sep 34 \end{sideways} & 
\begin{sideways} 12\sep 13\sep 234 \end{sideways} & 
\begin{sideways} 12\sep 34 \end{sideways} & 
\begin{sideways} 12\sep 134 \end{sideways} & 
\begin{sideways} 12\sep 134\sep 234 \end{sideways} & 
\begin{sideways} 123 \end{sideways} & 
\begin{sideways} 123\sep 124 \end{sideways} & 
\begin{sideways} 123\sep 124\sep 134 \end{sideways} & 
\begin{sideways} 123\sep 124\sep 134\sep 234 \end{sideways} & 
\begin{sideways} 1234 \end{sideways}\\
\hline
}

\noindent
{\scriptsize
\setlength{\tabcolsep}{0pt}
\begin{longtable}{|@{\,\,}p{4cm}@{\extracolsep{\fill}}|*{28}{c}@{\,\,}|}\hline
\columnlabels
\endfirsthead
\multicolumn{29}{c}{\textit{Continued from previous page}} \\
\hline
\columnlabels
\endhead
\hline \multicolumn{29}{r}{\textit{Continued on next page}} \\
\endfoot
\hline
\endlastfoot

1 & 10 &   &   &   &   &   &   &   &   &   &   &   &   &   &   &   &   &   &   &   &   &   &   &   &   &   &   &   \\
1\sep 2 & 1 & 9 &   &   &   &   &   &   &   &   &   &   &   &   &   &   &   &   &   &   &   &   &   &   &   &   &   &   \\
1\sep 2\sep 3 &   & 3 & 7 &   &   &   &   &   &   &   &   &   &   &   &   &   &   &   &   &   &   &   &   &   &   &   &   &   \\
1\sep 2\sep 3\sep 4 &   &   & 6 & 4 &   &   &   &   &   &   &   &   &   &   &   &   &   &   &   &   &   &   &   &   &   &   &   &   \\
1\sep 2\sep 3\sep 4\sep 5 &   &   &   & 10 &   &   &   &   &   &   &   &   &   &   &   &   &   &   &   &   &   &   &   &   &   &   &   &   \\
1\sep 2\sep 3\sep 45 &   &   & 6 & 1 & 3 &   &   &   &   &   &   &   &   &   &   &   &   &   &   &   &   &   &   &   &   &   &   &   \\
1\sep 2\sep 34 &   & 4 & 1 &   & 4 & 1 &   &   &   &   &   &   &   &   &   &   &   &   &   &   &   &   &   &   &   &   &   &   \\
1\sep 2\sep 34\sep 35 &   & 2 & 2 &   & 5 &   & 1 &   &   &   &   &   &   &   &   &   &   &   &   &   &   &   &   &   &   &   &   &   \\
1\sep 2\sep 34\sep 35\sep 45 &   &   & 3 &   & 6 &   &   & 1 &   &   &   &   &   &   &   &   &   &   &   &   &   &   &   &   &   &   &   &   \\
1\sep 2\sep 345 &   & 6 &   &   & 3 &   &   &   & 1 &   &   &   &   &   &   &   &   &   &   &   &   &   &   &   &   &   &   &   \\
1\sep 23 & 2 & 1 &   &   &   & 7 &   &   &   &   &   &   &   &   &   &   &   &   &   &   &   &   &   &   &   &   &   &   \\
1\sep 23\sep 24 & 1 & 2 &   &   &   & 3 & 4 &   &   &   &   &   &   &   &   &   &   &   &   &   &   &   &   &   &   &   &   &   \\
1\sep 23\sep 24\sep 25 & 1 & 3 &   &   &   &   & 6 &   &   &   &   &   &   &   &   &   &   &   &   &   &   &   &   &   &   &   &   &   \\
1\sep 23\sep 24\sep 25\sep 34 &   & 2 &   &   & 2 & 1 & 2 & 3 &   &   &   &   &   &   &   &   &   &   &   &   &   &   &   &   &   &   &   &   \\
1\sep 23\sep 24\sep 25\sep 34\sep 35 &   & 1 &   &   & 4 &   & 2 & 3 &   &   &   &   &   &   &   &   &   &   &   &   &   &   &   &   &   &   &   &   \\
1\sep 23\sep 24\sep 25\sep 34\sep 35\sep 45 &   &   &   &   & 6 &   &   & 4 &   &   &   &   &   &   &   &   &   &   &   &   &   &   &   &   &   &   &   &   \\
1\sep 23\sep 24\sep 25\sep 345 &   & 3 &   &   &   &   & 3 & 3 & 1 &   &   &   &   &   &   &   &   &   &   &   &   &   &   &   &   &   &   &   \\
1\sep 23\sep 24\sep 34 &   & 3 &   &   &   & 3 &   & 4 &   &   &   &   &   &   &   &   &   &   &   &   &   &   &   &   &   &   &   &   \\
1\sep 23\sep 24\sep 35 &   & 1 &   &   & 2 & 2 & 4 & 1 &   &   &   &   &   &   &   &   &   &   &   &   &   &   &   &   &   &   &   &   \\
1\sep 23\sep 24\sep 35\sep 45 &   &   &   &   & 4 &   & 6 &   &   &   &   &   &   &   &   &   &   &   &   &   &   &   &   &   &   &   &   &   \\
1\sep 23\sep 24\sep 345 &   & 2 &   &   &   & 2 & 3 & 2 & 1 &   &   &   &   &   &   &   &   &   &   &   &   &   &   &   &   &   &   &   \\
1\sep 23\sep 45 &   &   &   &   & 2 & 4 & 4 &   &   &   &   &   &   &   &   &   &   &   &   &   &   &   &   &   &   &   &   &   \\
1\sep 23\sep 245 & 1 & 1 &   &   &   & 4 & 3 &   & 1 &   &   &   &   &   &   &   &   &   &   &   &   &   &   &   &   &   &   &   \\
1\sep 23\sep 245\sep 345 &   & 1 &   &   &   & 2 & 4 & 1 & 2 &   &   &   &   &   &   &   &   &   &   &   &   &   &   &   &   &   &   &   \\
1\sep 234 & 3 &   &   &   &   & 3 &   &   & 4 &   &   &   &   &   &   &   &   &   &   &   &   &   &   &   &   &   &   &   \\
1\sep 234\sep 235 & 2 &   &   &   &   & 4 & 1 &   & 3 &   &   &   &   &   &   &   &   &   &   &   &   &   &   &   &   &   &   &   \\
1\sep 234\sep 235\sep 245 & 1 &   &   &   &   & 3 & 3 &   & 3 &   &   &   &   &   &   &   &   &   &   &   &   &   &   &   &   &   &   &   \\
1\sep 234\sep 235\sep 245\sep 345 &   &   &   &   &   &   & 6 &   & 4 &   &   &   &   &   &   &   &   &   &   &   &   &   &   &   &   &   &   &   \\
1\sep 2345 & 4 &   &   &   &   &   &   &   & 6 &   &   &   &   &   &   &   &   &   &   &   &   &   &   &   &   &   &   &   \\
12 & 1 &   &   &   &   &   &   &   &   & 9 &   &   &   &   &   &   &   &   &   &   &   &   &   &   &   &   &   &   \\
12\sep 13 & 2 &   &   &   &   &   &   &   &   & 1 & 7 &   &   &   &   &   &   &   &   &   &   &   &   &   &   &   &   &   \\
12\sep 13\sep 14 & 3 &   &   &   &   &   &   &   &   &   & 3 & 4 &   &   &   &   &   &   &   &   &   &   &   &   &   &   &   &   \\
12\sep 13\sep 14\sep 15 & 4 &   &   &   &   &   &   &   &   &   &   & 6 &   &   &   &   &   &   &   &   &   &   &   &   &   &   &   &   \\
12\sep 13\sep 14\sep 15\sep 23 & 2 &   &   &   &   & 2 & 1 &   &   &   &   &   & 5 &   &   &   &   &   &   &   &   &   &   &   &   &   &   &   \\
12\sep 13\sep 14\sep 15\sep 23\sep 24 & 1 &   &   &   &   & 2 & 3 &   &   &   &   &   & 1 & 3 &   &   &   &   &   &   &   &   &   &   &   &   &   &   \\
12\sep 13\sep 14\sep 15\sep 23\sep 24\sep 25 & 1 &   &   &   &   &   & 6 &   &   &   &   &   &   & 3 &   &   &   &   &   &   &   &   &   &   &   &   &   &   \\
12\sep 13\sep 14\sep 15\sep 23\sep 24\sep 25\sep 34 &   &   &   &   &   & 1 & 4 & 3 &   &   &   &   &   &   & 2 &   &   &   &   &   &   &   &   &   &   &   &   &   \\
12\sep 13\sep 14\sep 15\sep 23\sep 24\sep 25\sep 34\sep 35 &   &   &   &   &   &   & 3 & 6 &   &   &   &   &   &   & 1 &   &   &   &   &   &   &   &   &   &   &   &   &   \\
12\sep 13\sep 14\sep 15\sep 23\sep 24\sep 25\sep 34\sep 35\sep 45 &   &   &   &   &   &   &   & 10 &   &   &   &   &   &   &   &   &   &   &   &   &   &   &   &   &   &   &   &   \\
12\sep 13\sep 14\sep 15\sep 23\sep 24\sep 25\sep 345 &   &   &   &   &   &   & 6 &   & 1 &   &   &   &   &   & 3 &   &   &   &   &   &   &   &   &   &   &   &   &   \\
12\sep 13\sep 14\sep 15\sep 23\sep 24\sep 34 &   &   &   &   &   & 3 & 3 & 1 &   &   &   &   &   &   & 3 &   &   &   &   &   &   &   &   &   &   &   &   &   \\
12\sep 13\sep 14\sep 15\sep 23\sep 24\sep 35 &   &   &   &   &   & 2 & 3 & 2 &   &   &   &   &   & 2 & 1 &   &   &   &   &   &   &   &   &   &   &   &   &   \\
12\sep 13\sep 14\sep 15\sep 23\sep 24\sep 35\sep 45 &   &   &   &   &   &   & 4 & 4 &   &   &   &   &   & 2 &   &   &   &   &   &   &   &   &   &   &   &   &   &   \\
12\sep 13\sep 14\sep 15\sep 23\sep 24\sep 345 &   &   &   &   &   & 2 & 3 &   & 1 &   &   &   &   & 2 & 2 &   &   &   &   &   &   &   &   &   &   &   &   &   \\
12\sep 13\sep 14\sep 15\sep 23\sep 45 &   &   &   &   &   & 4 &   & 2 &   &   &   &   &   & 4 &   &   &   &   &   &   &   &   &   &   &   &   &   &   \\
12\sep 13\sep 14\sep 15\sep 23\sep 245 & 1 &   &   &   &   & 2 & 1 &   & 1 &   &   &   & 2 & 3 &   &   &   &   &   &   &   &   &   &   &   &   &   &   \\
12\sep 13\sep 14\sep 15\sep 23\sep 245\sep 345 &   &   &   &   &   & 2 & 1 &   & 2 &   &   &   &   & 4 & 1 &   &   &   &   &   &   &   &   &   &   &   &   &   \\
12\sep 13\sep 14\sep 15\sep 234 & 3 &   &   &   &   &   &   &   & 1 &   &   &   & 3 &   &   & 3 &   &   &   &   &   &   &   &   &   &   &   &   \\
12\sep 13\sep 14\sep 15\sep 234\sep 235 & 2 &   &   &   &   &   &   &   & 2 &   &   &   & 4 & 1 &   & 1 &   &   &   &   &   &   &   &   &   &   &   &   \\
12\sep 13\sep 14\sep 15\sep 234\sep 235\sep 245 & 1 &   &   &   &   &   &   &   & 3 &   &   &   & 3 & 3 &   &   &   &   &   &   &   &   &   &   &   &   &   &   \\
12\sep 13\sep 14\sep 15\sep 234\sep 235\sep 245\sep 345 &   &   &   &   &   &   &   &   & 4 &   &   &   &   & 6 &   &   &   &   &   &   &   &   &   &   &   &   &   &   \\
12\sep 13\sep 14\sep 15\sep 2345 & 4 &   &   &   &   &   &   &   &   &   &   &   &   &   &   & 6 &   &   &   &   &   &   &   &   &   &   &   &   \\
12\sep 13\sep 14\sep 23 & 2 &   &   &   &   & 2 &   &   &   &   &   &   & 4 &   &   &   & 2 &   &   &   &   &   &   &   &   &   &   &   \\
12\sep 13\sep 14\sep 23\sep 24 & 1 &   &   &   &   & 4 &   &   &   &   &   &   &   & 4 &   &   & 1 &   &   &   &   &   &   &   &   &   &   &   \\
12\sep 13\sep 14\sep 23\sep 24\sep 34 &   &   &   &   &   & 6 &   &   &   &   &   &   &   &   & 4 &   &   &   &   &   &   &   &   &   &   &   &   &   \\
12\sep 13\sep 14\sep 23\sep 24\sep 35 &   &   &   &   &   & 3 & 2 & 1 &   &   &   &   & 1 & 2 & 1 &   &   &   &   &   &   &   &   &   &   &   &   &   \\
12\sep 13\sep 14\sep 23\sep 24\sep 35\sep 45 &   &   &   &   &   &   & 5 & 2 &   &   &   &   & 1 & 2 &   &   &   &   &   &   &   &   &   &   &   &   &   &   \\
12\sep 13\sep 14\sep 23\sep 24\sep 345 &   &   &   &   &   & 4 &   &   & 1 &   &   &   & 1 & 2 & 2 &   &   &   &   &   &   &   &   &   &   &   &   &   \\
12\sep 13\sep 14\sep 23\sep 25 & 1 &   &   &   &   & 2 & 2 &   &   &   &   &   & 4 & 1 &   &   &   &   &   &   &   &   &   &   &   &   &   &   \\
12\sep 13\sep 14\sep 23\sep 25\sep 45 &   &   &   &   &   & 1 & 4 & 1 &   &   &   &   & 2 & 2 &   &   &   &   &   &   &   &   &   &   &   &   &   &   \\
12\sep 13\sep 14\sep 23\sep 45 &   &   &   &   &   & 3 & 1 & 1 &   &   &   &   & 3 & 2 &   &   &   &   &   &   &   &   &   &   &   &   &   &   \\
12\sep 13\sep 14\sep 23\sep 245 & 1 &   &   &   &   & 2 &   &   & 1 &   &   &   & 3 & 2 &   &   & 1 &   &   &   &   &   &   &   &   &   &   &   \\
12\sep 13\sep 14\sep 23\sep 245\sep 35 &   &   &   &   &   & 2 & 2 &   & 1 &   &   &   & 2 & 2 & 1 &   &   &   &   &   &   &   &   &   &   &   &   &   \\
12\sep 13\sep 14\sep 23\sep 245\sep 345 &   &   &   &   &   & 2 &   &   & 2 &   &   &   & 3 & 2 & 1 &   &   &   &   &   &   &   &   &   &   &   &   &   \\
12\sep 13\sep 14\sep 25 & 1 &   &   &   &   & 2 & 1 &   &   &   &   & 1 & 2 &   &   &   &   & 3 &   &   &   &   &   &   &   &   &   &   \\
12\sep 13\sep 14\sep 25\sep 35 &   &   &   &   &   & 2 & 3 &   &   &   &   & 1 &   & 1 &   &   &   & 1 & 2 &   &   &   &   &   &   &   &   &   \\
12\sep 13\sep 14\sep 25\sep 35\sep 45 &   &   &   &   &   &   & 6 &   &   &   &   & 1 &   &   &   &   &   &   & 3 &   &   &   &   &   &   &   &   &   \\
12\sep 13\sep 14\sep 234 & 3 &   &   &   &   &   &   &   &   &   &   &   &   &   &   & 4 & 3 &   &   &   &   &   &   &   &   &   &   &   \\
12\sep 13\sep 14\sep 234\sep 25 & 1 &   &   &   &   & 2 & 1 &   &   &   &   &   & 5 &   &   & 1 &   &   &   &   &   &   &   &   &   &   &   &   \\
12\sep 13\sep 14\sep 234\sep 25\sep 35 &   &   &   &   &   & 2 & 3 &   &   &   &   &   & 1 & 3 &   & 1 &   &   &   &   &   &   &   &   &   &   &   &   \\
12\sep 13\sep 14\sep 234\sep 25\sep 35\sep 45 &   &   &   &   &   &   & 6 &   &   &   &   &   &   & 3 &   & 1 &   &   &   &   &   &   &   &   &   &   &   &   \\
12\sep 13\sep 14\sep 234\sep 235 & 2 &   &   &   &   &   &   &   & 1 &   &   &   & 3 &   &   & 2 & 2 &   &   &   &   &   &   &   &   &   &   &   \\
12\sep 13\sep 14\sep 234\sep 235\sep 45 &   &   &   &   &   & 2 & 1 &   & 1 &   &   &   & 2 & 3 &   & 1 &   &   &   &   &   &   &   &   &   &   &   &   \\
12\sep 13\sep 14\sep 234\sep 235\sep 245 & 1 &   &   &   &   &   &   &   & 2 &   &   &   & 4 & 1 &   & 1 & 1 &   &   &   &   &   &   &   &   &   &   &   \\
12\sep 13\sep 14\sep 234\sep 235\sep 245\sep 345 &   &   &   &   &   &   &   &   & 3 &   &   &   & 3 & 3 &   & 1 &   &   &   &   &   &   &   &   &   &   &   &   \\
12\sep 13\sep 14\sep 235 & 2 &   &   &   &   &   &   &   & 1 &   &   & 1 & 2 &   &   & 1 &   & 1 &   & 2 &   &   &   &   &   &   &   &   \\
12\sep 13\sep 14\sep 235\sep 45 &   &   &   &   &   & 2 & 1 &   & 1 &   &   & 1 &   & 2 &   &   &   & 2 & 1 &   &   &   &   &   &   &   &   &   \\
12\sep 13\sep 14\sep 235\sep 245 & 1 &   &   &   &   &   &   &   & 2 &   &   & 1 & 2 & 1 &   &   &   & 2 &   & 1 &   &   &   &   &   &   &   &   \\
12\sep 13\sep 14\sep 235\sep 245\sep 345 &   &   &   &   &   &   &   &   & 3 &   &   & 1 &   & 3 &   &   &   & 3 &   &   &   &   &   &   &   &   &   &   \\
12\sep 13\sep 14\sep 2345 & 3 &   &   &   &   &   &   &   &   &   &   & 1 &   &   &   & 3 &   &   &   & 3 &   &   &   &   &   &   &   &   \\
12\sep 13\sep 23 & 3 &   &   &   &   &   &   &   &   &   &   &   &   &   &   &   & 7 &   &   &   &   &   &   &   &   &   &   &   \\
12\sep 13\sep 23\sep 45 &   &   &   &   &   & 3 &   & 1 &   &   &   &   & 6 &   &   &   &   &   &   &   &   &   &   &   &   &   &   &   \\
12\sep 13\sep 24 & 1 &   &   &   &   & 2 &   &   &   &   & 2 &   &   &   &   &   & 1 & 4 &   &   &   &   &   &   &   &   &   &   \\
12\sep 13\sep 24\sep 34 &   &   &   &   &   & 4 &   &   &   &   & 2 &   &   &   &   &   &   &   & 4 &   &   &   &   &   &   &   &   &   \\
12\sep 13\sep 24\sep 35 &   &   &   &   &   & 2 & 2 &   &   &   &   & 1 & 2 &   &   &   &   & 2 & 1 &   &   &   &   &   &   &   &   &   \\
12\sep 13\sep 24\sep 35\sep 45 &   &   &   &   &   &   & 5 &   &   &   &   &   & 5 &   &   &   &   &   &   &   &   &   &   &   &   &   &   &   \\
12\sep 13\sep 24\sep 345 &   &   &   &   &   & 2 &   &   & 1 &   & 2 &   & 1 &   &   &   &   & 2 & 2 &   &   &   &   &   &   &   &   &   \\
12\sep 13\sep 45 &   &   &   &   &   & 2 & 1 &   &   &   &   & 2 &   &   &   &   &   & 4 &   &   & 1 &   &   &   &   &   &   &   \\
12\sep 13\sep 145 & 2 &   &   &   &   &   &   &   &   &   & 4 & 3 &   &   &   &   &   &   &   &   &   & 1 &   &   &   &   &   &   \\
12\sep 13\sep 145\sep 23 & 2 &   &   &   &   &   &   &   & 1 &   &   &   & 3 &   &   &   & 4 &   &   &   &   &   &   &   &   &   &   &   \\
12\sep 13\sep 145\sep 23\sep 245 & 1 &   &   &   &   &   &   &   & 2 &   &   &   & 4 & 1 &   &   & 2 &   &   &   &   &   &   &   &   &   &   &   \\
12\sep 13\sep 145\sep 23\sep 245\sep 345 &   &   &   &   &   &   &   &   & 3 &   &   &   & 6 &   & 1 &   &   &   &   &   &   &   &   &   &   &   &   &   \\
12\sep 13\sep 145\sep 24 & 1 &   &   &   &   & 2 &   &   &   &   &   & 1 & 2 &   &   &   & 1 & 2 &   & 1 &   &   &   &   &   &   &   &   \\
12\sep 13\sep 145\sep 24\sep 34 &   &   &   &   &   & 4 &   &   &   &   &   & 1 &   & 2 &   &   &   &   & 2 & 1 &   &   &   &   &   &   &   &   \\
12\sep 13\sep 145\sep 24\sep 35 &   &   &   &   &   & 2 & 2 &   &   &   &   &   & 4 & 1 &   & 1 &   &   &   &   &   &   &   &   &   &   &   &   \\
12\sep 13\sep 145\sep 24\sep 345 &   &   &   &   &   & 2 &   &   & 1 &   &   & 1 & 2 & 1 &   &   &   & 1 & 1 & 1 &   &   &   &   &   &   &   &   \\
12\sep 13\sep 145\sep 234 & 2 &   &   &   &   &   &   &   &   &   &   & 1 &   &   &   & 2 & 2 &   &   & 3 &   &   &   &   &   &   &   &   \\
12\sep 13\sep 145\sep 234\sep 25 & 1 &   &   &   &   & 2 &   &   &   &   &   &   & 4 &   &   & 2 & 1 &   &   &   &   &   &   &   &   &   &   &   \\
12\sep 13\sep 145\sep 234\sep 25\sep 35 &   &   &   &   &   & 4 &   &   &   &   &   &   &   & 4 &   & 2 &   &   &   &   &   &   &   &   &   &   &   &   \\
12\sep 13\sep 145\sep 234\sep 235 & 2 &   &   &   &   &   &   &   &   &   &   &   &   &   &   & 4 & 4 &   &   &   &   &   &   &   &   &   &   &   \\
12\sep 13\sep 145\sep 234\sep 235\sep 245 & 1 &   &   &   &   &   &   &   & 1 &   &   &   & 3 &   &   & 3 & 2 &   &   &   &   &   &   &   &   &   &   &   \\
12\sep 13\sep 145\sep 234\sep 235\sep 245\sep 345 &   &   &   &   &   &   &   &   & 2 &   &   &   & 4 & 1 &   & 3 &   &   &   &   &   &   &   &   &   &   &   &   \\
12\sep 13\sep 145\sep 234\sep 245 & 1 &   &   &   &   &   &   &   & 1 &   &   & 1 & 2 &   &   & 1 & 1 & 1 &   & 2 &   &   &   &   &   &   &   &   \\
12\sep 13\sep 145\sep 234\sep 245\sep 35 &   &   &   &   &   & 2 &   &   & 1 &   &   &   & 3 & 2 &   & 2 &   &   &   &   &   &   &   &   &   &   &   &   \\
12\sep 13\sep 145\sep 234\sep 245\sep 345 &   &   &   &   &   &   &   &   & 2 &   &   & 1 & 2 & 1 &   & 1 &   & 2 &   & 1 &   &   &   &   &   &   &   &   \\
12\sep 13\sep 145\sep 245 & 1 &   &   &   &   &   &   &   & 1 &   &   & 2 & 1 &   &   &   &   & 2 &   & 2 &   &   & 1 &   &   &   &   &   \\
12\sep 13\sep 145\sep 245\sep 345 &   &   &   &   &   &   &   &   & 2 &   &   & 2 &   & 1 &   &   &   & 4 &   &   &   &   & 1 &   &   &   &   &   \\
12\sep 13\sep 145\sep 2345 & 2 &   &   &   &   &   &   &   &   &   &   & 2 &   &   &   & 1 &   &   &   & 4 &   &   & 1 &   &   &   &   &   \\
12\sep 13\sep 234 & 2 &   &   &   &   &   &   &   &   &   & 2 &   &   &   &   &   & 2 &   &   & 4 &   &   &   &   &   &   &   &   \\
12\sep 13\sep 234\sep 45 &   &   &   &   &   & 2 & 1 &   &   &   &   & 1 & 2 &   &   & 1 &   & 3 &   &   &   &   &   &   &   &   &   &   \\
12\sep 13\sep 234\sep 235 & 2 &   &   &   &   &   &   &   &   &   &   & 1 &   &   &   &   & 4 &   &   & 3 &   &   &   &   &   &   &   &   \\
12\sep 13\sep 234\sep 235\sep 45 &   &   &   &   &   & 2 & 1 &   &   &   &   &   & 5 &   &   & 2 &   &   &   &   &   &   &   &   &   &   &   &   \\
12\sep 13\sep 234\sep 235\sep 245 & 1 &   &   &   &   &   &   &   & 1 &   &   & 1 & 2 &   &   &   & 2 & 1 &   & 2 &   &   &   &   &   &   &   &   \\
12\sep 13\sep 234\sep 235\sep 245\sep 345 &   &   &   &   &   &   &   &   & 2 &   &   & 1 & 4 &   &   &   &   &   & 1 & 2 &   &   &   &   &   &   &   &   \\
12\sep 13\sep 234\sep 245 & 1 &   &   &   &   &   &   &   & 1 &   & 2 &   & 1 &   &   &   & 1 & 2 &   & 2 &   &   &   &   &   &   &   &   \\
12\sep 13\sep 234\sep 245\sep 345 &   &   &   &   &   &   &   &   & 2 &   & 2 &   & 2 &   &   &   &   & 2 & 1 & 1 &   &   &   &   &   &   &   &   \\
12\sep 13\sep 245 & 1 &   &   &   &   &   &   &   & 1 &   & 2 &   &   &   &   &   &   & 3 &   & 2 &   & 1 &   &   &   &   &   &   \\
12\sep 13\sep 245\sep 345 &   &   &   &   &   &   &   &   & 2 &   & 2 &   &   &   &   &   &   & 4 & 1 &   &   & 1 &   &   &   &   &   &   \\
12\sep 13\sep 2345 & 2 &   &   &   &   &   &   &   &   &   & 2 &   &   &   &   &   &   &   &   & 5 &   & 1 &   &   &   &   &   &   \\
12\sep 34 &   &   &   &   &   & 2 &   &   &   &   & 4 &   &   &   &   &   &   &   &   &   & 4 &   &   &   &   &   &   &   \\
12\sep 134 & 1 &   &   &   &   &   &   &   &   & 2 & 3 &   &   &   &   &   &   &   &   &   &   & 4 &   &   &   &   &   &   \\
12\sep 134\sep 35 &   &   &   &   &   & 2 &   &   &   &   & 2 & 1 &   &   &   &   &   & 2 &   & 1 & 2 &   &   &   &   &   &   &   \\
12\sep 134\sep 135 & 1 &   &   &   &   &   &   &   &   & 1 & 4 & 1 &   &   &   &   &   &   &   &   &   & 3 &   &   &   &   &   &   \\
12\sep 134\sep 135\sep 45 &   &   &   &   &   & 2 &   &   &   &   &   & 2 & 1 &   &   &   &   & 2 &   & 2 & 1 &   &   &   &   &   &   &   \\
12\sep 134\sep 135\sep 145 & 1 &   &   &   &   &   &   &   &   &   & 3 & 3 &   &   &   &   &   &   &   &   &   & 3 &   &   &   &   &   &   \\
12\sep 134\sep 135\sep 145\sep 234 & 1 &   &   &   &   &   &   &   &   &   &   & 2 &   &   &   & 1 & 1 &   &   & 4 &   &   & 1 &   &   &   &   &   \\
12\sep 134\sep 135\sep 145\sep 234\sep 235 & 1 &   &   &   &   &   &   &   &   &   &   & 1 &   &   &   & 3 & 2 &   &   & 3 &   &   &   &   &   &   &   &   \\
12\sep 134\sep 135\sep 145\sep 234\sep 235\sep 245 & 1 &   &   &   &   &   &   &   &   &   &   &   &   &   &   & 6 & 3 &   &   &   &   &   &   &   &   &   &   &   \\
12\sep 134\sep 135\sep 145\sep 234\sep 235\sep 245\sep 345 &   &   &   &   &   &   &   &   & 1 &   &   &   & 3 &   &   & 6 &   &   &   &   &   &   &   &   &   &   &   &   \\
12\sep 134\sep 135\sep 145\sep 234\sep 235\sep 345 &   &   &   &   &   &   &   &   & 1 &   &   & 1 & 2 &   &   & 3 &   & 1 &   & 2 &   &   &   &   &   &   &   &   \\
12\sep 134\sep 135\sep 145\sep 234\sep 345 &   &   &   &   &   &   &   &   & 1 &   &   & 2 & 1 &   &   & 1 &   & 2 &   & 2 &   &   & 1 &   &   &   &   &   \\
12\sep 134\sep 135\sep 145\sep 345 &   &   &   &   &   &   &   &   & 1 &   &   & 3 &   &   &   &   &   & 3 &   &   &   &   & 3 &   &   &   &   &   \\
12\sep 134\sep 135\sep 145\sep 2345 & 1 &   &   &   &   &   &   &   &   &   &   & 3 &   &   &   &   &   &   &   & 3 &   &   & 3 &   &   &   &   &   \\
12\sep 134\sep 135\sep 234 & 1 &   &   &   &   &   &   &   &   &   & 2 & 1 &   &   &   &   & 1 &   &   & 3 &   &   & 2 &   &   &   &   &   \\
12\sep 134\sep 135\sep 234\sep 45 &   &   &   &   &   & 2 &   &   &   &   &   & 1 & 2 &   &   & 2 &   & 2 &   & 1 &   &   &   &   &   &   &   &   \\
12\sep 134\sep 135\sep 234\sep 235 & 1 &   &   &   &   &   &   &   &   &   &   & 2 &   &   &   &   & 2 &   &   & 4 &   &   & 1 &   &   &   &   &   \\
12\sep 134\sep 135\sep 234\sep 235\sep 45 &   &   &   &   &   & 2 &   &   &   &   &   &   & 4 &   &   & 4 &   &   &   &   &   &   &   &   &   &   &   &   \\
12\sep 134\sep 135\sep 234\sep 235\sep 345 &   &   &   &   &   &   &   &   & 1 &   &   & 2 & 2 &   &   &   &   &   &   & 4 & 1 &   &   &   &   &   &   &   \\
12\sep 134\sep 135\sep 234\sep 245 & 1 &   &   &   &   &   &   &   &   &   & 2 &   &   &   &   & 2 & 1 &   &   & 4 &   &   &   &   &   &   &   &   \\
12\sep 134\sep 135\sep 234\sep 245\sep 345 &   &   &   &   &   &   &   &   & 1 &   & 2 &   & 1 &   &   & 2 &   & 2 &   & 2 &   &   &   &   &   &   &   &   \\
12\sep 134\sep 135\sep 234\sep 345 &   &   &   &   &   &   &   &   & 1 &   & 2 & 1 & 1 &   &   &   &   & 1 &   & 2 & 1 &   & 1 &   &   &   &   &   \\
12\sep 134\sep 135\sep 245 & 1 &   &   &   &   &   &   &   &   &   & 2 &   &   &   &   & 1 &   &   &   & 5 &   & 1 &   &   &   &   &   &   \\
12\sep 134\sep 135\sep 245\sep 345 &   &   &   &   &   &   &   &   & 1 &   & 2 &   &   &   &   & 1 &   & 3 &   & 2 &   & 1 &   &   &   &   &   &   \\
12\sep 134\sep 135\sep 345 &   &   &   &   &   &   &   &   & 1 &   & 2 & 1 &   &   &   &   &   & 2 &   &   & 1 & 1 & 2 &   &   &   &   &   \\
12\sep 134\sep 135\sep 2345 & 1 &   &   &   &   &   &   &   &   &   & 2 & 1 &   &   &   &   &   &   &   & 2 &   & 1 & 3 &   &   &   &   &   \\
12\sep 134\sep 234 & 1 &   &   &   &   &   &   &   &   &   & 4 &   &   &   &   &   & 1 &   &   &   &   &   & 4 &   &   &   &   &   \\
12\sep 134\sep 234\sep 345 &   &   &   &   &   &   &   &   & 1 &   & 4 &   & 1 &   &   &   &   &   &   &   & 2 &   & 2 &   &   &   &   &   \\
12\sep 134\sep 235 & 1 &   &   &   &   &   &   &   &   &   & 2 &   &   &   &   &   &   &   &   & 4 &   & 2 & 1 &   &   &   &   &   \\
12\sep 134\sep 235\sep 45 &   &   &   &   &   & 2 &   &   &   &   & 2 &   &   &   &   & 2 &   & 4 &   &   &   &   &   &   &   &   &   &   \\
12\sep 134\sep 235\sep 345 &   &   &   &   &   &   &   &   & 1 &   & 2 &   &   &   &   &   &   & 2 &   & 2 & 1 & 2 &   &   &   &   &   &   \\
12\sep 134\sep 345 &   &   &   &   &   &   &   &   & 1 &   & 2 &   &   &   &   &   &   & 1 &   &   & 2 & 3 & 1 &   &   &   &   &   \\
12\sep 134\sep 2345 & 1 &   &   &   &   &   &   &   &   &   & 2 &   &   &   &   &   &   &   &   & 1 &   & 3 & 3 &   &   &   &   &   \\
12\sep 345 &   &   &   &   &   &   &   &   & 1 &   &   &   &   &   &   &   &   &   &   &   & 3 & 6 &   &   &   &   &   &   \\
12\sep 1345 & 1 &   &   &   &   &   &   &   &   & 3 &   &   &   &   &   &   &   &   &   &   &   & 6 &   &   &   &   &   &   \\
12\sep 1345\sep 2345 & 1 &   &   &   &   &   &   &   &   &   &   &   &   &   &   &   &   &   &   &   &   & 6 & 3 &   &   &   &   &   \\
123 &   &   &   &   &   &   &   &   &   & 3 &   &   &   &   &   &   &   &   &   &   &   &   &   & 7 &   &   &   &   \\
123\sep 124 &   &   &   &   &   &   &   &   &   & 4 & 1 &   &   &   &   &   &   &   &   &   &   &   &   & 1 & 4 &   &   &   \\
123\sep 124\sep 125 &   &   &   &   &   &   &   &   &   & 6 &   & 1 &   &   &   &   &   &   &   &   &   &   &   &   & 3 &   &   &   \\
123\sep 124\sep 125\sep 134 &   &   &   &   &   &   &   &   &   & 2 & 2 & 1 &   &   &   &   &   &   &   &   &   & 3 &   &   &   & 2 &   &   \\
123\sep 124\sep 125\sep 134\sep 135 &   &   &   &   &   &   &   &   &   & 1 & 2 & 2 &   &   &   &   &   &   &   &   &   & 4 &   &   &   & 1 &   &   \\
123\sep 124\sep 125\sep 134\sep 135\sep 145 &   &   &   &   &   &   &   &   &   &   &   & 4 &   &   &   &   &   &   &   &   &   & 6 &   &   &   &   &   &   \\
123\sep 124\sep 125\sep 134\sep 135\sep 145\sep 234 &   &   &   &   &   &   &   &   &   &   &   & 3 &   &   &   & 1 &   &   &   & 3 &   &   & 3 &   &   &   &   &   \\
123\sep 124\sep 125\sep 134\sep 135\sep 145\sep 234\sep 235 &   &   &   &   &   &   &   &   &   &   &   & 2 &   &   &   & 3 &   &   &   & 4 &   &   & 1 &   &   &   &   &   \\
123\sep 124\sep 125\sep 134\sep 135\sep 145\sep 234\sep 235\sep 245 &   &   &   &   &   &   &   &   &   &   &   & 1 &   &   &   & 6 &   &   &   & 3 &   &   &   &   &   &   &   &   \\
123\sep 124\sep 125\sep 134\sep 135\sep 145\sep 234\sep 235\sep 245\sep 345 &   &   &   &   &   &   &   &   &   &   &   &   &   &   &   & 10 &   &   &   &   &   &   &   &   &   &   &   &   \\
123\sep 124\sep 125\sep 134\sep 135\sep 145\sep 2345 &   &   &   &   &   &   &   &   &   &   &   & 4 &   &   &   &   &   &   &   &   &   &   & 6 &   &   &   &   &   \\
123\sep 124\sep 125\sep 134\sep 135\sep 234 &   &   &   &   &   &   &   &   &   &   & 2 & 2 &   &   &   &   &   &   &   & 3 &   &   & 2 &   &   &   & 1 &   \\
123\sep 124\sep 125\sep 134\sep 135\sep 234\sep 235 &   &   &   &   &   &   &   &   &   &   &   & 3 &   &   &   &   &   &   &   & 6 &   &   &   &   &   &   & 1 &   \\
123\sep 124\sep 125\sep 134\sep 135\sep 234\sep 245 &   &   &   &   &   &   &   &   &   &   & 2 & 1 &   &   &   & 2 &   &   &   & 3 &   &   & 2 &   &   &   &   &   \\
123\sep 124\sep 125\sep 134\sep 135\sep 234\sep 245\sep 345 &   &   &   &   &   &   &   &   &   &   & 2 &   &   &   &   & 4 &   &   &   & 4 &   &   &   &   &   &   &   &   \\
123\sep 124\sep 125\sep 134\sep 135\sep 245 &   &   &   &   &   &   &   &   &   &   & 2 & 1 &   &   &   & 1 &   &   &   & 2 &   & 1 & 3 &   &   &   &   &   \\
123\sep 124\sep 125\sep 134\sep 135\sep 245\sep 345 &   &   &   &   &   &   &   &   &   &   & 2 &   &   &   &   & 2 &   &   &   & 5 &   & 1 &   &   &   &   &   &   \\
123\sep 124\sep 125\sep 134\sep 135\sep 2345 &   &   &   &   &   &   &   &   &   &   & 2 & 2 &   &   &   &   &   &   &   &   &   & 1 & 4 &   &   &   & 1 &   \\
123\sep 124\sep 125\sep 134\sep 234 &   &   &   &   &   &   &   &   &   &   & 4 & 1 &   &   &   &   &   &   &   & 1 &   &   & 2 &   &   &   & 2 &   \\
123\sep 124\sep 125\sep 134\sep 234\sep 345 &   &   &   &   &   &   &   &   &   &   & 4 &   &   &   &   & 2 &   &   &   &   &   &   & 4 &   &   &   &   &   \\
123\sep 124\sep 125\sep 134\sep 235 &   &   &   &   &   &   &   &   &   &   & 2 & 1 &   &   &   &   &   &   &   & 2 &   & 2 & 2 &   &   &   & 1 &   \\
123\sep 124\sep 125\sep 134\sep 235\sep 345 &   &   &   &   &   &   &   &   &   &   & 2 &   &   &   &   & 1 &   &   &   & 4 &   & 2 & 1 &   &   &   &   &   \\
123\sep 124\sep 125\sep 134\sep 345 &   &   &   &   &   &   &   &   &   &   & 2 &   &   &   &   & 1 &   &   &   & 1 &   & 3 & 3 &   &   &   &   &   \\
123\sep 124\sep 125\sep 134\sep 2345 &   &   &   &   &   &   &   &   &   &   & 2 & 1 &   &   &   &   &   &   &   &   &   & 3 & 2 &   &   &   & 2 &   \\
123\sep 124\sep 125\sep 345 &   &   &   &   &   &   &   &   &   &   &   &   &   &   &   & 1 &   &   &   &   &   & 6 & 3 &   &   &   &   &   \\
123\sep 124\sep 125\sep 1345 &   &   &   &   &   &   &   &   &   & 3 &   & 1 &   &   &   &   &   &   &   &   &   & 3 &   &   &   & 3 &   &   \\
123\sep 124\sep 125\sep 1345\sep 2345 &   &   &   &   &   &   &   &   &   &   &   & 1 &   &   &   &   &   &   &   &   &   & 6 &   &   &   &   & 3 &   \\
123\sep 124\sep 134 &   &   &   &   &   &   &   &   &   & 3 & 3 &   &   &   &   &   &   &   &   &   &   &   &   &   &   & 4 &   &   \\
123\sep 124\sep 134\sep 234 &   &   &   &   &   &   &   &   &   &   & 6 &   &   &   &   &   &   &   &   &   &   &   &   &   &   &   & 4 &   \\
123\sep 124\sep 134\sep 235 &   &   &   &   &   &   &   &   &   &   & 3 &   &   &   &   &   &   &   &   & 1 &   & 2 & 2 &   &   & 1 & 1 &   \\
123\sep 124\sep 134\sep 235\sep 245 &   &   &   &   &   &   &   &   &   &   & 3 &   &   &   &   &   &   &   &   & 3 &   & 1 & 2 &   &   & 1 &   &   \\
123\sep 124\sep 134\sep 235\sep 245\sep 345 &   &   &   &   &   &   &   &   &   &   & 3 &   &   &   &   &   &   &   &   & 6 &   &   &   &   &   & 1 &   &   \\
123\sep 124\sep 134\sep 2345 &   &   &   &   &   &   &   &   &   &   & 3 &   &   &   &   &   &   &   &   &   &   & 3 &   &   &   & 1 & 3 &   \\
123\sep 124\sep 135 &   &   &   &   &   &   &   &   &   & 1 & 2 &   &   &   &   &   &   &   &   &   &   & 4 &   &   & 2 & 1 &   &   \\
123\sep 124\sep 135\sep 145 &   &   &   &   &   &   &   &   &   &   & 4 &   &   &   &   &   &   &   &   &   &   & 4 &   &   & 2 &   &   &   \\
123\sep 124\sep 135\sep 245 &   &   &   &   &   &   &   &   &   &   & 1 &   &   &   &   &   &   &   &   & 2 &   & 4 & 2 &   &   & 1 &   &   \\
123\sep 124\sep 135\sep 245\sep 345 &   &   &   &   &   &   &   &   &   &   &   &   &   &   &   &   &   &   &   & 5 &   & 5 &   &   &   &   &   &   \\
123\sep 124\sep 345 &   &   &   &   &   &   &   &   &   &   &   &   &   &   &   &   &   &   &   & 1 &   & 5 & 2 &   &   & 2 &   &   \\
123\sep 124\sep 1345 &   &   &   &   &   &   &   &   &   & 2 & 1 &   &   &   &   &   &   &   &   &   &   & 2 &   &   & 2 & 3 &   &   \\
123\sep 124\sep 1345\sep 235 &   &   &   &   &   &   &   &   &   &   & 2 &   &   &   &   &   &   &   &   &   &   & 3 & 2 &   &   & 2 & 1 &   \\
123\sep 124\sep 1345\sep 235\sep 245 &   &   &   &   &   &   &   &   &   &   & 4 &   &   &   &   &   &   &   &   &   &   &   & 4 &   &   & 2 &   &   \\
123\sep 124\sep 1345\sep 2345 &   &   &   &   &   &   &   &   &   &   & 1 &   &   &   &   &   &   &   &   &   &   & 4 &   &   &   & 3 & 2 &   \\
123\sep 145 &   &   &   &   &   &   &   &   &   &   &   &   &   &   &   &   &   &   &   &   &   & 6 &   &   & 4 &   &   &   \\
123\sep 1245 &   &   &   &   &   &   &   &   &   & 2 &   &   &   &   &   &   &   &   &   &   &   & 1 &   & 2 & 5 &   &   &   \\
123\sep 1245\sep 345 &   &   &   &   &   &   &   &   &   &   &   &   &   &   &   &   &   &   &   &   &   & 4 & 2 &   &   & 4 &   &   \\
123\sep 1245\sep 1345 &   &   &   &   &   &   &   &   &   & 1 &   &   &   &   &   &   &   &   &   &   &   & 2 &   &   & 4 & 3 &   &   \\
123\sep 1245\sep 1345\sep 2345 &   &   &   &   &   &   &   &   &   &   &   &   &   &   &   &   &   &   &   &   &   & 3 &   &   &   & 6 & 1 &   \\
1234 &   &   &   &   &   &   &   &   &   &   &   &   &   &   &   &   &   &   &   &   &   &   &   & 6 &   &   &   & 4 \\
1234\sep 1235 &   &   &   &   &   &   &   &   &   &   &   &   &   &   &   &   &   &   &   &   &   &   &   & 6 & 3 &   &   & 1 \\
1234\sep 1235\sep 1245 &   &   &   &   &   &   &   &   &   &   &   &   &   &   &   &   &   &   &   &   &   &   &   & 3 & 6 & 1 &   &   \\
1234\sep 1235\sep 1245\sep 1345 &   &   &   &   &   &   &   &   &   &   &   &   &   &   &   &   &   &   &   &   &   &   &   &   & 6 & 4 &   &   \\
1234\sep 1235\sep 1245\sep 1345\sep 2345 &   &   &   &   &   &   &   &   &   &   &   &   &   &   &   &   &   &   &   &   &   &   &   &   &   & 10 &   &   \\
12345 &   &   &   &   &   &   &   &   &   &   &   &   &   &   &   &   &   &   &   &   &   &   &   &   &   &   &   & 10
\end{longtable}
}


\section*{Acknowledgments}

The work reported here was in part carried out during the authors' multiple reciprocal visits to Paris and Luxembourg.


\end{document}